\newtheorem{definition}{Definition}[subsection]
\newtheorem{theorem}{Theorem}[subsection]
\newtheorem{thm}[theorem]{Theorem}
\newtheorem{cor}[theorem]{Corollary}
\newtheorem{prop}[theorem]{Proposition}
\newtheorem{conj}[theorem]{Conjecture}
\newtheorem{lemma}[theorem]{Lemma}
\newtheorem{proposition}[theorem]{Proposition}
\newtheorem{corollary}[theorem]{Corollary}
\DeclareMathOperator{\Hig}{Hig}
\DeclareMathOperator{\ev}{ev}
\DeclareMathOperator{\coev}{coev}
\def\bC{\mathbb{C}}
\def\bZ{\mathbb{Z}}
\def\cZ{\mathcal{Z}}
\def\spa{\mathrm{span}}
\def\NK{{\mathcal{K}}}
\def\Kn{{\NK_n}}
\DeclareMathOperator{\tr}{tr}
\DeclareMathOperator{\id}{id}
\DeclareMathOperator{\Hom}{Hom}
\DeclareMathOperator{\Rep}{Rep}
\DeclareMathOperator{\End}{End}
\DeclareMathOperator{\irRC}{\operatorname{Irr}(\mathcal C)}
\DeclareMathOperator{\inRC}{\operatorname{Ind}(\mathcal C)}
\DeclareMathOperator{\fuRC}{\operatorname{Full}(\mathcal C)}
\DeclareMathOperator{\irRB}{\operatorname{Irr}(\mathcal B)}
\DeclareMathOperator{\StRB}{\operatorname{St}(\mathcal B)}
\DeclareMathOperator{\HigRB}{\operatorname{Hig}(\mathcal B)}
\DeclareMathOperator{\lirr}{\operatorname{I}_{\textrm{irr}}}
\DeclareMathOperator{\lind}{\operatorname{I}_{\textrm{ind}}}
\DeclareMathOperator{\lfull}{\operatorname{I}_{\textrm{full}}}
\DeclareMathOperator{\ccenterB}{Nat(\textrm{Id}_{\mcB})}
\newcommand{\mcB}{\mathcal{B}}
\newcommand{\mcC}{\mathcal{C}}
\newcommand{\mcI}{\mathcal{I}}
\newcommand{\mcK}{\mathcal{K}}
\newcommand{\mcL}{\mathcal{L}}
\newcommand{\mcZ}{\mathcal{Z}}
\newcommand{\mbbC}{\mathbb{C}}
\newcommand{\mbbZ}{\mathbb{Z}}
\newcommand{\mfg}{\mathfrak{g}}
\title{Modular Data of Non-semisimple Modular Categories}
\date{}
\author[nankai]{Liang Chang\textsuperscript{(1)}} 
\email{changliang996@nankai.edu.cn} 
\author[ucsb]{Quinn T. Kolt\textsuperscript{(2)}}
\email{quinn@math.ucsb.edu}
\author[ucsb]{Zhenghan Wang\textsuperscript{(2)}}
\email{zhenghwa@math.ucsb.edu}
\author[ucsb]{Qing Zhang\textsuperscript{(2)}}
\email{qingzhang@ucsb.edu}
\address[1]{Department of Mathematics, Nankai University, Tianjin, China}
\address[2]{Department of Mathematics, University of California, Santa Barbara, CA 93106, USA}
\begin{document}
\begin{abstract}
We investigate non-semisimple modular categories with an eye towards a structure theory, low-rank classification, and applications to low dimensional topology and topological physics.  We aim to extend the well-understood theory of semisimple modular categories to the non-semisimple case by using representations of factorizable ribbon Hopf algebras as a case study. We focus on the Cohen-Westreich modular data, which is obtained from the Lyubashenko-Majid modular representation restricted to the Higman ideal of a factorizable ribbon Hopf algebra. The Cohen-Westreich $S$-matrix diagonalizes the mixed fusion rules and reduces to the usual $S$-matrix for semisimple modular categories. The paper includes detailed studies on small quantum groups $U_qsl(2)$ and the Drinfeld doubles of Nichols Hopf algebras, especially the $\mathrm{SL}(2, \mathbb{Z})$-representation on their centers, Cohen-Westreich modular data, and the congruence kernel theorem's validity.
\end{abstract}
\maketitle

\tableofcontents

\section{Introduction}

In this paper, we investigate non-semisimple modular categories\footnote{Modular category is sometimes used as a short name of modular tensor category. We use modular
category to mean a non-degenerate ribbon finite tensor category.  Hence, they include both the semisimple ones (i.e., modular tensor categories) and non-semisimple ones.} 
(NSS MCs) with an eye towards a structure theory, low-rank classification, and applications to low dimensional topology and topological physics in the future.  Our strategy is to pursue a parallel theory to semisimple modular categories (i.e., modular tensor categories) by leveraging our knowledge of modular tensor categories and contrasting them using examples: NSS MCs from representations of factorizable ribbon Hopf algebras.

On one hand, unitary topological quantum field theories (TQFTs) have deep applications to quantum physics and quantum computing, yet no new profound applications to classical topology have been discovered.\footnote{The Rochlin theorem in smooth 4-dimensional topology can be deduced from TQFTs.}   On the other hand, the first Witten-Donaldson TQFT is non-unitary with transforming consequences for smooth 4-dimensional topology.  It suggests that non-semisimplicity might be necessary for applications to classical topological problems such as the classification of smooth $4$-manifolds and the Andrews-Curtis conjecture.  Arguably, the best-understood TQFTs are the semisimple (2+1)-TQFTs from modular tensor categories.  This gives motivation to see how much knowledge about modular tensor categories can be generalized to NSS MCs.  

Modular data is the most powerful invariant of a modular tensor category. The Lyubashenko-Majid modular representation transforms the center $Z(H)$ of any finitely dimensional factorizable ribbon Hopf algebra $H$ into an $\textrm{SL}(2,\mbbZ)$-module \cite{lyubashenko1994braided}. This representation on $Z(H)$ can be restricted to its Higman ideal $\textrm{Hig}(H)$.
Modular data of modular tensor categories are matrices, so a basis needs to be chosen for $\textrm{Hig}(H)$.   The two bases in \cite{cohen2008characters} used for a version of Verlinde formulas are natural choices.   
Based on \cite{cohen2008characters}, we propose that the restriction of the Lyubashenko-Majid modular representation $Z(H)$ to its Higman ideal $\textrm{Hig}(H)$ with the two bases to be the modular data for modular categories. 
We will call the resulting two matrices $(S_{CW}, T_{CW})$ the Cohen-Westreich modular data of modular categories.  They diagonalize the mixed fusion rules and reduce to the usual modular data for semisimple modular categories. 

Every NSS MC $\mathcal{B}$ can be semi-simplified to a ribbon fusion category $\overline{\mathcal{B}}$.  We hope that the Cohen-Westreich modular data of $\mathcal{B}$ and the $(S,T)$-matrices of its semi-simplification $\overline{\mathcal{B}}$ will provide insight into a structure theory for modular categories. As shown in Prop. \ref{non-modular}, the semi-simplification $\overline{\mathcal{B}}$ of a modular category $\mcB$ is not in general modular.

Our idea is to build a structure theory for NSS MCs via their semisimple quotients, analogous to the spin cover approach to fermionic modular categories in \cite{bruillard2017fermionic} and their mixed fusion modules.  Conceptually, this is interesting, as the Swan-Serre theorem shows that projective modules are algebraic incarnations of bundles.  A modular tensor category can be regarded as finitely many quantum points, and pursuing a projective cover can be regarded as a quantum covering space: the simplest quantum bundle theory.  

A large family of modular categories comes from quantum groups at roots of unity \cite{turaev2010quantum,bakalov2001lectures}.  These quantum groups at roots of unity are factorizable ribbon Hopf algebras. Let $\mfg$ be the Lie algebra of a simple Lie group $G$ and $q=e^{\frac{2\pi i}{m(\check{h}+k)}}, k\geq 1$, where $\check{h}$ is the dual Coxeter number of $\mfg$ and $m=1,2,3$ depending on the Lie algebra\footnote{$m=1$ for ADE, $m=2$ for B,C,$F_4$, and $m=3$ for $G_2$.},  then the quantum group at root of unity $q$ will be denoted as $U_q\mfg$.  The resulting modular category from the representation category of $U_q\mfg$ will be denoted as $U_qG_k$ and referred to as the modular category of $G$ at level $=k$ following physics jargon.

Small quantum groups $U_q sl(2)$ have been extensively studied in the literature.  Our interest comes from a conjecture of Kerler on the decompositions of their centers \cite{kerler2003homology}.  There are many proofs of Kerler's conjecture, and ours is the closest to Kerler's original work: complete explicit decompositions of the centers as $\operatorname{SL}(2,\mbbZ)$-module.  We identify one summand in the decomposition as the Higman module and the other as the tensor product of the $\operatorname{SL}(2,\mbbZ)$ defining representation with the modular representation of the even half of $\operatorname{SU}(2)_k$.  We also identify the Higman module as a Weil representation, so its kernel is a congruence subgroup.  This is the content of Section \ref{sec:uqsl2}.

In Section \ref{sec:doubleofnichols}, we study the Drinfeld doubles of the Nichols Hopf algebras\footnote{These are distinct from the Nichols algebras of braided vector spaces. However, both Nichols Hopf algebras and small quantum groups can be naturally constructed from Nichols algebras \cite[Exercises 8.25.13 and 8.25.15]{EGNO}. The use of ``Nichols Hopf algebra'' for $\Kn$ appears in \cite{EGNO}.} $\mcK_n$.   The first Nichols Hopf algebra $\mcK_1$ is the $4$-dimensional Sweedler's Hopf algebra, which is ribbon but not unimodular.  The second Nichols Hopf algebra $\mcK_2$ appeared in Kerler's paper \cite{kerler2003homology}, which is ribbon and unimodular but not factorizable. In general, $\mcK_n$ is always ribbon, unimodular if and only if $n$ is even, and never factorizable (their representation categories are not modular).  To obtain examples of NSS MCs, we study their doubles $D\mcK_n$.  $D\mcK_n$ is a factorizable ribbon Hopf algebra for even $n$, but the ribbon element studied here is not valid for odd $n$. It can be shown that $D\Kn$ with its canonical $R$-matrix is not ribbon for odd $n$. For the even $n$ case, the doubled Nichols Hopf algebras $D\mcK_{n}$ are also known as the symplectic fermion Hopf algebras $Q(n,1)$ \cite{farsad2022symplectic}. A similar investigation of a quasi-Hopf generalization of $D\mcK_{n}$ for even $n$ can be found in \cite{farsad2022symplectic}. 

The contents of the remaining sections are as follows.  Section \ref{sec:NSSMCs} is a collection of basic notions of modular categories with a new proposition on the minimal total rank.  Section \ref{sec:factorizableHA} summarizes fundamental facts on what we call Cohen-Westreich modular data.  Section \ref{sec:SSofMCs} recalls semi-simplification.  Section \ref{sec:comparison} studies which theorems for modular tensor categories generalize to arbitrary modular categories. We observe that rank-finiteness fails.  The congruence kernel theorem holds for the Higman ideals of both small quantum groups $U_qsl(2)$ and doubled Nichols Hopf algebras $D\mcK_n$ for even $n$.  We conjecture that the congruence kernel theorem holds in general for the Higman ideal.

\section{Non-semisimple modular categories}\label{sec:NSSMCs}

In this paper, we work over the complex numbers $\mathbb{C}$.  All algebras are over $\mbbC$, and all modules over them are assumed to be finite-dimensional over $\mathbb{C}$.  Categories are assumed to be  $\mathbb{C}$-linear and finite.  Our pictures read from bottom up. 

We collect some notation and terminology in the sequel into the following.

\subsection{Notations and terminologies}\label{notation}

\begin{enumerate}

\item In general, a finite tensor category (FTC) will be denoted as $\mcC$, and a modular category as $\mathcal{B}$.  Terminologies for FTCs $\mcC$ applying to modular categories $\mcB$ mean for the underlying FTCs of the modular categories $\mcB$.

\item There are three label sets for each FTC $\mcC$.  The set $\lirr$ of isomorphism classes of simple objects, the set $\lind$ of isomorphism classes of projective indecomposable objects, and the union of the two sets $\lfull=\lirr\cup \lind$.  The union is in general not disjoint as there are objects that are both simple and projective.  Such objects will be called the Steinberg objects.  

The cardinality of $\lfull$ will be called the total rank, denoted as $n$, the cardinality of $\lirr$ as the rank, denoted as $r$, and the cardinality of the intersection of the two sets as the Steinberg rank, denoted as $b$.  Note that $n+b=2r$, hence the total rank $n$ and Steinberg rank $b$ have the same parity.

We fix a complete set of representatives of objects labeled by each of the three label sets as $\irRC$, $\inRC$, and $\fuRC$.  Similarly a complete representative set $\StRB$ of isomorphism classes of the Steinberg objects.

\item Throughout the paper, $H$ will denote a finite-dimensional Hopf algebra and $J(H)$ the Jacobson radical of $H$.  We choose a complete set of distinct irreducible $H$-modules $\{V_i\}_{i\in I_{irr}}$ and let $\{P_i\}_{i\in I_{ind}}=\{P(V_i)\}$ denote their projective covers.  It is a fundamental result in algebra that there is well-defined 1-1 correspondence between $I_{irr}$ and $I_{ind}$.  In general, there can be modules of $H$ that are both irreducible and projective.

\item For a Hopf algebra $H$, $H$-mod is the category of finite-dimensional left modules of $H$, therefore $H$-mod is an FTC.  The irreducible modules are the simple objects of $H$-mod with projective covers.

Our main interest is the modular category of finite-dimensional modules over a factorizable ribbon Hopf algebra.  The categorical counterparts of their irreducible modules and projective indecomposable modules will be called the simple objects and projective indecomposable objects of the resulting modular categories.

\item Let $\mfg$ be the Lie algebra of a simple Lie group $G$ and $q$ a root of unity as in the introduction, then the quantum group at the root of unity will be denoted as $U_q\mfg$.  We will denote the corresponding modular category as $U_qG_k$ and call it the modular category of $G$ at level$=k$ following physics jargon.  The semi-simplification of $U_qG_k$ is the usual modular tensor category (MTC) $G_k$.

\item There will be four different flavors of fusion rings/modules for non-semisimple modular categories: the full fusion ring, the Grothendieck ring, the projective Grothendieck ring, and the mixed fusion module.  To distinguish the  different fusion coefficients, we will use $N_{ij}^k(f)$ for the different flavors $f=t,g,p,m$ for the fusion coefficient/multiplicity.  The notation  $N_{ij}^k$ will be reserved for the fusion multiplicity of MTCs or if no confusion would arise.

\item We will use $Z(A)$ to denote the center of an algebra $A$, and $\mcZ(\mcC)$ to denote the Drinfeld center of an FTC $\mcC$.

\item  There are two versions of the Higman ideal: $\HigRB$ for a modular category $\mcB$, and $\Hig(H)$ for a Hopf algebra $H$. Their dimensions are denoted as $m$, which is the same as the rank of the Cartan matrix.  

\item There are many uses of the letter $S$ for modular matrices and antipodes, so we will use $s_H$ to denote the antipode of a Hopf algebra $H$. $S_{LM}$ will denote the Lyubashenko-Majid map on $H$ and its center $Z(H)$. $S_{CW}$ will denote the restriction of $S_{LM}$ to the Higman ideal under some basis.  We will use $S,T$ for the usual modular data of MTCs, and $s,t$ for the usual defining matrices of $\operatorname{SL}(2,\mbbZ)$.

\item Given an FTC $\mcC$,  $\mcC_{QD}$ denotes the full sub-category, not necessarily abelian or monoidal, of all quasi-dominated objects by simple objects.  Quasi-domination is defined in Section \ref{QD:defintion}.

\end{enumerate}

\subsection{Modular categories}

A finite tensor category (FTC) is a generalization of fusion category to the non-semisimple setting and the categorical counterpart of a finite-dimensional weak quasi-Hopf algebra.  We strictly follow the definition in \cite{etingof2004finite}. In particular, an FTC is rigid, and every simple object $X$ has a unique projective cover $P(X)$ up to isomorphism.  Moreover, each projective indecomposable object covers a simple object called its socle. 

An FTC is unimodular if the projective cover $P_1$ of the tensor unit is self-dual.

A ribbon structure on an FTC consists of a braiding and a twist that are compatible with the rigidity \cite{turaev2010quantum}.  A ribbon FTC is an FTC with a ribbon structure.

A ribbon FTC $\mcB$ is non-degenerate if it satisfies one of the following four equivalent conditions in \cite{shimizu2019non}:

\begin{enumerate}
    \item The braided tensor functor $\mathcal{B} \boxtimes \mathcal{B}^{\operatorname{rev}}\rightarrow  \mcZ(\mcB)$ is an equivalence.
    \item The pairing on the coend in \cite{shimizu2019non} is non-degenerate.
    \item The end and coend are isomorphic to each other.
    \item The Müger center of $\mathcal B$ is trivial.
\end{enumerate}

\begin{definition}
A modular category $\mathcal{B}$ is a non-degenerate ribbon FTC.

\end{definition}

We also have:

\begin{enumerate}
    \item The number of distinct isomorphism classes of simple objects is called the rank.
    \item The total number of distinct isomorphism classes of simple and projective indecomposable objects will be called the total rank.
    \item An object is called Steinberg if it is both simple and projective.  The number of distinct isomorphism classes of Steinberg objects will be called the Steinberg rank.
\end{enumerate}

A semisimple modular category is called a modular tensor category (MTC).  MTCs are extensively studied related to anyon theory in topological physics (e.g. see \cite{wang2010topological,rowell2018mathematics} ). Our main focus in this paper is on non-semisimple modular categories (NSS MCs).

Recall the following result about NSS MCs:

\begin{theorem}\label{thm:S-rank}\cite[Theorem 3.4]{gainutdinov2020projective}
    The Steinberg rank of any NSS MC is at least one.
\end{theorem}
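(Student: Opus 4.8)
The plan is to argue by contradiction: suppose $\mcB$ is a modular category with Steinberg rank $b = 0$, i.e., no simple object of $\mcB$ is projective. The key structural fact I would invoke is that the projective cover $P_1$ of the tensor unit $\mathbf{1}$ is the generating object for the ideal of projectives: every projective object is a direct summand of $P_1 \otimes X$ for a suitable $X$, and in a modular (hence non-degenerate) category the dimensions behave well under the modular pairing. If no simple object is projective, then in particular $\mathbf{1}$ itself is not projective, so $P_1 \ne \mathbf{1}$ and $P_1$ has composition length at least $2$. I would first extract from this the statement that $\dim \Hom(\mathbf{1}, P_1) \geq 1$ while $P_1$ is not simple, and more importantly that $P_1 \cong P_1^{\vee}$ fails or holds depending on unimodularity — but the cleaner route avoids unimodularity entirely.

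The main line of attack I would pursue uses the non-degeneracy of the modular category through its Drinfeld center or, equivalently, through the fact that for a factorizable Hopf algebra the Higman ideal $\Hig(H)$ has dimension $m$ equal to the number of simple modules. Here is the mechanism: in $H$-mod with $H$ factorizable ribbon, non-semisimplicity forces the existence of a nontrivial Jacobson radical $J(H)$, and the socle/top filtration of each $P_i$ is nontrivial unless $V_i$ is Steinberg. One shows that if $b = 0$ then the Cartan matrix $C = (c_{ij})$, $c_{ij} = [P_i : V_j]$, has all diagonal entries $c_{ii} \geq 2$ — actually I only need that $C$ is not the identity and is symmetric (by unimodularity of the underlying factorizable structure, or by the self-duality of $H$ as a Frobenius algebra). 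The contradiction then comes from a trace or rank count: the modular $S$-matrix (Cohen–Westreich $S_{CW}$, or the pairing on the coend) must be invertible, but the absence of Steinberg objects makes the relevant pairing degenerate because the "projective direction" and the "simple direction" in $\Hig(H)$ collapse incompatibly. Concretely, one compares $\dim Z(H)$, $\dim \Hig(H) = m = r$, and the structure of the Loewy series; the Higman ideal consists of characters of projective modules, and in the factorizable non-semisimple case the map sending $V_i \mapsto$ (class of $P_i$) fails to be surjective onto $\Hig(H)$ unless some $P_i = V_i$.

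The honest version of the proof in the source \cite{gainutdinov2020projective} is likely categorical and uses the following: in a modular category, the distinguished invertible object (the "modular object" $D$ such that $P_1^{\vee} \cong D \otimes P_1$) together with non-degeneracy forces the existence of a simple projective. If $b=0$, then the tensor ideal of projectives $\mathrm{Proj}(\mcB)$, which is generated by $P_1$, would contain no simple objects, yet the semisimplification $\overline{\mcB}$ (which kills exactly the negligible morphisms, and in the presence of no Steinberg objects would kill \emph{all} of $\mathrm{Proj}(\mcB)$) would have to be a modular tensor category by the transport of non-degeneracy — but a nonzero MTC must have a simple object of nonzero dimension lifting to a non-negligible projective... this is where the argument bites.

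\textbf{Expected main obstacle.} The delicate point is controlling how non-degeneracy interacts with the ideal of projectives: one must rule out the scenario where all projectives are "invisible" to the modular pairing. The hard part will be showing that a Steinberg object must exist rather than merely that $\mathrm{Proj}(\mcB) \ne 0$ — i.e., upgrading "there is a nonzero projective" (automatic) to "there is a nonzero \emph{simple} projective." I expect this to require either the explicit Higman-ideal dimension count ($\dim \Hig(H) = r$ forcing a basis element that is simultaneously a simple character and a projective character) or a Müger-center argument showing that the tensor subcategory generated by the socles of projectives would otherwise be a nontrivial symmetric subcategory, contradicting non-degeneracy. I would lead with whichever of these two is cleaner; the Higman-ideal approach is more concrete for the Hopf-algebra case studied in this paper, while the Müger-center approach is intrinsic to the categorical statement.
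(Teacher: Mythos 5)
The paper offers no proof of Theorem \ref{thm:S-rank}; it is imported verbatim from \cite{gainutdinov2020projective}, so there is no internal argument to compare yours against. Judged on its own, your proposal is not yet a proof: it lists three candidate strategies (a Higman/Cartan dimension count, a semisimplification argument, a M\"uger-center argument) but carries none of them to a contradiction, and the two steps you actually commit to are false. First, you assert that $\dim\Hig(H)=m$ equals the number $r$ of simples; in fact $m$ is the rank of the Cartan matrix, which for a non-semisimple modular category is \emph{strictly less} than $r$ because the Cartan matrix is singular (the paper records this, citing \cite[Theorem 6.6.1]{EGNO}), so the proposed ``dimension count forcing a simple character to coincide with a projective character'' cannot start as stated. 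Second, your semisimplification route assumes $\overline{\mcB}$ ``would have to be a modular tensor category by the transport of non-degeneracy''; this is precisely what fails in general --- Proposition \ref{non-modular} of this very paper exhibits modular categories $D\mcK_n$-mod whose semisimplifications are degenerate --- so nothing is contradicted by $\overline{\mcB}$ annihilating all projectives.

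The genuinely hard step, which you correctly name but do not supply, is upgrading ``the ideal of projectives is nonzero'' (automatic) to ``it contains a simple object.'' The mechanism that does this is visible in the paper's own Gainutdinov--Runkel machinery: for $P$ projective indecomposable and $U$ simple, the component $(\phi_P)_U\in\End(U)\cong\bC$ of the internal character is nonzero only when $U$ is a Steinberg object (this is why the Verlinde-type formula sums only over $\StRB$), while non-degeneracy forces $S_{LM}$ to restrict to an isomorphism of the Higman ideal, whose image cannot consist entirely of natural transformations vanishing on every simple object. If you want to complete the proof, that vanishing statement and its interaction with the invertibility of $S_{LM}|_{\Hig}$ is the computation to carry out; the Cartan-matrix and M\"uger-center detours can be dropped.
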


\subsection{Jordan-Hölder or composition series}

\begin{definition}

    Let $X$ be an object in an FTC, then there exists a filtration
\begin{equation*}
    0=X_0 \subset X_1 \subset \cdots \subset X_{n-1} \subset X_n=X
\end{equation*}
such that $X_i / X_{i-1}$ is simple for all $i$. Such a filtration is called a Jordan-Hölder series of $X$. 
\end{definition}

By the Jordan-Hölder theorem, the number of any simple object $Y=X_i/X_{i-1}$ appeared in a Jordan-Hölder series is the same in any two Jordan-Hölder series. We call this number the \textit{c-multiplicity} of $Y$ in $X$ and denote it by $[X:Y]$. The length of $X$ is the length of its Jordan-Hölder series.

\subsection{Minimal total rank}

The set of extensions of the tensor unit $\mathbf 1$ in an FTC by itself is trivial, i.e., $\operatorname{Ext}^1(\mathbf 1, \mathbf 1)=0$ \cite[Theorem 2.17]{etingof2004finite}. We use this result to deduce the following:  

\begin{lemma}\label{lem:simi-simple}
    \begin{enumerate}
     \item If the tensor unit $\mathbf 1$ of an FTC $\mcC$ is projective, then $\mcC$ is semisimple.
     \item If all nontrivial simple objects of an FTC $\mathcal C$ are projective, then $\mathcal C$ is semisimple.
    \end{enumerate}
\end{lemma}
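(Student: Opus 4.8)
The plan is to reduce both statements to the vanishing $\operatorname{Ext}^1(\mathbf 1,\mathbf 1)=0$ together with rigidity of the FTC, using duals to transport extension groups between simple objects and the unit.

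For part (1), suppose $\mathbf 1$ is projective. Since $\mcC$ is rigid, tensoring with a fixed object is exact and has both adjoints, so for any simple object $X$ the object $X \cong X \otimes \mathbf 1$ is a direct summand of $X \otimes P$ for suitable projective $P$ (indeed $\mathbf 1$ projective forces $X = X\otimes \mathbf 1$ projective, because $X\otimes(-)$ sends projectives to projectives when the unit — hence every object — is rigid). Thus every simple object is projective. Every object then has a Jordan–Hölder filtration with simple, hence projective, subquotients, and such a filtration splits, so $\mcC$ is semisimple. In fact, once we establish part (2) it is cleaner to deduce part (1) from it: if $\mathbf 1$ is projective then so is every simple object by the argument just given, in particular all nontrivial simples are projective, and part (2) applies.

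For part (2), assume every nontrivial simple object is projective; we must show $\mathbf 1$ is also projective, for then part (1) (or the JH-splitting argument) finishes it. It suffices to show $\operatorname{Ext}^1(\mathbf 1, Y)=0$ for every simple $Y$, since then $\operatorname{Ext}^1(\mathbf 1,-)$ vanishes on all simples and hence (by dévissage along JH filtrations) on all objects, making $\mathbf 1$ projective. If $Y\cong\mathbf 1$, this is exactly $\operatorname{Ext}^1(\mathbf 1,\mathbf 1)=0$ from \cite[Theorem 2.17]{etingof2004finite}. If $Y\not\cong\mathbf 1$, then $Y$ is projective by hypothesis, so $\operatorname{Ext}^1(\mathbf 1, Y)=0$ because $Y$ is injective as well — here I invoke that a finite tensor category is Frobenius, i.e., projectives and injectives coincide \cite{etingof2004finite}. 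Alternatively, avoiding the Frobenius property: use rigidity to rewrite $\operatorname{Ext}^1(\mathbf 1, Y)\cong \operatorname{Ext}^1(Y^*, \mathbf 1)\cong \operatorname{Ext}^1(\mathbf 1, Y^{**})$-type identities, noting $Y^*$ is again simple and nontrivial, hence projective, so any extension $0\to \mathbf 1\to E\to Y^*\to 0$ splits. Either route gives the vanishing.

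The main obstacle is the bookkeeping in part (2): one must be careful that "nontrivial simple objects are projective" really forces the needed $\operatorname{Ext}$-vanishing in the direction $\operatorname{Ext}^1(\mathbf 1, Y)$ rather than $\operatorname{Ext}^1(Y,\mathbf 1)$, and this is where either the Frobenius (projective $=$ injective) property of FTCs or a duality argument via rigidity is essential — without it, projectivity of $Y$ alone controls $\operatorname{Ext}^1(Y,-)$, not $\operatorname{Ext}^1(-,Y)$. Once the correct variance is secured, the rest is the standard fact that an abelian category with enough projectives in which every simple object is projective is semisimple.
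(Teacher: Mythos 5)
Your proof is correct and follows essentially the same route as the paper: for (1), the unit being projective forces every simple $X\cong\mathbf 1\otimes X$ to be projective, whence semisimplicity; for (2), the two key inputs are $\operatorname{Ext}^1(\mathbf 1,\mathbf 1)=0$ and the Frobenius property of FTCs (nontrivial projective simples are also injective), exactly as in the paper. The only difference is packaging — the paper runs the argument as a contradiction on the Jordan–Hölder series of $P_{\mathbf 1}$, while you phrase it as vanishing of $\operatorname{Ext}^1(\mathbf 1,-)$ on simples plus dévissage — but the mathematical content is identical.
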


\begin{proof}

\begin{enumerate}
\item This follows from \cite[Corollary 4.2.13]{EGNO}.  For completeness, we outline a proof.  First recall that the tensor product of a projective with any object is always projective.  For any projective indecomposable object $P$ of $\mcC$, there is its socle--a simple object $X$ such that $P\cong P(X)$.  Since $X\cong \mathbf 1\otimes X$, so $X$ is also projective, hence it is a projective cover of itself.  The projective cover of any simple object is unique up to isomorphism, it follows that $X\cong P(X)\cong P$.  Therefore, all projective indecomposable objects of $\mcC$ are also simple, i.e. $\mcC$ is semisimple.

\item We show that the tensor unit $\mathbf 1$ must be projective in this case. Assume $\mathbf 1$ is not projective and let $P_{\mathbf 1}$ be its projective cover. Consider the Jordan-Hölder series of $P_{\mathbf{1}}$, i.e., a filtration 
 $$0=X_0 \subset X_1 \subset \cdots \subset X_{n-1} \subset X_n=P_{\mathbf 1}$$
 such that $X_i/X_{i-1}$ is simple for all $i$. Assume there exists a nontrivial projective simple object $Y$ such that $Y\cong X_i/X_{i-1}$ for some $i$.
 Note $Y$ is also injective object by \cite[Prop 6.1.3]{EGNO}. This implies $Y$ is a direct summand of $P_{\textbf{1}}$, a contradiction. Thus we have $X_i/X_{i-1}\cong \mathbf 1$ for all $i$, which contradicts to $\operatorname{Ext}^1(\mathbf{1}, \mathbf{1})=0$.  Therefore $\mathbf 1$ is projective, and $\mathcal C$ is semisimple by (1). 
    
\end{enumerate}

\end{proof}

\begin{prop}\label{smallestrank}

\begin{enumerate}
    \item Any  FTC of total rank $<4$ is semisimple.
    
    \item Any modular category of total rank $<5$ is semisimple.
\end{enumerate}

\end{prop}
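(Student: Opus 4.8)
The plan is to bound the total rank $n = r + |\StRB|$ from below by combining the parity relation $n + b = 2r$ (so $b \equiv n \pmod 2$), the semisimplicity criteria of Lemma \ref{lem:simi-simple}, and, for part (2), the Steinberg-rank estimate of Theorem \ref{thm:S-rank}. Throughout I would argue the contrapositive: assume $\mcC$ (resp. $\mcB$) is non-semisimple and derive a lower bound on $n$.

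For part (1): if $\mcC$ is non-semisimple, then by Lemma \ref{lem:simi-simple}(1) the unit $\mathbf 1$ is not projective, so $\mathbf 1 \notin \inRC$; hence $\mathbf 1$ contributes only to $\lirr$, giving at least one ``non-Steinberg simple.'' By Lemma \ref{lem:simi-simple}(2) there must be a nontrivial simple object that is not projective — another non-Steinberg simple, call it $X$, with $X \not\cong \mathbf 1$. So $r \ge 2$ and at least two of those $r$ simples (namely $\mathbf 1$ and $X$) are not Steinberg, i.e. $b \le r - 2$. Now $n = 2r - b \ge 2r - (r-2) = r + 2 \ge 4$. This already gives $n \ge 4$; I would double-check the edge case where $r = 2$ forces $b = 0$, so $n = 4$, showing the bound is sharp in principle at the level of counting (whether a genuine FTC realizes it is a separate matter and not needed here).

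For part (2): now $\mcB$ is a non-semisimple modular category, so Theorem \ref{thm:S-rank} gives $b = |\StRB| \ge 1$. Combined with the parity relation $n \equiv b \pmod 2$ and the part-(1) argument, which still applies verbatim to the underlying FTC and yields two distinct non-Steinberg simples $\mathbf 1, X$, we get $r \ge b + 2 \ge 3$, hence $n = 2r - b \ge b + 4 \ge 5$. Alternatively, phrased via parity: from part (1) we know $n \ge 4$; if $n = 4$ then $b = 2r - 4$ and also $n + b = 2r$ forces $b$ even, but we need $b \ge 1$ and $b \le r - 2$; with $n = 4$, $b \le r-2$ and $n = 2r - b$ give $4 = 2r - b \ge 2r - (r-2) = r+2$, so $r \le 2$, forcing $b \le 0$, contradicting $b \ge 1$. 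Either way $n \ge 5$.

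The only real subtlety — the ``hard part'' — is making sure the two simple objects $\mathbf 1$ and $X$ produced by Lemma \ref{lem:simi-simple} are genuinely distinct from each other and genuinely non-projective, so that they really do force $b \le r - 2$ rather than just $b \le r - 1$; this is immediate since $X$ is required to be a \emph{nontrivial} simple (hence $X \not\cong \mathbf 1$) and both are non-projective by the two clauses of the lemma. Everything else is the bookkeeping identity $n + b = 2r$ together with the inequality $b \ge 1$ from Theorem \ref{thm:S-rank}. I would present part (1) first in full, then deduce part (2) by appending the Steinberg-rank bound.
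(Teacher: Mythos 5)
Your proof is correct and follows essentially the same route as the paper's: both arguments extract from Lemma \ref{lem:simi-simple} that a non-semisimple FTC has at least two non-projective simples ($\mathbf 1$ and some nontrivial $X$), and for part (2) both invoke Theorem \ref{thm:S-rank} to add a Steinberg object. The only difference is presentational — you run the count through the identity $n=2r-b$ with the inequalities $b\le r-2$ and $b\ge 1$, whereas the paper simply exhibits the four (resp.\ five) pairwise distinct objects $\mathbf 1, X, P_{\mathbf 1}, P(X)$ (and a Steinberg object) directly.
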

\begin{proof}
As $\operatorname{Ext}^1(\mathbf{1}, \mathbf{1})=0$, any FTC with one simple object is equivalent to $\operatorname{Vec}$. Let $\mathcal C$ be an NSS FTC with two simple objects $\mathbf 1$ and $X$. By Lemma \ref{lem:simi-simple}, we know both $\mathbf 1$ and $X$ are not projective. Thus there has to be projective covers $P_{\mathbf 1}, P(X)$ of $\mathbf 1, X$.  Therefore, the minimal total rank for a NSS FTC is $4$.

For (2),  by Thm. \ref{thm:S-rank}, the modular category also has an extra Steinberg object.  Therefore,  the minimal total rank of an NSS MC is $5$.
\end{proof}

Both bounds are sharp.  The $\mathcal{K}_n$-mod of Nichols Hopf algebras $\mathcal{K}_n$ for even $n$ are unimodular ribbon FTCs of total rank=$4$,  while $U_qsl(2)$-mod at level=$1$ is a modular category of total rank=$5$.

\subsection{Cartan matrix of finite tensor category}

\begin{definition} 
Given an FTC $\mcC$ of rank=$r$ with a complete representative set of simple objects $\{V_j\}_{j=1}^r$ and their projective covers $\{P_i=P(V_i)\}_{i=1}^r$, we define 
\begin{equation}\label{eq:CartanMatrix}
    C_{ij}:= [P_i:V_j],
\end{equation}
i.e., the $c$-multiplicity of $V_j$ in $P(X_i)$, then the integral matrix $C=(C_{ij}) \in M_{r\times r}(\mathbb Z)$ is called the Cartan matrix of $\mcC$. 
\end{definition}

The rank of the Cartan matrix of an FTC $\mcC$ will be called its Cartan rank.

The Cartan matrix of an MTC is the identity matrix, but 
the Cartan matrix of any NSS MC is always singular \cite[Theorem 6.6.1]{EGNO}.

\begin{definition}
    Two projective indecomposable objects $X,Y$ are $c$-equivalent if they have the same column in the Cartan matrix, denoted as $X\sim_c Y$.
\end{definition}

Equivalently, $X$ and $Y$ lead to the same element in the Grothendieck ring of $\mcC$.

The set of all projective indecomposable objects that are $c$-equivalent will be called a $c$-class.  Note that the number of $c$-classes of projective indecomposable objects in an FTC can be different from its Cartan rank \cite{grinberg2020critical}.  We will call the number of $c$-classes the $c$-rank of an FTC.  We are not aware of examples of modular categories that the Cartan rank and $c$-rank are different.  We assume all FTCs have the same Cartan and $c$-rank in the sequel.

\subsection{Fusion rings and modules}\label{QD:defintion}

Given an FTC $\mcC$, there are several different fusion rings/modules: the full fusion ring, the Grothendieck ring, the projective Grothendieck fusion ring, and the mixed fusion module. 

Our main interest will be on the mixed fusion module: the action of the simple objects on $c$-classes of the projective indecomposable objects via the tensor product.

Given an FTC $\mcC$, recall that $\operatorname{Irr}(\mathcal C)$ is the set of isomorphism classes of simple objects of $\mcC$. Given $V_i\in\operatorname{Irr}(\mathcal C)$ and $P_i=P(V_i)$ a projective cover. For Steinberg objects $V_i=P_i$, which always exists in an NSS MC \cite{cohen2008characters,gainutdinov2020projective}.
The map $V_i\mapsto P_i$ is a one-to-one correspondence between the simple objects and the projective indecomposable objects of $\mcC$.

\subsubsection{Full fusion ring $F(\mathcal{C})$}

Let $\{V_1,...,V_a, V_{a+1},...,V_{a+b}, P_1, ..., P_n\}$ be a complete set of representatives  $\{V_i\}_{i=1}^a$ of simple object types that are not projective, Steinberg types $\{V_{a+j}\}_{j=1}^b$, and projective covers $\{P_i=P(V_i)\}_{i=1}^a$ of $V_i$.  

An FTC $\mcC$ is an abelian category, therefore all kernels and cokernels are in the category. For a Hopf algebra $H$, this would lead to the Green ring, which could be very complicated in the non-semisimple case.  Instead of the Green ring, we will consider the fusion ring of a sub-category $\mcC_{QD}$ of $\mcC$, not necessarily abelian or monoidal, that consists of only objects that are quasi-dominated by the simple objects. 

An object $X$ of $\mcC$ is quasi-dominated by some simple objects $\{X_i\}$ of $\irRC$ if there exist a family of morphisms $\alpha_{i,m_i}: X_i\rightarrow X, \beta_{i,m_i}: X\rightarrow X_i$ such that
$$ \textrm{Id}_X-\sum_{i, m_i}\alpha_{i,m_i}\beta_{i,m_i}$$ is a negligible morphism \cite{turaev2010quantum}. It is clear that every negligible object is in $\mcC_{QD}$.

Then the full fusion ring $F(\mathcal{C})$ of $\mcC$ is the free abelian group generated by the isomorphism classes $[X]$ of objects in $\mcC_{QD}$ with addition given by direct sum $[X]+[Y]=[X\oplus Y]$, and multiplication by tensor product $[X][Y]=[X\otimes Y]$, where $[X\otimes Y]$ decomposes into a sum of the isomorphism classes of the simple and indecomposable objects in 
$\{V_1,...,V_a, V_{a+1},...,V_{a+b}, P_1, ..., P_n\}$.

Fusion coefficients in the full fusion ring will be denoted as $N_{ij}^k(t)$.

\subsubsection{Grothendieck ring $\operatorname{Gr}(\mathcal{C})$}

\begin{definition}
 The Grothendieck ring $\operatorname{Gr}(\mathcal{C})$ of an FTC $\mathcal{C}$ is the free abelian group generated by the elements in $\operatorname{Irr}\mathcal{C}$. The associated class  $[X] \in \operatorname{Gr}(\mathcal{C})$ of $X\in \mathcal C$ is given by
\begin{equation}
    [X]=\sum_{X_i\in\operatorname{Irr}(\mathcal C)}\left[X: X_i\right] X_i.
\end{equation}
The multiplication on $\operatorname{Gr}(\mathcal{C})$  is defined by 
\begin{equation}
    [X_j] [X_k]:=\left[X_j \otimes X_k\right],
\end{equation}
which gives rise to the ring structure on $\operatorname{Gr}(\mathcal{C})$. 
We will use $X$ instead of $[X]$ when no confusion arises. 
\end{definition}

In the Grothendieck ring $Gr(\mathcal{C})$, a short exact sequence $0\rightarrow X\rightarrow Z\rightarrow Y\rightarrow 0$ leads to $[Z]=[X]+[Y]$.  Therefore, all projective indecomposables are sums of simples in the Grothendieck ring according to their Jordan-Holder series.

Fusion coefficients in the Grothendieck ring will be denoted as $N_{ij}^k(g)$.

\subsubsection{Projective Grothendieck ring $K_0(\mathcal{C})$}

\begin{definition} Let $K_0(\mathcal C)$ be the abelian group generated by the isomorphism classes $[P]$ of projective objects $P$ modulo the relations $[P \oplus Q]=[P]+[Q]$.  The multiplication on $K_0(\mathcal C)$ is defined as $[P] \cdot[Q]=[P \otimes Q]$ for any projective objects $P$ and $Q$ in $\mathcal C$. In general, $K_0(\mathcal{C})$ can be a ring without a unit (rng).

\end{definition}

Fusion coefficients in the projective Grothendieck ring will be denoted as $N_{ij}^k(p)$.

Note $K_0(\mathcal{C})$ is a $\operatorname{Gr}(\mathcal{C})$-bimodule. One can also describe the action via the usual fusion coefficients given by elements in $\operatorname{Gr}(\mathcal{C})$, see \cite[Prop 6.1.2]{EGNO}.

\subsubsection{Mixed fusion module $M(\mathcal{C})$}

Let $M(\mathcal{C})$ be the free abelian group generated by
$\{V_i\}_{V_i\in \operatorname{Irr}(\mathcal{C})}$ and $\{[P_i]_c\}_{V_i\in \operatorname{Irr}(\mathcal{C})}$, where $[P_i]_c$ is the $c$-class of $P_i$.  Then $V_i\otimes P_j$, which is projective, decomposes as a direct sum of indecomposable projective objects. 
The mixed fusion module is obtained by collecting fusion coefficients into $c$-classes of projective indecomposable objects in the full fusion ring: 

$$V_i\otimes P_j \cong \oplus_{k=1}^n N_{ij}^k(m) [P_k]_c,$$ where $n$ is the $c$-rank and $N_{ij}^k(m)=\sum_{P_k\in [P_k]_c} N_{ij}^k(t)$, where $N_{ij}^k(t)$ is the fusion coefficient in the full fusion ring of $\mcC$.

Define the mixed fusion matrix $N^i(m)$ for each $i=1,..,r$ by $(N_i(m))_{jk}=N_{ij}^k(m)$. Then each mixed fusion matrix $N^i(m)$ is an $n\times n$ matrix with non-negative integral entries, where $r$ is the rank and $n$ $c$-rank of $\mathcal{C}$.

\subsubsection{Cartan map}
The Cartan map of an FTC $\mcC$ is the homomorphism defined by 
\begin{equation}\label{eq: Cartanmap}
c:K_0(\mathcal C) \rightarrow \operatorname{Gr}(\mathcal C), \quad [P] \mapsto[P]
\end{equation}
from the projective Grothendieck ring to the Grothendieck ring of $\mcC$.

\subsection{Non-semisimple modular data and Verlinde formulas}

The modular data $(S,T)$ of an MTC is arguably the most important invariant in the study of MTCs.  One way to understand the modular data of an MTC $\mcB$ is via the associated $(2+1)$-TQFT $(V_\mcB, Z_\mcB)$.  The vector space $V_\mcB(T^2)$ of the torus $T^2$ is a projective representation of its mapping class group $\operatorname{SL}(2,\mathbb Z)$ with the well-known generating set $(s,t)$. Moreover, $V_\mcB(T^2)$ has an obvious basis given by the simple object types of $\mcB$, called the particle basis in physics jargon.  Then the modular data $(S,T)$ of $\mcB$ is simply the representation matrices of $(s,t)$ with respect to this particle basis\footnote{There is a subtle phase that can be resolved in many different ways due to the fact that in general the representation of the mapping class group is only projective.}.  Notice that the modular data of an MTC consists of two matrices $(S,T)$.

One important application of the modular data of an MTC is to derive the fusion rules via the Verlinde formulas.  There are numerous proposed generalizations of the Verlinde formulas to the non-semisimple setting, but it is not clear what is a good generalization.  

In \cite{cohen2008characters}, two potential generalizations of the modular $S$-matrix are given.  We will call them premodular $S$-matrices.  The authors found two bases for the Higman ideal and studied the restriction of the Lyubashenko-Majid modular representation to the Higman ideal using these two bases.  We will call the inverse of the resulting Fourier transform matrix {\it Cohen-Westreich (CW) modular $S$-matrix.}  Combining with the $T$-matrix on the Higman ideal, we propose to regard the pair of matrices as a generalization of the modular data to the non-semisimple setting---the CW modular data $(S_{CW},T_{CW})$. The CW modular data reduce to the usual modular data in the semisimple setting, and result in
Verlinde formulas for the mixed fusion rules.  These facts provide evidence that CW modular data is a good generalization.

\subsubsection{Bases of $V(T^2)$}

It is important to recognize the difference between the TQFT representation of $\operatorname{SL}(2,{\mathbb Z})$ and the modular data for an MTC: the modular data consists of two matrices that lead to the same projective representation of the $\operatorname{SL}(2,\mathbb Z)$ of the corresponding TQFT.  Conversely to get modular data from the $\operatorname{SL}(2,\mathbb Z)$ representation of a TQFT, we need to find the correct basis of the vector space $V(T^2)$.  For MTCs, due to the connection to anyon model theory (e.g. see \cite{wang2010topological,rowell2018mathematics}), the correct basis is obvious following the corresponding physical interpretation: use the particle basis.  There are no such obvious physical interpretations of general NSS MCs yet, so it is not clear how one would choose bases.

Two vector spaces are important for our discussion: $\text{Nat}(Id_{\mcB})$ and its Higman ideal $\HigRB$.  The key technical fact that we will explore is the two distinguished bases in the case of factorizable ribbon Hopf algebras  \cite{cohen2008characters}: two bases of the Higman ideal that lead to the CW $S$-matrix that diagonalizes the mixed fusion rules.

There is a categorical formulation of the CW modular data and Verlinde formulas from \cite{cohen2008characters} for general NSS MCs.  We will follow mainly \cite{gainutdinov2020projective} for this categorical formulation.

\subsubsection{Coend and categorical Hopf algebras}

Given a modular category $\mcB$, the bifunctor $\otimes: \mcB^*\otimes \mcB\rightarrow \mcB$ has a coend $\mcL=\int^{X\in \mcB} X^*\otimes X$ and a dinatural transformation $n_X: X^*\otimes X \rightarrow \mcL$ for every $X\in \mcB$.  The coend $\mcL$ has a categorical Hopf algebra structure and a non-degenerate pairing $\omega: \mcL\otimes \mcL \rightarrow 1$.  The product, coproduct, unit, and counit will be denoted as $\mu_{\mcL}, \Delta_{\mcL}, \eta_{\mcL}, \epsilon_{\mcL}$ and $s_{\mcL}$ is the antipode, respectively.

The coend $\mcL$ has an integral $\Lambda_{\mcL}\in \textrm{Hom}(1,\mcL)$ and cointegral $\lambda_{\mcL}\in \textrm{Hom}(\mcL, 1)$ normalized such that $\Lambda_{\mcL}\cdot \lambda_{\mcL}=id$.

Given a factorizable Hopf algebra $H$ and the associated NSS MC $\mcB_H$=$H$-mod, the coend of $\mcB_H$=$H$-mod is just the dual Hopf algebra $H^*$ as an $H$-mod.  Then the center $Z(H)$ acts as natural transformations.  It follows that the categorical incarnations of the center $Z(H)$ in $\mcB_H$ is the algebra $\text{Nat}(Id_{\mcB_H})$ of natural transformations of the identity functor of $\mcB_H$.  Closely related to the center is the vector space $\textrm{Hom}(\mcL, 1)$---the vector space assigned to the torus $T^2$ in the TQFT from $\mcB_H$. 

There is a linear map $\Omega: Gr(\mcB)\rightarrow \text{Nat}(Id_{\mcB})$ from the Grothendieck algebra $Gr(\mcB)$ to $\text{Nat}(Id_{\mcB})$ defined as follows:

$$(\Omega_X)_Y := \begin{tikzpicture}[baseline=-0.25cm]
    \draw[->] (0, -1.5) -- (0, 0.15) (0, 0.35) -> (0, 0.75);
    \draw (0, 0.75) -- (0, 1) node[above] {$Y$};
    \draw[->] (0.05, -0.4) arc(-36:0:0.5) node (x) {} node[right] {$X$};
    \draw (x) arc(0:306:0.5);
\end{tikzpicture}.$$

Fixing an object $X$ in $\mcB$, then the picture defines a morphism in $\Hom(Y,Y)$ for each object $Y$ in $\mcB$.  Hence $\Omega_X$ is a natural transformation in $\text{Nat}(Id_{\mcB})$. 

\begin{definition} Given a modular category $\mcB$, 
    \begin{enumerate}
\item the Ray ideal of $\mcB$ is the ideal of $\ccenterB$ spanned by $\{\Omega_X|X\in Gr(\mcB)\}$.
        \item the Higman ideal of $\mcB$ is the ideal of $\ccenterB$ spanned by $\{\Omega_X|X\in Proj(\mcB)\}$.
    \end{enumerate}
\end{definition}

There is an isomorphism $$\gamma: \textrm{Hom}(\mcL,1)\rightarrow \ccenterB$$ defined by the following picture for $f\in \textrm{Hom}(\mcL,1)$:

$$\gamma(f)_Y = \begin{tikzpicture}[baseline=0.3cm, scale=0.75]
     \draw[rounded corners] (0.33, 0) rectangle (1.66, 0.8) (0.5, 1.5) rectangle (1.5, 2.3);
     \node at (1, 0.4) {$n_{Y}$};
     \node at (1, 1.9) {$f$};
     \draw (1, 0.8) -- node[right] {$\mcL$} (1, 1.5);
     \draw[dashed] (1, 2.3) -- (1, 3); 
     \draw[->] (0.73, -0.5) arc(0:-180:0.35) -- (0.03, 1) node[left]  {$Y$};
     \draw[->] (0.03, 1) -- (0.03, 3) (0.73, 0) -- (0.73, -0.5);
     \draw[->] (1.26, -0.5) -- (1.26, 0) (1.26, -1.5) node[below] {$Y$} -- (1.26, -.5);
\end{tikzpicture}\ .$$

There are also isomorphism between $\textrm{Hom}(\mcL, 1)$ and $\textrm{Hom}(1,\mcL)$

$$\delta: \textrm{Hom}(\mcL, 1)\rightarrow \textrm{Hom}(1, \mcL)$$
and
$$\pi: \textrm{Hom}(1, \mcL)\rightarrow \textrm{Hom}(\mcL,1)$$

defined by the following pictures:

$$\delta(f) = \begin{tikzpicture}[baseline=1cm, scale=0.75]
    \draw[rounded corners] (0, -0.4) rectangle (1, 0.4) (-0.5, 2) rectangle (0.5, 2.8);
    \node at (0.5, 0) {$\Lambda_{\mcL}$};
    \node at (0, 2.4) {$f$};
    \draw[dashed] (0.5, -1) -- (0.5, -0.4) (0, 2.8) -- (0, 3.5);
    \draw (0.5, 0.4) -- (0.5, 1.3) node {$\bullet$} -- (0, 2) (0.5, 1.3) .. controls (1, 2) and (1, 2) .. (1, 3.5);
    \node at (0.8, 0.9) {$\mcL$};
\end{tikzpicture}\hspace{3cm}\pi(f) = \begin{tikzpicture}[baseline=1cm, scale=0.75]
    \draw[rounded corners] (-0.8, 2) rectangle (0.8, 2.8) (-0.9, 0.2) rectangle (0.1, 1);
    \node at (0, 2.4) {$\omega$};
    \node at (-0.4, 0.6) {$f$};
    \draw[dashed] (0, 2.8) -- (0, 3.5) (-0.4, -1) -- (-0.4, 0.2);
    \draw (-0.4, 1) -- (-0.4, 2) (0.4, -1) node[below] {$\mcL$} -- (0.4, 2);
\end{tikzpicture}.$$

The categorical Fourier transform is then given as 
$$F=\pi \cdot \delta: \textrm{Hom}(\mcL, 1)\rightarrow \textrm{Hom}(\mcL,1)$$

The Lyubashenko-Majid $S$-transformation $S_{LM}$ is given as 
$$S_{LM}=\gamma\cdot F\cdot \gamma^{-1}: \ccenterB\rightarrow \ccenterB$$

Restricted to the Ray ideal, $S_{LM}(\phi_X)=\Omega_X$, where $\phi_X$ is defined by the following picture.

$$(\phi_X)_Y = \begin{tikzpicture}[baseline=0cm, scale=0.75]
     \draw[rounded corners] (-0.2, 0) rectangle (2.2, 0.8) (0.5, 1.5) rectangle (1.5, 2.3);
     \node at (1, 0.4) {$n_{Y\otimes X^*}$};
     \node at (1, 1.9) {$\lambda_{\mcL}$};
     \draw (1, 0.8) -- (1, 1.5);
     \draw[<-] (0.2, -0.5) arc (0:-180:0.35) -- (-0.5, 1.5) arc (180:0:1.5) -- (2.5, 1) node[right] {$X$} -- (2.5, -0.5) arc (0:-180:0.35) ;
     \draw[->] (0.2, -0.5) -- (0.2, 0) (1.8, 0) -- (1.8, -0.5);
     \draw[->] (0.73, -0.5) arc(0:-180:0.88) -- (-1.03, 1) node[left]  {$Y$};
     \draw[->] (-1.03, 1) -- (-1.03, 4) (0.73, 0) -- (0.73, -0.5);
     \draw[->] (-1.03, -2.5) .. controls (-1.03,-1.5) and (1.26,-2.5) ..  (1.26, -0.5);
     \draw[->] (1.26, -0.5) -- (1.26, 0) (-1.03, -3) node[below] {$Y$} -- (-1.03, -2.5);
\end{tikzpicture}.$$

For a fixed object $X$ in $\mcB$, then the picture defines a morphism in $\Hom(Y,Y)$ for each object $Y$ in $\mcB$ leading to a natural transformation $\phi_X$ in $\text{Nat}(Id_{\mcB})$.


We fix a basis $B_{\textrm{Hig}}=\{\phi_{P_i}\}_{i=1}^m$ of $\HigRB$, where 
$\HigRB_p=\{P_i\}_{i=1}^m$ is a representative set of projective indecomposable classes of $\mcB$ for each independent column of the Cartan matrix.

\subsubsection{Categorical Cohen-Westreich modular $S$-matrix}

For any $B\in \HigRB_p$,  $$S_{LM}(\phi_{P_B})=\sum_{A\in \HigRB_p} (S^{-1}_{CW})_{AB}\phi_{P_A}$$

\begin{theorem}\cite[Proposition 8.8]{gainutdinov2020projective} 
Given a modular category $\mcB$, the CW modular $S$-matrix $S_{CW}$ diagonalizes the mixed fusion rules.
\end{theorem}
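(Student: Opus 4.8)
The plan is to identify $S_{CW}$, up to a transpose, with the transition matrix between two bases of the Higman ideal $\HigRB$: an ``object'' basis $\{\Omega_{P_A}\}$ in which left multiplication by the $\Omega_{V_i}$ realizes the mixed fusion matrices $N^i(m)$, and a ``character'' basis $\{\phi_{P_A}\}$ of common eigenvectors in which that multiplication is diagonal. Throughout we use that $\ccenterB$ is a finite-dimensional commutative algebra (for $\mcB=H\text{-mod}$ it is $Z(H)$; in general $\End(\mathrm{Id}_\mcB)$ is commutative since naturality of $\eta$ applied to the morphism $\mu_X$ forces $\eta_X\mu_X=\mu_X\eta_X$), and that $\HigRB$ is an ideal of it.

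First I would record that $\Omega\colon\operatorname{Gr}(\mcB)\to\ccenterB$ is multiplicative, $\Omega_{X\otimes Y}=\Omega_X\Omega_Y$ and $\Omega_{\mathbf 1}=\id$ (the standard fact that the internal-character map is a ring homomorphism from the Grothendieck ring), so $\Omega_X$ depends only on $[X]\in\operatorname{Gr}(\mcB)$, whence $\Omega_P$ depends only on the $c$-class of a projective $P$. Since $\dim_{\mathbb C}\HigRB=m$ equals the $c$-rank, the $m$ elements $\{\Omega_{P_A}\}_{A=1}^m$ (one per $c$-class) form a basis of $\HigRB$, and $[P_A]_c\mapsto\Omega_{P_A}$ is an isomorphism $M(\mcB)\otimes\mathbb C\to\HigRB$ of modules over $\operatorname{Gr}(\mcB)$ (acting on $\HigRB$ through $\Omega$), because $\Omega_{V_i}\Omega_{P_A}=\Omega_{V_i\otimes P_A}=\sum_B N_{iA}^B(m)\,\Omega_{P_B}$, using that $V_i\otimes P_A$ is projective and decomposes into projective indecomposables grouped into $c$-classes. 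In particular, in the basis $\{\Omega_{P_A}\}$, left multiplication by $\Omega_{V_i}$ on $\HigRB$ is represented by the mixed fusion matrix $N^i(m)$.

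Next, from the definition $S_{LM}(\phi_{P_B})=\sum_A(S^{-1}_{CW})_{AB}\phi_{P_A}$ together with the identity $S_{LM}(\phi_X)=\Omega_X$ (valid on the Ray ideal, which contains $\HigRB$) applied to $X=P_B$, we get $\Omega_{P_B}=\sum_A(S^{-1}_{CW})_{AB}\phi_{P_A}$; thus $S_{CW}$ is, up to a transpose, the transition matrix between the bases $\{\phi_{P_A}\}$ and $\{\Omega_{P_A}\}$ of $\HigRB$. Hence the theorem reduces to the claim that the $\phi$-basis diagonalizes left multiplication by every $\Omega_{V_i}$, i.e.\ that each $\phi_{P_A}$ spans a one-dimensional $\ccenterB$-submodule of $\HigRB$ (a common eigenvector for multiplication by every element of $\ccenterB$). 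Granting this, each $N^i(m)$ becomes $\operatorname{diag}(\mu_{i1},\dots,\mu_{im})$ in the $\phi$-basis with $\mu_{iA}$ the eigenvalue of $\Omega_{V_i}$ on $\mathbb C\phi_{P_A}$, so $S^{-1}_{CW}N^i(m)S_{CW}$ is diagonal for every $i$; expressing $\mu_{iA}$ as a ratio of entries of $S_{CW}$ normalized against a Steinberg column (which exists by Thm.~\ref{thm:S-rank} and plays the role of the vacuum) then yields the Verlinde-type formula for the $N_{ij}^k(m)$, and one checks it reduces to the familiar $N_i=S\,\operatorname{diag}(S_{i\ell}/S_{0\ell})\,S^{-1}$ when $\mcB$ is semisimple.

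The main obstacle is precisely the reduced claim, and it is where non-semisimplicity bites: $\HigRB$ need not be semisimple as an algebra, so one must instead show it is semisimple \emph{as a module} over the commutative algebra $\ccenterB$ --- equivalently, that the Jacobson radical of $\ccenterB$ annihilates $\HigRB$ --- so that it breaks up into one-dimensional submodules, and that the $\phi$-basis is adapted to that decomposition. I would prove this through the cointegral structure of $\phi$: in the model $\ccenterB=Z(H)$, $\HigRB=\Hig(H)$ of a factorizable ribbon Hopf algebra, $\phi_{P_A}$ is the image of the character of $P_A$ under the cointegral/Radford map, and the key input --- a consequence of factorizability and the normalization $\Lambda_\mcL\cdot\lambda_\mcL=\id$ --- is that $\Hig(H)$ lies in the socle of $Z(H)$ as a $Z(H)$-module, with these Radford images the resulting common eigenvectors. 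This is the substance of the two-basis construction of \cite{cohen2008characters} and of its categorical development in \cite{gainutdinov2020projective}; everything else above is formal bookkeeping with the transition matrix.
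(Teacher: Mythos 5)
Your proposal is correct and follows essentially the same route as the paper and the sources it cites: it identifies $S_{CW}$ as the transition matrix between the basis $\{\Omega_{P_A}\}$ of $\HigRB$ (in which multiplication by the internal characters $\Omega_{V_i}$ is given by the mixed fusion matrices, since $\Omega$ is a ring homomorphism on the Grothendieck ring) and the basis $\{\phi_{P_A}\}$ of common eigenvectors, which is exactly the two-basis mechanism of Cohen--Westreich ($B_\chi$ versus $B_\tau$) that the paper records in its ``Diagonalizing mixed fusion rules'' subsection before citing \cite[Prop.~8.8]{gainutdinov2020projective}. You correctly isolate the one non-formal input --- that the $\phi$-basis (Higman-trace basis) consists of simultaneous eigenvectors for multiplication by the $\Omega_{V_i}$ --- and attribute it to the cited references, just as the paper does; the only cosmetic discrepancy is the direction of conjugation ($S_{CW}N^i(m)S_{CW}^{-1}$ versus $S_{CW}^{-1}N^i(m)S_{CW}$), which is the transpose/row-versus-column convention you already flag and does not affect the diagonalization claim.
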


\subsubsection{Gainutdinov-Runkel formulas}

Given a modular category $\mcB$, we define two more matrices $M, J$ as follows:

\begin{enumerate}
    \item For any $B\in \irRB$, $$\phi_{P_B}=\sum_{A\in \HigRB_p} M_{BA}\phi_{P_A}$$

\item For any $B\in \HigRB$, $$\phi_{(P_B)^*}=\sum_{A\in Hig(\mcB)_p}J_{BA}\phi_{P_A}$$
\end{enumerate}

For each Steinberg object $Q$, there is a non-zero scalar $b_Q$ such that $\phi_Q(Q)=b_Q\cdot \text{Id}_Q$.

\begin{theorem} Given any $U,V\in \irRB$, and $P\in \HigRB_p$, 
    $$\sum_{W\in \irRB}M_{WP}\cdot N^W_{UV}(p)=\sum_{Q\in \StRB}b_Q\cdot (MS^{-1}_{CW})_{UQ}\cdot (MS^{-1}_{CW})_{VQ}\cdot(S^{-1}_{CW}J)_{QP}.$$
\end{theorem}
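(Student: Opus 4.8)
\emph{Strategy.} I would carry the statement into the commutative algebra $\ccenterB$ and its ideal $\HigRB$, where it becomes a Verlinde-type orthogonality. Two structural facts drive the argument. First, $X\mapsto\Omega_X$ is a unital algebra homomorphism $\operatorname{Gr}(\mcB)\to\ccenterB$ that is additive on direct sums; both are immediate from the encircling picture defining $\Omega$ (two nested loops around a strand fuse into one $X\otimes X'$-labelled loop, and the empty loop is $\id$). Second, $S_{LM}(\phi_X)=\Omega_X$, so $\{\phi_{P_A}\}_{A\in\HigRB_p}$ and $\{\Omega_{P_A}\}_{A\in\HigRB_p}$ are two bases of $\HigRB$; applying $S_{LM}$ to the defining relations for $M$ and $J$ and using the symmetry of $S_{CW}$ and $J$ gives, after a short index manipulation,
\begin{equation*}
\Omega_{P_W}=\sum_{A\in\HigRB_p}(MS_{CW}^{-1})_{WA}\,\phi_{P_A}\quad(W\in\irRB),\qquad \Omega_{(P_P)^*}=\sum_{A\in\HigRB_p}(S_{CW}^{-1}J)_{AP}\,\phi_{P_A}.
\end{equation*}

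\emph{Step 1: characters on Steinberg objects.} The element $\phi_{P_A}=\phi_{P(V_A)}$ is supported on the block of $P_A$ (a standard property of the Higman ideal), and a Steinberg object $Q$ is the only indecomposable projective in its block; hence evaluating on $Q$ gives $(\phi_{P_A})_Q=\delta_{A,Q}\,b_Q$, with the surviving term $\phi_Q(Q)=b_Q\id_Q$. Substituting into the two displays above yields, for every Steinberg $Q$,
\begin{equation*}
(\Omega_{P_W})_Q=b_Q\,(MS_{CW}^{-1})_{WQ},\qquad (\Omega_{(P_P)^*})_Q=b_Q\,(S_{CW}^{-1}J)_{QP}.
\end{equation*}
So the matrix entries appearing on the right of the theorem are exactly the normalized characters of the $\Omega$-action on the Steinberg objects; equivalently, the $b_Q^{-1}\phi_Q$ are the primitive idempotents of $\HigRB$ that come from the semisimple blocks, and the argument is evaluating $\Omega$-relations against them.

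\emph{Step 2: reduction to an orthogonality relation.} Multiplicativity of $\Omega$ at the object $Q$ gives $(\Omega_{P_U})_Q(\Omega_{P_V})_Q(\Omega_{(P_P)^*})_Q=(\Omega_{P_U\otimes P_V\otimes(P_P)^*})_Q$. Decomposing the projective $P_U\otimes P_V$ into indecomposables via the coefficients $N^W_{UV}(p)$ and feeding Step 1 back in, the right-hand side of the theorem becomes
\begin{equation*}
\sum_{W\in\irRB}N^W_{UV}(p)\sum_{Q\in\StRB}(MS_{CW}^{-1})_{WQ}(S_{CW}^{-1}J)_{QP}.
\end{equation*}
Comparing with the left-hand side, it remains to show $\sum_{Q\in\StRB}(MS_{CW}^{-1})_{WQ}(S_{CW}^{-1}J)_{QP}=M_{WP}$, at least after summation against the $N^W_{UV}(p)$.

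\emph{Main obstacle.} This orthogonality relation is the crux. If the inner sum ran over all of $\HigRB_p$ rather than only over $\StRB$ it would equal $\bigl(M(S_{CW}^{-1})^2J\bigr)_{WP}=M_{WP}$, using that $S_{LM}^2$ is the duality involution on $\HigRB$ (so $(S_{CW}^{-1})^2=J$ and $J^2=\id$), that $M$ restricts to the identity on $\HigRB_p$ (hence has full column rank), and that $J$ is invertible. Thus the entire content is that the non-Steinberg primitive idempotents of $\HigRB$ drop out of this particular bilinear combination. Term by term this is false, but it becomes true after contraction with the $N^W_{UV}(p)$ because a product $\Omega_{P_U}\Omega_{P_V}=\Omega_{P_U\otimes P_V}$ of two elements of $\HigRB$ lands in the semisimple (Steinberg) part of $\HigRB$ and therefore has no $\phi_{P_Q}$-component with $Q\notin\StRB$ — this is the structural input about the Higman ideal of a factorizable category that I expect to supply. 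Phrased this way, the relation is exactly the Cohen--Westreich orthogonality responsible for $S_{CW}$ diagonalizing the mixed fusion rules; I plan to extract what is needed from \cite[Prop.\ 8.8]{gainutdinov2020projective} and \cite{cohen2008characters}. With it in hand, substituting back gives the right-hand side $=\sum_W N^W_{UV}(p)\,M_{WP}$, the left-hand side.
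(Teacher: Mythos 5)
The paper states this theorem without proof (it is imported from Gainutdinov--Runkel \cite{gainutdinov2020projective}), so there is no in-paper argument to compare against; I am judging your outline on its own terms. Your overall strategy is the right one: evaluate the $\Omega$- and $\phi$-relations on the Steinberg objects, use that $\Omega:\operatorname{Gr}(\mcB)\to\ccenterB$ is a ring map so that the scalars multiply at a simple object, and reduce the identity to a statement about the sum over $\StRB$. The $b_Q$ bookkeeping in Steps 1--2 is consistent: the right-hand side does become $\sum_{W}N^W_{UV}(p)\sum_{Q\in\StRB}(MS^{-1}_{CW})_{WQ}(S^{-1}_{CW}J)_{QP}$.

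However, the proof does not close. The decisive step --- that after contraction with $N^W_{UV}(p)$ the Steinberg-only sum reproduces $M_{WP}$ --- rests on the assertion that $\Omega_{P_U}\Omega_{P_V}$ has no component along $\phi_{P_A}$ for non-Steinberg $A\in\HigRB_p$, and you explicitly leave this as something you ``expect'' and ``plan to extract'' from the references. Since the orthogonality is false term by term (as you note yourself), this unproven support statement is exactly where the content of the theorem lives; as written the argument is an outline, not a proof. The missing fact is true and can be proved directly rather than cited: in the Hopf-algebra model $\phi_{P_A}$ is $\tau(e_A)$ up to normalization, the non-Steinberg part of $\Hig(H)$ lies in the Reynolds ideal $\mathrm{Soc}(H)\cap Z(H)$, and inside a non-semisimple block the socle is contained in the Jacobson radical, so the product of any two such elements vanishes; hence $\Hig(H)^2$ is spanned by the Steinberg block idempotents, which is the statement you need. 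Two smaller unproven ingredients should also be supplied: the identity $(S^{-1}_{CW})^2=J$ (equivalently $S_{LM}^2(\phi_X)=\phi_{X^{*}}$ on the Higman ideal --- the paper only records $S_{LM}^2=s_H^{-1}$), and the support claim $(\phi_{P_A})_Q=\delta_{A,Q}\,b_Q$ in Step 1, which needs the observation that $\tau(e_A)$ lies in the block of $A$ while a Steinberg object is alone in its block.
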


\subsection{Examples}

\subsubsection{$U_q SU(2)_1$}

By Prop.\ref{smallestrank}, the smallest rank of an NSS MC is $5$, which is realized by 
the NSS MC $U_q SU(2)_1$.  This example belongs to the sequence $U_qSU(2)_k$ for any $k\geq 1$, which is studied in Sec. \ref{sec:uqsl2}.

The NSS MCs $U_qSU(2)_k$ contains the irreducible and projective indecomposable modules of the small quantum groups $U_qsl_2$ at the root of unity $q=e^{\frac{2\pi i}{l}}$ for an odd number $l\geq 3$ and $k=l-2$.  The semi-simplification of $U_qSU(2)_k$ is the well-studied MTC $SU(2)_k$.

The NSS MC $U_qSU(2)_1$ for $l=3$ has $3$ simple object representatives: $V_1, V_2, V_3$ with projective covers $P_3=V_3$, $P_1=P(V_1)$, and $P_2=P(V_2)$.

\noindent The full fusion ring has generating fusion rules as:
\begin{align*}
V_1\otimes X\cong&X,\\
V_2\otimes V_2\cong&V_1\oplus V_3,~~~~V_2\otimes V_3\cong P_2,~~~~V_2\otimes P_1\cong 2V_3\oplus P_2,~~~~V_2\otimes P_2\cong2V_3\oplus P_1,\\
V_3\otimes V_3\cong& V_3\oplus P_1,~~~~V_3\otimes P_1\cong 2V_3\oplus 2P_2,~~~~V_3\otimes P_2\cong 2V_3\oplus 2P_2,\\
P_i\otimes P_j\cong& 4V_3\oplus 2P_1\oplus 2P_2, 1\leq i,j\leq 3.
\end{align*}

The Cartan matrix is:
$C=\begin{pmatrix}
2&2&0\\2&2&0\\0&0&1
\end{pmatrix}$.
It follows that $P_1$ and $P_2$ are $c$-equivalent.

From the full fusion ring and Cartan matrix, we obtain the mixed fusion rules and matrices.  Obviously, the usual fusion matrices associated with the simple objects are $N^1(t)= I_{3\times 3}$, $N^2(t)=\begin{pmatrix}0&1&0\\1&0&1\\2&2 &0\end{pmatrix}$ and $N^3(t)=\begin{pmatrix} 0&0&1\\2&2&0\\2&2&1\\
    \end{pmatrix}.$

Let  $[P_1]$ and  $[2V_3]$ be the basis for the class of  $[P_1]$ and  $[V_3]$, respectively.  It follows that:

\[
N^2(m)=
\begin{pmatrix}
0 & 1 \\
2 & 1
\end{pmatrix},~~
N^3(m)=
\begin{pmatrix}
1 & 1 \\
2 & 2
\end{pmatrix}.
\]

From Sec. \ref{sec:uqsl2}, we know 
$S_{CW}=\frac{1}{\sqrt{3}}
\begin{pmatrix}
1 & 1 \\
2 & {-1}
\end{pmatrix}$, hence
\[
S_{CW}  N^2(m) (S_{CW} )^{-1}=
\begin{pmatrix}
2 & 0 \\
0 & -1
\end{pmatrix},~~
 S_{CW}  N^3(m) (S_{CW} )^{-1}=
\begin{pmatrix}
3 & 0 \\
0 & 0
\end{pmatrix}.
\]

\subsubsection{$D{\mcK_2}$-mod}

The family of ribbon Hopf algebras $\{\mcK_n\}_{n=1}^\infty$, called the Nichols Hopf algebras, are not factorizable for any $n$.  $\mcK_1$ is Sweedler's Hopf algebra, and $\mcK_2$ appeared in Kerler's work \cite{kerler2003homology}.  $\mcK_n$ is unimodular for even $n$, but not unimodular for odd $n$.  To obtain NSS MCs, we go to their Drinfeld doubles. The Nichols Hopf algebras $\Kn$ and their doubles are studied in Section \ref{sec:doubleofnichols}.

For the modular category $D{\mcK_2}$-mod, 
there are $4$ simple modules $V_1, V_{K\bar{K}}, V_K, V_{\bar{K}}$ such that $V_K, V_{\bar{K}}$ are also projective.  $V_1, V_{K\bar{K}}$ are of dimension=$1$,$V_K, V_{\bar{K}}$ of dimension=$4$. Moreover, $V_1$ is the tensor unit $\mathbf 1$. The projective covers of $V_1, V_{K\bar{K}}$ will be denoted as $P_1, P_{K\bar{K}}$, which are of dimension=$16$.  The full fusion ring can be described as: 

\begin{align*}
    V_1\otimes X &\cong X & V_{K\bar K}\otimes V_{K\bar K}&\cong V_1\\
    V_{K\bar K}\otimes V_{K}&\cong V_{\bar K}& V_{K\bar K}\otimes V_{\bar K}&\cong V_{K}\\
    V_{K}\otimes V_{K}\cong V_{\bar K}\otimes V_{\bar K}&\cong P_{1} & V_{\bar K}\otimes V_{K}\cong V_{K}\otimes V_{\bar K}&\cong P_{K\bar K}\\
\end{align*}
\vspace{-40pt}
\begin{alignat*}{6}
    V_{K}\otimes P_{1}&\cong V_{K}&\otimes P_{K\bar K}&\cong V_{\bar K}&\otimes P_{1}&\cong V_{\bar K}&\otimes P_{K\bar K}&\cong 8 V_{K}&\oplus& 8V_{\bar K}&\\
    P_{1}\otimes P_{1}&\cong P_{1}&\otimes P_{K\bar K}&\cong P_{K\bar K}&\otimes P_{1}&\cong P_{K\bar K}&\otimes P_{K\bar K}&\cong 8 P_{1}&\oplus& 8P_{K\bar K}&
\end{alignat*}
where $X$ is any object in $D\mathcal{K}_2$-mod. With rows ordered as $P_1, P_{K\bar{K}}, P_K,P_{\bar{K}}$ and columns ordered as $V_1, V_{K\bar{K}}, V_K,V_{\bar{K}}$, the Cartan matrix is:
$$C=\begin{pmatrix}
    8&8&0&0\\8&8&0&0\\0&0&1&0\\0&0&0&1
\end{pmatrix}.$$

The idempotents for the four projective indecomposable modules $e_1, e_{K\bar K}, e_K, e_{\bar K}$ are
\begin{align*}
    e_1 &= 1+K+\bar K+K\bar K,\\
    e_{K\bar K} &= 1-K-\bar K+K\bar K,\\
    e_K &= (1+K-\bar K+K\bar K)\xi_1\xi_2\bar\xi_1\bar\xi_2,\\
    e_{\bar K} &= (1-K+\bar K+K\bar K)\xi_1\xi_2\bar\xi_1\bar\xi_2.
\end{align*}
The Higman ideal is
$$\Hig(D\mcK_2) = \spa\{1 - K\bar K, (K+\bar K)\xi_1\xi_2\bar\xi_1\bar\xi_2, (K-\bar K)(1 - \xi_1\bar\xi_1 - \xi_2\bar\xi_2 - \xi_1\xi_2\bar\xi_1\bar\xi_2)\}.$$
Let $h_1 = 1 - K\bar K$, $h_2 = (K+\bar K)\xi_1\xi_2\bar\xi_1\bar\xi_2$, and $h_3 = (K-\bar K)(1 - \xi_1\bar\xi_1 - \xi_2\bar\xi_2 - \xi_1\xi_2\bar\xi_1\bar\xi_2)$. Then, the two bases arising from the shifted Drinfeld map and Higman trace are (resp.)
\begin{alignat*}{3}
    \{8h_1&, &2(h_3 - h_2),\,& &2&(h_3 + h_2)\}\\
    \{8h_2&, -&2(h_1 + h_2),\,& &2&(h_1 - h_2)\}
\end{alignat*}

The $\mathrm{SL}(2, \bZ)$-action on the Higman ideal is generated by
$$s\mapsto S_{CW}=\begin{pmatrix}
    0 & 1/4 & -1/4\\
    2 & 1/2 & 1/2\\
    -2 & 1/2 & 1/2
\end{pmatrix},\hspace{30pt}t\mapsto T_{CW}=\begin{pmatrix}
    1 & 0 & 0\\
    0 & 1 & 0\\
    0 & 0 & -1
\end{pmatrix}.$$
The mixed fusion matrices and their diagonalizations are: $N^1 = I_3$ and
\begin{align*}
    N^{K\bar K} &= \begin{pmatrix}
        1 & 0 & 0\\
        0 & 0 & 1\\
        0 & 1 & 0
    \end{pmatrix}, &SN^{K\bar K}S^{-1} &= \begin{pmatrix}
        -1 & 0 & 0\\
        0 & 1 & 0\\
        0 & 0 & 1
    \end{pmatrix},\\
    N^{K} = N^{\bar K} &= \begin{pmatrix}
        0 & 1 & 1\\
        8 & 0 & 0\\
        8 & 0 & 0
    \end{pmatrix}, & SN^K S^{-1} = SN^{\bar K}S^{-1} &= \begin{pmatrix}
        0 & 0 & 0\\
        0 & 4 & 0\\
        0 & 0 & -4
    \end{pmatrix}.
\end{align*}

The center decomposes as 
$$Z(D\mcK_2) = \Hig(D\mcK_2)\oplus(1+K\bar K)\spa\{1,\, \xi_1\xi_2,\, \xi_1\bar\xi_1,\, \xi_1\bar\xi_2,\, \xi_2\bar\xi_1,\, \xi_2\bar\xi_2,\, \bar\xi_1\bar\xi_2,\, \xi_1\xi_2\bar\xi_1\bar\xi_2\}.$$
The latter part is denoted $Z_\Lambda$. The $\mathrm{SL}(2,\bZ)$-action on $\Hig(D\mcK_2)$ decomposes as $\bC_{\mathrm{triv}}\oplus N_1$, where $N_1$ is the level 2 Weil representation. The $\mathrm{SL}(2,\bZ)$-action on $Z_\Lambda$ decomposes as $4\bC_{\mathrm{triv}}\oplus(\bC^2_{\mathrm{std}})^{\otimes 2}$. Thus, the $\mathrm{SL}(2,\bZ)$-action on the full center of $D\mcK_2$ decomposes as $5\bC_{\mathrm{triv}}\oplus N_1\oplus(\bC^2_{\mathrm{std}})^{\otimes 2}$.

    To give expressions for the Lyubashenko-Majid and Drinfeld maps, we must define a handful of notations. Define $1^c = \xi_1\xi_2$, $\xi_1^c = \xi_2$, $\xi_2^c = \xi_1$, and $(\xi_1\xi_2)^c = 1$. Define $|1| = 0$, $|\xi_1|=|\xi_2|=1$, and $|\xi_1\xi_2| = 2$. Define $\bar 1 = 1$ and $\overline{\xi_1\xi_2} = \bar\xi_1\bar\xi_2$. For $w_1, w_2\in \{1, \xi_1, \xi_2,\xi_1\xi_2\}$ and $a,b\in\{0,1\}$, the Lyubashenko-Majid maps are generated by
    \begin{align*}
            S_{LM}(K^a\bar K^b \bar w_1 w_2) &= \frac{(-1)^{s(w_1, w_2)}}{2} w_1^c (1+(-1)^b K+(-1)^a\bar K+(-1)^{a+b}K\bar K))\bar w_2^c,\\
            T_{LM}(K^a\bar K^b \bar w_1 w_2) &= K^{a}\bar K^{b}(1+K-\bar K+K\bar K)(1 - \xi_1\bar\xi_1 - \xi_2\bar\xi_2 - \xi_1\xi_2\bar\xi_1\bar\xi_2)\bar w_1w_2,
    \end{align*}
    where $s(w_1, w_2) = \delta_{w_1=1} + \delta_{w_2=1} + \delta_{w_1=\xi_1} + \delta_{w_2=\xi_2} + (a+b)|w_1|$. The Drinfeld map is given by
    \begin{align*}
        F(\chi)&= \sum_{w_1, w_2\in \{1,\xi_1,\xi_2,\xi_1\xi_2\}} \frac{(-1)^{\lfloor |w_1|/2\rfloor + \lfloor |w_2|/2\rfloor}}{4}(\chi(\bar w_1 (1+\bar K)w_2) w_1\bar w_2 (1+\bar K) \\
        &+ \chi(\bar w_1 (1-\bar K)w_2) Kw_1\bar w_2 (1+\bar K)+ \chi(\bar w_1 (1+\bar K)Kw_2)w_1\bar w_2 (1-\bar K) \\
        &+ \chi(\bar w_1 (1-\bar K)Kw_2)Kw_1\bar w_2 (1-\bar K)).
    \end{align*}

\section{Factorizable ribbon Hopf algebras}\label{sec:factorizableHA}

The NSS MCs that we study explicitly are representation categories of finite-dimensional modules of two families of factorizable ribbon Hopf algebras.  One family is the small quantum groups $U_qsl(2)$ at roots of unity, and the other family is the Drinfeld doubles of Nichols Hopf algebras, also referred to as doubled Nichols Hopf algebras.

We carry out a detailed study of the modular data and fusion rules for these two families of NSS modular categories in order to compare them with MTCs.  One hope is that the CW modular data of NSS MCs and the modular data of their semi-simplification can be combined to provide tools for a low-rank classification and insights for a structure theory for NSS MCs.

\subsection{Notations and terminologies for Hopf algebras}

We will follow closely the notation and terminology of \cite{cohen2008characters} as much as possible.  But due to conflicts, we sometimes have to make changes. 

Recall that a ribbon Hopf algebra $(H,R,v)$ is a quasitriangular Hopf algebra $(H,R)$ with a ribbon element $v$, where the universal $R$-matrix is written as $R=\sum r_1\otimes r_2 \in H\otimes H$ and $R^{21}=\sum r_2\otimes r_1 \in H\otimes H$. A quasitriangular Hopf algebra $(H, R)$ is called ribbon if there exists a central element $v \in H$ such that
\begin{equation}\label{eq: ribbon}
    \Delta(v)=\left(R_{21} R\right)^{-1}(v \otimes v), \quad \varepsilon(v)=1, \quad \text { and } \quad S(v)=v .
\end{equation}

Let $\{\chi_i\}_{i=1}^n$, $\{e_i\}_{i=1}^n$, $\{He_i\}_{i=1}^n$, and $\{p_{e_i}\}_{i=1}^n$ be the corresponding
irreducible characters, primitive orthogonal idempotents, projective indecomposable modules, and characters of the projective modules $He_i$.

\subsection{Drinfeld and Frobenius isomorphisms}

\subsubsection{Drinfeld map}

The element $Q=R^{21}R=\sum_i m_i\otimes n_i \in H\otimes H$ is called the Drinfeld matrix, where $R^{21}=\sum r_2\otimes r_1$.  
Given such a $Q$ of a quasitriangular Hopf algebra $H$, the Drinfeld
map $f_{Q}: H^{*}\rightarrow H$ is defined as
$$f_{Q} (\beta)=(\beta\otimes id)Q=\sum_i\beta(m_i)n_i, \beta \in H^*.$$

\subsubsection{Factorizable ribbon Hopf algebra}

\begin{definition}
 A quasitriangular Hopf algebra is factorizable if the Drinfeld map $f_{Q}: H^*\rightarrow H$ is an isomorphism of vector spaces.   
\end{definition}

\subsubsection{Space of q-characters}

For a Hopf algebra $H$, the space $\text{qCh}(H)=\{\beta\in H^*~|~\beta(xy)=\beta(s_H^2(y)x)\ \ \forall x,y\in H\}$ is called the space of q-characters of $H$. Let $\lambda\in H^*$ and $\Lambda\in H$ be the integrals of $H^*$ and $H$ normalized as $\lambda(\Lambda)=1$. 

\begin{theorem}
\cite{Dr}~Restricting to $\text{qCh}(H)$, the Drinfeld map $f_Q$ is an isomorphism of the commutative algebras between $\text{qCh}(H)$ and the center ${Z}(H)$.
\end{theorem}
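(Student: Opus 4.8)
The plan is to establish three separate assertions: that $f_Q$ restricts to a map $\mathrm{qCh}(H) \to Z(H)$, that this restriction is an algebra homomorphism, and that it is bijective. Bijectivity will be essentially free, since $f_Q$ is assumed to be a vector space isomorphism $H^* \to H$ and we will show it carries $\mathrm{qCh}(H)$ onto $Z(H)$ (equivalently, that the dimensions match and the image lands in $Z(H)$, or that the preimage of $Z(H)$ lies in $\mathrm{qCh}(H)$); then the restriction is injective as a restriction of an injection, and surjective onto $Z(H)$ by a dimension count or by producing a two-sided inverse. So the real content is the first two points.

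First I would show $f_Q(\beta) \in Z(H)$ for $\beta \in \mathrm{qCh}(H)$. Write $Q = R^{21}R = \sum_i m_i \otimes n_i$, so $f_Q(\beta) = \sum_i \beta(m_i) n_i$. The key structural fact about the Drinfeld matrix is the intertwining relation $\Delta(h) Q = Q \Delta(h)$ for all $h \in H$ (this holds because $R$ is quasitriangular, so $R\Delta(h) = \Delta^{\mathrm{op}}(h)R$ and hence $R^{21}R\Delta(h) = R^{21}\Delta^{\mathrm{op}}(h)R = \Delta(h)R^{21}R$). From this one derives, by applying $(\mathrm{id}\otimes \beta)$ or $(\beta \otimes \mathrm{id})$ to suitable expansions and using the axioms of $\Delta$ and the counit together with the $q$-character condition $\beta(xy) = \beta(s_H^2(y)x)$, that $h f_Q(\beta) = f_Q(\beta) h$ for all $h$. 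The $s_H^2$ in the $q$-character condition is exactly what is needed to absorb the square of the antipode that appears when one pushes $h$ through $Q$; this bookkeeping with $s_H^2$ is the step I expect to be the main obstacle, as it requires carefully tracking which leg of $Q$ the antipode acts on and invoking the identity $(s_H \otimes s_H)(Q) = Q$ (equivalently $(s_H\otimes s_H)(R) = R$).

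Second I would verify multiplicativity, $f_Q(\beta\gamma) = f_Q(\beta)f_Q(\gamma)$, where the product $\beta\gamma$ on $H^*$ is the convolution product dual to $\Delta$. This is the classical computation showing the Drinfeld map is an algebra map on $\mathrm{qCh}(H)$ (or on the larger space where it makes sense): expand $f_Q(\beta)f_Q(\gamma) = \sum_{i,j}\beta(m_i)\gamma(m_j)\,n_i n_j$ and compare with $f_Q(\beta\gamma) = \sum_k (\beta\gamma)(M_k) N_k$ where $Q = \sum_k M_k \otimes N_k$, using the hexagon-type identity $(\Delta \otimes \mathrm{id})(Q)$ expressed via $Q_{13}$ and $Q_{23}$-style factors together with the quasitriangularity axioms for $R$. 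Again the $q$-character condition is what makes the non-cocommutative correction terms collapse. Finally, I would assemble the pieces: $f_Q$ restricted to $\mathrm{qCh}(H)$ is an injective algebra homomorphism into $Z(H)$; since $\mathrm{qCh}(H)$ and $Z(H)$ have the same dimension (both equal the number of simple modules when $H$ is, say, semisimple, and in general this follows from $f_Q$ being a linear isomorphism $H^* \to H$ once we know it maps $\mathrm{qCh}(H)$ into $Z(H)$ and, symmetrically, maps the complement appropriately — or one cites that $f_Q^{-1}(Z(H)) = \mathrm{qCh}(H)$ directly from the relations above), it is surjective onto $Z(H)$, hence an algebra isomorphism $\mathrm{qCh}(H) \xrightarrow{\sim} Z(H)$, and $Z(H)$ is commutative so $\mathrm{qCh}(H)$ is too.
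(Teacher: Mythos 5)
The paper does not prove this statement; it is quoted from Drinfeld (the citation \cite{Dr}), so there is no in-paper argument to compare against. Judged on its own, your outline follows the standard route: centrality of $f_Q(\beta)$ from $Q\Delta(h)=\Delta(h)Q$ plus the $s_H^2$-twisted trace property, multiplicativity from the coproduct identity for $Q$ with the $q$-character condition collapsing the correction terms, and then injectivity from factorizability. Two points need attention. First, a small but real error: $(s_H\otimes s_H)(Q)=Q$ is false in general; since $s_H\otimes s_H$ is an algebra \emph{anti}-automorphism of $H\otimes H$ and $(s_H\otimes s_H)(R)=R$, one gets $(s_H\otimes s_H)(R_{21}R)=RR_{21}=Q_{21}$, not $Q$. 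The centrality computation does not actually need the identity you invoke, but as written that step would not go through.

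Second, and more seriously, the surjectivity step has a genuine gap. Knowing that $f_Q$ is a linear isomorphism $H^*\to H$ and that $f_Q(\mathrm{qCh}(H))\subseteq Z(H)$ only yields $\dim\mathrm{qCh}(H)\le\dim Z(H)$; it does not ``follow'' that the dimensions agree, and the appeal to the semisimple case proves nothing in general. You need one of two additional inputs: (i) the dimension equality $\dim\mathrm{qCh}(H)=\dim Z(H)$, which in the factorizable case follows because such $H$ is unimodular, so the cointegral satisfies $\lambda(xy)=\lambda(s_H^2(y)x)$ and the Frobenius map $\Psi$ (defined in the paper) restricts to a linear isomorphism $Z(H)\to\mathrm{qCh}(H)$; or (ii) the reverse inclusion $f_Q^{-1}(Z(H))\subseteq\mathrm{qCh}(H)$, proved by running the centrality computation backwards, which again uses the invertibility of $f_Q$ (i.e.\ factorizability) to pass from $hf_Q(\beta)=f_Q(\beta)h$ for all $h$ back to the $q$-character identity for $\beta$. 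Relatedly, you should state explicitly that $H$ is assumed factorizable: without that hypothesis the restricted map is only an algebra homomorphism into $Z(H)$, not an isomorphism, and the ``vector space isomorphism'' you take as given is precisely the factorizability assumption.
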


\subsubsection{Higman ideal}

The Higman ideal $\text{Hig}(H)$ of a Hopf algebra $H$ is defined as the image of the trace map 
\begin{align*}
\tau(x)=\sum_{(\Lambda)}s_H(\Lambda_{(2)})x\Lambda_{(1)}G^{-1}=ad_\Lambda(xG^{-1}).
\end{align*}

For a unimodular Hopf algebra $H$ with $s_H^2$ being inner by some $G \in H$, e.g. $U_qsl(2)$,  it is shown in \cite{cohen2008characters} that the image of $\text{qCh}(H)$ under $f_Q$ coincides with the center $ {Z}(H)$, and $I(H)\subseteq H^*$ spanned by the q-characters of projective modules is mapped to the Higman ideal.

\subsubsection{Frobenius map}

The Frobenius map $$\Psi: H_{H^*}\rightarrow H^*_{H^*} $$ by $\Psi(a)(b)=\lambda(S(a)b)$,
where $H^*$ is a right $H^*$-module under multiplication.

\subsection{Lyubashenko-Majid modular representations}

Lyubashenko and Majid defined two remarkable representations of the modular group $\operatorname{SL}(2,\mathbb Z)$ on the center $Z(H)$ for any factorizable ribbon Hopf algebra $H$ in \cite{lyubashenko1994braided}. These two representations are essentially the TQFT representations of the mapping class groups of the torus and there are no obvious bases of $Z(H)$ to make them into matrices.

For MTCs, there are different conventions of the modular $S$-matrices.  Closely related is the fact that for the standard generators $(s,t)$ of $\operatorname{SL}(2,\mathbb Z)$, we have $(st^{-1})^3=1, (st)^3=s^2$.  For MTCs, anyon theory leads to a central charge of an MTC, so different choices can be distinguished by the central charges.  But for NSS MCs, we do not have such an invariant, so in principle either one of the Lyubashenko-Majid $S$ map is good, which are inverse to each other.  We will focus on the $-$-version of \cite{lyubashenko1994braided} here.  The $-$-version of the $S,T$ maps will be denoted as  ${S}_{LM}, {T}_{LM}$, and the $+$-version as  ${S}^+_{LM}, {T}^+_{LM}$.

\subsubsection{Formulas for Lyubashenko-Majid maps}

The $+$-version of the Lyubashenko-Majid $S$-map is simply $$S^+_{LM}=f_Q\cdot \Psi: H\rightarrow H,$$ i.e. the composition of the Frobenius map followed by the Drinfeld map. 

In explicit formulas, the Lyubashenko-Majid maps are given as:
for all $x\in H$,
$${S}^+_{LM}(x)=(\text{id}\otimes\lambda)(R^{21}(1\otimes x)R), \ \ {T}^+_{LM}(x)=v\cdot x.$$

The $-$-version is defined similarly:
for all $x\in H$, 
$${S}_{LM}(x)=(\text{id}\otimes\lambda)(R^{-1}(1\otimes x)R_{21}^{-1}), \ \ {T}_{LM}(x)=v\cdot x.$$

The maps ${S}_{LM}, {T}_{LM}: H\rightarrow H$ satisfy the
modular identities
$$({S}_{LM} {T}_{LM})^3=\kappa {S}_{LM}^2,\ \ {S}_{LM}^2=s_{H}^{-1}$$
where $\kappa$ is some constant, and $s_H$ is the antipode of $H$.

\subsubsection{Center $Z(H)$ as an $\operatorname{SL}(2,\mathbb Z)$ module}
 Restricted to the center $Z(H)$ of $H$,
${S}^4=s_H^{-2}=id_{{Z}(H)}$ since
$s_H^{-2}(x)=G^{-1}xG$ for all $x\in H$. Here $G$ is the balancing element of $H$. It follows that the Lyubashenko-Majid maps $S_{LM}$ and $ T_{LM}$ give rise to a
projective representation of $\operatorname{SL}(2,\mathbb{Z})$ on ${Z}(H)$.

As in \cite{lachowska2021remarks}, when restricted to the center ${Z}(H)$ of
$H$, the definition of this representation can be slightly modified as follows: for
$a\in {Z}(H)$
$${S}_{LM}(a)=(\lambda\otimes 1)(S(a)\otimes 1)R_{21}R=\phi_R(f_Q^{-1}(a)),$$
$${T}_{LM}(a)=\kappa {S}_{LM}^{-1}(v^{-1}({S}_{LM}(a))).$$

\subsubsection{Higman ideal as an $\operatorname{SL}(2,\mathbb Z)$-module}

The Higman ideal is invariant under $S_{LM}$ (see e.g. \cite{cohen2008characters}), hence the Higman ideal is a $\operatorname{SL}(2,\mathbb Z)$-submodule of $Z(H)$. Moreover, it is a direct summand of $Z(H)$ (see e.g. \cite{lachowska2021remarks}).

\subsection{Cohen-Westreich modular data and Verlinde formulas}

\subsubsection{Cartan matrix of Hopf algebras}

The Cartan matrix of a factorizable Hopf algebra $H$ can also be computed directly from the Hopf algebra data.  The Cartan matrix
of $H$ is the $r\times r$ matrix $C$ with $c_{ij} = \text{dim} e_iAe_j$, which is symmetric. We have 
$$p_{e_j}=\sum_{i=1}^r c_{ij}\chi_i.$$

Note also that for any idempotent $e, p_e = \chi_{Ae}$. Hence, 
$\langle p_{e_i}, e_j\rangle= c_{ij} .$

\subsubsection{Pre-$S$ matrices}

There are two potential generalizations of the $S$-matrix that we will call pre-$S$-matrices. These two $r\times r$ matrices together with the Cartan matrix $C$  are used to derive the CW modular $S$-matrix $S_{CW}$, which is later used for the diagonalization of the mixed fusion rules. 

The two pre-$S$-matrices will be denoted by $\tilde{S}$ and $\hat{S}$, and note that for MTCs, these two matrices are equal and are the usual $S$-matrix.  Their entries are given as \cite[Eq(21)]{cohen2008characters}
\begin{equation}
\tilde{s}_{i j}=\left\langle\widehat{f}_Q\left(\chi_i\right), \chi_j\right\rangle, \quad \widehat{s}_{i j}=\left\langle\widehat{f}_Q\left(\chi_i\right), \widehat{\Psi} s_H\left(e_j\right)\right\rangle .
\end{equation}
Here $\chi_i$'s are the irreducible character, $s_H$ is the antipode for $H$. The shifted Frobenius map $\widehat{\Psi}: H \rightarrow H^*$ is  defined in \cite[Eq(17)]{cohen2008characters}. Particularly, $\widehat{\Psi}^{-1}(I)=\operatorname{Hig}(H)$, where $I:=I(H)$ is spanned by the characters of all projective indecomposable $H$-modules \cite[Th 2.8]{cohen2008characters}. The shifted Drinfeld map $f_Q: H^* \rightarrow H$ is the Drinfeld map defined as above and in \cite[Section 3]{cohen2008characters}. $\widehat{f}_Q$ is defined in \cite[Eq(20)]{cohen2008characters}.   Note $\widehat{f}_Q(I)=\operatorname{Hig}(H)$\cite[Prop 3.5]{cohen2008characters}.

To obtain a symmetric pre-$S$-matrix, one forms the following $r\times r$ matrix $$(C\widehat{S})_{ij}=\left\langle\widehat{f}_Q\left(p_{e_i}\right), \widehat{\Psi} s_H\left(e_j\right)\right\rangle.$$ It is symmetric by \cite[Lemma 3.10]{cohen2008characters}. Note if the category is semisimple, this matrix is also the same as the usual modular $S$.

Finally, to get the CW modular $S$-matrix that diagonalizes the mixed fusion matrices in Eq. (\ref{eq:fusion}), one computes $S_{CW}=C_n^{-1}(C\widehat{S})_n$, which is an $n\times n$ matrix.  The subscript $n$ indicates we are restricting to the major minor of the matrices. Moreover, $S_{CW}$ diagonalizes the fusion matrices $N^i(m)$.
Notice that $S_{CW}$ is the usual $S$-matrix if the category is semisimple.

\subsubsection{Two bases of Higman ideal}
The following two bases for the Higman ideal $\operatorname{Hig}(H)$ is from \cite[Eq (23)]{cohen2008characters}:
\begin{equation}\label{eq:twobases}
    B_{\chi}=\left\{\widehat{f}_Q\left(p_{e_j}\right)\right\}_{j=1}^n, \quad B_{\tau}=\left\{\tau\left(e_j\right)\right\}_{j=1}^n .
\end{equation}

\subsubsection{CW modular $S$-matrix}

$S_{CW}$ is in fact the change of bases matrix of the two bases above:\footnote{Our $S_{CW}$ is the inverse of the $F$ matrix in \cite{cohen2008characters}.}

$$\tau({e_j})=\sum_{k=1}^n (S_{CW})_{kj} f_Q(p_{e_k}).$$

\subsubsection{Mixed fusion rules}

The representation category of finite-dimensional modules of $H$ is a modular category $\mcB_H$.  Let $r$ be the rank of $\mcB_H$. The mixed fusion rules of $\mcB_H$ are given by 
\begin{equation}\label{eq:fusion}
    \chi_i p_{e_j}=\sum_{k=1}^n N_{k j}^i(m) p_{e_k}.
\end{equation}
Here $\chi_i$ is an irreducible character, $p_{e_j}$ is a character of an indecomposable projective module. Note $n$ is the rank of the Cartan matrix. So we obtain $r$ many $n\times n$ matrices $N^i(m)$.

\subsubsection{Diagonalizing mixed fusion rules}
Let $L_i$ denote the restriction of the operator left multiplication by $\widehat{f_Q}\left(\chi_i\right)$ to $\operatorname{Hig}(H)$, see \cite[Eq (27)]{cohen2008characters}. 
The matrix $L_i$ with respect to the basis $\mathcal{B}_\chi$ is 
\begin{equation}
    \mathbf{N}^i(m)=\left(N_{j k}^i(m)\right)
\end{equation}
and the matrix of $L_i$ with respect to the basis $\mathcal{B}_\tau$ is
\begin{equation}
    \mathbf{D}^i=\operatorname{Diag}\left\{d_1^{-1} \tilde{s}_{i 1}, \ldots, d_n^{-1} \tilde{s}_{i n}\right\}
\end{equation}

\subsubsection{Verlinde formulas}

From \cite{cohen2008characters}, we have 
$$\mathbf{N}^i(m)=S^{-1}_{CW}\mathbf{D}^i S_{CW}.$$
Let $S_{CW}=(s_{jk})_{1\leq j,k\leq n}$, $S^{-1}_{CW}=(f_{jk})_{1\leq j,k\leq n}$, then for any $1\leq i\leq r, 1\leq j,k\leq n$,  
$$ N^i_{jk}(m)=\sum_{1\leq l\leq n} \frac{f_{jl}\tilde{s}_{il}s_{lk}}{d_l}.$$

\section{Semi-simplification of modular categories}\label{sec:SSofMCs}

There are ample applications of NSS MCs to physics.  The standard application is to non-unitary physics via logarithmic conformal field theory or mathematically non-semisimple vertex operator algebra.  It is not inconceivable that every modular category can be realized as the representation category of a vertex operator algebra, which has been conjectured for MTCs \cite{tener2017classification}.  

We are interested in a possible different application of NSS MCs to unitary physics via symmetry breaking.  Suppose an NSS MC $\mcB_H$ results from the representations of a finite-dimensional factorizable ribbon Hopf algebra $H$.  The projective cover $P(V_i)$ of an irreducible module $V_i$ is indecomposable as $H$-modules.  But  $P(V_i)$ obviously decomposes as vector spaces, i.e. $P(V_i)\cong V_i\oplus V_i'$ as vector spaces for some vector space $V_i'$.  If there is a Hopf subalgebra $H'$ such that every projective $P(V_i)$ decomposes into $V_i$ and another $H'$-module $V_i'$, we will say that $H$ symmetry breaks to the subalgebra $H'$, and will leave an investigation of this symmetry breaking mechanism to the future.  

Symmetry breaking dually leads to semisimple quotients of the representation categories.  This is called purification or semi-simplification in the literature \cite{turaev2010quantum,etingof2021semisimplification}.  One interesting observation follows from the doubled Nichols Hopf algebra examples: the semi-simplification of an NSS MC is not always non-degenerate.   

\subsection{Semi-simplification tensor functor}

It is well-understood that the objects of a category are merely for convenience: a category can be defined using just the morphisms.  Given a modular category $\mcB$, one is interested in a quotient category that is semisimple.  One way to do this is to keep the same class of objects and change the set of morphisms.  Such a "quotient" is one way to define the condensation of algebras in an MTC, which corresponds well to physics.  The semi-simplification or purification of an NSS MC can be performed following the same strategy: we will keep the same class of objects while quotient of negligible morphisms.

Given a modular category $\mcB$ and two objects $X,Y$, a morphism
$f : X\rightarrow Y$  is negligible if for any morphism $g : Y\rightarrow X$,  we have $tr( f g) = 0$.  The tensor product of a negligible morphism with an arbitrary morphism is negligible.
Hence, the negligible morphisms in $\mcB$ form a two-sided ideal with
respect to composition and tensor product.  We will denote the negligible ideal of $Hom(X,Y)$ as $\mcI_N(X,Y)$.

In general, an FTC has many more objects than simples and projective indecomposables (e.g. see Section 1.6 of \cite{gainutdinov2006kazhdan}).  For our semi-simplification of a modular category $\mcB$, we will focus on the purification of $\mcB_{QD}$ of quasi-dominated objects.

\begin{definition}
Given a modular category $\mcB$, the semi-simplification $\bar{\mcB}$ of $\mcB$ is the category $\bar{\mcB}$ with the same objects as $\mcB_{QD}$, and the morphism set of any two objects $X,Y$ of $\bar{\mcB}$ is the quotient morphism set $Hom_{\bar{\mcB}}(X,Y)=Hom_\mcB(X,Y)/ \mcI_N(X,Y).$  The semi-simplification functor from $\mcB$ to $\bar{\mcB}$ is the identity map on objects and sends each morphism in $\mcB$ to its quotient class in $\bar{\mcB}$.
\end{definition}

It is straightforward to verify that 
\begin{proposition}
    The semi-simplification functor from $\mcB$ to $\bar{\mcB}$ is a tensor functor.
\end{proposition}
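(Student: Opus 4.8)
The plan is to verify the three defining data of a tensor functor: (i) compatibility with the tensor product on objects, (ii) a natural isomorphism $J_{X,Y}\colon \bar F(X)\otimes \bar F(Y)\xrightarrow{\sim}\bar F(X\otimes Y)$, and (iii) compatibility with the unit and the associativity/unit constraints. Since the semi-simplification functor is the identity on objects and $\mcB_{QD}$ is closed under $\otimes$ (the tensor product of quasi-dominated objects is quasi-dominated, because if $\mathrm{Id}_X-\sum\alpha_i\beta_i$ and $\mathrm{Id}_Y-\sum\gamma_j\delta_j$ are negligible then so is $\mathrm{Id}_{X\otimes Y}-\sum(\alpha_i\otimes\gamma_j)(\beta_i\otimes\delta_j)$, the negligible morphisms forming a tensor ideal), the underlying object-level functor is literally the restriction of $\otimes$ and $J_{X,Y}$ can be taken to be the identity. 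So the real content is checking that the associativity and unit constraints of $\mcB$ descend to well-defined morphisms in $\bar{\mcB}$ satisfying the pentagon and triangle axioms there.

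First I would record the key algebraic fact, already observed in the excerpt: the negligible morphisms $\mcI_N(X,Y)$ form a two-sided ideal with respect to both composition and tensor product. This is exactly what is needed for the quotient $\mathrm{Hom}_{\bar{\mcB}}(X,Y)=\mathrm{Hom}_\mcB(X,Y)/\mcI_N(X,Y)$ to inherit composition and tensor-product operations: if $f\equiv f'$ and $g\equiv g'$ modulo negligibles then $gf\equiv g'f'$ and $f\otimes g\equiv f'\otimes g'$, so composition and $\otimes$ on morphisms are well-defined on equivalence classes. Identities map to identities and the bilinearity of $\otimes$ on $\mathrm{Hom}$-spaces is inherited, so $\bar{\mcB}$ is a $\mathbb C$-linear category with a bifunctor $\otimes$. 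The quotient functor $F\colon\mcB\to\bar{\mcB}$ is then by construction $\mathbb C$-linear, the identity on objects, and strictly monoidal on objects: $F(X\otimes Y)=F(X)\otimes F(Y)$.

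Next I would push the structural isomorphisms through $F$. The associator $a_{X,Y,Z}\colon (X\otimes Y)\otimes Z\to X\otimes(Y\otimes Z)$ of $\mcB$ is an isomorphism, and its image $F(a_{X,Y,Z})$ in $\bar{\mcB}$ is again an isomorphism (with inverse $F(a_{X,Y,Z}^{-1})$), and it is natural in each variable because $F$ is a functor and naturality is a commuting-square condition preserved by any functor. Likewise $F(l_X)$ and $F(r_X)$ give the left and right unit constraints in $\bar{\mcB}$, with unit object $F(\mathbf 1)=\mathbf 1$. The pentagon and triangle identities in $\bar{\mcB}$ are then obtained by applying $F$ to the pentagon and triangle identities in $\mcB$: both sides of each axiom are composites of tensor products of instances of $a,l,r$, and $F$ sends such composites to the corresponding composites in $\bar{\mcB}$ since $F$ respects composition, $\otimes$ on morphisms, and is strict on objects. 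Therefore $\bar{\mcB}$ is a monoidal category and $F$ is a (strict, hence in particular a) tensor functor, with the coherence isomorphisms $\phi_0\colon\mathbf 1\to F(\mathbf 1)$ and $\phi_{X,Y}\colon F(X)\otimes F(Y)\to F(X\otimes Y)$ taken to be identities.

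The main obstacle, and the only step that is not pure formalism, is the very first point: confirming that $\mcB_{QD}$ is closed under tensor product and that $\mathcal I_N$ is a tensor ideal, i.e. that $f\otimes g$ is negligible whenever $f$ is. The latter uses the trace property $\mathrm{tr}\big((f\otimes g)(h\otimes k)\big)=\mathrm{tr}(fh)\,\mathrm{tr}(gk)$ in a ribbon category together with the fact that an arbitrary morphism $X\otimes Y\to X'\otimes Y'$ need not split as $h\otimes k$; the standard argument is to instead use that negligibility of $f$ means $\mathrm{tr}(fh)=0$ for all $h$, express $\mathrm{tr}\big((f\otimes g)\,u\big)$ for a general $u$ via partial traces — closing up the $Y,Y'$ strands of $u$ against $g$ to produce a morphism $X'\to X$ — and conclude it vanishes because $f$ is negligible. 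I would cite this as the standard fact underlying the construction (it is essentially the statement quoted in the excerpt that the negligible morphisms form a two-sided tensor ideal), so in the write-up this reduces to one clean lemma, and everything after it is diagram-chasing that $F$ transports coherence from $\mcB$ to $\bar{\mcB}$.
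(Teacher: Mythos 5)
Your verification is correct and is exactly the ``straightforward'' check the paper leaves to the reader (the paper offers no proof beyond the preceding remark that negligible morphisms form a two-sided ideal under composition and tensor product, which is the lemma you isolate). One step deserves slightly more care than your write-up gives it: closure of $\mcB_{QD}$ under $\otimes$. Your identity $\mathrm{Id}_{X\otimes Y}-\sum(\alpha_i\otimes\gamma_j)(\beta_i\otimes\delta_j)$ is indeed negligible, but the morphisms $\alpha_i\otimes\gamma_j$ and $\beta_i\otimes\delta_j$ factor through the objects $X_i\otimes Y_j$, which are tensor products of simples and in general not simple, whereas quasi-domination as defined in the paper requires factoring through simple objects. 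To finish, you need the additional input that each $X_i\otimes Y_j$ is itself quasi-dominated by simples (this is Turaev's domination axiom for the family of simples); once that is granted, you splice $\mathrm{Id}_{X_i\otimes Y_j}=\sum_k\mu_k\nu_k+(\text{negligible})$ into the middle of each term and the ideal property absorbs the error. Everything else --- well-definedness of composition and $\otimes$ on the quotient $\Hom$-spaces, transporting the associator and unit constraints along the identity-on-objects functor, and deducing the pentagon and triangle axioms in $\bar{\mcB}$ by applying the quotient functor to those of $\mcB$ --- is the standard diagram chase and is carried out correctly.
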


Interesting subtleties arise for semi-simplification.  First the semi-simplification of an NSS MC is not necessarily modular as Prop. \ref{non-modular} shows.  Secondly the semi-simplification could be wild as footnote 9 of \cite{berger2023non} shows if we do not restrict to $\mcB_{QD}$.

\subsection{Quantum groups at roots of unity}

Let $\mfg$ be the Lie algebra of a simple Lie group $G$ and $q=e^{\frac{2\pi i}{m(\check{h}+k)}}, k\geq 1$, where $\check{h}$ is the dual Coxeter number of $\mfg$ and $m=1,2,3$ depending on the Lie algebra\footnote{$m=1$ for ADE, $m=2$ for B,C,$F_4$, and $m=3$ for $G_2$.}.  The quantum group at root of unity as defined in \cite{turaev2010quantum,bakalov2001lectures} will be denoted as $U_q\mfg$, which is a factorizable ribbon Hopf algebra.  We will denote the resulting modular category from the representation category of $U_q\mfg$ as $U_qG_k$ and refer it as the modular category of $G$ at level$=k$ following the physics convention.

As in \cite{turaev2010quantum,bakalov2001lectures}, a unitary MTC  $G_k$\footnote{In the literature, $G_k$ can denote any one of the three essentially equivalent things: the Witten-Chern-Simons theory, the WZW minimal model CFT, and the unitary MTC here.} can be constructed from the quantum group at roots of unity $U_q\mfg$.  It follows that the semi-simplification of $U_qG_k$ is $\overline{U_qG_k}=G_k$ \cite{turaev2010quantum}.

\subsection{Decomposition of the center as $\operatorname{SL}(2,\mbbZ)$ module}

Given a factorizable ribbon Hopf algebra $H$ with resulting modular category $\mcB_H$, its center $Z(H)$ is remarkably an $\operatorname{SL}(2,\mbbZ)$-module \cite{lyubashenko1994braided}. Moreover, the Higman ideal $\operatorname{Hig}(H)$ is an $\operatorname{SL}(2,\mbbZ)$ submodule.  It would be interesting to decide to what extent the center $Z(H)$ as $\operatorname{SL}(2,\mbbZ)$-module is determined by three things: the modular data of the semi-simplification $\overline{\mcB}_H$, the CW modular data of $\mcB_H$, and the Gainutdinov-Runkel formulas.

\section{Comparison with modular tensor categories}\label{sec:comparison}

MTCs are interesting in their own right and find many applications in physics and quantum computing.  We expect MCs will have a parallel structure theory, and similar applications.  As a first step, we wonder what properties of MTCs generalize and what fail? In this section, we investigate this question for a few basic theorems for MTCs: the rank-finiteness theorem, the abelian-ness of Galois fields, and the congruence kernel of the modular representation.

\subsection{Refined rank-finiteness}

Rank-finiteness for MTCs is a refinement of the Ocneanu rigidity for fusion categories \cite{bruillard2016rank}.  One key difference between fusion categories and MTCs is that the number field of adjoining the eigenvalues of the fusion rules of MTCs has an abelian Galois group.  This fact results in strong constraints on the quantum dimensions of MTCs.  By Corollary \ref{cor:rank-finiteness-fails}, rank-finiteness fails for modular categories; the infinite family of distinct modular categories from the representations of doubled Nichols Hopf algebras are all of total rank=$6$ and are nonequivalent even as categories.  It begs an interesting question: is there a generalization of rank-finiteness from MTCs to general MCs?

An interesting first step is: does finiteness hold for MCs with the same Cartan matrix? The categories $D\Kn$-mod have different fusion rules for every $n$, so they do not provide a counterexample to this proposal.

\subsection{Galois group for modular categories}

A Galois group can be defined for the fusion rules of an MTC, the modular data of an MTC, and even the full data of an MTC \cite{davidovich2013arithmetic}.  In this case, the Galois groups of the fusion rules and modular data are closely related.  The generalization to MCs is complicated, and we will define potentially many different Galois groups of an MC.

\subsubsection{Galois group of mixed fusion rules}

The eigenvalues of the mixed fusion matrices $N^i$ will be denoted as $\{\lambda_{i,j}\}, 1\leq i\leq r, 1\leq j\leq n$.  We could consider the Galois field $K_M=\mathbb{Q}(\lambda_{ij}),1\leq i\leq r, 1\leq j\leq n$.

In the case of factorizable ribbon Hopf algebras, the mixed fusion matrices $N^i$ have eigenvalues $\{d_1^{-1} \tilde{s}_{i 1}, \ldots, d_n^{-1} \tilde{s}_{i n}\}.$  Since $d_j$ are integers, the Galois fields become  
$K_M=\mathbb{Q}(\tilde{s}_{ij}),1\leq i\leq r, 1\leq j\leq n$.  

We do not know if the Galois group of this extension is abelian, though all our examples have abelian Galois groups.

\subsubsection{Galois group of Cohen-Westreich modular data}

Similarly, we define $K_{CW}=\mathbb{Q}({s}_{ij}),1\leq i\leq n, 1\leq j\leq n$.
We also do not know if the Galois group of this extension is abelian, and again all our examples have abelian Galois groups.

\subsection{Congruence subgroup for Higman representation}
The group $\operatorname{SL}(2,\mathbb Z)$ is generated by the elements $ s=\begin{pmatrix}0&-1\\1&0\end{pmatrix} $ and $ t = \begin{pmatrix}1&1\\0&1\end{pmatrix}$. Let $\mathcal C$ be an MTC with modular data $(S, T)$. The assignments $ s\mapsto S$ and $  t\mapsto T$ define a projective representation of $\operatorname{SL}(2,\mathbb Z)$ which can be lifted to linear representations (see e.g.,\cite[Sec. 3.1]{bakalov2001lectures}).

One of the deep theorems about MTCs is the congruence kernel subgroup theorem \cite{ng2010congruence, dong2015congruence}.  This has been generalized to super-modular categories and has a complicated picture for representations of the braid groups \cite{ricci2017congruence, bonderson2018congruence}.  We verify that the congruence kernel statement holds for the Higman representations for both families of modular categories.  Therefore, we conjecture that the congruence kernel conjecture generalizes to the Higman representation.

\subsubsection{Quadratic modules}
\begin{definition}
    Let $M$ be a finite $\mathbb Z/p^\lambda\mathbb Z$ module, where $\lambda$ is a natural number and $p$ is a prime. A quadratic form on $M$ is a map $Q: M\to \mathbb Q/\mathbb Z$ such that 
\begin{enumerate}
    \item  $Q(-x)=Q(x)$ for all $x \in M$.
\item  $B(x, y):=Q(x+y)-Q(x)-Q(y)$ defines a bilinear map from $M\times M\to \mathbb Q/\mathbb Z$. 
\end{enumerate}
\end{definition}
The quadratic form $Q$ is called non-degenerate if $B(x,y)$ is non-degenerate, that is, $B(x, y)=0$ for all $y=0$ implies $x=0$. The pair $(M, Q)$ is called a quadratic module.

Quadratic modules are closely related to pointed premodular categories (E.g., \cite[Section 8.4]{EGNO}). Specifically, on one side, for any pointed modular category $\mathcal{C}$, the isomorphism classes of simple objects, along with the function determined by their braidings, constitute a quadratic module. Conversely, starting with a quadratic module $(M, Q)$, one can apply the Eilenberg-MacLane theorem \cite{eilenberg1947cohomology, eilenberg1947cohomology2} to obtain a pointed premodular category $\mathcal{C}(M, Q)$ \cite[Section 3]{joyal1993braided}.  The category $\mathcal C(M, Q)$ is modular if $Q$ is non-degenerate.

\subsubsection{Weil representations} Let $\mathbb C^M$ denote the space of $\mathbb C$-valued functions on $M$.
By \cite[Satz 2, p. 467]{Nobs1}, there is a projective representation 
$$
W(M, Q): \operatorname{SL}\left(2, \mathbb Z/p^\lambda\mathbb Z\right) \rightarrow \operatorname{PGL}(\mathbb C^M)$$
defined by
$$\begin{aligned}
  tf(x) & =e^{2 \pi i Q(x)} f(x) \\
 s^{-1} f(x) & =\frac{\tau_{M,Q}}{|M|} \sum_{y \in M} e^{2 \pi i B(x, y)} f(y)
\end{aligned}$$
where $\tau_{M,Q} = \sum_{a\in M} e^{2\pi i Q(a) } $ is the Gauss sum of the quadratic module $(M,Q)$ and $f\in\mathbb C^M$. Through investigations into Weil representations, it is proved in \cite{ng2023symmetric} that every finite-dimensional congruence representation of $\operatorname{SL}(2,\mathbb{Z})$ is symmetrizable.

Note $W(M, Q)$ coincides with the projective representation derived from the pointed MTC $\mathcal{C}(M, Q)$.  Let $\{\delta_x \in \mathbb{C}^M \, \colon \, x \in M\}$  be the standard basis of $\mathbb C^M$. The modular data for $\mathcal{C}(M, Q)$ is described by
\[ S_{x,y}=e^{-2\pi iB(x,y)},\qquad T_{x,y}=\delta_{x,y}e^{2\pi iQ(x)},\]
where $x,y\in M$.

\subsubsection{$\operatorname{SL}(2,\mathbb Z)$-representations from $U_qsl(2)$}

\begin{lemma}\label{lem:decomposition} Let $l = 2h+1\ge3$ be an odd number. 
  The projective representation of $\operatorname{SL}(2, \mathbb Z)$ from some pointed MTC $\mathcal C(\mathbb Z/l\mathbb Z, Q)$ can be decomposed as $V^{\operatorname{even}}\oplus V^{\operatorname{odd}}$,where $V^{\text {even }}$ is the same as $\mathcal{P}_{h+1}$ and $V^{\text {odd }}$ is the same as $\mathcal{V}_h$ in Thm. \ref{Kerler}.
\end{lemma}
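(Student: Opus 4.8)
The plan is to make the decomposition explicit on $\mathbb{C}^{\mathbb{Z}/l\mathbb{Z}}$ and then identify the two blocks by comparing their $(S,T)$-matrices with the ones recorded for $\mathcal{P}_{h+1}$ and $\mathcal{V}_h$ in Thm.~\ref{Kerler}. First I would fix the quadratic module: take $Q(x) = \frac{x^2}{l}\bmod\mathbb{Z}$ on $M=\mathbb{Z}/l\mathbb{Z}$, so $B(x,y)=\frac{2xy}{l}$ (any $Q(x)=\frac{mx^2}{l}$ with $m$ a square mod $l$ gives an isomorphic quadratic module); since $l$ is odd this $B$ is non-degenerate, so $\mathcal{C}(\mathbb{Z}/l\mathbb{Z},Q)$ is a pointed MTC. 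By the Weil-representation formulas quoted above, its projective $\operatorname{SL}(2,\mathbb{Z})$-representation on $\mathbb{C}^{\mathbb{Z}/l\mathbb{Z}}$ is generated by $t\colon\delta_x\mapsto e^{2\pi i x^2/l}\delta_x$ and $s^{-1}\colon\delta_x\mapsto \frac{\tau_{M,Q}}{l}\sum_y e^{-4\pi i xy/l}\delta_y$, with $\tau_{M,Q}$ the quadratic Gauss sum of modulus $\sqrt{l}$.

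The structural point is that the involution $\iota\colon\delta_x\mapsto\delta_{-x}$ of $\mathbb{C}^{\mathbb{Z}/l\mathbb{Z}}$ commutes with $t$ and with $s^{-1}$ (hence with $s$): this is immediate from $Q(-x)=Q(x)$ and the fact that $B$ is symmetric and $\mathbb{Z}$-bilinear. Therefore $\mathbb{C}^{\mathbb{Z}/l\mathbb{Z}}=V^{+}\oplus V^{-}$ as a projective $\operatorname{SL}(2,\mathbb{Z})$-module, where $V^{\pm}$ are the $(\pm1)$-eigenspaces of $\iota$. Because $l=2h+1$ is odd, $x\mapsto-x$ has $0$ as its only fixed point, so $\{\delta_0\}\cup\{\delta_j+\delta_{-j}\}_{j=1}^{h}$ is a basis of $V^{+}=:V^{\operatorname{even}}$ and $\{\delta_j-\delta_{-j}\}_{j=1}^{h}$ is a basis of $V^{-}=:V^{\operatorname{odd}}$; in particular $\dim V^{\operatorname{even}}=h+1$ and $\dim V^{\operatorname{odd}}=h$, already matching the ranks of $\mathcal{P}_{h+1}$ and $\mathcal{V}_h$.

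Next I would write out the two blocks in these bases. The twist $t$ is diagonal, acting by $1,e^{2\pi i j^2/l}$ ($1\le j\le h$) on $V^{\operatorname{even}}$ and by $e^{2\pi i j^2/l}$ ($1\le j\le h$) on $V^{\operatorname{odd}}$, while $s^{-1}$ becomes a discrete cosine transform $\delta_j+\delta_{-j}\mapsto \tfrac{\tau_{M,Q}}{l}\bigl(2\delta_0+\sum_{k=1}^{h}2\cos(4\pi jk/l)(\delta_k+\delta_{-k})\bigr)$ on $V^{\operatorname{even}}$ and a discrete sine transform $\delta_j-\delta_{-j}\mapsto \tfrac{\tau_{M,Q}}{l}\sum_{k=1}^{h}(-2i)\sin(4\pi jk/l)(\delta_k-\delta_{-k})$ on $V^{\operatorname{odd}}$. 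Completing the square modulo $l$, $j(j-1)\equiv (j-h-1)^2-(h+1)^2\equiv (j-h-1)^2-h^2\pmod l$ (using $2(h+1)\equiv1$ and $(h+1)^2\equiv h^2$ mod $l$), so the multiset $\{j^2\bmod l\colon 1\le j\le h\}$ equals $\{j(j-1)+h^2\bmod l\colon 1\le j\le h\}$; these are the twists of the integer-weight subcategory $SU(2)_{l-2}^{\operatorname{even}}$, shifted by the single global phase $e^{2\pi i h^2/l}$. After the induced relabelling $j\mapsto 2j\pm1$ of the basis and the usual rescaling that makes the block symmetric, the sine transform on $V^{\operatorname{odd}}$ is then exactly the $\sqrt{2/l}\,\sin$ $S$-matrix of $SU(2)_{l-2}^{\operatorname{even}}$, so $V^{\operatorname{odd}}\cong\mathcal{V}_h$ (the global phase and the ambiguity $s\leftrightarrow s^{-1}$ are invisible at the projective level). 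For $V^{\operatorname{even}}$: by construction it is the restriction of the Weil representation of $(\mathbb{Z}/l\mathbb{Z},Q)$ to the even functions, which is precisely the realization of the Higman module $\mathcal{P}_{h+1}$ in Thm.~\ref{Kerler}; equivalently one checks directly that the cosine block above equals the $(S,T)$ recorded there (a sanity check at $l=3$, $h=1$ gives the $2\times2$ cosine block $\tfrac{\tau_{M,Q}}{3}\left(\begin{smallmatrix}1&2\\1&-1\end{smallmatrix}\right)$, which up to a phase and transpose is the $S_{CW}$ of the paper's $U_qSU(2)_1$ example). Since $s,t$ generate $\operatorname{SL}(2,\mathbb{Z})$, equality of these two pairs of matrices gives $V^{\operatorname{even}}\oplus V^{\operatorname{odd}}\cong\mathcal{P}_{h+1}\oplus\mathcal{V}_h$ as projective $\operatorname{SL}(2,\mathbb{Z})$-modules.

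The step I expect to be the main obstacle is exactly this last matching: tracking the index substitution, the rescaling of basis vectors, the overall Gauss-sum/central-charge phase, and the choice of $s$ versus $s^{-1}$ so that the cosine and sine blocks literally coincide with the matrices recorded for $\mathcal{P}_{h+1}$ and $\mathcal{V}_h$, rather than merely being conjugate to them by a monomial matrix. All of this is elementary, but it requires careful bookkeeping with roots of unity and with the normalization conventions used in Thm.~\ref{Kerler}; the conceptual content---the $\iota$-eigenspace splitting and the dimension count---is the easy part.
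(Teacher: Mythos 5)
Your overall strategy---split $\mathbb{C}^{\mathbb{Z}/l\mathbb{Z}}$ into the symmetric and antisymmetric parts under $x\mapsto -x$, obtain cosine and sine blocks, and match them against the matrices recorded in Thm.~\ref{Kerler}---is essentially the paper's proof. But there is a genuine gap at the very first step: the choice of quadratic form. The lemma is existential in $Q$, and the choice that actually works (the one the paper makes) is $Q(a)=-ha^2/l$, for which $B(a,b)=-2hab/l\equiv ab/l \pmod{\mathbb{Z}}$, so that $S=(q^{ab})$ and $T=\operatorname{diag}(q^{-ha^2})$. You instead fix $Q(x)=x^2/l$. Your parenthetical remark that $Q_m(x)=mx^2/l$ with $m$ a square gives an isomorphic quadratic module is true but does not rescue this: $-h\equiv h+1\equiv 2^{-1}\pmod{l}$ is a square mod $l$ if and only if $2$ is, which fails whenever $l\equiv 3,5\pmod 8$. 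For such $l$ your pointed MTC is a genuinely different one, and the restricted representations do not match $\mathcal{P}_{h+1}$ and $\mathcal{V}_h$ even projectively.

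Concretely, take $l=5$, $h=2$. Your even block has $T$-eigenvalues $q^{a^2}$, $a=0,1,2$, i.e.\ exponents $\{0,1,4\}$, while $T_N/\kappa$ from Eq.~(\ref{Uqsl2THig}) has exponents $\{0,2,4\}$; no translate of $\{0,1,4\}$ modulo $5$ equals $\{0,2,4\}$, so no global phase, basis permutation, or rescaling can identify the two (projective equivalence forces the $T$-spectra to agree up to a single overall scalar). The odd block fails the same test: your exponents are $\{1,4\}$ versus $\{0,4\}$ for $T_V/\kappa$ in Eq.~(\ref{Uqsl2TSO3}), and again these are not translates. Your ``completing the square'' identity $\{j^2\}=\{j(j-1)+h^2\}$ is correct but irrelevant to this mismatch, and your $l=3$ sanity check passes only by accident ($\{0,1\}$ and $\{0,2\}$ happen to be translates mod $3$). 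The fix is one line---take $Q(a)=-ha^2/l$ as in the paper---but as written the final matching step, which you correctly flag as the delicate one, actually fails for infinitely many $l$ rather than merely requiring bookkeeping.
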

\begin{proof}
    Let $q$ be a primitive $l^{th}$ root of unity. Without loss of generality, let $q=e^{2\pi i/l}$.  Consider the quadratic form $Q: \mathbb{Z} / l \mathbb{Z} \rightarrow \mathbb{Q} / \mathbb{Z}$ defined as $Q(a)=-ha^2 / l$.   The modular data of $\mathcal C(\mathbb Z/l\mathbb Z, Q)$ is given by 
    $S=(q^{ab})_{0\le a, b<l}$, $T= \operatorname{diag}(q^{-ha^2})_{0\le a<l}$. 
    Let $w_0=v_0, w^+_i=\frac{1}{2}(v_i+v_{h+i})$ and $w_i^-=\frac{1}{2}(v_{h+1-i}-v_{2h+1-i})$, where $i=1,\cdots, h$. Then the span of $\{w_0, w^+_1, \cdots, w^+_{h}\}$ and $\{w^-_1, \ldots, w^-_{h}\}$ form invariant subspaces $V^{\operatorname{even}}$ and $V^{\operatorname{odd}}$, respectively. In terms of the change of basis, we have
   \begin{align*}
	S^{\operatorname{even}}=&
	\begin{pmatrix}
	1 & 1 &   & \cdots &   & 1\\
	2 & q+q^{-1} &   & \cdots &   & q^{h}+q^{-h}\\
	&   & \ddots &   &   &  \\
	\vdots & \vdots &   & q^{ab}+q^{-ab} &   & \vdots\\
	&   &   &   & \ddots &  \\
	2 & q^{h}+q^{-h} &   & \cdots &   & q^{h^2}+q^{-h^2}\\
	\end{pmatrix}_{{(h+1)\times (h+1)}}\\
	T^{\operatorname{even}}=& 
	\begin{pmatrix}
	1 & 0 &   & \cdots &   & 0\\
	0 & q^{-h} &   & \cdots &   & 0\\
	&   & \ddots &   &   &  \\
	\vdots & \vdots &   & q^{-ha^2} &   & \vdots\\
	&   &   &   & \ddots &  \\
	0 & 0 &   & \cdots &   & q^{-hh^2}\\
	\end{pmatrix}_{(h+1)\times (h+1)}
\end{align*}

\begin{align*}
S^{\operatorname{odd}}=&
\begin{pmatrix}
q-q^{-1} & q^2-q^{-2} &   & \cdots &   & q^{h}-q^{-h}\\
q^{2}-q^{-2}& q^{4}-q^{-4} &   & \cdots &   & q^{2h}-q^{-2h}\\
&   & \ddots &   &   &  \\
\vdots & \vdots &   & q^{ab}-q^{-ab} &   & \vdots\\
&   &   &   & \ddots &  \\
q^{h}-q^{-h} & q^{2h}-q^{-2h} &   & \cdots &   & q^{h^2}-q^{-h^2}\\
\end{pmatrix}_{h \times h}\\
\end{align*}
\begin{align*}
    T^{\operatorname{odd}}=&
\begin{pmatrix}
q^{-h} & 0 &   & \cdots &   & 0\\
0 & q^{-h2^2} &   & \cdots &   & 0\\
&   & \ddots &   &   &  \\
\vdots & \vdots &   &  q^{-ha^2} &   & \vdots\\
&   &   &   & \ddots &  \\
0 & 0 &   & \cdots &   & q^{ -hh^2}\\
\end{pmatrix}_{h \times h}
\end{align*} 

It is straightforward to check that up to a scalar, the $S$ matrices match with those in Eq. (\ref{Uqsl2SHig}) and Eq. (\ref{Uqsl2SSO3}).  Multiply $T^{\operatorname{even}}$ by $q^h$, then the $a$th term is $q^{h(1-a^2)}$, $a=0,\ldots, h$. On the other hand, we write the $a$th term for $T_N$ in Eq. (\ref{Uqsl2THig}) as $q^{\frac{1}{2}(a+1)(2h+a)}$ after dividing by $\kappa$. As $q^{\frac{1}{2}(a+1)(2h+a)}/q^{h(1-a^2)}=q^{\frac{1}{2}la(a+1)}=1$, we know $T^{\operatorname{even}}$ and $T_N$ are the same up to multiplying with a scalar. The same argument works for comparing $T^{\operatorname{odd}}$ and $T_V$.
\end{proof}

\begin{theorem}
     Let $l$ be an odd number and $q$ be a primitive $l^{th}$ root of unity. The $\operatorname{SL}(2,\mathbb Z)$-representation on the Higman ideal of $U_qsl(2)$ has kernel congruence subgroup of $\operatorname{SL}(2,\mathbb Z)$.
\end{theorem}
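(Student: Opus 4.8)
The plan is to deduce the statement from the congruence subgroup theorem for semisimple modular tensor categories, together with the identification of the Higman representation carried out in Lemma~\ref{lem:decomposition}. Write $l = 2h+1$, and let $\rho_{\operatorname{Hig}}$ denote the $\operatorname{SL}(2,\mathbb{Z})$-representation on $\operatorname{Hig}(U_qsl(2))$ induced by the Lyubashenko--Majid maps $S_{LM}, T_{LM}$; this is a genuine linear representation, since $S_{LM}^2 = s_H^{-1}$ squares to the identity on the center and $T_{LM}$ acts on $\operatorname{Hig}(U_qsl(2))$ by a diagonal matrix with root-of-unity entries (visible from the diagonal form of $T_N$ used in the proof of Lemma~\ref{lem:decomposition}). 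On the other side, consider the pointed modular tensor category $\mathcal{C}(\mathbb{Z}/l\mathbb{Z}, Q)$ with $Q(a) = -ha^2/l$: here $Q$ is non-degenerate because $2h$ is a unit modulo the odd number $l$ (as $\gcd(h,2h+1)=1$), the modular data is $S=(q^{ab})_{0\le a,b<l}$, $T=\operatorname{diag}(q^{-ha^2})$, and by Lemma~\ref{lem:decomposition} the associated $\operatorname{SL}(2,\mathbb{Z})$-module decomposes as $V^{\operatorname{even}}\oplus V^{\operatorname{odd}}$, with $V^{\operatorname{even}}$ agreeing with $\rho_{\operatorname{Hig}}$ up to a root-of-unity scalar on the generators (this is precisely the matching of $S^{\operatorname{even}}, T^{\operatorname{even}}$ with Eq.~(\ref{Uqsl2SHig}) and Eq.~(\ref{Uqsl2THig}) done there).

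The first main step is to apply the congruence subgroup theorem for modular tensor categories \cite{ng2010congruence, dong2015congruence} to $\mathcal{C}(\mathbb{Z}/l\mathbb{Z}, Q)$: its projective modular representation admits a linearization $\rho$ with $\ker\rho$ a congruence subgroup, so $\Gamma(N)\subseteq\ker\rho$ for some $N$. Alternatively, since $\rho$ is, up to normalization, the Weil representation $W(\mathbb{Z}/l\mathbb{Z},Q)$ of \cite{Nobs1}, it factors through $\operatorname{SL}(2,\mathbb{Z}/N'\mathbb{Z})$ for a suitable $N'$ — for composite $l$ one first splits $\mathbb{Z}/l\mathbb{Z}$ into prime-power summands by the CRT and uses the corresponding tensor factorization of $W$ — which gives the congruence statement directly.

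The second main step is descent to the summand together with the normalization adjustment. Since $\rho = V^{\operatorname{even}}\oplus V^{\operatorname{odd}}$ as linear representations, an element of $\operatorname{SL}(2,\mathbb{Z})$ lies in $\ker\rho$ iff it lies in both $\ker(V^{\operatorname{even}})$ and $\ker(V^{\operatorname{odd}})$; hence $\Gamma(N)\subseteq\ker\rho\subseteq\ker(V^{\operatorname{even}})$, so $\ker(V^{\operatorname{even}})$ is already a congruence subgroup. Finally, $\rho_{\operatorname{Hig}}$ and $V^{\operatorname{even}}$ are two linearizations of the same underlying projective $\operatorname{SL}(2,\mathbb{Z})$-representation, so they differ by a linear character $c:\operatorname{SL}(2,\mathbb{Z})\to\mathbb{C}^{\times}$; such a character factors through $\operatorname{SL}(2,\mathbb{Z})^{\mathrm{ab}}\cong\mathbb{Z}/12$, hence through $\operatorname{SL}(2,\mathbb{Z}/12\mathbb{Z})$, so $\Gamma(12)\subseteq\ker c$. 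Therefore $\ker\rho_{\operatorname{Hig}}\supseteq\ker(V^{\operatorname{even}})\cap\ker c\supseteq\Gamma(\operatorname{lcm}(N,12))$, so $\ker\rho_{\operatorname{Hig}}$ contains a principal congruence subgroup and is a congruence subgroup of $\operatorname{SL}(2,\mathbb{Z})$, as claimed.

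The only genuinely delicate point — everything else being a direct combination of Lemma~\ref{lem:decomposition} with the established congruence property of modular representations of (pointed) MTCs — is the normalization bookkeeping: one must verify that the separate ``up to a scalar'' matchings of the $S$- and $T$-blocks in the proof of Lemma~\ref{lem:decomposition} really do assemble into a single linear character of $\operatorname{SL}(2,\mathbb{Z})$, rather than a more general projective discrepancy, and track its order so as to enlarge the congruence level from $N$ to $\operatorname{lcm}(N,12)$. No new categorical input beyond the MTC congruence theorem is required.
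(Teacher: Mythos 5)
Your proposal is correct and follows essentially the same route as the paper: identify the Higman representation, via Lemma \ref{lem:decomposition}, with the summand $V^{\operatorname{even}}$ of the modular representation of the pointed MTC $\mathcal{C}(\mathbb{Z}/l\mathbb{Z},Q)$, and then invoke the congruence subgroup theorem for MTCs, noting that a direct summand of a congruence representation again has congruence kernel. The paper states this in one line; your additional bookkeeping of the scalar normalizations (two linearizations of the same projective representation differ by a character factoring through $\mathbb{Z}/12$) is a correct and worthwhile elaboration of a point the paper leaves implicit.
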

\begin{proof}
   By Lemma \ref{lem:decomposition}, we know such representation is a subrepresentation of the $\operatorname{SL}(2,\mathbb Z)$-representation associated with a  pointed MTC, which has a congruence kernel.
\end{proof}

\subsubsection{Doubled Nichols Hopf algebras}  
\begin{proposition}
    The $\operatorname{SL}(2,\mathbb Z)$-representation on the Higman ideal of $D \mathcal{K}_n$ has kernel congruence subgroup of $\operatorname{SL}(2, \mathbb Z)$ of level 2.
\end{proposition}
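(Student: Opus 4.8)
The plan is to show that the (linear) $\operatorname{SL}(2,\mathbb Z)$-representation $\rho$ on $\Hig(D\mcK_n)$ — for even $n$, the only case in which $D\mcK_n$ is factorizable ribbon — factors through the reduction homomorphism $\operatorname{SL}(2,\mathbb Z)\twoheadrightarrow \operatorname{SL}(2,\mathbb Z/2\mathbb Z)$, whose kernel is the principal congruence subgroup $\Gamma(2)$. Since $\operatorname{SL}(2,\mathbb Z/2\mathbb Z)\cong S_3$ is finite and $\rho$ is nontrivial, this immediately yields that $\ker\rho$ is a congruence subgroup of level exactly $2$.

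First I would recall from Section \ref{sec:doubleofnichols} the module structure of $D\mcK_n$ for even $n$: there are four simple objects $V_{\mathbf 1},V_{K\bar K},V_K,V_{\bar K}$ with $V_K,V_{\bar K}$ Steinberg and $P_{\mathbf 1}\sim_c P_{K\bar K}$, so the Cartan matrix has rank $3$ and $\Hig(D\mcK_n)$ is $3$-dimensional for every even $n$, with a basis coming from $\phi_{P_{\mathbf 1}}$, $\phi_{V_K}$, $\phi_{V_{\bar K}}$. Feeding the universal $R$-matrix, ribbon element, and the two distinguished bases of Section \ref{sec:doubleofnichols} into the formulas of Section \ref{sec:factorizableHA} produces explicit $3\times 3$ matrices $S_{CW}$ and $T_{CW}$; for $n=2$ these are the matrices displayed in the example above, and I expect the same shape to persist for all even $n$ up to the Cartan-normalization scalars. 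The key step is then the (routine but essential) verification of the three matrix identities
\begin{equation*}
S_{CW}^{2}=\mathrm{Id},\qquad T_{CW}^{2}=\mathrm{Id},\qquad (S_{CW}T_{CW})^{3}=\mathrm{Id}.
\end{equation*}
The first reflects $S_{LM}^{2}=s_H^{-1}$ together with the fact that the antipode acts as the identity on $\Hig(D\mcK_n)$, which one checks directly on the generators $K,\bar K,\xi_i,\bar\xi_i$; the second records that the twist eigenvalues on the three basis vectors are all $\pm 1$; the third is the $(st)^3=s^2$ relation specialized using $S_{CW}^2=\mathrm{Id}$ together with the triviality of the relevant Gauss sum.

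Granting these identities, the argument finishes by pure group theory. Because $\operatorname{SL}(2,\mathbb Z)$ admits the presentation $\langle s,t\mid s^4=(st)^6=1,\ s^2=(st)^3\rangle$, the assignments $s\mapsto S_{CW}$, $t\mapsto T_{CW}$ do define a genuine linear representation (no projective ambiguity survives), and it kills $s^2$ and $t^2$; equivalently, via the presentation $\operatorname{SL}(2,\mathbb Z/2\mathbb Z)\cong\langle\bar s,\bar t\mid\bar s^2=\bar t^2=(\bar s\bar t)^3=1\rangle\cong S_3$ and von Dyck's theorem, $\rho$ factors through $\operatorname{SL}(2,\mathbb Z/2\mathbb Z)$, hence $\Gamma(2)\subseteq\ker\rho$. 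Since $T_{CW}\ne\mathrm{Id}$, we have $\ker\rho\ne\operatorname{SL}(2,\mathbb Z)$, so the level is exactly $2$. Equivalently, one can package the whole argument as the $\operatorname{SL}(2,\mathbb Z)$-module decomposition $\Hig(D\mcK_n)\cong\bC_{\mathrm{triv}}\oplus N_1$, where $N_1$ is the $2$-dimensional level-$2$ Weil representation — the standard $2$-dimensional irreducible of $\operatorname{SL}(2,\mathbb Z/2\mathbb Z)$ — whose kernel is manifestly $\Gamma(2)$; this mirrors the reduction to pointed modular tensor categories used for $U_qsl(2)$.

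I expect the main obstacle to be the uniformity in $n$: the $n=2$ matrices are written down explicitly above, but to obtain the proposition for \emph{all} even $n$ one must establish in Section \ref{sec:doubleofnichols} that the $3$-dimensional Higman-ideal module and the resulting $S_{CW},T_{CW}$ are, up to harmless normalization factors, independent of $n$, so that the three matrix identities hold simultaneously for every even $n$. Once that bookkeeping is in place, the remainder is the standard finite-group theory of $\operatorname{SL}(2,\mathbb Z/2\mathbb Z)$ recalled above.
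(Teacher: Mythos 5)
Your argument is correct and, in its packaged form, coincides with the paper's: the paper's entire proof is a one-line citation of Theorem \ref{DKn-CW-decomp}, which decomposes the $\operatorname{SL}(2,\bZ)$-action on $\Hig(D\Kn)$ as $\bC_{\mathrm{triv}}\oplus N_1$ with $N_1$ the level-$2$ Weil representation (a faithful $2$-dimensional representation of $\operatorname{SL}(2,\bZ/2\bZ)\cong S_3$), whence the kernel is exactly $\Gamma(2)$. Your primary route --- verifying the relations $S_{CW}^2=T_{CW}^2=(S_{CW}T_{CW})^3=\mathrm{Id}$ on the explicit $3\times 3$ matrices and invoking the presentation of $\operatorname{SL}(2,\bZ/2\bZ)$ --- is an equivalent, more elementary path, and the uniformity in $n$ that you flag as the main obstacle is exactly what Lemma \ref{DKn-two-bases} and Equation (\ref{eq:DKnST}) supply: the matrices depend on $n$ only through the entries $2^{\pm(n-1)},2^{-n}$ and an overall sign, and the relations hold for all even $n$. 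One small caveat: with the normalization of Equation (\ref{eq:DKnST}) one gets $S_{CW}^2=\mathrm{Id}$ but $(S_{CW}T_{CW})^3=(-1)^{n/2}\,\mathrm{Id}$, so for $n\equiv 2\pmod 4$ the assignment $s\mapsto S_{CW},\ t\mapsto T_{CW}$ is a priori only a projective representation and your assertion that ``no projective ambiguity survives'' is slightly too strong. This is harmless for the conclusion --- either rescale $T_{CW}$ by $-1$ to restore $(ST)^3=S^2$ without disturbing $T^2=\mathrm{Id}$, or work with the kernel of the projective representation as the paper's surrounding conjecture does, since $-\mathrm{Id}$ is scalar and $-I\in\Gamma(2)$ --- but it should be stated.
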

\begin{proof}
    This follows from Theorem \ref{DKn-CW-decomp}.
\end{proof}

Based on the results on the modular data computations,
we propose the following conjecture:

\begin{conj}
Let $\mathcal{C}$ be a non-semisimple modular category and let $\rho:(\mathfrak{s}, \mathfrak{t}) \mapsto\left(S_{\mathrm{CW}}, T_{\mathrm{CW}}\right)$. Then the projective representation of $\operatorname{SL}(2,\mathbb Z)$ given by $\rho$ has kernel that is a congruence subgroup of level $\operatorname{ord}\left(T_{\mathrm{CW}}\right)$.

\end{conj}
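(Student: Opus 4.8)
A proof of this conjecture is not within reach of the methods of the present paper, but the two verified cases point to a concrete strategy, which we sketch. In both families the underlying mechanism is identical: the Higman ideal representation is realized inside the modular representation of a (pointed) modular tensor category, the congruence subgroup theorem for MTCs \cite{ng2010congruence,dong2015congruence} then applies, and the level is read off from the order of the $T$-matrix. The plan is to establish such a realization for an arbitrary NSS MC $\mathcal C$.

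The first step is to record that $(S_{CW},T_{CW})$ defines a well-behaved projective $\operatorname{SL}(2,\mathbb Z)$-representation. This should follow from the Lyubashenko--Majid modular identities $(S_{LM}T_{LM})^3=\kappa S_{LM}^2$ and $S_{LM}^2=s_H^{-1}$, from $S_{LM}^4=\operatorname{id}$ on $Z(H)$, and from the fact that $\operatorname{Hig}(H)$ is an $\operatorname{SL}(2,\mathbb Z)$-submodule and a direct summand of $Z(H)$ \cite{cohen2008characters,lachowska2021remarks}. In particular $T_{CW}$ has finite order $N:=\operatorname{ord}(T_{CW})$, is diagonalizable with roots of unity as eigenvalues, and $S_{CW}^2$ is (proportional to) an involution of the Higman ideal exchanging its two distinguished bases.

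The crux is to prove that the Higman ideal representation of $\mathcal C$ is isomorphic to a subrepresentation of the modular representation of a semisimple modular category $\mathcal D$. In both verified cases $\mathcal D$ is pointed: for $U_qsl(2)$ at a primitive $l$-th root of unity, $\mathcal D=\mathcal C(\mathbb Z/l\mathbb Z,Q)$ and the Higman representation is the even summand $V^{\operatorname{even}}$ of Lemma~\ref{lem:decomposition}; for $D\mathcal K_n$ with $n$ even, Theorem~\ref{DKn-CW-decomp} exhibits it as $\mathbb C_{\mathrm{triv}}$ together with a level $2$ Weil representation $\mathcal C(M,Q)$ attached to a $2$-torsion quadratic module. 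One expects $\mathcal D$ in general to be assembled from the semi-simplification $\overline{\mathcal C}$ (or an ``even'' part of it) together with pointed pieces, in analogy with the full-center decompositions such as $5\,\mathbb C_{\mathrm{triv}}\oplus N_1\oplus(\mathbb C^2_{\mathrm{std}})^{\otimes 2}$ for $D\mathcal K_2$; the diagonalization of the mixed fusion rules by $S_{CW}$ \cite[Proposition~8.8]{gainutdinov2020projective} provides the eigenbasis in which one would attempt a Verlinde-type reconstruction of $\mathcal D$. Granting this, the congruence subgroup theorem for MTCs yields $\Gamma(\operatorname{ord}(T_{\mathcal D}))\subseteq\ker$ with level exactly $\operatorname{ord}(T_{\mathcal D})$; since subrepresentations and direct sums of congruence representations are again congruence and $\operatorname{ord}(T_{CW})$ divides $\operatorname{ord}(T_{\mathcal D})$, it remains only to verify, as in the two examples, that the level of the Higman representation is exactly $\operatorname{ord}(T_{CW})$.

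The main obstacle is precisely this realization step. The mixed fusion matrices $N^i(m)$ form only a based \emph{module} over the Grothendieck ring, not a based ring, so there is no Verlinde algebra from which to reconstruct $\mathcal D$ directly; moreover $S_{CW}$ is in general neither symmetric nor unitary, as the $3\times 3$ matrix in the $D\mathcal K_2$ computation illustrates. An alternative would be to bypass $\mathcal D$ and adapt the Galois-theoretic proof of the congruence subgroup theorem in \cite{dong2015congruence} directly to $(S_{CW},T_{CW})$, but this presupposes control of the Galois action on the CW modular data, and even the abelian-ness of the field $K_{CW}=\mathbb Q(s_{ij})$ generated by its entries is currently open. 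A weaker but accessible intermediate goal is to show that every Higman representation is symmetrizable; by \cite{ng2023symmetric} this is necessary for congruence, and it should follow from the structure recorded above.
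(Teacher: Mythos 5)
This statement is a conjecture; the paper offers no proof of it, only the two verified families ($U_qsl(2)$ via Lemma \ref{lem:decomposition} and $D\mcK_n$ for even $n$ via Theorem \ref{DKn-CW-decomp}). Your proposal correctly recognizes this and does not claim a proof; the strategy you sketch — realize the Higman representation inside the modular representation of a pointed MTC (a Weil representation) and invoke the congruence subgroup theorem for MTCs — is precisely the mechanism the paper uses in both verified cases, so as a research program it is faithful to the evidence presented.

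Two soft spots are worth naming. First, you assert that $T_{CW}$ has finite order and is diagonalizable with root-of-unity eigenvalues as a consequence of the Lyubashenko--Majid identities and the direct-summand property of $\HigRB$; this does not follow from those facts alone (on the full center $T_{LM}$ is genuinely non-diagonalizable, as the $\begin{pmatrix}1&1\\0&1\end{pmatrix}\otimes T_{semi}$ block in Theorem \ref{Kerler} shows), and finiteness of $\operatorname{ord}(T_{CW})$ in general is itself part of what the conjecture implicitly presupposes. Second, the final step you defer — that the level is exactly $\operatorname{ord}(T_{CW})$ — is actually the easy part: for any representation of $\operatorname{SL}(2,\mbbZ)$ whose kernel is a congruence subgroup, the level necessarily equals the order of the image of $\mathfrak t$, so once congruence is established the level statement is automatic. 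The entire difficulty is concentrated in the realization step, which, as you note, is open; your suggestion to first establish symmetrizability via \cite{ng2023symmetric} as a necessary condition is a reasonable and genuinely accessible intermediate target not pursued in the paper.
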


\section{Small quantum groups $U_qsl(2)$}\label{sec:uqsl2}

The family of factorizable ribbon Hopf algebras $U_qsl(2)$, called small quantum groups, has been well-studied.  By examining explicitly their modules and resulting modular categories, we identify their Higman ideals with Weil representations.

\subsection{Small quantum groups $U_qsl(2)$}
Let $l=2h+1\geq 3$ be an odd natural number and $q$ be
a primitive $l^{th}$ root of unity. The small quantum group
$U_qsl(2)$ is a Hopf algebra generated by $E$, $F$, and $K$ with the
relations
$$E^l=F^l=0,\ \ K^l=1$$ and the Hopf algebra structure given by
$$KE=q^2EK,\ \ KF=q^{-2}FK,\ \ [E, F]=\frac{K-K^{-1}}{q-q^{-1}},$$
$$\Delta(E)=1\otimes E+E\otimes K,\ \ \Delta(F)=K^{-1}\otimes
F+F\otimes 1, \ \ \Delta(K)=K\otimes K,$$
$$\epsilon(E)=\epsilon(F)=0,\ \ \epsilon(K)=1,$$ $$S(E)=-EK^{-1},\ \ S(F)=-KF, \ \ S(K)=K^{-1}.$$
The PBW-basis of $U_qsl(2)$ are given by $\{F^mE^nK^k\}$ for $0\leq
m,n,k\leq l-1$. So the dimension of $U_qsl(2)$ is $l^3$. 

By induction,
$$\Delta(F^mE^nK^k)=\sum_{r=0}^m\sum_{s=0}^nq^{2(n-s)(r-m)+r(m-r)+s(n-s)}{m\brack r}{n\brack s}F^rE^{n-s}K^{r-m+k}\otimes
F^{m-r}E^sK^{n-s+k}.$$ 
The integrals $\lambda$ and $\Lambda$ are given by
$$\lambda(F^mE^nK^k)=\frac{1}{\zeta}\delta_{m,l-1}\delta_{n,l-1}\delta_{k,1},~~\text{and}~~\Lambda=\zeta F^{l-1}E^{l-1}\sum_{j=0}^{l-1}K^j,$$
where
$\zeta=\frac{\sqrt{l}}{([l-1]!)^2}$ is for normalization.

The small quantum group $U_qsl(2)$ is a ribbon Hopf algebra with a universal $R$-matrix \cite[Theorem IX.7.1.]{kassel2012quantum}
$$R=\frac{1}{l}\sum_{0\leq m,i,j\leq l-1}\frac{(q-q^{-1})^m}{[m]!}q^{\frac{m(m-1)}{2}+2m(i-j)-2ij}E^mK^i\otimes F^mK^j.$$
and a ribbon element \cite[Proposition XIV.6.5.]{kassel2012quantum}
$$\theta=\frac{1}{l}(\sum_{r=0}^{l-1}q^{hr^2})(\sum_{0\leq m,j\leq l-1}\frac{(q-q^{-1})^m}{[m]!}(-1)^mq^{-\frac{1}{2}m+mj+\frac{1}{2}(j+1)^2}F^mE^mK^j)$$

The Drinfeld $Q$-matrix $Q=R_{21}R$ is
$$Q=\frac{1}{l}\sum_{0\leq m,n,i,j\leq l-1}\frac{(q-q^{-1})^{m+n}}{[m]![n]!}q^{\frac{m(m-1)}{2}+\frac{n(n-1)}{2}-m^2-mj+mi+2nj-2ni-ij}F^mE^nK^j\otimes E^mF^nK^i.$$
When $Q$ is represented as $Q=\sum_Im_I\otimes n_I$, $\{m_I\}$ and
$\{n_I\}$ are two bases of $U_qsl(2)$. It follows that the small quantum group $U_qsl(2)$ is a factorizable ribbon Hopf
algebra.

\subsubsection{Radford isomorphism}
The Radford isomorphism $\phi_{R}: H^*\rightarrow H$ is given by 
\begin{align*}
\phi_R(\alpha)=\Lambda\leftharpoonup \alpha=(\alpha\otimes id)\Delta(\Lambda)=\sum_{(\Lambda)}\alpha(\Lambda_{(1)})\Lambda_{(2)},
\end{align*}
which is an isomorphism between left $H$-modules $H^*$ and $H$. The left action of $H$ on itself is given by left multiplication and the left
action of $H$ on $H^*$ is given by $h(\beta)=\beta\leftharpoonup S(h)$.

Note that the Radford isomorphism and the Frobenius map are inverse of each other.

\subsubsection{The center of $U_qsl(2)$}
Let $ {Z}$ denote the center of $U_qsl(2)$. In \cite{Kerler1}, Kerler constructed a canonical base for ${Z}$ by study the Casimir element of $U_qsl(2)$, which is
$$C_H=EF+\frac{q^{-1}K+qK^{-1}}{(q-q^{-1})^2}=FE+\frac{qK+q^{-1}K^{-1}}{(q-q^{-1})^2},$$
and satisfies the minimal polynomial $\Psi(C_H)=0$ where
$$\Psi(x)=\prod_{j=1}^l(x-b_j),\ \ \ \ b_j=\frac{q^{j}+q^{-j}}{(q-q^{-1})^2}$$
Using the polynomials $\phi_j(x)=\prod_{b_i\neq
	b_j}(x-b_i), \ j=0,\ldots, h$, any polynomial $R(C_H)$ in $C_H$ can be expressed in the forms of
$$R(C_H)=\sum_{j=0}^hR(b_j)P_j+\sum_{j=1}^{h}R'(b_j)N_j,$$
where
$$P_j=\frac{1}{\phi_j(b_j)}\phi_j(C_H)-\frac{\phi_j'(b_j)}{\phi_j(b_j)^2}(C_H-b_j)\phi_j(C_H), \ \ j=0,\ldots, h$$
$$N_j=\frac{1}{\phi_j(b_j)}(C_H-b_j)\phi_j(C_H), \ \ j=1,\ldots, h$$
This can be understood as some kind of Lagrange interpolation.

In order to describe $ {Z}$, we introduce the
projections
$$\pi^{+}_j=\frac{1}{l}\sum_{n=0}^{j-1}\sum_{i=0}^{l-1}q^{(2n-j+1)i}K^i,\ \ \pi^{-}_j=\frac{1}{l}\sum_{n=j}^{l-1}\sum_{i=0}^{l-1}q^{(2n-j+1)i}K^i, \ \ j=1,\ldots, h$$
Let $N_j^+=\pi^{+}_jN_j$ and $N_j^-=\pi^{-}_jN_j$ for $j=1,\ldots,h$.

\begin{prop}
	\cite{Kerler1}~The center ${Z}$ of $U_qsl(2)$ is a $3h+1$ dimensional
	commutative algebra with basis $\{P_i, N^{+}_j,
	N^{-}_j~|~i=0,\ldots,h;j=1,\ldots,h\}$ such that
	$$P_iP_j=\delta_{ij}P_j,\ \ P_iN^{\pm}_j=\delta_{ij}N^{\pm}_j,\ \ N^{\pm}_iN^{\pm}_j=N^{\pm}_iN^{\mp}_j=0$$
\end{prop}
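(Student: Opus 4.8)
The plan is to determine $Z := Z(U_qsl(2))$ in two passes — first the subalgebra generated by the Casimir $C_H$, then the extra central directions visible only through $K$ — and then read the multiplication off the block decomposition of $A := U_qsl(2)$. For the first pass, conjugating the PBW monomials by $K$ gives $K(F^mE^nK^k)K^{-1}=q^{2(n-m)}F^mE^nK^k$, so every central element is a combination of the $F^mE^mK^k$; imposing in addition commutation with $E$ and $F$ then reduces the computation of $\dim Z$ to an explicit finite linear-algebra problem, which I expect to give $3h+1$. In parallel, $C_H$ is central with $\Psi(C_H)=0$ and $\Psi(x)=(x-b_0)\prod_{j=1}^h(x-b_j)^2$, so the interpolation identity $R(C_H)=\sum_{j=0}^h R(b_j)P_j+\sum_{j=1}^h R'(b_j)N_j$ already shows that $P_0,\dots,P_h,N_1,\dots,N_h$ lie in $Z$ and span $\mathbb{C}[C_H]$; applying that identity to products of the polynomials defining the $P_j$ and $N_j$ and reading off their values and first derivatives at the $b_k$ by the Leibniz rule gives $P_iP_j=\delta_{ij}P_j$, $P_iN_j=\delta_{ij}N_j$ and $N_iN_j=0$. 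In particular $N_j^2=0$, and $C_H$ acts on the $j$-th primitive block as $b_j\cdot\mathrm{id}+N_j$ with $N_j$ square-zero.

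For the second pass I would use the block structure. A central element acts on each simple by a scalar, and exactly one simple has $C_H$-eigenvalue $b_0$ — the unique Steinberg simple — so it spans its own block $\cong M_{d_0}(\mathbb{C})$ and contributes precisely the central line $\mathbb{C}P_0$; for each $j=1,\dots,h$ the two simples on which $C_H$ acts by $b_j$ lie in a common block $B_j$, cut out by $P_j$, whose center is $3$-dimensional with $2$-dimensional radical. The Casimir supplies only one of the two radical directions inside $B_j$ — namely $N_j$, its nilpotent part — since $C_H$ cannot tell the two simples of $B_j$ apart; the missing direction is recovered from $K$, whose eigenvalues do separate those two simples. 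Concretely I set $N_j^\pm:=\pi_j^\pm N_j$. Since $q$ is a primitive $l$-th root of unity, $\pi_j^++\pi_j^-=\tfrac1l\sum_{n=0}^{l-1}\sum_{i=0}^{l-1}q^{(2n-j+1)i}K^i=1$, so $N_j^++N_j^-=N_j$; and $[N_j^\pm,K]=0$ because $\pi_j^\pm$ is a polynomial in $K$ while $N_j$ is central.

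The crux is that each $N_j^\pm$ is itself central. Using $EK^i=q^{-2i}K^iE$ (and $FK^i=q^{2i}K^iF$) one rewrites $E\pi_j^\pm=\widetilde{\pi_j^\pm}E$ for the twisted expression $\widetilde{\pi_j^\pm}$ obtained from $\pi_j^\pm$ by $K^i\mapsto q^{-2i}K^i$; since $N_j$ commutes with $E$, the condition $[N_j^\pm,E]=0$ becomes $(\widetilde{\pi_j^\pm}-\pi_j^\pm)N_j=0$, and similarly for $F$. This is exactly the identity the sums $\pi_j^\pm=\tfrac1l\sum_n\sum_i q^{(2n-j+1)i}K^i$ were engineered to satisfy: $\widetilde{\pi_j^\pm}$ and $\pi_j^\pm$ differ only in the two outermost $K$-eigenspaces, and $N_j$ annihilates those because $N_j$ is supported on the $K$-weights of the simple constituents of $B_j$. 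I would verify this either from an explicit monomial form of $N_j$ or from the $K$-weight decomposition of $B_j$. This identity, together with the clean identification of the blocks $B_j$ needed for the sharp count $\dim Z=3h+1$, is the step I expect to carry the real technical weight.

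Finally I would assemble the basis. The $h+1$ orthogonal idempotents $P_i$ and the $2h$ nilpotents $N_j^\pm$ are linearly independent — the $N_j^\pm$ lie in the radical of $Z$, are supported on distinct blocks for distinct $j$, and within $B_j$ are kept apart by the disjoint $K$-eigenvalue supports of $\pi_j^+$ and $\pi_j^-$ — so $\dim Z\ge 3h+1$, and combined with the first-pass bound (or with $\dim Z=\dim Z(M_{d_0}(\mathbb{C}))+\sum_{j=1}^h\dim Z(B_j)=1+3h$) this is a basis. The stated relations then read off: $P_iP_j=\delta_{ij}P_j$ and $P_iN_j^\pm=\delta_{ij}N_j^\pm$ from the block supports; and $N_i^\pm N_j^\pm=N_i^\pm N_j^\mp=0$ since products of $N$'s from different blocks vanish, while inside $B_j$ one has $(\pi_j^\pm N_j)(\pi_j^\mp N_j)=\pi_j^\pm\pi_j^\mp N_j^2=0$ and $(\pi_j^\pm N_j)^2=(\pi_j^\pm)^2N_j^2=0$, using $N_j^2=0$ and that $N_j$ commutes with the $\pi_j^{\pm}$. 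Commutativity is automatic, $Z$ being a center.
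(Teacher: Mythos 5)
This proposition is quoted by the paper from Kerler's work \cite{Kerler1} with no proof supplied, so there is no in-paper argument to compare against; I am judging your proposal on its own. Your architecture is the standard and correct one: the interpolation identity in $\mathbb{C}[C_H]$ produces the $P_i$ and $N_j$ with the right relations, the block decomposition (Steinberg block plus the $h$ blocks $\{V_j,V_{l-j}\}$) bounds $\dim Z$ by $1+3h$, and the remaining central directions are the $\pi_j^{\pm}N_j$. The linear independence argument and the final multiplication table are fine, \emph{granted} that the $N_j^{\pm}$ are central.

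That centrality step — the one you yourself identify as carrying the technical weight — is where the proposal has a genuine error. Your reduction of $[\pi_j^{\pm}N_j,E]=0$ to ``$(\widetilde{\pi_j^{\pm}}-\pi_j^{\pm})N_j=0$'' drops a factor of $E$: writing $p_n=\tfrac1l\sum_i q^{(2n-j+1)i}K^i$ for the projection onto the $K$-eigenvalue $q^{j-1-2n}$, the commutator is $(p_{j-1}-p_{-1})EN_j$, and the identity \emph{without} the $E$ is false: $p_{j-1}N_j\neq 0$, since $N_jb^{(j)}_{j-1}=a^{(j)}_{j-1}$ lies in the $K=q^{1-j}$ eigenspace. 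The justification you offer — that $N_j$ is supported on the $K$-weights of the simple constituents of $B_j$ — cannot close this, because the weights of $V_j$ and $V_{l-j}$ together exhaust all $l$ eigenvalues of $K$, so that support condition is vacuous. The correct mechanism genuinely uses the extra $E$ (resp.\ $F$): the image of left multiplication by $N_j$ is the sum of socles, a semisimple module isomorphic to $jV_j\oplus(l-j)V_{l-j}$, and $E$ applied to each simple summand misses its lowest-weight line; the two eigenvalues $q^{1-j}$ and $q^{j+1}$ on which $\pi_j^{\pm}$ and $\widetilde{\pi_j^{\pm}}$ differ are precisely the lowest weights of $V_j$ and $V_{l-j}$, whence $(p_{j-1}-p_{-1})EN_j=0$, and dually for $F$ with the highest weights. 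Repair the step along these lines (or by direct computation on the explicit bases of $P_j$ and $P_{l-j}$) and the rest of your argument goes through; as written, it reduces to a false identity. A secondary, smaller gap: the claim that each non-Steinberg block has a $3$-dimensional center is asserted rather than derived, and is needed for the upper bound $\dim Z\le 3h+1$.
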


\begin{prop}
	\cite{Kerler1}~The ribbon element is decomposed in terms of the canonical central
	elements as
	$$\theta=\sum_{r=0}^{h}(-1)^{r+1}q^{-\frac{1}{2}(r^2-1)}P_r-(q-q^{-1})\sum_{r=1}^{h}(-1)^{r+1}q^{-\frac{1}{2}(r^2-1)}(\frac{l-r}{[l-r]}N^{+}_r+\frac{r}{[r]}N^{-}_r)$$
\end{prop}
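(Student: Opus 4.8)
Since the ribbon element $\theta$ is central it lies in $Z$, so by the preceding proposition $\theta=\sum_{r=0}^{h}\alpha_rP_r+\sum_{r=1}^{h}\bigl(\beta_rN_r^{+}+\gamma_rN_r^{-}\bigr)$ for uniquely determined scalars, and it remains to compute them. The left regular representation ${}_HH$ is faithful and decomposes as $\bigoplus_{s=1}^{l}(\dim X_s)\,P(X_s)$, where $X_1,\dots,X_l$ are the simples ($\dim X_s=s$, $X_l$ Steinberg) and $P(X_s)$ are their projective covers; hence two central elements agree iff they act identically on each $P(X_s)$, and it suffices to match $\theta$ against the claimed expression on every $P(X_s)$. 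Recall the block structure: $\{X_l\}$ is a semisimple block whose central idempotent is $P_0$, and for $1\le r\le h$ the pair $\{X_r,X_{l-r}\}$ spans a non-semisimple block with local center $\operatorname{span}\{P_r,N_r^{+},N_r^{-}\}$, unit $P_r$ and nilpotents $N_r^{\pm}=\pi_r^{\pm}N_r$; on such a block any central element acts as a scalar plus a nilpotent $H$-endomorphism of $P(X_r)\oplus P(X_{l-r})$ landing in the socles, with $\pi_r^{\pm}$ separating the two socle directions by $K$-eigenvalue.

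\textbf{Scalar coefficients.} A central element acts on a simple module by a scalar, while a central nilpotent acts there by $0$; thus $\alpha_r$ is the ribbon (twist) eigenvalue of $\theta$ on the simples of block $r$. Apply the explicit formula for $\theta$ to a highest-weight vector $v\in X_s$ (so $Ev=0$, $Kv=q^{s-1}v$): since $K$ commutes with every $F^mE^m$ and $E^mv=0$ for $m\ge1$, only the $m=0$ term survives, giving $\theta v=\frac1l\bigl(\sum_{r=0}^{l-1}q^{hr^{2}}\bigr)\bigl(\sum_{j=0}^{l-1}q^{\frac12 j^{2}+sj+\frac12}\bigr)v$. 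Completing the square, the inner sum equals $q^{\frac12-\frac12 s^{2}}\sum_j q^{\frac12(j+s)^{2}}$, and both it and $\sum_r q^{hr^{2}}$ are classical Gauss sums; evaluating them, using $q^{l}=1$ and $q^{l/2}=-1$, collapses everything to $\alpha_r=(-1)^{r+1}q^{-\frac12(r^{2}-1)}$ for $r=0,\dots,h$ (one checks along the way that $X_r$ and $X_{l-r}$ yield the same value, and that $r=0$ reproduces the twist on the Steinberg module). This fixes the $P_r$-coefficients.

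\textbf{Nilpotent coefficients.} For $1\le r\le h$ the scalars $\beta_r,\gamma_r$ are controlled by the off-diagonal action of $\theta$ on the $2l$-dimensional projectives $P(X_r)$ and $P(X_{l-r})$, with Loewy series $X_r/(X_{l-r}\oplus X_{l-r})/X_r$ and $X_{l-r}/(X_r\oplus X_r)/X_{l-r}$. Using again that $K$ commutes with each $F^mE^m$, write $\theta=\frac1l\bigl(\sum_{r}q^{hr^{2}}\bigr)\sum_{m\ge0}\frac{(q-q^{-1})^m(-1)^mq^{-m/2}}{[m]!}\,F^mE^m\bigl(\sum_{j}q^{mj+\frac12(j+1)^{2}}K^{j}\bigr)$, and rewrite $F^mE^m=\prod_{i=0}^{m-1}\bigl(C_H-\tfrac{q^{2i+1}K+q^{-(2i+1)}K^{-1}}{(q-q^{-1})^{2}}\bigr)$, so that $\theta$ becomes an explicit function of the Casimir $C_H$ and of $K$ — it is precisely this genuine $K$-dependence that places $\theta$ outside $\mathbb C[C_H]$ and forces $\beta_r\ne\gamma_r$ in general. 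One then evaluates $\theta$ by combining the interpolation formula $R(C_H)=\sum_j R(b_j)P_j+\sum_j R'(b_j)N_j$ with the projectors $\pi_r^{\pm}$ (equivalently, computes $\theta$ on a vector at the top of the radical filtration of $P(X_r)$ and compares with the known action of $N_r^{\pm}$); the resulting ``derivative'' terms carry the asymmetric weights $\frac{l-r}{[l-r]}$ on $N_r^{+}$ and $\frac{r}{[r]}$ on $N_r^{-}$, multiplied by the common prefactor $-(q-q^{-1})(-1)^{r+1}q^{-\frac12(r^{2}-1)}$, which is the stated decomposition.

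\textbf{Main obstacle.} The scalar part is a routine Gauss-sum evaluation; the work, and the room for sign/phase errors, lives in the nilpotent part. One must rewrite $\theta$ cleanly through $C_H$ and $K$, track the $q^{1/2}$-periodicity carefully when reindexing the theta-like sums in $K$, and — most delicately — normalize $N_r^{\pm}=\pi_r^{\pm}N_r$ so that the two distinct derivative weights $\frac{l-r}{[l-r]}$ and $\frac{r}{[r]}$ attach to the correct generators. This is essentially Kerler's original computation; checking the outcome against the case $l=3$, where it must be compatible with the $U_qSU(2)_1$ data recorded above, is a useful consistency test.
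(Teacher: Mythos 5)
The paper offers no proof of this proposition; it is quoted from Kerler's work, so there is nothing internal to compare your argument against. Your overall strategy --- expand $\theta$ in the basis $\{P_r, N_r^{\pm}\}$ of the center, pin down the semisimple coefficients by evaluating on highest-weight vectors of the simples, and extract the nilpotent coefficients from the action on the $2l$-dimensional projective covers --- is the right one, and is essentially Kerler's. The scalar half of your argument is in order: only the $m=0$ term of the explicit ribbon element survives on a highest-weight vector, the resulting Gauss sums evaluate to $(-1)^{s+1}q^{-\frac{1}{2}(s^2-1)}$, and the consistency check that $X_r$ and $X_{l-r}$ receive the same scalar works out precisely because $q^{1/2}$ must be taken as a primitive $2l$-th root (so $q^{-l^2/2}=-1$ cancels the sign flip $(-1)^{l-r+1}=-(-1)^{r+1}$); you flag the relevant subtlety correctly.

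The gap is in the nilpotent half, which is where all the content of the proposition lives. You correctly identify that $\theta\notin\mathbb{C}[C_H]$ because of the residual $K$-dependence, and that this is what forces the $N_r^{+}$ and $N_r^{-}$ coefficients to differ; but you then assert, rather than derive, that ``the resulting derivative terms carry the asymmetric weights $\frac{l-r}{[l-r]}$ on $N_r^{+}$ and $\frac{r}{[r]}$ on $N_r^{-}$.'' Nothing in the write-up actually produces these two numbers. You would need to (i) resum $\sum_j q^{mj+\frac{1}{2}(j+1)^2}K^j$ against $F^mE^m=\prod_i\left(C_H-\frac{q^{2i+1}K+q^{-(2i+1)}K^{-1}}{(q-q^{-1})^2}\right)$ into a polynomial in $C_H$ with coefficients in $\mathbb{C}[K]$, (ii) apply the interpolation formula $R(C_H)=\sum_j R(b_j)P_j+\sum_j R'(b_j)N_j$ to isolate the $N_r$-component as a derivative, and (iii) push the $K$-dependent part through the spectral projectors $\pi_r^{\pm}$ --- using, e.g., that $N_r^{+}$ is supported on $P_r$ (via $N_r^{+}b_0^{(r)}=a_0^{(r)}$) while $N_r^{-}$ is supported on $P_{l-r}$ --- to split that derivative into the two stated weights. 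Step (iii) is exactly where $\frac{l-r}{[l-r]}$ versus $\frac{r}{[r]}$ (note $[l-r]=-[r]$) must emerge, and it is skipped. As written, the proposal establishes the $P_r$-coefficients and the general shape of the answer, but not the proposition itself.
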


\subsection{Representations of $U_qsl(2)$}
Now we review the representation theory of $U_qsl(2)$. First, there are $l$ irreducible $U_qsl(2)$-modules $V_r$'s for $r=1,\ldots,l$. $V_r$ is spanned by $v^{(r)}_n$, $0\leq n\leq r-1$
with $v^{(r)}_0$ the highest weight vector. The $U_qsl(2)$-action is
given by
$$Kv^{(r)}_n=q^{r-1-2n}v^{(r)}_n$$
$$Ev^{(r)}_n=[n][r-n]v^{(r)}_{n-1}$$
$$Fv^{(r)}_n=v^{(r)}_{n+1}$$
where we set $v^{(r)}_0=v^{(r)}_r=0$. In particular, $V_1$ is the
trivial module and $V_l$ is also a projective module.

There are $l-1$ indecomposable projective $U_qsl(2)$-modules $P_r$'s for $r=1,\ldots,l-1$. $P_r$ is generated by $\{x^{(r)}_k,y^{(r)}_k\}_{k=0}^{l-r-1}\cup\{a^{(r)}_n,b^{(r)}_n\}_{n=0}^{r-1}$. The action of
$U_qsl(2)$ on $P_r$ is given by
$$Kx^{(r)}_k=q^{l-r-1-2k}x^{(r)}_k,\ \ Ky^{(r)}_k=q^{l-r-1-2k}y^{(r)}_k,\ \ 0\leq k \leq l-r-1,$$
$$Ka^{(r)}_n=q^{r-1-2n}a^{(r)}_n,\ \ Kb^{(r)}_n=q^{r-1-2n}b^{(r)}_n,\ \ 0\leq n \leq r-1,$$
$$Ex^{(r)}_k=[k][l-r-k]x^{(r)}_{k-1}, \ \ 0\leq k \leq l-r-1, \ \ (with \ \ x^{(r)}_{-1}=0)$$
$$Ey^{(r)}_k=[k][l-r-k]y^{(r)}_{k-1}, \ \ 0\leq k \leq l-r-1, \ \ Ey^{(r)}_0=a^{(r)}_{r-1},$$
$$Ea^{(r)}_n=[n][r-n]a^{(r)}_{n-1}, \ \ 0\leq n \leq r-1, \ \ (with \ \ a^{(r)}_{-1}=0)$$
$$Eb^{(r)}_n=[n][r-n]b^{(r)}_{n-1}+a^{(r)}_{n-1}, \ \ 0\leq n \leq r-1, \ \ Eb^{(r)}_0=x^{(r)}_{p-r-1},$$
$$Fx^{(r)}_k=x^{(r)}_{k+1}, \ \ 0\leq k \leq l-r-2, \ \ Fx^{(r)}_{l-r-1}=a^{(r)}_0$$
$$Fy^{(r)}_k=y^{(r)}_{k+1}, \ \ 0\leq k \leq l-r-1, \ \ (with \ \ y^{(r)}_{l-r}=0)$$
$$Fa^{(r)}_n=a^{(r)}_{n+1}, \ \ 0\leq n \leq r-1, \ \ (with \ \ a^{(r)}_r=0)$$
$$Fb^{(r)}_n=b^{(r)}_{n+1}, \ \ 0\leq n \leq r-2, \ \ Fb^{(r)}_{r-1}=y^{(r)}_0$$

Note that $\{x^{(r)}_k,y^{(r)}_k\}_{k=0}^{l-r-1}\cup\{a^{(r)}_n\}_{n=0}^{r-1}$ generates a submodule $W_r$
of $P_r$, and $\{x^{(r)}_k\}_{k=0}^{l-r-1}\cup\{a^{(r)}_n\}_{n=0}^{r-1}$ generates a submodule $M_r$
of $W_r$. We have a composition series:
\begin{eqnarray}\label{composition series}
P_r\supseteq W_r\supseteq M_r\supseteq V_r\supseteq \{0\}
\end{eqnarray}
with composition factors $V_r$, $V_{l-r}$, $V_{l-r}$, $V_r$.

For a $U_qsl(2)$-module $V$, its q-character is defined to be $\text{qCh}_{V}=Tr_{V}(K^{-1}?)$.
By $S^2(x)=KxK^{-1}$ for $x\in U_qsl(2)$, we know that $\text{qCh}_{V}\in \text{qCh}(U_qsl(2))$. For the irreducible $U_qsl(2)$-modules $V_1,\ldots, V_l$, the images of their $q$-characters in the center are
$$\phi(r):=\phi(\text{qCh}_{V_r})=\sum_{(\Lambda)}\text{Tr}_{V_r}(K^{-1}\Lambda_{(1)})\Lambda_{(2)},\ \ 1\leq r\leq l$$
\begin{prop}
	\begin{eqnarray*}
		\phi(r)&=&\frac{l\sqrt{l}}{[r]^2}N^{+}_r, \ \ 1\leq r \leq h\\
		\phi(r)&=&\frac{l\sqrt{l}}{[r]^2}N^{-}_{p-r}, \ \ h+1\leq r \leq 2h\\
		\phi(l)&=&l\sqrt{l}P_0
	\end{eqnarray*}
\end{prop}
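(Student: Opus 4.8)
The map $\phi$ here is the Radford isomorphism $\phi_R$ restricted to $q$-characters, so $\phi(r)=(\text{qCh}_{V_r}\otimes\mathrm{id})\,\Delta(\Lambda)$ with $\text{qCh}_{V_r}=\mathrm{Tr}_{V_r}(K^{-1}\,\cdot\,)$. The plan is to compute this pairing directly using the explicit integral $\Lambda=\zeta F^{l-1}E^{l-1}\sum_{j}K^j$ and the comultiplication formula for $\Delta(F^mE^nK^k)$, and then to match the resulting central element against Kerler's canonical basis $\{P_i,N_j^\pm\}$ using the two Propositions of \cite{Kerler1} quoted above.

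\textbf{Reduction to a single sum.} First I would expand $\Delta(\Lambda)=\zeta\sum_{j=0}^{l-1}\Delta(F^{l-1}E^{l-1}K^{j})$ via the displayed comultiplication formula with $m=n=l-1$, $k=j$, and apply $\text{qCh}_{V_r}$ to the first tensor leg. The key simplification is that $V_r$ is $K$-weight-graded and $F^{a}E^{b}K^{c}$ multiplies $K$-eigenvalues by $q^{2(b-a)}$; since $l$ is odd, its $K^{-1}$-twisted trace on $V_r$ vanishes unless $a=b$. In the double sum over $(r',s)$ this forces $s=l-1-r'$, and moreover only $0\le r'\le r-1$ survives because $E^{r'}$ annihilates $V_r$ for $r'\ge r$. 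What is left is a single sum
\[
\phi(r)=\zeta\sum_{j=0}^{l-1}\sum_{r'=0}^{r-1} q^{e(r',j)}{l-1\brack r'}{l-1\brack l-1-r'}\;\mathrm{Tr}_{V_r}\!\big(K^{-1}F^{r'}E^{r'}K^{\,r'-l+1+j}\big)\cdot F^{l-1-r'}E^{l-1-r'}K^{\,r'+j},
\]
where $e(r',j)$ is the exponent read off from the comultiplication formula.

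\textbf{Trace evaluation and identification.} On the weight basis one computes $\mathrm{Tr}_{V_r}(K^{-1}F^{r'}E^{r'}K^{c})=\sum_{n=r'}^{r-1}\big(\prod_{i=0}^{r'-1}[n-i][r-n+i]\big)q^{(r-1-2n)(c-1)}$, a finite $q$-power-weighted sum of products of quantum integers. For the leftover algebra elements I would use $FE=C_H-\tfrac{qK+q^{-1}K^{-1}}{(q-q^{-1})^2}$ together with the centrality of $C_H$ to rewrite $F^{a}E^{a}$ as a polynomial in $C_H$ with coefficients in $\mathbb C[K^{\pm1}]$, and observe that $\sum_{j}q^{cj}K^{r'+j}$ is, up to a power of $K$ and a factor of $l$, one of Kerler's projections $\pi_j^{\pm}$ (or the weight projector $\tfrac1l\sum_j K^j$). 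After collecting terms, $\phi(r)$ is expressed through $\phi_j(C_H)$, $(C_H-b_j)\phi_j(C_H)$ and the $\pi_j^{\pm}$ — exactly the building blocks of the $P_i$, $N_j$, $N_j^{\pm}$ — and Kerler's structure result then pins $\phi(r)$ down to a single basis vector: $N_r^{+}$ for $1\le r\le h$, $N_{l-r}^{-}$ for $h+1\le r\le 2h$, and $P_0$ for $r=l$ (the Steinberg case, where the block is semisimple so the character is sent to the block idempotent rather than a nilpotent). The overall scalar $\tfrac{l\sqrt l}{[r]^{2}}$ (resp.\ $l\sqrt l$) I would fix not by tracking every $q$-power but by a single normalization check — for $r=l$, $\phi(l)$ acts as a scalar on $V_l$, matching the scalar by which $l\sqrt l\,P_0$ acts; for $r<l$, comparing one matrix entry of $\phi(r)$ acting on $P_r$ with that of $N_r^{\pm}$ isolates the constant. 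Heuristically $\sqrt l$ comes from $\zeta$, an extra $l$ from $\sum_j$, the quantum binomials telescope against $([l-1]!)^{-2}$, and $[r]^{-2}$ falls out of the quantum-integer sum in the trace.

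The main obstacle is the quantum-combinatorial bookkeeping in the two reduction steps: controlling the $q$-exponents produced by the comultiplication formula at a primitive $l$-th root of unity, simplifying the quantum binomials ${l-1\brack r'}$ there via their root-of-unity evaluations, and carrying out the quantum-integer sum in the trace so that the clean factor $[r]^{-2}$ emerges. Recognizing the surviving expression as Kerler's basis element is conceptually forced by the block structure, but verifying it term-by-term against the explicit formulas for $P_j$, $N_j$, $\pi_j^{\pm}$ is where the computation is heaviest.
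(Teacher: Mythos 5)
Your plan follows the paper's proof essentially verbatim: plug the cointegral into the comultiplication formula, use the weight grading of $V_r$ (and $l$ odd) to collapse the twisted trace to a single sum of the form $\zeta\sum_{n,i,j}([i]!)^2 q^{j(r-1-2n)}{r-n+i-1\brack i}{n\brack i}F^{l-1-i}E^{l-1-i}K^j$, localize the result in Kerler's canonical basis for the center (the paper delegates this to the analogous calculation in Lemma 4.5.1 of [FGST]), and fix the scalar by evaluating on a distinguished vector --- the paper uses the action on $b_0^{(r)}$ together with $N_r^{+}b_0^{(r)}=a_0^{(r)}$, which is exactly your ``one matrix entry'' normalization. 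The only refinement worth noting is that to rule out an $N_r^{-}$ component one must also check that $\phi(r)$ annihilates $b_0^{(l-r)}$ (i.e., test both projectives in the block), not merely compare a single entry on $P_r$.
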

\begin{proof}
	Plugging the formula of cointegral $\Lambda$, we have
	$$\phi(r)=\zeta\sum_{n=0}^{r-1}\sum_{0\leq i,j\leq l}([i]!)^2q^{j(r-1-2n)}{r-n+i-1\brack i}{n\brack i}F^{l-1-i}E^{l-1-i}K^j$$
	By the same calculation for the lemma 4.5.1 in \cite{FGST}, one can see that $\phi(r)$ acts by zero on $P_{r'}$ for $r'\neq r$ and it is proportional to $N^{\pm}$. The proportionality coefficients are determined by the action of $\phi(r)$ on $b_0^{(r)}$ and $N_r^{+}b^{(r)}_0 = a^{(r)}_0$, $N_r^{-}b^{(l-r)}_0 = a^{(l-r)}_0$.
\end{proof}

By the composition series \eqref{composition series}, we know that $\text{qCh}_{P_r}=2\text{qCh}_{V_r}+2\text{qCh}_{V_{l-r}}$ for $r=1, \ldots, l-1$.  So $\{\phi(0)\}\cup\{\phi(r)+\phi(l-r)\}_{r=1}^h$ gives a basis for $\text{Hig}(U_qsl(2))$.

Recall that when restricted on $\text{qCh}(H)$, the Drinfeld map $\chi$ is an isomorphism of commutative algebras between $\text{qCh}(H)$ and the center $Z(H)$.

For the irreducible $U_qsl(2)$-modules $V_1,\ldots, V_l$, the images of their $q$-characters under the Drinfeld map are
$$\chi(r):=\chi(\text{qCh}_{V_r})=(\text{Tr}_{V_r}\otimes id)((K^{-1}\otimes1)Q),\ \ 1\leq r\leq l.$$

\begin{prop}
	$$\chi(r)=\sum_{n=0}^{r-1}\sum_{m=0}^n(q-q^{-1})^{2m}q^{-(m+1)(m+r-1-2n)}{r-n+m-1\brack m}{n\brack m}E^mF^mK^{r-1-2n+m}.$$
	In particular, $\chi(2):=\hat{C}=(q-q^{-1})^2C$.
\end{prop}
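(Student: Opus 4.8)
The plan is to compute $\chi(r)=(\mathrm{Tr}_{V_r}\otimes\mathrm{id})\big((K^{-1}\otimes 1)Q\big)$ directly from the explicit Drinfeld matrix $Q$ displayed above and then specialize to $r=2$. Writing $Q=\tfrac1l\sum_{m,n,i,j}\tfrac{(q-q^{-1})^{m+n}}{[m]![n]!}q^{e(m,n,i,j)}F^mE^nK^j\otimes E^mF^nK^i$, the first thing I would use is that the operator $F^mE^n$ shifts the $K$-weight by $2(n-m)$ while every weight space of $V_r$ is one-dimensional; concretely $F^mE^n$ sends $v^{(r)}_a\mapsto(\text{scalar})\,v^{(r)}_{a+m-n}$, so $\mathrm{Tr}_{V_r}\big(K^{-1}F^mE^nK^j\big)=0$ unless $m=n$. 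This collapses the quadruple sum to a triple sum over $m,i,j$, with $E^mF^nK^i$ in the second leg becoming $E^mF^mK^i$.

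Next I would evaluate the surviving traces. From $K^{-1}F^m=q^{2m}F^mK^{-1}$ and $K^{-1}E^m=q^{-2m}E^mK^{-1}$ the stray powers of $q$ cancel, so $K^{-1}F^mE^mK^j=F^mE^mK^{j-1}$; on the standard basis $\{v^{(r)}_a\}_{a=0}^{r-1}$ one has $F^mE^mv^{(r)}_a=\big(\prod_{k=0}^{m-1}[a-k][r-a+k]\big)v^{(r)}_a$, and a short manipulation of $q$-factorials identifies this product with $([m]!)^2{a\brack m}{r-a+m-1\brack m}$. Hence
$$\mathrm{Tr}_{V_r}\big(K^{-1}F^mE^mK^j\big)=([m]!)^2\sum_{a=0}^{r-1}q^{(j-1)(r-1-2a)}{a\brack m}{r-a+m-1\brack m},$$
and the factor $([m]!)^2$ cancels the $([m]!)^2$ in the denominator of the $m=n$ part of $Q$. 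I would then perform the $j$-sum: the $j$-dependence of the combined exponent is the additive character $q^{j(m-i+r-1-2a)}$ of $\mathbb Z/l\mathbb Z$, so $\sum_{j=0}^{l-1}$ produces $l\,\delta_{i\equiv m+r-1-2a}$, which cancels the $1/l$ in $Q$ and forces $K^i=K^{m+r-1-2a}$.

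It remains to collect the exponent of $q$. Setting $n=m$ in $e(m,n,i,j)$, adding the $(j-1)(r-1-2a)$ coming from the trace, discarding the $j$-linear part (handled above) and substituting $i=m+r-1-2a$, the remaining exponent simplifies to $-(m+1)(m+r-1-2a)$; renaming $a\mapsto n$ and noting ${n\brack m}=0$ for $m>n$ (equivalently, $\prod_{k=0}^{m-1}[a-k]$ acquires the zero factor $[0]$ as soon as $m>a$) truncates the $m$-sum at $n$, yielding the asserted formula. For $r=2$ only $(n,m)\in\{(0,0),(1,0),(1,1)\}$ contribute, giving $\chi(2)=q^{-1}K+qK^{-1}+(q-q^{-1})^2EF=(q-q^{-1})^2\big(EF+\tfrac{q^{-1}K+qK^{-1}}{(q-q^{-1})^2}\big)=(q-q^{-1})^2C$.

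The only real obstacle is bookkeeping: one must track every $q$-exponent correctly through the commutation relations, the trace, and the $j$-summation, and recognise the product $\prod_{k=0}^{m-1}[a-k][r-a+k]$ as a product of two $q$-binomials. Nothing here is conceptually deep, but the exponent $\tfrac{m(m-1)}2+\tfrac{n(n-1)}2-m^2-mj+mi+2nj-2ni-ij$ is easy to mishandle, so I would use the $r=2$ specialization, which must reproduce $(q-q^{-1})^2C$, as an arithmetic consistency check at the end.
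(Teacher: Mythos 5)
Your proof is correct and follows essentially the same route as the paper: both reduce $\chi(r)=(\mathrm{Tr}_{V_r}\otimes\mathrm{id})\bigl((K^{-1}\otimes 1)Q\bigr)$ to the trace identity $\mathrm{Tr}_{V_r}(F^mE^mK^j)=([m]!)^2\sum_{n}q^{j(r-1-2n)}{r-n+m-1\brack m}{n\brack m}$ and substitute into the explicit Drinfeld matrix $Q$. The only cosmetic difference is that the paper obtains the eigenvalues of $F^mE^m$ via the factorization $F^rE^r=\prod_{s=0}^{r-1}\bigl(C-\tfrac{q^{2r+1}K+q^{-2r-1}K^{-1}}{(q-q^{-1})^2}\bigr)$, whereas you compute them directly on the weight basis; your write-up also makes explicit the $m=n$ weight argument and the $j$-summation that the paper leaves implicit.
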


\begin{proof}
	Note that by induction
	$$F^rE^r=\prod_{s=0}^{r-1}(C-\frac{q^{2r+1}K+q^{-2r-1}K^{-1}}{(q-q^{-1})^2}),$$
	for $r<l$, Then
	$$Tr_{V_r}(F^mE^mK^l)=([m]!)^2\sum_{n=0}^{r-1}q^{l(r-1-2n)}{r-n+m-1\brack m}{n\brack m}.$$
\end{proof}
By observing the fusion rules, we find $\chi(r)$ can be calculated by
the Chebyshev polynomials of the second type $$U_r(2\cos
t)=\frac{\sin rt}{\sin t}$$ which satisfy the recursive relation
$xU_r(x)=U_{r-1}(x)+U_{r+1}(x)$ for $r\geq 2$. Then $\chi(r)=U_r(\hat{C})$ for $r=1,\ldots,l$.
The following proposition provides a formula expanding $\chi(r)$ in terms of the canonical basis of the center.
\begin{prop}
	For $r=1,\ldots,l-1,$
	\begin{eqnarray*}
		\chi(r)&=&\sum_{j=0}^h\frac{[jr]}{[j]}P_j(C)-\sum_{j=1}^{h}\frac{(r+1)[j(r-1)]-(r-1)[j(r+1)]}{[j]^3}N_j(C)\\
		\chi(l)&=&lP_0(C)+2l\sum_{j=1}^{h}\frac{1}{[j]^2}N_j(C)
	\end{eqnarray*}
\end{prop}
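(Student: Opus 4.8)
The plan is to reduce the whole statement to the interpolation formula already recorded in the excerpt: for any polynomial $R$ one has $R(C_H)=\sum_{j=0}^{h}R(b_j)P_j(C)+\sum_{j=1}^{h}R'(b_j)N_j(C)$ with $b_j=\frac{q^{j}+q^{-j}}{(q-q^{-1})^2}$, and, by the identity noted just before the statement, $\chi(r)=U_r(\hat C)$ with $\hat C=(q-q^{-1})^2C$. Hence it suffices to compute $R(b_j)$ and $R'(b_j)$ for the single polynomial $R(x)=U_r\big((q-q^{-1})^2x\big)$; that is, to evaluate the Chebyshev polynomial $U_r$ and its derivative at the points $q^{j}+q^{-j}$, and then to assemble the coefficients.

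First I would write $q=e^{i\theta}$ with $l\theta\in 2\pi\mathbb{Z}$, so that $q^{j}+q^{-j}=2\cos(j\theta)$ and the quantum integer $[n]=\sin(n\theta)/\sin\theta$. The identity $U_r(2\cos t)=\sin(rt)/\sin t$ then gives $R(b_j)=U_r(2\cos(j\theta))=\sin(rj\theta)/\sin(j\theta)=[rj]/[j]$ for $1\le j\le h$, while $R(b_0)=U_r(2)=\lim_{t\to 0}\sin(rt)/\sin t=r$, which is the $j=0$ value of $[jr]/[j]$ read as a limit; this produces the $P_j$-coefficients in the first displayed formula. For $\chi(l)=U_l(\hat C)$ the same evaluation gives $R(b_j)=\sin(lj\theta)/\sin(j\theta)=0$ for $j\ge 1$ (since $lj\theta\in 2\pi\mathbb{Z}$) and $R(b_0)=l$, which is the term $lP_0(C)$.

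For the $N_j$-coefficients I would differentiate $U_r(2\cos t)=\sin(rt)/\sin t$ in $t$ to obtain $U_r'(2\cos t)=\dfrac{\sin(rt)\cos t-r\cos(rt)\sin t}{2\sin^{3}t}$, apply the chain rule $R'(x)=(q-q^{-1})^2U_r'\big((q-q^{-1})^2x\big)$, substitute $t=j\theta$, and use $(q-q^{-1})^2=-4\sin^2\theta$ together with $\sin(mj\theta)=[mj]\sin\theta$ to get $R'(b_j)=-\dfrac{[rj](q^{j}+q^{-j})-r[j](q^{rj}+q^{-rj})}{[j]^{3}}$. The elementary identities $[a](q^{b}+q^{-b})=[a+b]+[a-b]$ and $[-n]=-[n]$ rewrite the numerator as $(r-1)[(r+1)j]-(r+1)[(r-1)j]$, so $R'(b_j)=-\dfrac{(r+1)[(r-1)j]-(r-1)[(r+1)j]}{[j]^{3}}$, which is precisely the claimed coefficient of $N_j(C)$. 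For $\chi(l)$, using $\sin(lj\theta)=0$ and $\cos(lj\theta)=1$, the same computation collapses to $R'(b_j)=2l/[j]^{2}$, giving the last term.

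The only genuinely delicate point is the $j=0$ entry: since $[0]=0$, the coefficient $[jr]/[j]$ there must be understood as the limit $r$ (equivalently, evaluated directly from $b_0=2/(q-q^{-1})^2$ and $U_r(2)=r$), and one should note that no $N_0$ term appears in the canonical basis, so there is nothing to check at $j=0$. Everything else is routine bookkeeping — carrying the factor $(q-q^{-1})^2$ from the chain rule, the sign from $(q-q^{-1})^2=-4\sin^2\theta$, and the index shifts in the product-to-sum identities — and I expect this bookkeeping, rather than any conceptual difficulty, to be the main obstacle.
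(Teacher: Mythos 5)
Your proposal is correct and follows essentially the same route as the paper: expand $U_r$ in the Hermite-interpolation basis $\{P_j, N_j\}$, evaluate $U_r(2\cos t)=\sin(rt)/\sin t$ and its $t$-derivative at $t=2\pi j/l$, and convert to quantum integers. The only cosmetic difference is that the paper absorbs the factor $(q-q^{-1})^2$ into rescaled elements $\hat N_j(\hat C)=(q-q^{-1})^2N_j(C)$ rather than carrying it through the chain rule as you do, and your explicit treatment of the $j=0$ term as the limit $r$ is a point the paper leaves implicit.
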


\begin{proof}
	Recall $\hat{C}$
	satisfies
	$$\prod_{j=1}^l(\hat{C}-\hat{b}_j)=0,\ \ \hat{b}_j=q^{j}+q^{-j}$$
	Similarly as the case of $C$, any polynomial $R(\hat{C})$ in
	$\hat{C}$ can be expressed in terms of $\hat{P}_j$ and $\hat{N}_j$
	by
	$$R(\hat{C})=\sum_{j=0}^hR(\hat{b}_j)\hat{P}_j+\sum_{j=1}^{h}\hat{R}'(\hat{b}_j)\hat{N}_j(\hat{C}).$$
	Here $\hat{P}_j(\hat{C})=P_j(C),\ \
	\hat{N}_j(\hat{C})=(q-q^{-1})^2N_j(C)$.
	Then
	\begin{eqnarray*}		\chi(r)&=&U_r(\hat{C})=\sum_{j=0}^hU_r(\hat{b}_j)\hat{P}_j(\hat{C})+\sum_{j=1}^{h}U'_r(\hat{b}_j)\hat{N}_j(\hat{C})\\
		&=&\sum_{j=0}^hU_r(\hat{b}_j)P_j(C)+(q-q^{-1})^2\sum_{j=1}^{h}U'_r(\hat{b}_j)N_j(C)
	\end{eqnarray*}
	\noindent
	By plug in
	$$U_r(\hat{b}_j)=U_r(2\cos\frac{2\pi j}{l})=\sin\frac{2\pi jr}{l}/\sin\frac{2\pi j}{l}=\frac{[jr]}{[j]}$$
	$$U'_r(\hat{b}_j)=-\frac{1}{(q-q^{-1})^2}\frac{(r+1)[j(r-1)]-(r-1)[j(r+1)]}{[j]^3}$$
	We obtain the formulas for $\chi(r)$. Here the derivative is calculated by differentiating both sides of
	$U_r(2\cos t)\sin t=\sin rt$ with respect to $t$, then evaluating at
	$2\cos t=\hat{b}_j$.
\end{proof}

Define $\nu(r):=\chi(r)+\chi(l-r)$ for $r=1, \ldots, h$ and $\nu(0)=\chi(l)$. They span the image of $I(H)$ under the Drinfeld map.
\begin{cor}
	For $r=0, \ldots, h,$
	\begin{eqnarray*}
		\nu(r)&=&lP_0(C)+l\sum_{j=1}^{h}\frac{q^{jr}+q^{-jr}}{[j]^2}N_j(C)
	\end{eqnarray*}
\end{cor}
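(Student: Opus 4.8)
The plan is to prove this by substituting the closed formulas for $\chi(r)$, $\chi(l-r)$, and $\chi(l)$ from the preceding proposition directly into the definition $\nu(r)=\chi(r)+\chi(l-r)$ (for $1\le r\le h$) and $\nu(0)=\chi(l)$, and then collapsing the resulting sums using that $q$ is a primitive $l$-th root of unity. The single algebraic fact doing all the work is the reduction $[j(l-k)]=-[jk]$ for every integer $k$ and every $1\le j\le h$, which is immediate from $q^{jl}=1$; primitivity also guarantees $[j]\ne 0$ in this range, so all divisions by powers of $[j]$ are legitimate.

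First I would dispose of $r=0$: the preceding proposition gives $\chi(l)=lP_0(C)+2l\sum_{j=1}^{h}[j]^{-2}N_j(C)$, and since $q^{0}+q^{0}=2$ this is exactly the claimed formula at $r=0$. For $1\le r\le h$ I would compare, term by term, the coefficients of $P_0(C)$, of $P_j(C)$ with $1\le j\le h$, and of $N_j(C)$ with $1\le j\le h$. The $P_0(C)$-coefficient is obtained by reading the $j=0$ summand of $\chi(r)$, namely $[jr]/[j]$, as its Chebyshev value $U_r(2)=r$, so it contributes $r+(l-r)=l$. The $P_j(C)$-coefficient of $\nu(r)$ is $[jr]/[j]+[j(l-r)]/[j]=0$ by the reduction above, which is precisely why no $P_j(C)$ with $j\ge 1$ survives. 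For the $N_j(C)$-coefficient, applying $[j(l-r-1)]=-[j(r+1)]$ and $[j(l-r+1)]=-[j(r-1)]$ turns the numerator coming from $\chi(l-r)$ into $(l-r-1)[j(r-1)]-(l-r+1)[j(r+1)]$; adding the numerator $(r+1)[j(r-1)]-(r-1)[j(r+1)]$ coming from $\chi(r)$ cancels the $r$-dependence and leaves $l\big([j(r-1)]-[j(r+1)]\big)$, so the $N_j(C)$-coefficient of $\nu(r)$ equals $l\big([j(r+1)]-[j(r-1)]\big)/[j]^3$.

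The last step is the elementary identity $[j(r+1)]-[j(r-1)]=(q^{jr}+q^{-jr})[j]$, obtained by factoring $q^{jr}$ out of $q^{j(r+1)}-q^{j(r-1)}$ and $q^{-jr}$ out of $-q^{-j(r+1)}+q^{-j(r-1)}$; this converts the $N_j(C)$-coefficient into $l(q^{jr}+q^{-jr})/[j]^2$ and completes the argument. I do not expect a genuine obstacle here: once the previous proposition is available the proof is bookkeeping. The only points that require care are the $j=0$ convention — interpreting $[jr]/[j]$ there as $U_r(2)=r$ rather than an undefined $0/0$ — and the systematic use of $q^{l}=1$; it is also worth recording the consistency check that the $1\le r\le h$ formula and the $r=0$ formula agree, which they do because $\nu(0)=\chi(l)$ is exactly the $r=0$ specialization of the stated expression.
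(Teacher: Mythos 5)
Your proposal is correct and is exactly the computation the paper intends: the corollary is stated without proof as a direct consequence of the preceding proposition, and your substitution of the formulas for $\chi(r)$, $\chi(l-r)$, $\chi(l)$ together with the reduction $[j(l-k)]=-[jk]$ and the identity $[j(r+1)]-[j(r-1)]=(q^{jr}+q^{-jr})[j]$ is the intended bookkeeping. Your handling of the $j=0$ term as $U_r(\hat b_0)=U_r(2)=r$ and the consistency check at $r=0$ are both correct.
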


Let  $\mathcal{D}_l$ and
$\mathcal{R}_l$ be the images of $\text{qCh}_{V_r}$'s under the Drinfeld map and the Radford map, respectively. It is shown in \cite{La} that $\mathcal{D}_l+ \mathcal{R}_l=Z(U_qsl(2))$.
And $\mathcal{P}_{h+1}:=\mathcal{D}_l\cap \mathcal{R}_l$ is the image of the q-characters of projective indecomposable modules under both the Drinfeld map and the Radford map. 

\begin{prop}
	$\mathcal{P}_{h+1}=\text{Hig}(U_qsl(2))$.
\end{prop}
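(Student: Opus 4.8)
\emph{Strategy.} The plan is to show that the two spaces in question coincide with one and the same subspace of $Z:=Z(U_qsl(2))$, namely $\mathcal H:=\spa\{P_0(C),\,N_1(C),\,\dots,\,N_h(C)\}$, which is also the image under the Drinfeld map of $I:=I(U_qsl(2))$, the span of the $q$-characters of the projective indecomposable $U_qsl(2)$-modules. Two essentially independent arguments are available: a ``slick'' one invoking the Cohen--Westreich description of $\text{Hig}$ together with the description of $\mathcal P_{h+1}$ recalled from \cite{La}, and a self-contained one that just intersects the explicit spanning sets computed above. I will sketch both, since the second is a useful check.

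\emph{Step 1: $\text{Hig}(U_qsl(2))=\mathcal H$.} Since $U_qsl(2)$ is unimodular and $s_H^2$ is inner (by $K$), the results recalled in Section \ref{sec:factorizableHA} give that the Drinfeld map restricts to an algebra isomorphism from $\text{qCh}(U_qsl(2))$ onto $Z$ carrying $I$ onto $\text{Hig}(U_qsl(2))$. The projective indecomposables are $P_1,\dots,P_{l-1}$ together with $V_l$ (which is projective, hence its own projective cover); by the composition series \eqref{composition series}, $\text{qCh}_{P_r}=2(\text{qCh}_{V_r}+\text{qCh}_{V_{l-r}})$ for $1\le r\le l-1$, while $\text{qCh}_{V_l}$ is itself a projective $q$-character. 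Applying the Drinfeld map and recalling $\nu(r)=\chi(r)+\chi(l-r)$, $\nu(0)=\chi(l)$, we conclude $\text{Hig}(U_qsl(2))=\spa\{\nu(r):0\le r\le h\}$ (this is exactly the statement, made above, that the $\nu(r)$ span the Drinfeld image of $I$). By the formula $\nu(r)=lP_0(C)+l\sum_{j=1}^{h}\tfrac{q^{jr}+q^{-jr}}{[j]^2}N_j(C)$ established above, each $\nu(r)$ lies in $\mathcal H$, and the $(h+1)\times(h+1)$ coefficient matrix with $(r,j)$-entry $q^{jr}+q^{-jr}$, $0\le j\le h$, is a Chebyshev/cosine Vandermonde matrix in the pairwise distinct values $\hat b_j=q^{j}+q^{-j}$, hence invertible; so the $\nu(r)$ span all of $\mathcal H$. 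Thus $\text{Hig}(U_qsl(2))=\mathcal H$, of dimension $h+1$ --- consistent with the Cartan rank and with $m$.

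\emph{Step 2: $\mathcal P_{h+1}=\mathcal H$.} By \cite{La} (as recalled just before the statement), $\mathcal P_{h+1}=\mathcal D_l\cap\mathcal R_l$ is precisely the image of $I$ under the Drinfeld map (equivalently under the Radford map), i.e. $\spa\{\nu(r):0\le r\le h\}=\mathcal H$ by Step 1. Alternatively one verifies the intersection directly from the displayed formulas: the formula for $\chi(r)$ gives $\mathcal D_l=\spa\{P_0(C),\dots,P_h(C),N_1(C),\dots,N_h(C)\}$ (a $(2h+1)$-dimensional space, since the Drinfeld map is injective on $\text{qCh}$ and $\text{qCh}_{V_1},\dots,\text{qCh}_{V_l}$ are linearly independent), while the formulas for $\phi(r)$ give $\mathcal R_l=\spa\{P_0,N_1^+,\dots,N_h^+,N_1^-,\dots,N_h^-\}$; intersecting inside the canonical basis of $Z$ and using $N_j(C)=N_j^++N_j^-$ forces the $P_1,\dots,P_h$-components to vanish and the $N_j^+,N_j^-$-components to agree, which is exactly $\mathcal H$. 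Combining the two steps, $\mathcal P_{h+1}=\text{Hig}(U_qsl(2))$.

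\emph{Main difficulty.} There is essentially no obstacle beyond bookkeeping: one must keep straight the several closely related maps involved (the Drinfeld map and its shifted version $\widehat f_Q$, the Radford and Frobenius maps, and the spaces $I$ versus $\text{qCh}$) and confirm that $\text{Hig}$ and $\mathcal D_l\cap\mathcal R_l$ are read off against the same normalization --- but this is already contained in the cited results. The only genuine computation is the invertibility of the Chebyshev Vandermonde matrix in Step 1, which is immediate from the distinctness of $\hat b_0,\dots,\hat b_h$.
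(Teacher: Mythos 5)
Your proof is correct, and it is organized differently from the paper's. The paper first fixes the basis $\{\phi(l)\}\cup\{\phi(r)+\phi(l-r)\}_{r=1}^h$ of $\operatorname{Hig}(U_qsl(2))$ (i.e.\ it describes the Higman ideal through the Radford-map image of $I$) and then proves the proposition by an explicit orthogonality computation: it evaluates $\nu(0)+\sum_{r'}(q^{rr'}+q^{-rr'})\nu(r')=\tfrac{l^2}{[r]^2}N_r(C)$ and $\nu(0)+2\sum_r\nu(r)=l^2P_0(C)$, thereby exhibiting each Higman basis vector as a concrete linear combination of the $\nu(r)$'s. This amounts to inverting the same cosine matrix whose invertibility you assert abstractly via the Chebyshev--Vandermonde argument, so your Step 1 is a legitimate (if less explicit) substitute; the distinctness of the $\hat b_j$ does make that matrix invertible. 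Your route instead pins both spaces to $\mathcal H=\spa\{P_0(C),N_1(C),\dots,N_h(C)\}$: for $\operatorname{Hig}$ you use the Cohen--Westreich fact that $f_Q$ carries $I$ onto the Higman ideal (quoted in Section 3 of the paper), and for $\mathcal P_{h+1}$ you either cite the characterization from \cite{La} or compute $\mathcal D_l\cap\mathcal R_l$ directly in the canonical basis $\{P_i,N_j^{\pm}\}$. One caution: the ``slick'' version of your Step 2 comes close to assuming the conclusion, since the statement that $\mathcal P_{h+1}$ equals the Drinfeld image of the projective $q$-characters already makes the proposition an immediate consequence of the quoted Cohen--Westreich identities; the paper's explicit computation is precisely what makes the claim self-contained rather than a restatement of the citations. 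Your direct intersection argument ($\mathcal D_l=\spa\{P_j,N_j\}$, $\mathcal R_l=\spa\{P_0,N_j^{\pm}\}$, intersection forces the $P_1,\dots,P_h$ components to vanish and the $N_j^{+},N_j^{-}$ components to agree, using $N_j=N_j^++N_j^-$) repairs this and is a genuinely independent verification, so the overall proof stands.
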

\begin{proof}
	\begin{eqnarray*}
		& &\nu(0)+\sum_{r'=1}^{h}(q^{rr'}+q^{-rr'})\nu(r')=\nu(0)+\frac{1}{2}\sum_{r'=1}^{l-1}(q^{rr'}+q^{-rr'})\nu(r')\\
		&=&\nu(0)+\frac{l}{2}\sum_{r'=1}^{l-1}(q^{rr'}+q^{-rr'})P_0(C)+\frac{l}{2}\sum_{r'=1}^{l-1}\sum_{j=1}^{h}(q^{rr'}+q^{-rr'})\frac{q^{jr'}+q^{-jr'}}{[j]^2}N_j(C)\\
		&=&\nu(0)-lP_0(C)+\frac{l}{2}\sum_{j=1}^{h}\sum_{r'=1}^{l-1}\frac{q^{(r+j)r'}+q^{(r-j)r'}+q^{(-r+j)r'}+q^{(-r-j)r'}}{[j]^2}N_j(C)\\
		&=&2l\sum_{j=1}^{h}\frac{1}{[j]^2}N_j(C)+\frac{l}{2}(\frac{2l}{[r}N_r(C)-4\sum_{j=1}^{h}\frac{1}{[j]^2}N_j(C))\\
		&=&\frac{l^2}{[r]^2}N_r(C)=\frac{1}{\sqrt{p}}(\phi(r)+\phi(l-r))
	\end{eqnarray*}
	
	\begin{eqnarray*}
		& &\nu(0)+2\sum_{r=1}^{h}\nu(r)=\nu(0)+\sum_{r=1}^{l-1}\nu(r)\\
		&=&lP_0(C)+2l\sum_{j=1}^{h}\frac{1}{[j]^2}N_j(C)+(l-1)lP_0(C)+l\sum_{r=1}^{p-1}\sum_{j=1}^{h}\frac{q^{jr}+q^{-jr}}{[j]^2}N_j(C)\\
		&=&l^2P_0(C)=\frac{1}{l\sqrt{l}}\phi(l)
	\end{eqnarray*}
    Note that $\{\phi(0)\}\cup\{\phi(r)+\phi(l-r)\}_{r=1}^h$ is a basis for $\text{Hig}(U_qsl(2))$. Thus $\{\nu_r\}_{r=0}^h$ is also a basis for $\text{Hig}(U_qsl(2))$ and $\mathcal{P}_{h+1}=\text{Hig}(U_qsl(2))$.
\end{proof}

\subsection{$\operatorname{SL}(2,\mathbb{Z})$-representations on the center of $U_qsl(2)$}

As \cite{La}, when restricted to the center $Z(H)$ of
$H$, we may slightly modify the definition of this representation: for
$a\in Z(H)$
$$S_{LM}(a)=(\lambda(S(a))\otimes 1)R_{21}R=\phi(\chi^{-1}(a)),$$
$$T_{LM}(a)=\kappa S_{LM}^{-1}(\theta^{-1}(S_{LM}(a))).$$
The following theorem was conjectured by Kerler in \cite{Kerler1}.

\begin{thm}[Kerler]\label{Kerler} Let $l=2h+1$ be an odd number and $q$ be a $l^{th}$ primitive root of unity. The $\operatorname{SL}(2,\mathbb{Z})$ representation on the center
	$Z$ of $U_qsl(2)$ decomposes as
	$$Z=\mathcal{P}_{h+1}\oplus\mathbb{C}^2\otimes \mathcal{V}_{h}$$
	$\mathcal{P}_{h+1}$ is an $(h+1)$ dimensional representation and $\mathbb{C}^2$
	is the standard representation of $\operatorname{SL}(2,\mathbb{Z})$. $\mathcal{V}_{h}$ is an
	$h$ dimensional representation when restricted on which the matrices
	$S_{LM}$ and $T_{LM}$ are the same as those obtained by
	RT TQFT.
\end{thm}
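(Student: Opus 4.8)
The plan is to follow Kerler's original approach: write the Lyubashenko--Majid operators $S_{LM}$ and $T_{LM}$ as explicit matrices in Kerler's canonical central basis $\{P_i, N_j^+, N_j^-\}$, and then exhibit a single change of basis splitting $Z$ into $\mathcal P_{h+1}$ and a complementary $2h$-dimensional subrepresentation which one recognizes as $\mathbb C^2\otimes\mathcal V_h$. The $T_{LM}$-side is almost free. Replacing the anomaly-corrected operator $T_{LM}=\kappa\,S_{LM}^{-1}\circ(x\mapsto\theta^{-1}x)\circ S_{LM}$ by naive multiplication $m_{\theta^{-1}}$ merely conjugates the whole representation by $S_{LM}$, so it suffices to decompose $(S_{LM},\kappa\,m_{\theta^{-1}})$; and Kerler's expansion of $\theta$ shows that multiplication by $\theta^{\pm1}$ acts on each block $\langle P_r,N_r^+,N_r^-\rangle$ ($1\le r\le h$) as the scalar $\lambda_r^{\pm1}$, with $\lambda_r=(-1)^{r+1}q^{-(r^2-1)/2}$, plus a rank-one nilpotent sending $P_r$ to $\tilde N_r:=\mu_r^+N_r^++\mu_r^-N_r^-$ and killing $N_r^\pm$; since $\mu_r^+/\mu_r^-=-(l-r)/r\neq1$, the vector $\tilde N_r$ is independent of $N_r=N_r^++N_r^-$, and it is the nilpotent partner glued to $P_r$. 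On $\langle P_0\rangle$ and on each $\langle N_r\rangle$, multiplication by $\theta^{\pm1}$ is a scalar.

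For the $S_{LM}$-side, the clean input is the identity $S_{LM}=\phi_R\circ\chi^{-1}$, which forces $S_{LM}(\chi(s))=\phi(s)$ and $S_{LM}^{-1}(\phi(s))=\chi(s)$ for $s=1,\dots,l$, where $\chi(s)=\chi(\mathrm{qCh}_{V_s})$ and $\phi(s)=\phi_R(\mathrm{qCh}_{V_s})$. Both families have already been written in the canonical basis --- the $\chi(s)$ via Chebyshev polynomials $U_s$ of $\widehat C$, and $\phi(r)=\tfrac{l\sqrt l}{[r]^2}N_r^+$, $\phi(l-r)=\tfrac{l\sqrt l}{[r]^2}N_r^-$, $\phi(l)=l\sqrt l\,P_0$ --- so inverting these (Vandermonde-type) linear systems gives $S_{LM}$ on $\mathcal D_l=\langle P_0,\dots,P_h,N_1,\dots,N_h\rangle$ in closed form. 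The remaining $h$ directions $\langle N_r^+-N_r^-\rangle$, which complete $\mathcal D_l$ to $Z$, follow from $S_{LM}(\phi(s))=S_{LM}^2(\chi(s))=s_H^{-1}(\chi(s))$: one only needs the antipode's action on the center, a computable involution of the canonical basis. Alternatively, one extends the $l$ simple $q$-characters to a full basis of $\mathrm{qCh}(U_qsl(2))$ --- the extra $h$ vectors being pseudo-trace functionals --- and evaluates $\chi$ and $\phi_R$ on them, which is the route closest to the $U_qsl(2)$ literature. Note that since $T_{LM}$ is not semisimple the representation does not factor through any finite quotient of $\operatorname{SL}(2,\mathbb Z)$, so no character-theoretic shortcut is available and the decomposition must be produced by hand.

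With both operators matrixed in the canonical basis, the rest is linear algebra. The subspace $\mathcal P_{h+1}=\langle P_0,N_1,\dots,N_h\rangle=\mathrm{Hig}(U_qsl(2))=\langle\nu(0),\dots,\nu(h)\rangle$ is already known to be $S_{LM}$-invariant and a direct summand of $Z$, and it is $m_{\theta^{\pm1}}$-invariant by the scalar action above, hence a subrepresentation of dimension $h+1$; choosing the complementary subrepresentation I would reorganize its basis into the two families $\{P_r\}_{r=1}^h$ and a suitably rescaled $\{\tilde N_r\}_{r=1}^h$ and verify that there $T_{LM}$ takes the block form $\bigl(\begin{smallmatrix}T_V & T_V\\ 0 & T_V\end{smallmatrix}\bigr)$ and $S_{LM}$ the block form $\bigl(\begin{smallmatrix}0 & -S_V\\ S_V & 0\end{smallmatrix}\bigr)$, which is exactly the standard representation $\mathbb C^2$ tensored with an $h$-dimensional pair $(S_V,T_V)$. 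Comparing $(S_V,T_V)$ with the matrices $S^{\mathrm{odd}}=(q^{ab}-q^{-ab})$, $T^{\mathrm{odd}}=\mathrm{diag}(q^{-hr^2})$ appearing in Lemma \ref{lem:decomposition} --- the Reshetikhin--Turaev modular data of the corresponding $h$-dimensional ($SO(3)$-type) semisimple modular category --- identifies $(S_V,T_V)$, up to an overall scalar, with $\mathcal V_h$, completing the proof.

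The step I expect to be the genuine obstacle is computing $S_{LM}$ on the radical directions $N_r^+-N_r^-$. On $\mathcal D_l$ everything is dictated by the clean rule $\chi(s)\mapsto\phi(s)$ and is routine Chebyshev algebra, but the way the rank-one Jordan blocks of $T_{LM}$ are glued into a $\mathbb C^2$-tensor is precisely encoded in this off-$\mathcal D_l$ behavior; so pinning down the antipode on $Z(U_qsl(2))$ (equivalently, the pseudo-trace basis of $\mathrm{qCh}$) and then checking that the resulting off-diagonal $S$-block is the standard-representation pattern is where all the $q$-binomial bookkeeping is concentrated.
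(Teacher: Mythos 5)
Your overall route is the same as the paper's: work with Kerler's canonical central basis $\{P_i,N_j^{+},N_j^{-}\}$, get $S_{LM}$ on the Drinfeld image $\mathcal D_l$ from $S_{LM}(\chi(s))=\phi(s)$ (Chebyshev expansion of $\chi(s)=U_s(\hat C)$ plus the explicit $\phi(r)\propto N_r^{\pm}$, $\phi(l)\propto P_0$), get it on the remaining directions from $S_{LM}^2=s_H^{-1}$ together with the antipode acting on the center (the paper notes it acts as the identity there, so $S_{LM}(\phi(s))=\chi(s)$), and read off $T_{LM}$ from Kerler's expansion of $\theta^{\pm1}$, whose nilpotent part glues $P_r$ to $\tilde N_r\propto (l-r)N_r^{+}-rN_r^{-}$. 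Your conjugation trick replacing $T_{LM}$ by $\kappa\, m_{\theta^{-1}}$ is a harmless repackaging of the paper's direct computation, and your identification of $(S_V,T_V)$ with the $SO(3)$-type data via Lemma \ref{lem:decomposition} is exactly what the paper does.

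There is, however, one concrete step that fails as written: the complementary subrepresentation is \emph{not} $\operatorname{span}\{P_1,\dots,P_h\}\oplus\operatorname{span}\{\tilde N_1,\dots,\tilde N_h\}$, no matter how you rescale the $\tilde N_r$. Indeed $S_{LM}(\tilde N_r)\propto S_{LM}\bigl(\tfrac{l-r}{l}\phi(r)-\tfrac{r}{l}\phi(l-r)\bigr)=\rho(r)=\tfrac{l-r}{l}\chi(r)-\tfrac{r}{l}\chi(l-r)$, and expanding via the Chebyshev formulas one finds
$$\rho(r)=\sum_{j=1}^{h}\frac{[jr]}{[j]}P_j-\sum_{j=1}^{h}\frac{(q^{j}+q^{-j})[jr]}{[j]^{3}}\,N_j ,$$
which has nonzero components along $N_j=N_j^{+}+N_j^{-}$; since $N_j\notin\operatorname{span}\{\tilde N_j\}$, your proposed subspace is not $S_{LM}$-invariant and the verification of the block form $\bigl(\begin{smallmatrix}0&-S_V\\ S_V&0\end{smallmatrix}\bigr)$ would break down. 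The fix is exactly the paper's choice of basis: take the complement to be spanned by $\{\rho(r)\}_{r=1}^{h}$ (the $P_r$ corrected by an $N_j$-tail lying inside $\operatorname{span}\{P_0,N_1,\dots,N_h\}=\mathcal P_{h+1}$) together with $\{\varphi(r)\}_{r=1}^{h}$, whose span coincides with $\operatorname{span}\{\tilde N_r\}$. Since you intend to compute $S_{LM}$ explicitly anyway, you would discover this upon verification, but the statement that the $P_r$ themselves span half of the complement is false and should be replaced by the $\rho(r)$.
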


\begin{proof}
	We choose a basis for $Z$ as
	\begin{eqnarray*}
		\nu(r)&=&\chi(r)+\chi(l-r), \ \ r=1,\ldots,h;~~~~\nu(0)=\chi(l)\\
		\rho(r)&=&\frac{l-r}{l}\chi(r)-\frac{r}{l}\chi(l-r), \ \ r=1,\ldots,h\\
		\varphi(r)&=&\frac{1}{\sqrt{l}}\sum_{j=1}^{h}(q^{jr}-q^{-jr})(\frac{l-r}{l}\phi(r)-\frac{r}{l}\phi(l-r)), \ \ r=1,\ldots,h
	\end{eqnarray*}
	\noindent
	First,
	\begin{eqnarray*}
		{S}_{LM}(\nu(r))&=&\phi\chi^{-1}(\chi(r)+\chi(l-r))=\phi(r)+\phi(l-r)\\
		&=&\frac{1}{\sqrt{l}}(\nu(0)+\sum_{j=1}^{h}(q^{jr}+q^{-jr})\nu(j))\\
		{S}_{LM}(\nu(0))&=&\frac{1}{\sqrt{l}}(\nu(0)+2\sum_{r=1}^{h}\nu(r))
	\end{eqnarray*}
	So $\mathcal{P}_{h+1}$ is invariant under the action of $S_{LM}$.
	\noindent
	Further,
	\begin{eqnarray*}
		S_{LM}(\rho(r))&=&\phi\chi^{-1}(\frac{l-r}{l}\chi(r)-\frac{r}{l}\chi(l-r))=\frac{l-r}{l}\phi(r)-\frac{r}{l}\phi(l-r)\\
		&=&-\frac{1}{\sqrt{l}}\sum_{j=1}^{h}(q^{jr}-q^{-jr})\varphi(r)
	\end{eqnarray*}
	Note the facts that $S_{LM}^2=s_H^{-1}$ and the antipode $s_H$ acts
	identically on the center $Z$. Then
	$S_{LM}(\frac{l-r}{l}\phi(r)-\frac{r}{l}\phi(l-r))=\rho(r)$.
	And
	\begin{eqnarray*}
		S_{LM}(\varphi(r))&=&\frac{1}{\sqrt{l}}\sum_{j=1}^{h}(q^{jr}-q^{-jr})S_{LM}(\frac{l-r}{l}\phi(r)-\frac{r}{l}\phi(l-r))\\
		&=&\frac{1}{\sqrt{l}}\sum_{j=1}^{h}(q^{jr}-q^{-jr})\rho(r)
	\end{eqnarray*}
	
	$\{\rho(r),\varphi(r)\}_{r=1,\ldots,h}$ span the
	$\mathbb{C}^2\otimes \mathcal{V}_h$ that is invariant under the action of
	$S_{LM}$, and the matrix of $S_{LM}$ is
	\[
	\begin{pmatrix}
	0_{h\times h} & -\frac{q-q^{-1}}{\sqrt{l}}S_{semi}\\
	\frac{q-q^{-1}}{\sqrt{l}}S_{semi} & 0_{h\times h}\\
	\end{pmatrix}
	=\frac{q-q^{-1}}{\sqrt{l}}
	\begin{pmatrix}
	0 & -1\\
	1 & 0\\
	\end{pmatrix}
	\otimes S_{semi}
	\]
	where $S_{semi}$ is the semisimple $S$-matrix $([q^{jr}])_{h\times
		h}$.
	
	Recall that
	\begin{align*}
	\theta=&\sum_{r=0}^{h}(-1)^{r+1}q^{-\frac{1}{2}(r^2-1)}P_r-(q-q^{-1})\sum_{r=1}^{h}(-1)^{r+1}q^{-\frac{1}{2}(r^2-1)}(\frac{l-r}{[l-r]}N^{+}_r+\frac{r}{[r]}N^{-}_r)\\
	=&\sum_{r=0}^{h}(-1)^{r+1}q^{-\frac{1}{2}(r^2-1)}P_r+\frac{q-q^{-1}}{\sqrt{l}}\sum_{r=1}^{h}(-1)^{r+1}q^{-\frac{1}{2}(r^2-1)}[r](\frac{l-r}{l}\phi(r)-\frac{r}{l}\phi(l-r))
	\end{align*}
	It is easy to check
	$$\theta^{-1}=\sum_{j=0}^{h}(-1)^{j+1}q^{\frac{1}{2}(j^2-1)}P_j-\frac{q-q^{-1}}{\sqrt{l}}\sum_{j=1}^{h}(-1)^{j+1}q^{\frac{1}{2}(j^2-1)}[j](\frac{l-j}{l}\phi(j)-\frac{j}{l}\phi(l-j))$$	
	Then $\theta^{-1}$ acts on $\phi(r)$ as
	\begin{align*}
	\theta^{-1}\phi(r)=&(-1)^{r+1}q^{\frac{1}{2}(r^2-1)}\phi(r), \ \ for \ r=1, \ldots, h.\\
	\theta^{-1}\phi(r)=&(-1)^{l-r+1}q^{\frac{1}{2}((l-r)^2-1)}\phi(r), \ \ for \ r=h+1,\ldots, 2h.
	\end{align*}
	Then we have
	\begin{align*}
	T_{LM}(\chi(r))=&\kappa S_{LM}^{-1}(\theta^{-1}\phi(r))=(-1)^{r+1}q^{\frac{1}{2}(r^2-1)}\kappa\chi(r), \ \ for \ r=1, \ldots, h.\\
	T_{LM}(\chi(r))=&\kappa  S_{LM}^{-1}(\theta^{-1}\phi(r))=(-1)^{l-r+1}q^{\frac{1}{2}((l-r)^2-1)}\kappa\chi(r), \ \ for \ r=h+1, \ldots, 2h.
	\end{align*}
	Then $T_{LM}(\nu(r))=(-1)^{r+1}q^{\frac{1}{2}(r^2-1)}\kappa\nu(r)$ for $r=0, 1, \ldots, h$. So $\mathcal{P}_{h+1}$ is invariant under $T_{LM}$ and actually a $(h+1)$ dimensional representation of $\operatorname{SL}(2,\mathbb{Z})$. Moreover, $T_{LM}(\rho(r))=(-1)^{r+1}q^{\frac{1}{2}(r^2-1)}\kappa\rho(r)$.
	
	Finally, we want to evaluate $T_{LM}(\varphi(r))$. 
	\begin{align*}
	T_{LM}(\varphi(r))=&\kappa S_{LM}^{-1}(\theta^{-1}\frac{1}{\sqrt{l}}\sum_{j=1}^{h}(q^{jr}-q^{-jr})(\frac{l-j}{l}\chi(j)-\frac{j}{l}\chi(l-j)))\\
	=&\frac{\sqrt{l}}{q^r-q^{-r}}\kappa S_{LM}^{-1}(\theta^{-1}(P_r-\frac{q^{r}+q^{-r}}{[r]^2}N_r))\\
	=&\frac{(-1)^{r+1}\sqrt{l}q^{\frac{1}{2}(r^2-1)}}{q^r-q^{-r}}\kappa S_{LM}^{-1}(P_r-\frac{q^{r}+q^{-r}}{[r]^2}N_r+\frac{q-q^{-1}}{\sqrt{l}}[r](\frac{l-r}{l}\phi(r)-\frac{r}{l}\phi(l-r)))\\
	=&(-1)^{r+1}q^{\frac{1}{2}(r^2-1)}\kappa\varphi(r)+(-1)^{r+1}q^{\frac{1}{2}(r^2-1)}\kappa\rho(r)
	\end{align*}
	Here we used that
	$S_{LM}(\varphi(r))=\frac{1}{\sqrt{l}}\sum_{j=1}^{h}(q^{jr}-q^{-jr})\rho(j)=\frac{\sqrt{l}}{q^r-q^{-r}}(P_r-\frac{q^{r}+q^{-r}}{[r]^2}N_r))$.
	
	$\mathbb{C}^2\otimes \mathcal{V}_h$ is also invariant under the action of $T_{LM}$. The
	matrix the matrix of $T_{LM}$ is
	\[
	\begin{pmatrix}
	\lambda T_{semi} & \lambda T_{semi}\\
	0_{h\times h} & \lambda T_{semi}\\
	\end{pmatrix}
	=\kappa
	\begin{pmatrix}
	1 & 1\\
	0 & 1\\
	\end{pmatrix}
	\otimes T_{semi}
	\]
	Here $T_{semi}=\text{diag}(\ldots, (-1)^{r+1}q^{\frac{1}{2}(r^2-1)}, \ldots)_{h\times
		h}$ is the semisimple T-matrix.
\end{proof}

We collect the $S$ and $T$ for the Higman and semisimple parts in as follows.

\noindent Higman part:

\begin{align}\label{Uqsl2SHig}
	S_N=&\frac{1}{\sqrt{l}}
	\begin{pmatrix}
	1 & 1 &   & \cdots &   & 1\\
	2 & q+q^{-1} &   & \cdots &   & q^{h}+q^{-h}\\
	&   & \ddots &   &   &  \\
	\vdots & \vdots &   & q^{jr}+q^{-jr} &   & \vdots\\
	&   &   &   & \ddots &  \\
	2 & q^{h}+q^{-h} &   & \cdots &   & q^{h^2}+q^{-h^2}\\
	\end{pmatrix}_{(h+1)\times(h+1)}\end{align} \begin{align}\label{Uqsl2THig}
	T_N=&\kappa
	\begin{pmatrix}
	-q^{-\frac{1}{2}} & 0 &   & \cdots &   & 0\\
	0 & 1 &   & \cdots &   & 0\\
	&   & \ddots &   &   &  \\
	\vdots & \vdots &   & (-1)^{r+1}q^{\frac{1}{2}(r^2-1)} &   & \vdots\\
	&   &   &   & \ddots &  \\
	0 & 0 &   & \cdots &   & (-1)^{h+1}q^{\frac{1}{2}(h^2-1)}\\
	\end{pmatrix}_{(h+1)\times(h+1)}
\end{align}

\noindent Semisimple part:

\begin{align}\label{Uqsl2SSO3}
S_V=&\frac{1}{\sqrt{l}}
\begin{pmatrix}
q-q^{-1} & q^2-q^{-2} &   & \cdots &   & q^h-q^{-h}\\
q^{2}-q^{-2}& q^{4}-q^{-4} &   & \cdots &   & q^{2h}-q^{-2h}\\
&   & \ddots &   &   &  \\
\vdots & \vdots &   & q^{jr}-q^{-jr} &   & \vdots\\
&   &   &   & \ddots &  \\
q^{h}-q^{-h} & q^{2h}-q^{-2h} &   & \cdots &   & q^{h^2}-q^{-h^2}\\
\end{pmatrix}_{h \times h}\end{align} 
\begin{align}\label{Uqsl2TSO3}
T_V=&\kappa
\begin{pmatrix}
1 & 0 &   & \cdots &   & 0\\
0 & -q^{\frac{3}{2}} &   & \cdots &   & 0\\
&   & \ddots &   &   &  \\
\vdots & \vdots &   & (-1)^{r+1}q^{\frac{1}{2}(r^2-1)} &   & \vdots\\
&   &   &   & \ddots &  \\
0 & 0 &   & \cdots &   & (-1)^{h+1}q^{\frac{1}{2}(h^2-1)}\\
\end{pmatrix}_{h \times h}
\end{align}

\section{Drinfeld doubles of Nichols Hopf algebras}\label{sec:doubleofnichols}

In this section, we study the Drinfeld doubles $D\mcK_n, n\geq 1$ of Nichols Hopf algebras. These doubled Nichols Hopf algebras exhibit phenomena dramatically different from the small quantum groups.  Moreover, the Nichols Hopf algebras are closely related to the so-called Grassmann numbers in physics, so it is not surprising that $D\mcK_n, n\geq 1$ is used to study fermions. Through our analysis, we recover the decomposition of the center of $D\mcK_n$ presented in \cite{farsad2022symplectic}. Many results for $D\mcK_n, n\geq 1$ with even $n$ should follow from \cite{farsad2022symplectic}, but our approach is completely explicit and self-contained.  Moreover, we additionally investigate $\mcK_n$ and $D\mcK_n$ with $n$ odd.

\subsection{Nichols Hopf algebras and their doubles}
    Let $n$ be a positive integer. The Nichols Hopf algebra $\Kn$ is the complex $2^{n+1}$-dimensional Hopf algebra with algebra generators $K, \xi_1, \dots, \xi_n$ subject to the following relations: for $i, j=1,\dots, n$
    \begin{equation}\label{Kn-pres}
        K^2 = 1,\hspace{5em} \xi_i^2 = 0\hspace{5em}K\xi_i =-\xi_i K,\hspace{5em}\xi_i\xi_j=-\xi_j\xi_i.
    \end{equation}
    The Nichols Hopf algebra is given a Hopf algebra structure with the following comultiplication $\Delta:\Kn\to\Kn\otimes \Kn$, counit $\epsilon:\Kn\to\bC$ and antipode $S:\Kn\to \Kn$:
    \begin{align*}
        \Delta(K) &= K\otimes K, & \epsilon(K) &= 1_{\bC}, & S(K) &= K,\\
        \Delta(\xi_i) &= K\otimes \xi_i + \xi_i\otimes 1, & \epsilon(\xi_i) &= 0_{\bC}, & S(\xi_i) &= -K\xi_i.
    \end{align*}
    Note that $\mcK_1$ is the 4-dimensional Sweedler's Hopf algebra. Nichols Hopf algebras can also be expressed as a crossed product of the exterior algebra of a $n$-dimensional complex vector space $E$ with the $\bZ/2\bZ$-group algebra $\Kn\cong\Lambda^* E\rtimes \bC[\bZ/2\bZ]$.
    
    Let $W$ be the set of all elements in $\Kn$ of the form $\xi_{i_1}\dots\xi_{i_k}$ for $1\leq i_1 < i_2 <\dots < i_k\leq n$ and $k\geq 0$ (if $k=0$, the corresponding element is $1$). Namely, $W$ is the set of words in the $\xi_i$ with increasing index. Define $B = W\cup KW$. Note that $B$ is a basis for $\Kn$. For $b\in B$, let $\mathbf I(b) = \{\xi_{i_1},\dots, \xi_{i_k}\}$ and call $|b|:=|\mathbf I(b)|$ the length of $b$. Inversely, for $I\subseteq\{\xi_1,\dots, \xi_n\}$, define $\mathbf w(I)=\prod_{\xi_i\in I}\xi_i$, where the product occurs in ascending order.
    
    \begin{lemma}\label{SbLemma}
        Let $w\in W$, then  $S(w) = (-K)^{|w|}w$ and $S(Kw)=K^{|w|+1}w$. 
    \end{lemma}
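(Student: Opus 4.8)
The plan is to reduce everything to the standard fact that the antipode of a Hopf algebra is an anti-algebra homomorphism, and then carry out a careful sign count using the relations in \eqref{Kn-pres}. Write $w = \xi_{i_1}\xi_{i_2}\cdots\xi_{i_k}$ with $i_1<i_2<\cdots<i_k$, so $k=|w|$. Anti-multiplicativity together with $S(\xi_i)=-K\xi_i$ gives
$$S(w) = S(\xi_{i_k})S(\xi_{i_{k-1}})\cdots S(\xi_{i_1}) = (-1)^{k}(K\xi_{i_k})(K\xi_{i_{k-1}})\cdots(K\xi_{i_1}),$$
and the work is to normalize this product to the form $(\text{power of }K)\cdot w$.

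First I would move all $k$ copies of $K$ to the left using $K\xi_i=-\xi_i K$: the $m$-th copy from the left must pass $m-1$ of the $\xi$'s, so collecting them all contributes the sign $(-1)^{0+1+\cdots+(k-1)}=(-1)^{k(k-1)/2}$ and produces $K^{k}\,\xi_{i_k}\xi_{i_{k-1}}\cdots\xi_{i_1}$. Next I would restore ascending order among the $\xi$'s; since they pairwise anticommute, reversing $k$ of them costs another sign $(-1)^{k(k-1)/2}$, i.e. $\xi_{i_k}\cdots\xi_{i_1}=(-1)^{k(k-1)/2}w$. Combining the three signs,
$$S(w) = (-1)^{k}(-1)^{k(k-1)/2}(-1)^{k(k-1)/2}K^{k}w = (-1)^{k}K^{k}w = (-K)^{|w|}w,$$
using that $k(k-1)$ is even so the two middle signs cancel. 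For the second identity I would then use $S(Kw)=S(w)S(K)=(-K)^{|w|}wK$ together with $wK=(-1)^{|w|}Kw$ (commute $K$ past the $k$ factors $\xi_i$), which gives $S(Kw)=(-1)^{|w|}K^{|w|}(-1)^{|w|}Kw = K^{|w|+1}w$.

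An alternative I would keep in reserve is induction on $|w|$: writing a word of length $k+1$ as $w'=w\xi_j$ with $j$ larger than every index occurring in $w$, one gets $S(w')=S(\xi_j)S(w)=(-K\xi_j)(-K)^{k}w$, and the inductive hypothesis together with $\xi_j K^{k}=(-1)^{k}K^{k}\xi_j$ and $\xi_j w=(-1)^{k}w\xi_j$ collapses this to $(-K)^{k+1}w'$, with the $Kw$-statement following in the same way. Neither route is hard; the only real danger is mismanaging the anticommutation signs, and the single clean observation that makes the bookkeeping painless is that collecting the $K$'s and un-reversing the $\xi$'s each contribute $(-1)^{k(k-1)/2}$ and therefore cancel, leaving only the overall $(-1)^{k}$ coming from the $k$ occurrences of $S(\xi_i)=-K\xi_i$.
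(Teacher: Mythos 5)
Your proof is correct, and the paper in fact states this lemma without proof, treating it as a routine computation; your argument via anti-multiplicativity of the antipode, $S(\xi_i)=-K\xi_i$, and the observation that collecting the $K$'s and reversing the $\xi$'s each contribute $(-1)^{k(k-1)/2}$ (hence cancel) is exactly the standard verification, and the sign bookkeeping checks out, including the final step $S(Kw)=(-K)^{|w|}wK=K^{|w|+1}w$.
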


    The Drinfeld double $DH$ of a Hopf algebra $H$ is a Hopf algebra formed from a bicrossed product $DH=H^{*cop}\bowtie H$, where $H^{*cop}$ means the opposite comultiplication dual Hopf algebra of $H$. As a vector space, $DH=H^{*}\otimes H$. The multiplication in $DH$ is given for $\phi,\phi'\in H^{*cop}$ and $h,h'\in H$ by
    $$(\phi\otimes h)(\phi'\otimes h')=\phi\phi'(S^{-1}(h_{(3)})(-)h_{(1)})\otimes h_{(2)}h'.$$
    Since $DH$ is generated by $\epsilon\otimes H$ and $H^{*cop}\otimes 1$, the remaining operations on the Hopf algebra $DH$ are inherited from $H$ and $H^{*cop}$.
    
    For $b\in B$, let $b^*:\Kn\to\bC$ be the linear map generated by $b^*(b')=\delta_{b=b'}$ for all $b'\in B$. Let $\bar K=1^*-K^*$ and $\bar\xi_i = \xi_i^*+(K\xi_i)^*$. We will always identify $f\in \Kn^{*cop}$ with $f\otimes 1\in D\Kn$ and $a\in \Kn$ with $\epsilon\otimes a\in D\Kn$. In particular, $1$ is identified with $1\otimes \epsilon\in D\Kn$. Note that $fa = f\otimes a$ for any $f\in \Kn^{*cop}$ and $a\in \Kn$. 
    
    \begin{lemma}\label{DKn-pres}
    In addition to the defining relations of the Nichols Hopf algebra (Eqn. \ref{Kn-pres}), we have the following additional relationships among elements in the doubled Nichols Hopf algebra $D\Kn$: for $i, j=1,\dots, n$
    \begin{align*}
        \bar K^2&=1,& \bar K\bar\xi_i&=-\bar\xi_i \bar K, &  \bar\xi_i\bar\xi_j&=-\bar\xi_j\bar\xi_i,\\
        (K\bar K)^2 &= 1& K\bar\xi_i&=-\bar\xi_i K, & \bar K\xi_i&=-\xi_i \bar K,\\
        \xi_i\bar\xi_i&= 1-K\bar K-\bar\xi_i\xi_i, & \xi_i\bar\xi_j&=-\bar\xi_j\xi_i, \text{ for $i\neq j$}.
    \end{align*}
    The coalgebra structure and antipode of $\Kn^{*cop}$ (and hence $D\Kn$) are described by:
    \begin{align*}    
        \Delta(\bar K)&=\bar K\otimes \bar K, & \epsilon(\bar K) &= 1_\bC, & S(\bar K)&=\bar K, \\
        \Delta(\bar \xi_i)&=\bar K\otimes \bar \xi_i+\bar\xi_i\otimes 1, & \epsilon(\bar \xi_i) &= 0_\bC, & S(\bar \xi_i)&= -\bar K\bar \xi_i.
    \end{align*}
    In particular, $D\Kn$ is generated by two copies of $\Kn$.
    \end{lemma}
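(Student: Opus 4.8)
The plan is to prove the three families of relations in the first display separately, organized by the bicrossed‑product structure $D\Kn=\Kn^{*cop}\bowtie\Kn$. The relations among $K$ and the $\xi_i$ are the defining relations of $\Kn$ and require nothing new; the relations among $\bar K$ and the $\bar\xi_i$ are relations inside $\Kn^{*cop}$ and will be obtained by Hopf duality; the mixed relations will be computed from the straightening formula $(\phi\otimes h)(\phi'\otimes h')=\phi\phi'\!\left(S^{-1}(h_{(3)})(-)h_{(1)}\right)\otimes h_{(2)}h'$, after identifying $a\in\Kn$ with $\epsilon\otimes a$ and $f\in\Kn^{*cop}$ with $f\otimes 1$.

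First I would analyze $\Kn^{*}$ as the dual of $\Kn$, whose multiplication is dual to $\Delta_{\Kn}$ and whose comultiplication is dual to $m_{\Kn}$. A direct check shows that $\bar K=1^{*}-K^{*}$ is the algebra map $\Kn\to\bC$ sending $K\mapsto-1$, $\xi_i\mapsto 0$ (it factors through $\Kn/(\xi_1,\dots,\xi_n)\cong\bC[\bZ/2\bZ]$), hence is a grouplike of $\Kn^{*}$ with $\bar K^{2}=\epsilon$; and that each $\bar\xi_i=\xi_i^{*}+(K\xi_i)^{*}$ is $(1,\bar K)$‑skew primitive, i.e.\ $\Delta_{\Kn^{*}}(\bar\xi_i)=1\otimes\bar\xi_i+\bar\xi_i\otimes\bar K$, which one verifies by pairing $\bar\xi_i$ against $xy$ as $x,y$ range over the basis $B=W\cup KW$ and collecting the eight nonzero terms. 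The relations $\bar\xi_i^{2}=0$, $\bar K\bar\xi_i=-\bar\xi_i\bar K$, $\bar\xi_i\bar\xi_j=-\bar\xi_j\bar\xi_i$ then follow by evaluating both sides on $\Delta_{\Kn}$‑components, using that $\Delta_{\Kn}$ preserves the total $\xi$‑degree and that no basis element of $\Kn$ contains a repeated $\xi_i$. Passing to the opposite comultiplication gives $\Delta(\bar K)=\bar K\otimes\bar K$ and $\Delta(\bar\xi_i)=\bar K\otimes\bar\xi_i+\bar\xi_i\otimes 1$ in $\Kn^{*cop}$, and since an antipode (anti‑)homomorphism is determined on grouplikes and skew‑primitives, the antipode axiom in $\Kn^{*cop}$ forces $S(\bar K)=\bar K$ and $S(\bar\xi_i)=-\bar K\bar\xi_i$.

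Next I would compute the mixed relations with the straightening formula. From Lemma~\ref{SbLemma} one has $S(\xi_i)=-K\xi_i$ and $S(K\xi_i)=\xi_i$, so $S$ is invertible with $S^{-1}(\xi_i)=K\xi_i$; also $\Delta^{2}(\xi_i)=K\otimes K\otimes\xi_i+K\otimes\xi_i\otimes 1+\xi_i\otimes 1\otimes 1$. The products with the grouplikes are cheap, since $\Delta^{2}(K)=K\otimes K\otimes K$ and $S^{-1}(K)=K$: one reads off $\bar K K=K\bar K=\bar K\otimes K$, $(K\bar K)^{2}=1$, $K\bar\xi_i=-\bar\xi_i K$, and $\bar K\xi_i=-\xi_i\bar K$ directly (in each case the relevant functional on $\Kn$ is supported in $\xi$‑degree $0$ or collapses to $\pm\bar K$, $\pm\bar\xi_i$). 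The one real computation is $\xi_i\cdot\bar\xi_j=(\epsilon\otimes\xi_i)(\bar\xi_j\otimes 1)$: it has three summands, one per term of $\Delta^{2}(\xi_i)$, each of the form (functional on $\Kn$)$\otimes$(element of $\Kn$); rewriting each functional $x\mapsto\bar\xi_j\!\left(S^{-1}(h_{(3)})\,x\,h_{(1)}\right)$ in the dual basis $\{b^{*}\}$ gives, respectively, $\delta_{ij}\,\epsilon\otimes 1$, $-\bar\xi_j\otimes\xi_i$, and $-\delta_{ij}\,\bar K\otimes K$. Hence $\xi_i\bar\xi_j=\delta_{ij}1-\bar\xi_j\xi_i-\delta_{ij}K\bar K$, which is precisely $\xi_i\bar\xi_i=1-K\bar K-\bar\xi_i\xi_i$ together with $\xi_i\bar\xi_j=-\bar\xi_j\xi_i$ for $i\neq j$. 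Finally, $D\Kn$ is generated by $\{K,\xi_1,\dots,\xi_n\}$ and $\{\bar K,\bar\xi_1,\dots,\bar\xi_n\}$ because the bicrossed product is generated as an algebra by its two tensor factors $\Kn$ and $\Kn^{*cop}$, each generated over $\bC$ by its displayed generators.

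The main obstacle is bookkeeping: tracking the signs coming from the super‑commutativity of the $\xi_i$ (and hence of the $\bar\xi_i$), from $S$ and $S^{-1}$ on odd‑length words, and from the three‑fold coproduct inside the straightening formula. The only genuinely conceptual point to pin down is the skew‑primitivity datum of $\bar\xi_i$: it is $(1,\bar K)$‑primitive in $\Kn^{*}$, hence $(\bar K,1)$‑primitive in $\Kn^{*cop}$, and this is exactly what makes its coproduct mirror $\Delta(\xi_i)=K\otimes\xi_i+\xi_i\otimes 1$ and what fixes the sign in $S(\bar\xi_i)=-\bar K\bar\xi_i$.
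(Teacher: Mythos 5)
Your verification is correct and is exactly the intended (the paper itself omits the proof of this lemma): duality identifies $\bar K$ as a grouplike character and $\bar\xi_i$ as a $(1,\bar K)$-skew-primitive of $\Kn^*$, which after passing to $\Kn^{*cop}$ gives the stated coalgebra structure and antipode, and the straightening formula with $\Delta^2(\xi_i)=K\otimes K\otimes\xi_i+K\otimes\xi_i\otimes 1+\xi_i\otimes 1\otimes 1$ and $S^{-1}(\xi_i)=K\xi_i$ yields $\xi_i\bar\xi_j=\delta_{ij}1-\bar\xi_j\xi_i-\delta_{ij}K\bar K$ with all signs right. The only blemishes are cosmetic: your three listed contributions are matched to the summands of $\Delta^2(\xi_i)$ in reversed order (the $K\otimes K\otimes\xi_i$ term is the one producing $-\delta_{ij}\bar K\otimes K$), and the closing generation claim for $\Kn^{*cop}$ ultimately rests on the linear independence of the monomials in $\bar K,\bar\xi_i$, which the paper supplies in the subsequent lemma expressing $\bar w$ in the dual basis.
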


    \begin{lemma}
        Let $w=\xi_{i_1}\dots\xi_{i_k}\in W$. Define $\bar w = \bar\xi_{i_1}\dots\bar\xi_{i_k}$. Then,
        \begin{align*}
            \bar w &= (-1)^{\lfloor k/2\rfloor} (w^* + (Kw)^*),\\
            \bar w\bar K &= (-1)^{\lfloor k/2\rfloor}(w^* - (Kw)^*).
        \end{align*}
    \end{lemma}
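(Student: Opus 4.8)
The plan is to prove both identities by induction on the length $k=|w|$, working inside $\mcK_n^{*cop}$, whose multiplication is dual to the comultiplication of $\mcK_n$: for $b,c\in B$ the product $b^{*}c^{*}=\sum_{x\in B}\langle b\otimes c,\Delta(x)\rangle\,x^{*}$ simply reads off the coefficient of $b\otimes c$ in $\Delta(x)$.

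First I would pin down $1^{*}$ and $K^{*}$. Since $\epsilon(1)=\epsilon(K)=1$ and $\epsilon(b)=0$ for $|b|\ge1$, the counit is $\epsilon=1^{*}+K^{*}$, so $1^{*}$ and $K^{*}$ are orthogonal idempotents of $\mcK_n^{*cop}$ with sum $1$. Since $\Delta$ is an algebra map with $\Delta(K)=K\otimes K$ and $\Delta(\xi_i)=K\otimes\xi_i+\xi_i\otimes1$, for a word $w\in W$ the only term of $\Delta(w)$ with second tensor leg $1$ is $w\otimes1$ and the only term of $\Delta(Kw)$ with second leg $K$ is $Kw\otimes K$; hence $b^{*}1^{*}=b^{*},\ b^{*}K^{*}=0$ for $b\in W$ while $b^{*}1^{*}=0,\ b^{*}K^{*}=b^{*}$ for $b\in KW$. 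Consequently $(w^{*}+(Kw)^{*})\bar K=(w^{*}+(Kw)^{*})(1^{*}-K^{*})=w^{*}-(Kw)^{*}$, so the second identity follows from the first by right multiplication by $\bar K$, and it is enough to prove $\bar w=(-1)^{\lfloor k/2\rfloor}(w^{*}+(Kw)^{*})$.

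For $k=0$ this is $\bar1=\epsilon=1^{*}+K^{*}$ and for $k=1$ it is the definition $\bar\xi_i=\xi_i^{*}+(K\xi_i)^{*}$. For $k\ge2$ factor $w=\xi_j w'$ with $j=i_1$ the smallest index of $w$ and $w'=\xi_{i_2}\cdots\xi_{i_k}\in W$ of length $k-1$; then $\bar w=\bar\xi_j\overline{w'}$ and $\overline{w'}=(-1)^{\lfloor(k-1)/2\rfloor}\big((w')^{*}+(Kw')^{*}\big)$ by the inductive hypothesis. The crux is the relation
\[
\bar\xi_j\big((w')^{*}+(Kw')^{*}\big)=(-1)^{k-1}\big(w^{*}+(Kw)^{*}\big),
\]
which I would check by expanding $\Delta(w)=\prod_{t=1}^{k}\Delta(\xi_{i_t})$ and $\Delta(Kw)=(K\otimes K)\Delta(w)$: to obtain a term with second leg $w'$, positions $2,\dots,k$ must each send their $\xi$ to the right, forcing $\xi_j$ to the left and the other $k-1$ generators to the left as $K$; reordering those $K$'s past $\xi_j$ produces exactly the term $(-1)^{k-1}K^{k-1}\xi_j\otimes w'$ in $\Delta(w)$ (hence $(-1)^{k-1}K^{k}\xi_j\otimes Kw'$ in $\Delta(Kw)$), and no other splitting of $w$ contributes because each $\xi$-index of $w$ occurs once. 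According to the parity of $k$ the prefix $K^{k-1}$ equals $1$ or $K$, which selects which two of the four products $\xi_j^{*}(w')^{*},\ \xi_j^{*}(Kw')^{*},\ (K\xi_j)^{*}(w')^{*},\ (K\xi_j)^{*}(Kw')^{*}$ are nonzero; adding them up yields the displayed relation. Then $\bar w=(-1)^{\lfloor(k-1)/2\rfloor+(k-1)}\big(w^{*}+(Kw)^{*}\big)=(-1)^{\lfloor k/2\rfloor}\big(w^{*}+(Kw)^{*}\big)$, completing the induction.

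The main obstacle is exactly this sign bookkeeping in the inductive step: one must verify that $\xi_j$ really occupies the leftmost slot so that the reordering sign is $(-1)^{k-1}$, that no stray splitting of $w$ contributes, and that the parity-dependent vanishing of two of the four products combines with $(-1)^{k-1}$ and the exponent $\lfloor(k-1)/2\rfloor$ to yield precisely $(-1)^{\lfloor k/2\rfloor}$. All the remaining steps are routine unwindings of the definitions of $\mcK_n^{*cop}$ and $\bar\xi_i$.
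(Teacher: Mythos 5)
Your proof is correct and complete; the paper actually states this lemma without proof, and your argument (induction on word length using the convolution product $b^*c^*=\sum_x\langle b\otimes c,\Delta(x)\rangle x^*$ in $\mcK_n^{*cop}$) is the natural verification the authors presumably intended. The two points that needed care both check out: the unique splitting of $\Delta(w)$ contributing to the first leg $\xi_j$ and second leg $w'$ carries the coefficient $(-1)^{k-1}K^{k-1}$ exactly as you claim, and the sign identity $\lfloor(k-1)/2\rfloor+(k-1)\equiv\lfloor k/2\rfloor\pmod 2$ closes the induction, while the reduction of the second formula to the first via $b^*1^*$, $b^*K^*$ being $b^*$ or $0$ according to $b\in W$ or $b\in KW$ is also valid.
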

    Let $\bar W = \{\bar w| w\in W\}$ and $\bar B = \bar W\cup \bar K\bar W$. It follows from this lemma that $\bar B$ is a linearly independent set. Moreover, it has $2^{n+1}$ elements, so it must be a basis for $\Kn^*$. Therefore, $\bar K$ and the $\bar\xi_i$ generate all of $\Kn^{*cop}$. Thus, $K, \bar K, \xi_i,\bar\xi_i$ generate $D\Kn$. Moreover, the presentation of $D\Kn$ given in Lemma \ref{DKn-pres} is complete since it defines an algebra of dimension $2^{2n+2}=\dim D\Kn$.
    
    \begin{corollary}\label{DKnR}
        The doubled Nichols Hopf algebra $D\Kn$ is quasitriangular with the following $R$-matrix:
        $$R = \sum_{w\in W} (-1)^{\lfloor|w|/2\rfloor}( w\otimes \bar w\bar K^{|w|})Z,$$
        where $Z = \frac{1}{2}(1\otimes 1 + K\otimes 1 + 1\otimes \bar K - K\otimes\bar K)$.
    \end{corollary}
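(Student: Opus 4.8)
The plan is to specialize the canonical universal $R$-matrix of a Drinfeld double and then rewrite it in the generators $K,\bar K,\xi_i,\bar\xi_i$ of $D\Kn$. Recall that for any finite-dimensional Hopf algebra $H$ the double $DH=H^{*cop}\bowtie H$ is quasitriangular, and in the bicrossed-product conventions fixed above its universal $R$-matrix is
\[
R_{\mathrm{can}}=\sum_i (\epsilon\otimes e_i)\otimes(e^i\otimes 1)=\sum_i e_i\otimes e^i,
\]
where $\{e_i\}$ is any basis of $H$, $\{e^i\}\subseteq H^*$ is the dual basis, and we use the identifications $a=\epsilon\otimes a$, $f=f\otimes 1$ (see e.g. \cite{kassel2012quantum}). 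Thus it suffices to take $H=\Kn$ with the basis $B=W\cup KW$ and its dual basis $\{b^{*}\}_{b\in B}$, and to check that
\[
R_{\mathrm{can}}=\sum_{w\in W}\bigl(w\otimes w^{*}+Kw\otimes (Kw)^{*}\bigr)
\]
agrees with the formula in the statement. Note that $B=W\sqcup KW$ and $\dim\Kn=2^{n+1}$, so these two families of terms exhaust the sum over $b\in B$ and nothing is dropped.

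Next I would invert the two identities of the preceding lemma: from $\bar w=(-1)^{\lfloor |w|/2\rfloor}(w^{*}+(Kw)^{*})$ and $\bar w\bar K=(-1)^{\lfloor |w|/2\rfloor}(w^{*}-(Kw)^{*})$ one gets, using $\bar K^{2}=1$ from Lemma \ref{DKn-pres},
\[
w^{*}=\tfrac{(-1)^{\lfloor |w|/2\rfloor}}{2}(\bar w+\bar w\bar K),\qquad (Kw)^{*}=\tfrac{(-1)^{\lfloor |w|/2\rfloor}}{2}(\bar w-\bar w\bar K).
\]
Substituting these into $R_{\mathrm{can}}$ yields
\[
R_{\mathrm{can}}=\sum_{w\in W}\frac{(-1)^{\lfloor |w|/2\rfloor}}{2}\Bigl(w\otimes(\bar w+\bar w\bar K)+Kw\otimes(\bar w-\bar w\bar K)\Bigr).
\]
It then remains to verify the purely formal identity, for each fixed $w\in W$,
\[
(w\otimes \bar w\bar K^{|w|})\,Z=\tfrac12\bigl(w\otimes(\bar w+\bar w\bar K)+Kw\otimes(\bar w-\bar w\bar K)\bigr)
\]
in $D\Kn\otimes D\Kn$. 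One does this by expanding $Z=\tfrac12(1\otimes 1+K\otimes 1+1\otimes\bar K-K\otimes\bar K)$, using $(w\otimes x)(K\otimes 1)=wK\otimes x=(-1)^{|w|}Kw\otimes x$ (since $K\xi_i=-\xi_iK$) together with $\bar K^{2}=1$, and separating the cases $|w|$ even (so $\bar K^{|w|}=1$) and $|w|$ odd (so $\bar K^{|w|}=\bar K$ and $\bar K^{|w|+1}=1$); in both cases the signs collapse to the same expression, giving the claim.

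The one point that needs genuine care is pinning down the exact convention for the canonical double $R$-matrix so that it is compatible with the bicrossed-product structure fixed in Lemma \ref{DKn-pres} (the placement of $S^{-1}$ in the multiplication, and the use of $\Kn^{*cop}$ rather than $\Kn^{*op}$); once that is settled, quasitriangularity of $D\Kn$ is inherited from the general theory of Drinfeld doubles, and the explicit shape of $R$ follows from the bookkeeping above. Should the convention force an antipode twist in $R_{\mathrm{can}}$, the same argument goes through after replacing $\bar w\bar K^{|w|}$ by its image under the relevant antipode, which Lemma \ref{SbLemma} and the preceding lemma make fully explicit.
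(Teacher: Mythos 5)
Your proposal is correct and follows exactly the route the paper intends: the statement is presented as a corollary of the preceding lemma expressing $\bar w$ and $\bar w\bar K$ in terms of the dual basis elements $w^*$ and $(Kw)^*$, which is precisely the inversion you perform to rewrite the canonical double $R$-matrix $\sum_{b\in B} b\otimes b^*$ in the generators $K,\bar K,\xi_i,\bar\xi_i$. Your case check ($|w|$ even versus odd, using $wK=(-1)^{|w|}Kw$ and $\bar K^2=1$) verifies the factorization through $Z$ correctly, and the convention for the canonical $R$-matrix of $H^{*cop}\bowtie H$ that you adopt is the one compatible with the multiplication rule fixed in Lemma \ref{DKn-pres}, so no antipode twist is needed.
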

    \begin{lemma}\label{DKn-ribbon}
        Suppose $n$ is even. A ribbon element $\nu\in D\Kn$ for the $R$-matrix in Corollary \ref{DKnR} is
        $$\nu = (1 + K - \bar K + K\bar K)\sum_{w\in W} \frac{(-1)^{\lfloor(|w|+1)/2\rfloor}}{2} w\bar w.$$
    \end{lemma}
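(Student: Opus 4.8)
The plan is to check directly that the displayed $\nu$ satisfies the defining conditions of a ribbon element in \eqref{eq: ribbon}: $\Delta(\nu)=(R_{21}R)^{-1}(\nu\otimes\nu)$, $\varepsilon(\nu)=1$, $S(\nu)=\nu$, and centrality. The first step is to rewrite the ``fermionic'' part of $\nu$ in product form. Using $w\bar w=(-1)^{\binom{|w|}{2}}\prod_{\xi_i\in\mathbf I(w)}\xi_i\bar\xi_i$ (obtained by moving each $\bar\xi_{i_j}$ to the left with the off-diagonal relation $\xi_i\bar\xi_j=-\bar\xi_j\xi_i$) together with the congruence $\lfloor(k+1)/2\rfloor+\binom k2\equiv k\pmod 2$, one gets
$$\sum_{w\in W}\frac{(-1)^{\lfloor(|w|+1)/2\rfloor}}{2}\,w\bar w=\frac12\sum_{I\subseteq\{1,\dots,n\}}(-1)^{|I|}\prod_{i\in I}\xi_i\bar\xi_i=\frac12\prod_{i=1}^{n}(1-\xi_i\bar\xi_i),$$
using that the elements $\xi_i\bar\xi_i$ pairwise commute (a consequence of the relations of Lemma \ref{DKn-pres}). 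Hence $\nu=\tfrac12\,(1+K-\bar K+K\bar K)\prod_{i=1}^n(1-\xi_i\bar\xi_i)$. I record for later use: $(1+K-\bar K+K\bar K)^2=4$, $(1+K-\bar K+K\bar K)K\bar K=1-K+\bar K+K\bar K$, and $1+K-\bar K+K\bar K$ is $S$-fixed; also $\xi_i\bar\xi_i+\bar\xi_i\xi_i=1-K\bar K$, whence $1+K\bar K\,\bar\xi_i\xi_i=K\bar K(1-\xi_i\bar\xi_i)$.

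Three of the conditions are then reasonably short. $\varepsilon(\nu)=1$: only $w=1$ contributes, and $\tfrac12\varepsilon(1+K-\bar K+K\bar K)=1$. $S(\nu)=\nu$: since $S$ is an anti-automorphism fixing $1+K-\bar K+K\bar K$ and $S(\xi_i\bar\xi_i)=S(\bar\xi_i)S(\xi_i)=-K\bar K\,\bar\xi_i\xi_i$, one has $S\!\bigl(\prod_i(1-\xi_i\bar\xi_i)\bigr)=\prod_i(1+K\bar K\,\bar\xi_i\xi_i)=(K\bar K)^n\prod_i(1-\xi_i\bar\xi_i)$, which equals $\prod_i(1-\xi_i\bar\xi_i)$ exactly because $n$ is even. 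Centrality is checked on generators: $K,\bar K$ commute with each $\xi_i\bar\xi_i$ and with the grouplike prefactor; for $\xi_j$ one uses $\xi_j(1+K-\bar K+K\bar K)=(1-K+\bar K+K\bar K)\xi_j$ together with $\xi_j(1-\xi_j\bar\xi_j)=\xi_j$ and $(1-\xi_j\bar\xi_j)\xi_j=K\bar K\xi_j$, which give $\xi_j\prod_i(1-\xi_i\bar\xi_i)=K\bar K\prod_i(1-\xi_i\bar\xi_i)\xi_j$, and the two twists cancel since $(1-K+\bar K+K\bar K)K\bar K=1+K-\bar K+K\bar K$; the case of $\bar\xi_j$ is symmetric.

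The remaining condition, $\Delta(\nu)=(R_{21}R)^{-1}(\nu\otimes\nu)$, I would reduce to the standard theory relating ribbon elements and the Drinfeld element $u=\sum S(r_2)r_1$ (see \cite[Ch.~XIV]{kassel2012quantum}): $u$ is invertible, implements $S^2$, and satisfies $\Delta(u)=(R_{21}R)^{-1}(u\otimes u)$, while $\Delta$ sends grouplikes to grouplikes; it therefore suffices to identify $\nu$ with $u$ (or $u^{-1}$, according to the sign fixed in \eqref{eq: ribbon}) times a grouplike. Concretely, I would compute $u$ from the $R$-matrix of Corollary \ref{DKnR}: applying $S$ to the second tensor leg and using Lemma \ref{SbLemma} together with its analogue for the barred generators (so $S(\bar w\bar K^{|w|})=(-1)^{|w|}\bar w$), the grouplike parts of all terms collapse into the single factor $1+K+\bar K-K\bar K$ and the sum over $W$ collapses, via the same fermionic-product identity as above, into $\prod_i(1-\bar\xi_i\xi_i)$; thus $u=\tfrac12(1+K+\bar K-K\bar K)\prod_i(1-\bar\xi_i\xi_i)$. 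One then finds $S(u)=(K\bar K)^n u$, and using $(1-\bar\xi_i\xi_i)^{-1}=K\bar K(1-\xi_i\bar\xi_i)$ one obtains, for $n$ even, $u^{-1}=\tfrac12(1+K+\bar K-K\bar K)\prod_i(1-\xi_i\bar\xi_i)$, so that $\nu=u^{-1}K$; this yields the comultiplication identity and completes the verification.

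The main obstacle is the explicit evaluation of $u$ in that last step: it is a sign-weighted sum over all $2^n$ increasing words in the $\xi_i$, and collapsing it requires repeatedly reordering products using the anticommutation relations of Lemma \ref{DKn-pres} and the relation $\xi_i\bar\xi_i+\bar\xi_i\xi_i=1-K\bar K$, while carefully tracking the exponents $\lfloor|w|/2\rfloor$. This step, together with the identity $S(\nu)=\nu$, is precisely where the hypothesis that $n$ is even is used: the parity enters through the factors $(K\bar K)^n$ (as $(K\bar K)^2=1$), and for odd $n$ the same element is no longer $S$-fixed, consistent with the fact that $D\mcK_n$ with its canonical $R$-matrix admits no ribbon element when $n$ is odd.
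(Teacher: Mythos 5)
Your proposal is correct, and its endpoint is the same as the paper's own argument: the identification $\nu=Gu^{-1}$ with $G=K$ the balancing grouplike and $u$ the Drinfeld element of the $R$-matrix in Corollary~\ref{DKnR}. The paper's proof is shorter and more formal --- it quotes the well-known formula for $u^{-1}$, simplifies the resulting sum, and multiplies by $K$, letting the standard Kauffman--Radford/Drinfeld machinery supply all four ribbon axioms at once. What you do differently, and what is genuinely useful, is (i) the closed product form $\sum_{w\in W}\tfrac{(-1)^{\lfloor(|w|+1)/2\rfloor}}{2}w\bar w=\tfrac12\prod_{i=1}^n(1-\xi_i\bar\xi_i)$, which turns the inverse, antipode, and commutation computations into one-liners (your auxiliary identities $(1-\bar\xi_i\xi_i)^{-1}=K\bar K(1-\xi_i\bar\xi_i)$, $S(1-\xi_i\bar\xi_i)=K\bar K(1-\xi_i\bar\xi_i)$, and $(1-\xi_j\bar\xi_j)\xi_j=K\bar K\xi_j$ all check out against the relations of Lemma~\ref{DKn-pres}); and (ii) a direct, machinery-free verification of centrality, $S$-fixedness, and the counit condition, reserving the abstract theory only for the comultiplication identity. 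Your value of $u$ agrees with the paper's $u^{-1}$ after inversion, and the parity of $n$ enters in both arguments exactly through $(K\bar K)^n$. The one point to pin down --- which is equally present in the paper's own proof, so it is a convention to fix rather than a gap in your argument --- is the sign of the braiding: with $\Delta(u)=(R_{21}R)^{-1}(u\otimes u)$, the element $Ku^{-1}$ satisfies $\Delta(\nu)=(R_{21}R)(\nu\otimes\nu)$, i.e.\ the inverse of Eq.~(\ref{eq: ribbon}) as literally written, while the element matching that equation verbatim is $\nu^{-1}=uK$; you flag this ambiguity yourself, and either choice is a ribbon element for one of the two mutually inverse braidings.
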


    The set $\{\xi_1,\dots, \xi_n,\bar\xi_1,\dots, \bar\xi_n\}$ does not generate an exterior algebra because, for example $\xi_1\bar\xi_1\neq -\bar\xi_1 \xi_1$. However, if $K$ and $\bar K$ were identified in a quotient, for example, then this would generate an exterior algebra of dimension $2^{2n}$ since $1 - K^2 = 0$. It follows that $D\NK_n / \langle K - \bar K\rangle\cong \NK_{2n}$ under the quotient map $\pi:K,\bar K\mapsto K,\xi_i\mapsto \xi_i,\bar\xi_i\mapsto \xi_{i+n}$. In particular, since quasitriangularity is preserved by quotients, $\NK_{2n}$ is quasitriangular. The $R$-matrix is given by
    \begin{align*}
      R_{\mcK_{2n}} &= \sum_{w\in W} (-1)^{\lfloor|w|/2\rfloor}( w\otimes \pi(\bar w)K^{|w|})(\id\otimes\pi)(Z).
    \end{align*}
    There is a large family of $R$-matrices which make the Nichols Hopf algebras quasitriangular, of which these are a particular case \cite{panaite1999quasitriangular}. Indeed, there are many $R$-matrices for $\Kn$ for odd $n$ as well. Moreover, for every $n$ and every choice of $R$-matrix in the large family, $(\Kn, R)$ is ribbon. However, regardless of choice of $R$-matrix, $\Kn$-mod cannot be made modular, since, by Theorem \ref{Kn-modules}, there is no $\Kn$-module which is its own projective cover, contradicting Theorem \ref{thm:S-rank}.

    \begin{lemma}
        For the above $R$-matrix, a ribbon element $\nu_{\mcK_{2n}}\in \mcK_{2n}$ is
        $$\nu_{\mcK_{2n}} = \sum_{w\in W} (-1)^{\lfloor(|w|+1)/2\rfloor} w\pi(\bar w).$$
    \end{lemma}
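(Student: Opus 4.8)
\emph{Proof plan.} The strategy is to transport everything through the surjective Hopf algebra map $\pi\colon D\mcK_n\to \mcK_{2n}$, $K,\bar K\mapsto K$, $\xi_i\mapsto\xi_i$, $\bar\xi_i\mapsto \xi_{i+n}$. By construction $R_{\mcK_{2n}}=(\pi\otimes\pi)(R)$ for the $R$-matrix $R$ of $D\mcK_n$ from Corollary~\ref{DKnR}, and $\pi$ intertwines the antipodes; hence the Drinfeld element $u=\sum S(r_2)r_1$ and its inverse transport: $u_{\mcK_{2n}}=\pi(u)$ and $u_{\mcK_{2n}}^{-1}=\pi(u^{-1})$. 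The key point is that producing a ribbon element out of $u$ and a grouplike balancing element $G$ (one that is grouplike, implements $S^2$ by conjugation, and satisfies $G^2=uS(u)^{-1}$) via $\nu=Gu^{-1}$ is a purely formal construction valid in any quasitriangular Hopf algebra. So it suffices to exhibit such a $G$ inside $\mcK_{2n}$, which we can do even though no such $G$ exists in $D\mcK_n$ for odd $n$ --- which is exactly why $D\mcK_n$ fails to be ribbon there.

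First I would record the explicit inverse Drinfeld element. Applying the standard closed formula for $u^{-1}$ to the $R$-matrix of Corollary~\ref{DKnR}, together with $S^2(w)=(-1)^{|w|}w$ (which follows from Lemma~\ref{SbLemma}) and the fact that $K$ and $\bar K$ anticommute with every $\xi_i$ and $\bar\xi_i$ and therefore commute with every $w\bar w$, one obtains
$$u^{-1}=(1+K+\bar K-K\bar K)\sum_{w\in W}\frac{(-1)^{\lfloor(|w|+1)/2\rfloor}}{2}\,w\bar w ,$$
and therefore, multiplying by $K$ on the left,
$$Ku^{-1}=(1+K-\bar K+K\bar K)\sum_{w\in W}\frac{(-1)^{\lfloor(|w|+1)/2\rfloor}}{2}\,w\bar w .$$
This is exactly the computation behind Lemma~\ref{DKn-ribbon}, and it uses nothing about the parity of $n$: the identity for $Ku^{-1}$ holds in $D\mcK_n$ for every $n\ge 1$, even when this element is not a ribbon element of $D\mcK_n$.

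Next I would check that $G:=\pi(K)$ is a balancing element of $\mcK_{2n}$. It is grouplike, and it implements $S^2$: on generators $S^2(\xi_j)=-\xi_j=K\xi_jK^{-1}$ and $S^2(K)=K$. Since $S^2=\mathrm{Ad}_K$ with $K^2=1$, one has $S^4=\mathrm{id}$ in $\mcK_{2n}$, so Drinfeld's grouplike $u_{\mcK_{2n}}S(u_{\mcK_{2n}})^{-1}$ is central; but the only grouplikes of $\mcK_{2n}$ are $1$ and $K$, and $K$ is not central (it anticommutes with the $\xi_j$), so $u_{\mcK_{2n}}S(u_{\mcK_{2n}})^{-1}=1=G^2$. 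Equivalently, in $D\mcK_n$ the central grouplike $uS(u)^{-1}$ lies in $\{1,K\bar K\}$, and $\pi$ kills the obstruction since $\pi(K\bar K)=K^2=1$. By the standard construction of ribbon elements from a grouplike balancing element (e.g.\ \cite{kassel2012quantum}), $\nu:=Gu_{\mcK_{2n}}^{-1}$ is then a ribbon element of $(\mcK_{2n},R_{\mcK_{2n}})$.

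Finally, $\nu=\pi(K)\,\pi(u^{-1})=\pi(Ku^{-1})$, and applying $\pi$ to the second display above, using $\pi(1+K-\bar K+K\bar K)=1+K-K+K^2=2$ and $\pi(w\bar w)=w\,\pi(\bar w)$, gives $\nu=\sum_{w\in W}(-1)^{\lfloor(|w|+1)/2\rfloor}w\,\pi(\bar w)$, as claimed; for even $n$ this just says $\nu_{\mcK_{2n}}=\pi(\nu_{D\mcK_n})$ with $\nu_{D\mcK_n}$ the ribbon element of Lemma~\ref{DKn-ribbon}. The only genuinely new input beyond Lemma~\ref{DKn-ribbon} is the grouplike bookkeeping of the previous paragraph; the main potential obstacle is purely computational, namely pinning down the exponent $\lfloor(|w|+1)/2\rfloor$ in the formula for $u^{-1}$, but that sign chase is the same one already carried out for Lemma~\ref{DKn-ribbon}.
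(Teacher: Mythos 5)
Your proposal is correct and takes essentially the same route as the paper: the authors' own (unpublished) proof of the companion ribbon lemma for $D\mcK_n$ computes $u^{-1}$ from the explicit $R$-matrix, takes the grouplike $G=K$ as balancing element, and sets $\nu=Gu^{-1}$, and since the quasitriangular structure on $\mcK_{2n}$ is by construction the image of that of $D\mcK_n$ under $\pi$, your transport argument with $\pi(1+K-\bar K+K\bar K)=2$ is exactly the intended derivation of the stated formula. The only addition is your indirect verification that $u S(u)^{-1}=1$ in $\mcK_{2n}$ via centrality of Drinfeld's grouplike, where the paper instead asserts $S(u)=u$ directly; both are fine.
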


    \begin{lemma}
        Let $x\in\Kn$ be a left integral (i.e., $hx =\epsilon(h)x$ for all $h\in\Kn$). Then, there is a constant $c\in\bC$ so that
        $$x = c (1+K)\xi_1\dots \xi_n.$$
        If $x\in\Kn$ is a right integral, then there is a constant $c\in\bC$ so that
        $$x = c \xi_1\dots \xi_n(1+K).$$
    \end{lemma}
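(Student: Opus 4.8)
The plan is a direct computation using the basis $B = W\cup KW$ of $\Kn$. First I would reduce the defining condition: since $\Kn$ is generated as an algebra by $K$ and $\xi_1,\dots,\xi_n$, an element $x$ is a left integral if and only if $hx=\epsilon(h)x$ holds on these generators, i.e. $Kx = x$ and $\xi_i x = 0$ for all $i$ (using $\epsilon(K)=1$, $\epsilon(\xi_i)=0$). The candidate $\Lambda := (1+K)\xi_1\cdots\xi_n$ meets both requirements: $K(1+K)=1+K$ gives $K\Lambda=\Lambda$, while $\xi_i\xi_1\cdots\xi_n=0$ (a repeated $\xi_i$, killed by $\xi_i^2=0$) together with $\xi_i K=-K\xi_i$ gives $\xi_i\Lambda = \xi_i\xi_1\cdots\xi_n - K\xi_i\xi_1\cdots\xi_n = 0$. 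The analogous computation shows $\xi_1\cdots\xi_n(1+K)$ is a right integral.

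For uniqueness I would use the $K$-action to cut the problem down to the exterior subalgebra. Expanding $x=\sum_{w\in W}(a_w w + b_w Kw)$ and using $K(Kw)=w$, the equation $Kx=x$ forces $a_w=b_w$ for every $w$, so $x=(1+K)y$ with $y=\sum_w a_w w$ in $\operatorname{span}(W)\cong\Lambda^*E$. From $\xi_i(1+K)=(1-K)\xi_i$ one then gets $\xi_i x=(1-K)(\xi_i y)$ with $\xi_i y\in\operatorname{span}(W)$; since $\{w-Kw : w\in W\}$ consists of linear combinations of pairwise distinct basis vectors, multiplication by $1-K$ is injective on $\operatorname{span}(W)$, and hence $\xi_i x=0$ if and only if $\xi_i y=0$ in $\Lambda^*E$. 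The remaining input is the standard exterior-algebra fact that a $y$ annihilated by left multiplication by every $\xi_i$ is a scalar multiple of the top form: writing $y=\sum_{I\subseteq\{\xi_1,\dots,\xi_n\}} c_I\,\mathbf w(I)$, one has $\xi_i\,\mathbf w(I)=0$ for $\xi_i\in I$ and $\xi_i\,\mathbf w(I)=\pm\,\mathbf w(I\cup\{\xi_i\})$ for $\xi_i\notin I$, so that the injectivity of $I\mapsto I\cup\{\xi_i\}$ on subsets missing $\xi_i$ forces $c_I=0$ whenever $\xi_i\notin I$; ranging over all $i$ leaves only $I=\{\xi_1,\dots,\xi_n\}$. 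Therefore $x=c\,(1+K)\xi_1\cdots\xi_n$. The right-integral statement follows either by the mirror argument (replace $Kx=x,\ \xi_ix=0$ by $xK=x,\ x\xi_i=0$, obtaining $x=y(1+K)$ with $y\xi_i=0$) or by applying the antipode $S$, which sends left integrals to right integrals, to $\Lambda$ and using Lemma \ref{SbLemma} together with $K(1+K)=1+K$ to identify $S(\Lambda)$ with $\xi_1\cdots\xi_n(1+K)$.

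I do not expect any genuine obstacle; this is essentially bookkeeping. The one point worth care is that $K$ is not central in $\Kn$, so one cannot treat $(1\pm K)/2$ as central idempotents and split $\Kn$ as a direct sum accordingly — this is why the argument is routed through the explicit basis $B$ and the sign rule $\xi_i K=-K\xi_i$. As a consistency check, the left integral $(1+K)\xi_1\cdots\xi_n$ and the right integral $\xi_1\cdots\xi_n(1+K)=\xi_1\cdots\xi_n+(-1)^nK\xi_1\cdots\xi_n$ coincide up to a scalar exactly when $n$ is even, which matches the earlier statement that $\Kn$ is unimodular precisely for even $n$.
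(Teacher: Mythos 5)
Your proof is correct, and the paper actually states this lemma without any proof, so your argument fills that gap rather than paralleling an existing one. The route you take — reducing the integral condition to the generators $K$ and $\xi_i$, using $Kx=x$ to write $x=(1+K)y$ with $y$ in the exterior subalgebra, noting that $\xi_i(1+K)=(1-K)\xi_i$ with $(1-K)$ injective on $\operatorname{span}(W)$, and then invoking the standard fact that only the top form of $\Lambda^*E$ is annihilated by all $\xi_i$ — is the natural direct computation, and your verification that the antipode carries $(1+K)\xi_1\cdots\xi_n$ to $\xi_1\cdots\xi_n(1+K)$ via Lemma \ref{SbLemma} is a clean way to get the right-integral statement. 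The closing consistency check against the unimodularity corollary ($n$ even versus odd) is exactly the point of the lemma in the paper.
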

    \begin{corollary}
        If $n$ is odd, then $\Kn$ is not unimodular. If $n$ is even, then $\Kn$ is unimodular.
    \end{corollary}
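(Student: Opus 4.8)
The plan is to read off both claims directly from the integral computation in the preceding lemma. Recall that a finite-dimensional Hopf algebra is unimodular exactly when its space of left integrals coincides with its space of right integrals; equivalently, the distinguished grouplike element $\alpha\in H^*$ defined by $xh=\alpha(h)x$ for a nonzero left integral $x$ equals the counit $\epsilon$. So first I would invoke the lemma to pin down the two integral spaces: the left integrals of $\Kn$ form the line $\spa\{(1+K)\xi_1\cdots\xi_n\}$, and the right integrals form the line $\spa\{\xi_1\cdots\xi_n(1+K)\}$. Both lines are genuinely nonzero, since $\xi_1\cdots\xi_n\in W$ and $K\xi_1\cdots\xi_n\in KW$ are distinct elements of the basis $B=W\cup KW$.

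Next I would compare these two lines by commuting $K$ through the top-degree word. Applying the relation $K\xi_i=-\xi_i K$ exactly $n$ times gives $K\xi_1\cdots\xi_n=(-1)^n\xi_1\cdots\xi_n K$, and, using $(1+K)K=1+K$, this yields
\[
(1+K)\xi_1\cdots\xi_n=\xi_1\cdots\xi_n+(-1)^n\xi_1\cdots\xi_n K=\xi_1\cdots\xi_n\bigl(1+(-1)^nK\bigr).
\]
For even $n$ the right-hand side is precisely $\xi_1\cdots\xi_n(1+K)$, so the spaces of left and right integrals coincide and $\Kn$ is unimodular. For odd $n$ it becomes $\xi_1\cdots\xi_n(1-K)$; since $\xi_1\cdots\xi_n$ and $K\xi_1\cdots\xi_n$ are linearly independent basis elements, the lines $\spa\{\xi_1\cdots\xi_n(1-K)\}$ and $\spa\{\xi_1\cdots\xi_n(1+K)\}$ are different, so left and right integrals do not agree and $\Kn$ is not unimodular. (Equivalently, in terms of the modular element one reads off $\alpha(K)=(-1)^n$ and $\alpha(\xi_i)=0$, using $\xi_1\cdots\xi_n\xi_i=0$, so that $\alpha=\epsilon$ if and only if $n$ is even.)

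There is essentially no obstacle here: all the real work is in the preceding lemma, and what remains is a single commutation identity. The only point to keep straight is the sign $(-1)^n$ accumulated when moving $K$ past the length-$n$ word $\xi_1\cdots\xi_n$, together with the (immediate) observation that the two candidate integrals span distinct lines when $n$ is odd, which follows at once from the explicit basis $B$.
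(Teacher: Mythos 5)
Your argument is correct and is exactly the intended derivation: the paper states this as an immediate corollary of the lemma on integrals and omits the proof, and the single commutation identity $(1+K)\xi_1\cdots\xi_n=\xi_1\cdots\xi_n(1+(-1)^nK)$ is precisely what makes the left- and right-integral lines coincide for even $n$ and differ for odd $n$. The sign bookkeeping and the linear-independence check in the basis $B=W\cup KW$ are both right, as is the equivalent reformulation via the distinguished grouplike element.
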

    \begin{lemma}
        Let $R = R_{\mcK_{2n}}$ and $F:\mcK_{2n}^*\to \mcK_{2n}$ be the Drinfeld map $F:f\mapsto (\id\otimes f)(R_{21}R)$.
        Then, $F$ is NOT an isomorphism. Thus, $\Kn$ is not factorizable for any $n$.
    \end{lemma}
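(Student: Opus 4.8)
The plan is to show $F$ is not injective (equivalently not surjective, as $\dim_{\bC}\mcK_{2n}^*=\dim_{\bC}\mcK_{2n}$) by producing a nonzero $f\in\mcK_{2n}^*$ with $(\id\otimes f)(R_{21}R)=0$. I would organize this around the fact that $\mcK_{2n}\cong D\mcK_n/I$ with $I=\langle K-\bar K\rangle$, together with the fact that the Drinfeld double $D\mcK_n$ \emph{is} factorizable (Drinfeld doubles always are), so its Drinfeld map $f_{Q}\colon (D\mcK_n)^*\to D\mcK_n$ is an isomorphism. First one checks from Corollary \ref{DKnR} and the displayed formula for $R_{\mcK_{2n}}$ that $R_{\mcK_{2n}}=(\pi\otimes\pi)(R_{D\mcK_n})$, where $\pi\colon D\mcK_n\to\mcK_{2n}$ is the quotient map; hence $Q_{\mcK_{2n}}=R_{21}R=(\pi\otimes\pi)(Q_{D\mcK_n})$. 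Dualizing $\pi$ identifies $\mcK_{2n}^*$ with the subspace $I^{\perp}\subseteq(D\mcK_n)^*$ of functionals vanishing on $I$, and under this identification $F$ becomes $\pi\circ f_{Q}|_{I^{\perp}}$.

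Since $f_Q$ is bijective and $\dim I^{\perp}=\dim\mcK_{2n}=\dim D\mcK_n-\dim I=2^{2n+1}$, the map $F$ is an isomorphism if and only if $f_Q(I^{\perp})\cap I=0$, i.e.\ if and only if the two $2^{2n+1}$-dimensional subspaces $f_Q(I^{\perp})$ and $I$ of the $2^{2n+2}$-dimensional $D\mcK_n$ are in direct sum. So the task reduces to showing this intersection is \emph{nonzero}. For that I would multiply out $Q_{D\mcK_n}=R_{21}R$ from the explicit $R$-matrix of Corollary \ref{DKnR}, using the idempotents $\tfrac12(1\pm K)$ and $\tfrac12(1\pm\bar K)$ to absorb the group-like factor $Z$ (this makes the product finite and keeps it supported on a small sub-coalgebra), and then exhibit an explicit nonzero element of $I\cap f_Q(I^{\perp})$ — a natural candidate being built from the generator $K-\bar K$ and the top exterior monomial (e.g.\ something near $1-K\bar K=K(K-\bar K)\in I$, or the cointegral of $\mcK_{2n}^*$ pulled back to $I^{\perp}$). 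Equivalently — probably cleaner to write up — one works directly in $\mcK_{2n}$, expands $Q_{\mcK_{2n}}$ in the PBW basis $\{K^{\varepsilon}\xi_{i_1}\cdots\xi_{i_k}\}$, and verifies that the functional dual to a suitable top monomial is annihilated by $F$.

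Conceptually, such an $f$ has to exist because, by Lemma \ref{DKn-pres}, the left tensor legs of $R_{\mcK_{2n}}$ lie in the copy of $\mcK_n$ generated by $\{K,\xi_1,\dots,\xi_n\}$ and the right legs in the copy generated by $\{K,\bar\xi_1,\dots,\bar\xi_n\}$; these two copies overlap in the group algebra $\bC[K]$ and contain no units besides $1$ and $K$, so the products of legs appearing in $Q=R_{21}R$ satisfy nontrivial linear relations and $Q$ is degenerate as a tensor. The main obstacle is entirely computational: carrying out $R_{21}R$ while tracking the signs coming from the factors $(-1)^{\lfloor|w|/2\rfloor}$ and from the relations $\xi_i\xi_j=-\xi_j\xi_i$, $\xi_i\bar\xi_j=-\bar\xi_j\xi_i$, $\xi_i\bar\xi_i=1-K\bar K-\bar\xi_i\xi_i$ of Lemma \ref{DKn-pres}, and pinning down one vanishing pairing. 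Once a nonzero $f\in\ker F$ is produced, $F$ is not an isomorphism, so $\mcK_{2n}$ is not factorizable; and since $\mcK_m$ is not unimodular for odd $m$ while factorizable Hopf algebras are unimodular, $\mcK_m$ is not factorizable for odd $m$ either, which gives the stated conclusion that $\mcK_n$ is not factorizable for any $n$. (As a sanity check, non-factorizability of $\mcK_n$ for \emph{every} ribbon structure is also immediate from Theorems \ref{Kn-modules} and \ref{thm:S-rank}: $\mcK_n$-mod is non-semisimple yet has no Steinberg object, hence cannot be a modular category.)
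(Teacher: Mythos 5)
There is a genuine gap: your argument never actually exhibits the degeneracy. You correctly reduce the problem to finding a nonzero $f\in\ker F$ (equivalently, via the quotient $\pi:D\Kn\to\mcK_{2n}$ and the factorizability of $D\Kn$, to showing $f_Q(I^{\perp})\cap I\neq 0$), but the decisive step is deferred as ``entirely computational,'' and the candidates you name are unverified guesses. Note that the two $2^{2n+1}$-dimensional subspaces $f_Q(I^{\perp})$ and $I$ sit inside the $2^{2n+2}$-dimensional $D\Kn$, so a generic pair of such subspaces meets trivially; nothing short of an explicit computation forces a nonzero intersection. Your ``conceptual'' reason that a kernel element must exist is also not sound as stated: the confinement of tensor legs you describe applies to $R_{\mcK_{2n}}$ itself, whose left legs lie in the span of $\{K^a w\}$ and right legs in the span of $\{K^a\pi(\bar w)\}$; but $F$ is built from $Q=R_{21}R$, whose legs are \emph{products} mixing both copies, and a crude rank bound gives only $\operatorname{rank}Q\le 2^{n+1}\cdot 2^{n+1}=2^{2n+2}$, which exceeds $\dim\mcK_{2n}$ and so yields no contradiction.

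The paper's proof supplies exactly the missing ingredient, and it is a two-line evaluation rather than a PBW expansion: apply $\id\otimes f$ to $R_{21}R$ for the two distinct \emph{characters} $f=\epsilon$ and $f=\bar K$ of $\mcK_{2n}$. Since both kill every $\xi_i$, all summands with $|w|>0$ or $|w'|>0$ vanish, and what remains is $(\id\otimes f)(\pi(Z)^2)=1$ in both cases; hence $F(\epsilon)=F(\bar K)=1$ and $\epsilon-\bar K\in\ker F$. (In particular the kernel contains a multiple of $K^{*}$, not the dual of a top monomial.) Your parenthetical ``sanity check'' via Theorems \ref{Kn-modules} and \ref{thm:S-rank} is essentially the remark the paper itself makes just before the lemma and does yield the final clause that $\Kn$ is never factorizable; but it does not substitute for the lemma's actual content, which is the non-injectivity of the specific map $F$ attached to $R_{\mcK_{2n}}$, and that is the step your main argument leaves open.
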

    \begin{proof}
        First, observe
        $$R_{21}R = \sum_{w,w'\in W}(-1)^{\lfloor|w|/2\rfloor+\lfloor|w'|/2\rfloor}(\pi(\bar w')K^{|w'|}\otimes w')(\id\otimes\pi)(Z)(w\otimes \pi(\bar w)K^{|w|})(\id\otimes\pi)(Z).$$
        When $|w| > 0$, $\epsilon(w) = 0$, so all but the $w=1$ summand disappears under $\id\otimes \epsilon$.
        $$F(\epsilon)=(\id\otimes\epsilon)(R_{21}R) = (\id\otimes \epsilon)(\pi(Z)^2) = (\id\otimes \epsilon)(1\otimes 1) = 1,$$
        but $F(\bar K) = 1$ by the same reasoning, so $F$ is not injective.
    \end{proof}
    \begin{lemma}
        $D\Kn$ is factorizable.
    \end{lemma}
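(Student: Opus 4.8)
The plan is to verify the definition directly. By definition $D\Kn$ is factorizable exactly when the Drinfeld map $f_Q\colon(D\Kn)^*\to D\Kn$, $f_Q(\beta)=(\beta\otimes\id)(Q)$ with $Q=R_{21}R$ and $R$ the $R$-matrix of Corollary~\ref{DKnR}, is a linear isomorphism; since $\dim(D\Kn)^*=\dim D\Kn=2^{2n+2}$ is finite, it is enough to prove that $f_Q$ is injective. This is the special case $H=\Kn$ of the classical fact that the Drinfeld double of any finite-dimensional Hopf algebra is factorizable (see e.g.\ \cite{kassel2012quantum}); the argument I would give is the explicit specialization of that one, and it is uniform in $n$, using neither unimodularity nor a ribbon element, so it applies equally when $n$ is odd.

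First I would rewrite the $R$-matrix of Corollary~\ref{DKnR} in the canonical double form $R=\sum_{b\in B}b\otimes b^*$, where $B$ is the given basis of $\Kn$, regarded inside $D\Kn$, and $\{b^*\}_{b\in B}\subset\Kn^{*cop}\subset D\Kn$ is the dual basis. This is a short computation combining the identities $\bar w=(-1)^{\lfloor|w|/2\rfloor}(w^*+(Kw)^*)$ and $\bar w\bar K=(-1)^{\lfloor|w|/2\rfloor}(w^*-(Kw)^*)$ with the observation that the grouplike factor $Z$ of Corollary~\ref{DKnR} is exactly the part of the canonical $R$-matrix that pairs the $\langle K\rangle$-block with the $\langle\bar K\rangle$-block. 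With $R$ in this form,
\[
Q=R_{21}R=\sum_{b,c\in B}(b^*c)\otimes(bc^*),\qquad\text{so}\qquad f_Q(\beta)=\sum_{b,c\in B}\beta(b^*c)\,bc^* .
\]

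The crux is that both families $\{\,b^*c\,\}_{b,c\in B}$ and $\{\,bc^*\,\}_{b,c\in B}$ are bases of $D\Kn$. The first is immediate from the construction $D\Kn=\Kn^{*cop}\bowtie\Kn$, whose underlying vector space is $\Kn^{*}\otimes\Kn$ with $fa=f\otimes a$. For the second it suffices that the \emph{opposite} multiplication map $\Kn\otimes\Kn^{*cop}\to D\Kn$ be a linear isomorphism, and I would see this by passing to the associated graded algebra for the filtration with $\deg\xi_i=\deg\bar\xi_i=1$ and $\deg K=\deg\bar K=0$: the relations of Lemma~\ref{DKn-pres} — e.g.\ $\xi_i\bar\xi_i=1-K\bar K-\bar\xi_i\xi_i$ — only produce lower-degree corrections, so this is indeed a filtration, and its associated graded is the exterior algebra on $\xi_1,\dots,\xi_n,\bar\xi_1,\dots,\bar\xi_n$ crossed with $\bC[\bZ/2\bZ\times\bZ/2\bZ]$, in which products may be freely reordered up to sign, so both multiplication orders $\Kn\cdot\Kn^{*cop}$ and $\Kn^{*cop}\cdot\Kn$ give bases; lifting back, so do they in $D\Kn$. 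Since moreover each $c^*$ is an invertible linear combination of the basis $\bar B$ of $\Kn^{*cop}$ by the two identities above, $\{bc^*\}$ is a basis as well. Granting both facts: if $f_Q(\beta)=0$, then expanding in the basis $\{bc^*\}$ forces $\beta(b^*c)=0$ for all $b,c\in B$, and since $\{b^*c\}$ is a basis this gives $\beta=0$. Hence $f_Q$ is injective, therefore bijective, and $D\Kn$ is factorizable.

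The main obstacle is purely bookkeeping: recognizing the explicit $R$ of Corollary~\ref{DKnR} as the canonical double $R$-matrix $\sum_b b\otimes b^*$, and the filtration argument showing the opposite multiplication is bijective; both are standard features of Drinfeld doubles, and a reader willing to invoke the general theorem may bypass them. This fits the pattern visible in the preceding lemma: the Nichols Hopf algebras $\Kn$ themselves are never factorizable — and the quotient $\mcK_{2n}\cong D\Kn/\langle K-\bar K\rangle$ is not either, the identification of $K$ with $\bar K$ collapsing precisely the grouplike pairing recorded by $Z$ — whereas passing to the double $D\Kn$ adjoins an independent dual grouplike $\bar K$ and thereby restores the non-degeneracy of $Q$.
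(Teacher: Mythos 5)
Your proof is correct, but it takes a genuinely different route from the paper's. The paper's own argument is a two-line categorical one: $\Rep(D\Kn)\cong \mcZ(\Rep(\Kn))$, the Drinfeld center of any finite tensor category is factorizable \cite[Corollary 8.20.13]{EGNO}, and a finite-dimensional quasitriangular Hopf algebra is factorizable if and only if its representation category is non-degenerate. You instead verify the definition by hand: you correctly recognize the $R$-matrix of Corollary \ref{DKnR} as the canonical double $R$-matrix $\sum_{b\in B} b\otimes b^*$ (the sign $(-1)^{\lfloor |w|/2\rfloor}$ exactly converts $\bar w,\bar w\bar K$ into $w^*\pm(Kw)^*$, and $Z$ is the grouplike block $1\otimes 1^*+K\otimes K^*$), so that $Q=R_{21}R=\sum_{b,c}c^*b\otimes cb^*$, and then use the two triangular decompositions of the double — $\{b^*c\}$ is a basis by construction of $D\Kn=\Kn^{*cop}\otimes\Kn$, and $\{bc^*\}$ is a basis by your filtration argument (or by the general straightening relation for doubles) — to conclude that $f_Q(\beta)=0$ forces $\beta=0$. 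Both proofs are instances of the standard theorem that Drinfeld doubles are factorizable; yours buys explicitness and self-containedness, which is in the spirit of the paper's stated approach to $D\Kn$, and makes transparent that the statement holds uniformly in $n$ (no unimodularity or ribbon element is used), while the paper's version buys brevity and immediate generality. The only point worth tightening is the associated-graded step: you should note that the spanning set $\{K^a\bar K^b w\bar w'\}$ already has $2^{2n+2}=\dim D\Kn$ elements, so the filtration is tight and the reordering really does produce a basis rather than merely a spanning set.
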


\subsection{Representations of $\Kn$ and $D\Kn$}
The total rank of $\Kn$-mod is 4, with two simples and two projective indecomposables. The total rank of $D\Kn$-mod is 6, with four simples, two of which are projective, and two additional projective indecomposables. The subring of the full fusion ring $F(D\Kn$-mod) generated by the two non-projective simples and their projective covers is isomorphic to $F(\mcK_{2n}$-mod). Hence, the additional projective simples in $D\Kn$-mod come from the relation $\xi_i\bar\xi_i = 1 - K\bar K - \bar\xi_i \xi_i \neq -\bar\xi_i\xi_i$. This can be seen readily in the definition of these modules in the proof of Theorem \ref{DKn-modules}. These two additional projective simples in $D\Kn$-mod are essential; for even $n$, they make $D\Kn$-mod an NSS MC.

\begin{theorem}\label{Kn-modules}
There are two non-isomorphic irreducible $\Kn$-modules $V_\epsilon, V_{\bar K}$ of dimension 1, and each has a projective cover $P_\epsilon, P_{\bar K}$ (resp.) of dimension $2^{n}$. Thus, the total rank of $\Kn$-mod is 4 for all $n$. Their fusion rules are 
$V_{\bar K}\otimes P_{\epsilon}=P_{\bar K}$ and 
$$P_\epsilon^2=P_{\bar K}^2=P_\epsilon\otimes P_{\bar K}=P_{\bar K}\otimes P_{\epsilon}=2^{n-1}P_\epsilon\oplus 2^{n-1}P_{\bar K}.$$
\end{theorem}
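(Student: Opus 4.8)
The plan is to first determine the simple and projective objects of $\Kn$-mod from the Jacobson radical, and then read off the fusion rules from the invertibility of $V_{\bar K}$ together with one duality computation.

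First I would identify $J(\Kn)$ with the two-sided ideal $J$ generated by $\xi_1,\dots,\xi_n$: it is nilpotent, since by $\xi_i^2=0$ and $K\xi_i=-\xi_i K$ every product of $n+1$ of the $\xi_i$ vanishes, and the quotient $\Kn/J\cong\bC[\bZ/2\bZ]\cong\bC\times\bC$ is semisimple. Hence $\Kn$ has exactly two simple modules, both one-dimensional: the tensor unit $V_\epsilon$ ($K\mapsto 1$) and $V_{\bar K}$ ($K\mapsto -1$), with the $\xi_i$ acting by $0$. Next I would use the orthogonal idempotents $e_\epsilon=\tfrac12(1+K)$, $e_{\bar K}=\tfrac12(1-K)$ to split the regular module $\Kn=\Kn e_\epsilon\oplus\Kn e_{\bar K}$; from $Kw=(-1)^{|w|}wK$ for $w\in W$ one checks that $\Kn e_\epsilon=\spa\{we_\epsilon:w\in W\}$ has dimension $2^n$ and simple head $V_\epsilon$, hence is indecomposable projective and equals $P_\epsilon$, and symmetrically $\Kn e_{\bar K}\cong P_{\bar K}$ of dimension $2^n$. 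Since $2^n>1$ for $n\ge 1$, neither simple is projective, so $V_\epsilon, V_{\bar K}, P_\epsilon, P_{\bar K}$ are four distinct objects and the total rank is $4$.

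For the fusion rules I would first record a composition-multiplicity count: because $K^2=1$, $K$ acts semisimply with eigenvalues $\pm 1$ on every module, so $[M:V_\epsilon]$ (resp. $[M:V_{\bar K}]$) equals the dimension of the $(+1)$- (resp. $(-1)$-) eigenspace of $K$ on $M$; on $P_\epsilon=\spa\{we_\epsilon\}$, where $K$ acts on $we_\epsilon$ by $(-1)^{|w|}$, this gives $[P_\epsilon:V_\epsilon]=\sum_j\binom{n}{2j}=2^{n-1}=[P_\epsilon:V_{\bar K}]$ for $n\ge 1$. Next, $V_{\bar K}$ is invertible ($V_{\bar K}\otimes V_{\bar K}\cong V_\epsilon$, self-dual), so $V_{\bar K}\otimes(-)$ and $(-)\otimes V_{\bar K}$ are tensor auto-equivalences; since a tensor product with a projective object is projective (as recalled in the proof of Lemma \ref{lem:simi-simple}), $V_{\bar K}\otimes P_\epsilon$ is indecomposable projective covering $V_{\bar K}\otimes V_\epsilon\cong V_{\bar K}$, whence $V_{\bar K}\otimes P_\epsilon\cong P_{\bar K}$, and likewise $P_\epsilon\otimes V_{\bar K}\cong P_{\bar K}$ and $V_{\bar K}\otimes P_{\bar K}\cong P_\epsilon\cong P_{\bar K}\otimes V_{\bar K}$. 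Finally $P_\epsilon\otimes P_\epsilon$ is projective, so $P_\epsilon\otimes P_\epsilon\cong a\,P_\epsilon\oplus b\,P_{\bar K}$, with $a+b=2^n$ by comparing dimensions; to pin down $a$ I would observe that $a=\dim\Hom(P_\epsilon\otimes P_\epsilon,V_\epsilon)$ is the multiplicity of the summand $P_\epsilon$, and then, using rigidity, $V_\epsilon=\mathbf 1$, and that $P_\epsilon$ is the projective cover of $V_\epsilon$,
\[
a=\dim\Hom(P_\epsilon,P_\epsilon^{*})=[P_\epsilon^{*}:V_\epsilon]=[P_\epsilon:V_\epsilon]=2^{n-1},
\]
the last equality because the duality functor reverses composition series and fixes $V_\epsilon$. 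Hence $b=2^{n-1}$ and $P_\epsilon\otimes P_\epsilon\cong 2^{n-1}P_\epsilon\oplus 2^{n-1}P_{\bar K}$; the remaining three products follow by tensoring this one on either side with $V_{\bar K}$.

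The main obstacle will be this last point — separating the multiplicities $a$ and $b$ — since dimension counting alone gives only $a+b=2^n$. That is precisely where the concrete structure of $\Kn$ enters (the exterior-algebra factor and $K^2=1$, through $[P_\epsilon:V_\epsilon]=2^{n-1}$), rather than purely formal finite-tensor-category bookkeeping. A more hands-on alternative would be to exhibit explicit module homomorphisms realizing the indecomposable summands, but the representation-theoretic argument above is cleaner and avoids any case split on the parity of $n$.
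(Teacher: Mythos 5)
Your proposal is correct, and it reaches the theorem by a genuinely different route at the one step that requires real work. The setup is the same in substance: you realize $P_\epsilon$ and $P_{\bar K}$ as the two summands $\Kn e_{\pm}$ of the regular module (the paper writes them as $\Lambda^*E\cdot(1\pm K)$ and checks the projective-cover property by a superfluous-submodule argument, whereas you invoke ``projective with simple head''; these are equivalent). For $V_{\bar K}\otimes P_\epsilon\cong P_{\bar K}$ the paper compares dimensions and exhibits a nonzero homomorphism, while you use invertibility of $V_{\bar K}$ --- slicker, and it also immediately disposes of the other products with $V_{\bar K}$. The real divergence is in splitting $P_\epsilon\otimes P_\epsilon\cong aP_\epsilon\oplus bP_{\bar K}$: the paper computes $a=\dim\Hom(P_\epsilon\otimes P_\epsilon,V_\epsilon)$ by writing out a general homomorphism in the basis $W(1+K)\otimes W(1+K)$ and bounding the solution space of the resulting linear relations by $2^{n-1}$ (and symmetrically for $b$), forcing $a=b=2^{n-1}$. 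You instead use the adjunction $\Hom(P_\epsilon\otimes P_\epsilon,\mathbf 1)\cong\Hom(P_\epsilon,P_\epsilon^*)$, the identity $\dim\Hom(P(S),M)=[M:S]$, self-duality of the simples, and the eigenvalue count $[P_\epsilon:V_\epsilon]=\sum_j\binom{n}{2j}=2^{n-1}$ coming from the semisimple action of $K$. Your argument is more conceptual and gives the exact value of $a$ directly rather than via two inequalities that happen to saturate; the paper's computation is more elementary and self-contained but longer. Both are valid; all the categorical facts you use (exactness of duality, tensoring with projectives stays projective, the Hom--multiplicity identity over $\mathbb C$) hold here.
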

\begin{proof}
First, define $V_\epsilon$ and $V_{\bar K}$ to be the trivial and sign modules respectively, whose underlying vector space is $\bC$ and multiplication rules are $h\cdot x := \epsilon(h)x$ and $h\cdot x := \bar K(h)x$ respectively. Let $P_\epsilon=\Lambda^*E\cdot (1+K) \subset \Kn$ and $P_{\bar K}=\Lambda^* E\cdot(1-K)\subset \Kn$ be the $\Kn$-modules generated by left-multiplication. Clearly, $\Kn = P_\epsilon\oplus P_{\bar K}$, so these are each projective. Consider the maps $p_\epsilon:P_\epsilon\to V_K$ and $p_{\bar K}:P_{\bar K}\to V_{\bar K}$ defined by $p_\epsilon = \epsilon|_{P_\epsilon}$ and $p_{\bar K}= \bar K|_{P_{\bar K}}$. These are surjective $\Kn$-module homomorphisms to their respective simple modules. The kernels of these maps are $\ker p_\epsilon=\spa\{w(1+K) | w\in W, |w|\geq 1\}$ and $\ker p_{\bar K}=\spa\{w(1-K) | w\in W, |w|\geq 1\}$. Suppose $M$ is a $\Kn$-module for which $M+\ker p_\epsilon = P_\epsilon$. Then, there is an element of the form $(1 - x)(1+K)\in M$, where $x\in \spa\{W\setminus\{1\}\}$ (namely, because there is an element $m\in M$ so that $m + x(1+K) = 1+K$). Note that $x^{n+1} = 0$ because there are no nonzero $k$-forms for $k > n$ in $\Lambda^* E$. Therefore, 
$$1+K = (1 - x^{n+1})(1+K) = (1 + x + x^2 + \dots + x^n)(1 - x)(1+K) \in M.$$
Hence, $y(1+K)\in M$ for any $y\in \Lambda^* E$ since $M$ is a $\Kn$-module. It follows that $M = P_\epsilon$, so $\ker p_\epsilon$ is a superfluous submodule of $P_\epsilon$. A similar argument applies to $P_{\bar K}$. Thus, $P_\epsilon$ is the projective cover of $V_\epsilon$, and $P_{\bar K}$ is the projective cover of $V_{\bar K}$. These each have dim=$2^n$, and they are distinct. Moreover, this list is complete since 
$$\dim\Kn = \dim(V_\epsilon)\dim(P_\epsilon)+\dim(V_{\bar K})\dim(P_{\bar K}).$$

Next, we discuss the tensor product of these modules. We will use the fact that the Hom functor distributes over direct sums and that 
\begin{align*}
    \dim\Hom(P_\epsilon, V_\epsilon) = \dim\Hom(P_{\bar K}, V_{\bar K}) &= 1,\\
    \dim\Hom(P_\epsilon, V_{\bar K}) = \dim\Hom(P_{\bar K}, V_\epsilon) &= 0.
\end{align*}
Note that $P_\epsilon\otimes V_{\bar K}$ is projective because tensor products with a projective module are always projective. In particular, either $P_\epsilon\otimes V_{\bar K}\cong P_\epsilon$ or $P_\epsilon\otimes V_{\bar K}\cong P_{\bar K}$ because these are the only projective $\Kn$-modules with the correct dimension. Note that $W(1+K)\otimes 1$ is a basis for $P_\epsilon\otimes V_{\bar K}$. Let $f:P_\epsilon\otimes V_{\bar K}\to V_{\bar K}$ be given by $f:w(1+K)\otimes 1\mapsto \bar K(w)$. It's easy to verify this is $\Kn$-linear, so $\dim\Hom(P_\epsilon\otimes V_{\bar K}, V_{\bar K}) \geq 1$, and thus $P_\epsilon\otimes V_{\bar K}\cong P_{\bar K}$.

We show the fusion rule for the projective indecomposable $\Kn$-modules for just $P_\epsilon\otimes P_\epsilon$. The others are similar. Note that $P_\epsilon\otimes P_\epsilon$ is projective because tensor products of projective modules are projective. In particular, $P_\epsilon\otimes P_\epsilon\cong d^\epsilon P_\epsilon\oplus d^{\bar K}P_{\bar K}$. 
In particular, $d^\epsilon = \dim\Hom(P_\epsilon\otimes P_\epsilon, V_\epsilon)$ and $d^{\bar K} = \dim\Hom(P_\epsilon\otimes P_\epsilon, V_{\bar K})$. Moreover, since $P_\epsilon$ and $P_{\bar K}$ are $2^{n}$-dimensional vector spaces and $P_\epsilon\otimes P_\epsilon$ is a $2^{2n}$-dimensional vector space, we must have that $d^\epsilon + d^{\bar K} = 2^n$. Suppose $f:P_\epsilon\otimes P_\epsilon\to V_\epsilon$ is an $\Kn$-module homomorphism. Since $W(1+K)\otimes W(1+K)$ is a basis for $P_\epsilon\otimes P_\epsilon$, $f$ is completely determined by the complex numbers $c_{w, w'} := f(w(1+K)\otimes w'(1+K))$ for $w,w'\in W$. Let $c_{0, w} = c_{0, w} = 0$ for $w\in W$. Then, for any $i$, we have the following:
\begin{align*}
    c_{w, w'} &= f(w(1+K)\otimes w'(1+K)) \\
    &= K\cdot f(w(1+K)\otimes w'(1+K)) \\
    &= f(Kw(1+K)\otimes Kw'(1+K)) = (-1)^{|w|+|w'|}c_{w,w'}\\
    0 &= \xi_i \cdot f(w(1+K)\otimes w'(1+K)) \\
    &= f(Kw(1+K)\otimes \xi_i w'(1+K) + \xi_i w(1+K)\otimes w'(1+K)) \\
    &= \pm c_{w, \pm\xi_i w'} \pm c_{\pm\xi_i w, w'}.
\end{align*}
In particular, if $|w| + |w'|$ is odd, then $c_{w, w'} = 0$. Moreover, if $\mathbf{I}(w)\cap \mathbf{I}(w') \neq \varnothing$, then $c_{w, w'} = 0$. Otherwise, if $w^\cup=\mathbf{w}(\mathbf{I}(w)\cup \mathbf{I}(w'))$, then $c_{w, w'}$ is determined by $c_{1, w^{\cup}}$. Combining these two facts, we see that the solution space subject to these relations is at most $2^{n-1}$-dimensional, so $d^{\epsilon} \leq 2^{n-1}$, but this is also true for $d^{\bar K}$, so $d^{\epsilon} = d^{\bar K} = 2^{n-1}$.  
\end{proof}

    \begin{theorem}\label{DKn-modules}
        There are 2 non-isomorphic irreducible $D\Kn$-modules $V_1$ and $V_{K\bar K}$ of dimension 1, each with a projective cover of dimension $2^{2n}$, $P_1$ and $P_{K\bar K}$. There are two other non-isomorphic irreducibles $V_K$ and $V_{\bar K}$, each of dimension $2^n$, which are also projective. Thus, the total rank of $D\Kn$-mod is 6 for all $n$.
    \end{theorem}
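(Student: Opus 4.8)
The plan is to decompose $D\Kn$ into two two-sided blocks via a central idempotent built from $K\bar K$, and then read off all simple and indecomposable projective modules from the two blocks. First I would check that $K\bar K$ is central in $D\Kn$: the grouplikes $K$ and $\bar K$ commute with each other (a short computation with the double product), so $K\cdot K\bar K=\bar K=K\bar K\cdot K$ and similarly with $\bar K$; and $K\bar K$ commutes with each $\xi_i$ and $\bar\xi_i$ because the relevant relations of Lemma \ref{DKn-pres} contribute two sign changes that cancel. Since $(K\bar K)^2=1$, the elements $z_{\pm}=\tfrac12(1\pm K\bar K)$ are central orthogonal idempotents with $z_++z_-=1$, whence $D\Kn=z_+D\Kn\oplus z_-D\Kn$ as algebras, $\dim z_{\pm}D\Kn=2^{2n+1}$, and every simple and every indecomposable projective lies in exactly one summand.

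Next I would identify the block $z_+D\Kn$, on which $K\bar K=1$. Then $\bar K=K$ (using $\bar K^2=1$) and the relation $\xi_i\bar\xi_i=1-K\bar K-\bar\xi_i\xi_i$ collapses to the exterior-algebra relation $\xi_i\bar\xi_i=-\bar\xi_i\xi_i$. One checks that $\langle 1-K\bar K\rangle=\langle K-\bar K\rangle$ as ideals (each generator is $K$ times the other), so $z_+D\Kn\cong D\Kn/\langle K-\bar K\rangle\cong\mcK_{2n}$ by the isomorphism noted above. Applying Theorem \ref{Kn-modules} with $n$ replaced by $2n$, this block has exactly two simple modules, both one-dimensional, with indecomposable projective covers of dimension $2^{2n}$; relabel these $V_1,V_{K\bar K}$ and $P_1,P_{K\bar K}$. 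Since $z_+D\Kn$ is a block of $D\Kn$, $P_1$ and $P_{K\bar K}$ remain the projective covers over $D\Kn$.

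Then I would identify the block $z_-D\Kn$, on which $K\bar K=-1$, so $\bar K=-K$ and $\xi_i\bar\xi_i+\bar\xi_i\xi_i=1-K\bar K=2$. After a $\bC$-linear change of the generators $\xi_i,\bar\xi_i$ (passing to suitable scalar multiples of $\xi_i\pm\bar\xi_i$ and keeping $K$) one obtains $2n+1$ pairwise anticommuting elements each squaring to $1$, hence a surjection $\mathrm{Cl}(2n+1,\bC)\twoheadrightarrow z_-D\Kn$, which must be an isomorphism since $\dim\mathrm{Cl}(2n+1,\bC)=2^{2n+1}=\dim z_-D\Kn$. As $\mathrm{Cl}(2n+1,\bC)\cong M_{2^n}(\bC)\oplus M_{2^n}(\bC)$, the block $z_-D\Kn$ is semisimple with exactly two simple modules $V_K,V_{\bar K}$, each of dimension $2^n$, each automatically projective over $D\Kn$; concretely these are the two Fock-space modules $\Lambda^*\bC^n$ on which the $\xi_i,\bar\xi_i$ act by (rescaled) fermionic annihilation/creation and $K$ by $\pm$(parity), distinguished by the sign of $K$ on the top weight line. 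No one-dimensional module lives in this block, since $\xi_i,\bar\xi_i$ would act by scalars squaring to $0$, contradicting $\xi_i\bar\xi_i+\bar\xi_i\xi_i=2$.

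Combining the two blocks, the simple $D\Kn$-modules are exactly $V_1,V_{K\bar K}$ (dimension $1$) and $V_K,V_{\bar K}$ (dimension $2^n$, and projective), so the rank is $4$, the Steinberg rank is $2$, and the total rank is $4+2=6$, uniformly in $n$; as a sanity check, $\dim D\Kn=\sum_i(\dim V_i)(\dim P_i)=2\cdot 2^{2n}+2\cdot(2^n)^2=2^{2n+2}$. The main obstacle I anticipate is the Clifford identification of $z_-D\Kn$: one must confirm that, after rescaling, the relations of Lemma \ref{DKn-pres} restricted to this block are precisely the Clifford relations on $2n+1$ generators and impose nothing more, and it is exactly the dimension count coming from the block decomposition that upgrades the resulting surjection to an isomorphism and thereby does the real work.
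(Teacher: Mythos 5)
Your proof is correct, and it takes a genuinely different route from the paper's. The paper proceeds by explicit construction: it writes down $V_1,V_{K\bar K}$ and the idempotent-generated left ideals $P_1=D\Kn(1+K+\bar K+K\bar K)$, $P_{K\bar K}=D\Kn(1-K-\bar K+K\bar K)$, realizes $V_K,V_{\bar K}$ concretely on $(\bC^2)^{\otimes n}$ via Pauli-type matrices, verifies simplicity by exhibiting raising/lowering and projection elements, shows $\Hom(V_K,V_{\bar K})=0$ by hand, and deduces completeness from the regular-representation decomposition $D\Kn=\bigoplus_V\dim(V)P_V$. You instead split $D\Kn$ along the central idempotents $\tfrac12(1\pm K\bar K)$, identify the $+$ block with $\mcK_{2n}$ (this is exactly the paper's own observation that $D\Kn/\langle K-\bar K\rangle\cong\mcK_{2n}$, so Theorem \ref{Kn-modules} applies with $n$ replaced by $2n$) and the $-$ block with the odd Clifford algebra $\mathrm{Cl}(2n+1,\bC)\cong M_{2^n}(\bC)\oplus M_{2^n}(\bC)$ via the rescaled generators $\xi_i\pm\bar\xi_i$ together with $K$, with the dimension count $2^{2n+1}$ forcing the surjection to be an isomorphism. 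Your approach buys automatic completeness of the list of simples, automatic projectivity of $V_K,V_{\bar K}$ (they sit in a semisimple block), and reuse of the $\mcK_n$ theorem; its one load-bearing step is precisely the Clifford identification, which you correctly flag and which does check out (the relations of Lemma \ref{DKn-pres} on the $K\bar K=-1$ block are exactly the Clifford relations on $2n+1$ anticommuting square roots of $1$). What the paper's more pedestrian construction buys is the explicit matrix models and idempotents for $V_K,V_{\bar K}$ and $P_1,P_{K\bar K}$, which are used heavily downstream for the characters, fusion rules, and Higman-ideal computations; if you wanted those later, you would still need to unwind your Clifford isomorphism into concrete module realizations.
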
 
    \begin{proof}
        The non-projective simple modules $V_1, V_K$ are each 1-dimensional as $\bC$-vector spaces generated by the following multiplications: for each $i=1,\dots, n$ and $z\in \bC$,
        \begin{alignat*}{6}
            &V_1&:\ \ &\ &K\cdot z&=\ &\bar K\cdot z&=&z\ \ \ \ \ \ &&\xi_i\cdot z=\bar\xi_i\cdot z = 0,\\
            &V_{K\bar K}&:\ \ &\ &K\cdot z&=\ &\bar K\cdot z&=-&\,z\ \ \ \ \ \ &&\xi_i\cdot z=\bar\xi_i\cdot z = 0,
        \end{alignat*}
        Their projective covers are 
        \begin{alignat*}{2}
            &P_1 &\ = D\Kn (1+K+\bar K+K\bar K),\\
            &P_{K\bar K} &\ = D\Kn (1-K-\bar K+K\bar K).
        \end{alignat*}
        respectively. Each has dimension $2^{2n}$ as $\bC$-vector spaces. Note $P_1\oplus P_{K\bar K}\oplus D\Kn(K - \bar K) = D\Kn$, so both are clearly projective. Moreover, by a similar analysis for the projective indecomposables $P_\epsilon, P_{\bar K}\in \Kn$-mod, these are indeed projective covers of their associated simples.

        The two remaining simple $D\Kn$-modules are $V_K$ and $V_{\bar K}$, where $V_K = V_{\bar K} = (\bC^2)^{\otimes n}$ as vector spaces. Let $I_2$ be the $2\times 2$ identity matrix, $\sigma_Z = \begin{pmatrix}1 & 0\\0 & -1\end{pmatrix}$ be the Pauli $Z$-matrix, $\Xi = \begin{pmatrix}0&\sqrt{2}\\0 & 0\end{pmatrix}$. Define the following $2^n\times 2^n$-matrices
        \begin{align*}
            \Xi_i &= \sigma_Z^{\otimes i-1}\otimes\Xi\otimes I_2^{\otimes n - i},\ \ i=1,\dots, n,\\
            \bar\Xi_i& = \sigma_Z^{\otimes i-1}\otimes\Xi^T\otimes I_2^{\otimes n - i},\ \ i=1,\dots, n,\\
            \cZ &= \sigma_Z^{\otimes n},
        \end{align*}
        Finally, for $v\in (\bC^2)^{\otimes n}$, define the following two actions: 
        \begin{alignat*}{7}
            &V_K&:\ \ &\ &K\cdot v&=-&\bar K\cdot v&=&\cZ v\ \ \ \ \ \ &&\xi_i\cdot v=\Xi_i v\ \ \ \ \ \ &&\bar\xi_i\cdot v = \bar\Xi_i v,\\
            &V_{\bar K}&:\ \ &-&K\cdot v&=\ &\bar K\cdot v&=&\cZ v\ \ \ \ \ \ &&\xi_i\cdot v=\Xi_i v\ \ \ \ \ \ &&\bar\xi_i\cdot v = \bar\Xi_i v,
        \end{alignat*}
        
        To show $V_K$ and $V_{\bar K}$ are $D\Kn$-modules, it suffices to show these actions preserve the relations of the underlying algebra of $D\Kn$. All of the relations follow from the following identities:
        $$\sigma_Z^2 = I_2,\hspace{3em}\Xi^2 = 0,\hspace{3em}\Xi\sigma_Z = -\sigma_Z\Xi,\hspace{3em}\Xi\Xi^T + \Xi^T\Xi = 2I_2 = I_2 - \sigma_Z(-\sigma_Z).$$

        Next, we show these are simple. We start by defining a few special elements of $D\Kn$. Let $\omega$ be an arbitrary $n$-bit string with $1$s occurring in the $i_1<i_2<\dots<i_k$ spots. Then, let
        $$\Phi^\omega = \frac{\xi_{i_1}}{\sqrt{2}}\frac{\xi_{i_2}}{\sqrt{2}}\dots\frac{\xi_{i_k}}{\sqrt{2}}, \hspace{15pt}\Psi^\omega = \frac{\bar\xi_{i_k}}{\sqrt{2}}\dots\frac{\bar\xi_{i_2}}{\sqrt{2}}\frac{\bar\xi_{i_1}}{\sqrt{2}}.$$ 
        Note that $\Phi^\omega\cdot |0^n\rangle = |\omega\rangle$, so $|0^n\rangle$ generates $V_K$, as vectors of the form $|\omega\rangle\in V_K$ form a basis. Moreover, $\Psi^\omega\cdot |\omega\rangle = |0^n\rangle$, so $|\omega\rangle$ generates $V_K$ for any $\omega$. If the $i$th bit of $\omega$ is 0, let $\Pi_i^\omega = \frac{\xi_i\bar\xi_i}{2}$. Otherwise, let $\Pi_i^\omega = \frac{\bar\xi_i\xi_i}{2}$. Set
        $$\Pi^{\omega} = \Pi_1^\omega \Pi_2^\omega\dots \Pi_n^\omega.$$
        $\Pi^\omega$ is a projection matrix onto the subspace $\bC|\omega\rangle$. Given a nonzero vector $v\in V_K$, we may find an $n$-bit string $\omega$ for which $\Pi^\omega \cdot v = c|\omega\rangle$ for some nonzero $c\in\bC$, as the $|\omega\rangle$ form a basis. Thus, any $v\in V_K$ generates $V_K$, so $V_K$ is simple. Similarly, $V_{\bar K}$ is simple. 

        Suppose $f:V_K\to V_{\bar K}$ is a $D\Kn$-module homomorphism. Then, $f$ is completely determined by its image on $|00\dots 0\rangle$. Suppose $f(|00\dots 0\rangle) = v$. Then, 
        $$-\cZ v = f(K\cdot|00\dots 0\rangle) = f(|00\dots 0\rangle) = v.$$
        However, for the same reasons as above $v = c|00\dots 0\rangle$, so we get $-c|00\dots 0\rangle=-c\cZ |00\dots 0\rangle = c|00\dots 0\rangle$, so $c=0$ and $f = 0$ identically. Thus, $\Hom(V_K, V_{\bar K}) \cong 0$, and these are distinct simple objects.

        This list of simples and projective indecomposables is complete by the decomposition of $D\Kn$ as a $D\Kn$-module:
            $$D\Kn = \bigoplus_{V\text{ simple}} \dim(V) P_V.$$
        Moreover, this implies $V_K$ and $V_{\bar K}$ are both projective.
    \end{proof}
    \begin{corollary}\label{cor:rank-finiteness-fails}
        $D\Kn$-mod and $D\mcK_m$-mod are not equivalent as categories for any $n\neq m$. In particular, there are infinitely many inequivalent modular categories of total rank $6$. 
    \end{corollary}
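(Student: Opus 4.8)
The plan is to extract from $D\Kn$-mod a numerical invariant that recovers $n$ and is preserved by \emph{any} equivalence of categories, not merely by a tensor equivalence. An equivalence of categories between module categories of finite-dimensional $\bC$-algebras is automatically additive (it preserves finite biproducts, which are characterized by a universal property) and preserves every purely categorical notion: subobjects, short exact sequences, simplicity, projectivity, indecomposability, and hence Jordan--H\"older length. So I would fix attention on the quantity $\ell(\mcC)$ defined as the largest composition length of an indecomposable projective object of $\mcC$; this is an invariant of $\mcC$ up to equivalence of categories, and the whole proof reduces to computing $\ell(D\Kn\text{-mod})$.

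First I would list the indecomposable projective objects of $D\Kn$-mod using Theorem \ref{DKn-modules}: they are $P_1$ and $P_{K\bar K}$, each of $\bC$-dimension $2^{2n}$, together with the two Steinberg objects $V_K, V_{\bar K}$, each of dimension $2^n$ and composition length $1$. The crux is to show that $P_1$ (and symmetrically $P_{K\bar K}$) has composition length exactly $2^{2n}$. For this I would argue that no composition factor of $P_1$ can be $V_K$ or $V_{\bar K}$: these simple modules are projective, hence injective by \cite[Prop.\ 6.1.3]{EGNO}, so if one of them occurred as a composition factor it would embed into $P_1$ and that embedding would split, exhibiting it as a direct summand of the indecomposable module $P_1$ --- impossible since $\dim V_K = 2^n < 2^{2n} = \dim P_1$. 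This is exactly the mechanism used in the proof of Lemma \ref{lem:simi-simple}(2). Therefore every composition factor of $P_1$ lies in $\{V_1, V_{K\bar K}\}$, both of which are $1$-dimensional, so the composition length of $P_1$ equals $\dim_\bC P_1 = 2^{2n}$, and hence $\ell(D\Kn\text{-mod}) = 2^{2n}$.

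From here the first claim is immediate: $2^{2n} = 2^{2m}$ forces $n = m$, so $D\Kn$-mod and $D\mcK_m$-mod cannot be equivalent as categories when $n \neq m$. For the ``in particular'' part I would restrict to even $n$, where $D\Kn$ is factorizable (by the lemma established in this section) and ribbon by Lemma \ref{DKn-ribbon}, so $D\Kn$-mod is a modular category; it has total rank $6$ by Theorem \ref{DKn-modules} and is non-semisimple, since its total rank $6$ exceeds its rank $4$ (equivalently $P_1 \not\cong V_1$). Thus $\{\,D\mcK_{2k}\text{-mod}\,\}_{k \geq 1}$ is an infinite family of pairwise inequivalent non-semisimple modular categories, all of total rank $6$, which is the desired conclusion.

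As for the main obstacle: there is no deep difficulty here, but two points deserve care. One is the justification that $\ell(\mcC)$ really is an invariant of the bare category --- that a plain categorical equivalence respects the abelian/length structure --- which is standard but worth stating explicitly, since the corollary asserts inequivalence in the strongest sense. The other, and the only genuinely substantive step, is ruling out $V_K$ and $V_{\bar K}$ as composition factors of $P_1$; once ``projective simple $\Rightarrow$ injective $\Rightarrow$ direct summand'' is invoked, the length of $P_1$ collapses to its dimension and the rest is bookkeeping. As an alternative one could instead use the tensor-ring coefficient: by Theorem \ref{Kn-modules} applied with parameter $2n$ together with the identification of the relevant subring of $F(D\Kn$-mod$)$ with $F(\mcK_{2n}$-mod$)$, one gets $P_1^{\otimes 2} \cong 2^{2n-1}(P_1 \oplus P_{K\bar K})$, and the coefficient $2^{2n-1}$ already distinguishes the $n$; but this only rules out tensor equivalences, whereas the length invariant yields the stronger statement asserted in the corollary.
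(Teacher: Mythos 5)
Your proof is correct, and it takes a genuinely different route from the paper. The paper distinguishes $D\Kn$-mod from $D\mcK_m$-mod by examining the endomorphism monoid $\End_{D\Kn}(P_1)$: it exhibits an $n$-element family of nilpotents $f_1^1,\dots,f_1^n$ all of whose ordered products are nonzero, argues no larger such family exists, and uses this as the distinguishing invariant (after first ruling out that $P_1$ could be matched with $V_K$ or $V_{\bar K}$ via $\End\neq\bC$). You instead use the composition length of the projective indecomposables, which is $2^{2n}$ for $P_1$ because all of its composition factors are the one-dimensional simples $V_1, V_{K\bar K}$ --- a fact the paper independently establishes in its Cartan matrix computation, and which your ``projective simple $\Rightarrow$ injective $\Rightarrow$ direct summand'' argument also yields directly, mirroring Lemma \ref{lem:simi-simple}(2). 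Both invariants are preserved by a bare equivalence of categories (since the abelian structure, and hence length, simplicity, projectivity, and $\End$-monoids, are all determined by the underlying category), so both give the full strength of the statement; your version has the advantage of requiring no new computation beyond the Cartan matrix and no maximality claim about nilpotent families, while the paper's version packages the invariant inside a single $\End$-algebra. Your handling of the ``in particular'' clause --- restricting to even $n$ where $D\Kn$ is factorizable ribbon so that $D\Kn$-mod is a non-semisimple modular category of total rank $6$ --- matches the intended reading and is complete.
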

    \begin{proof}
        Suppose $D\Kn$-mod and $D\mcK_m$-mod were equivalent. Then, the projective indecomposable object $P_1\in D\Kn$-mod must be sent, under the equivalence, to a projective indecomposable object $P\in D\mcK_m$-mod. Note that 
        $$\End_{D\Kn}(P_1)\neq\bC\cong \End_{D\mcK_m}(V_K)\cong \End_{D\mcK_m}(V_{\bar K}),$$ 
        so $P_1\in D\Kn$-mod may not be sent to $V_K, V_{\bar K}\in D\mcK_m$-mod. 
        
        Let $f_1^i\in \End_{D\Kn}(P_1)$ and $f_{K\bar K}^i\in \End_{D\Kn}(P_{K\bar K})$ be generated by 
        \begin{align*}
            f_1^i&:(1+K+\bar K+K\bar K)\mapsto \xi_i\bar\xi_i (1+K+\bar K+K\bar K),\\
            f_{K\bar K}^i&:(1-K-\bar K+K\bar K)\mapsto \xi_i\bar\xi_i (1-K-\bar K+K\bar K).
        \end{align*} 
        Let $S_1 = \{f_1^1,\dots, f_1^n\}$ and $S_{K\bar K} = \{f_{K\bar K}^1,\dots, f_{K\bar K}^n\}$. The sets $S_1$ and $S_{K\bar K}$ are $n$-element sets of nilpotents for which all products over ordered subsets of $S_1$ and $S_{K\bar K}$ are nonzero.  There are no larger subsets of $\End_{D\Kn}(P_1)$ or $\End_{D\Kn}(P_{K\bar K})$ with this property. In particular, this invariant distinguishes the monoid $\End_{D\Kn}(P_1)$ from $\End_{D\mcK_m}(P_1)$ and $\End_{D\mcK_m}(P_{K\bar K})$.
    \end{proof}
    The argument presented above may be slightly modified to show $\Kn$-mod and $\mcK_m$-mod are also inequivalent as categories for any $n\neq m$.
    
    \begin{theorem}
        The Cartan matrix for $D\Kn$-mod is given by 
        $$C = \begin{pmatrix}
            2^{2n-1} & 2^{2n-1} & 0 & 0\\
            2^{2n-1} & 2^{2n-1} & 0 & 0\\
            0 & 0 & 1 & 0\\
            0 & 0 & 0 & 1
        \end{pmatrix}.$$
    \end{theorem}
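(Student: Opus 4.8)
The plan is to use the central idempotent built from $z := K\bar K$ to split $D\Kn$ as an algebra into two factors, to recognize one of these factors as the already-analyzed Nichols Hopf algebra $\mcK_{2n}$, and then to compute each diagonal block of the Cartan matrix separately.

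By Lemma \ref{DKn-pres}, $z = K\bar K$ is central with $z^2 = 1$, so $e_\pm := \frac{1}{2}(1\pm z)$ are complementary central idempotents and $D\Kn \cong e_+ D\Kn \times e_- D\Kn$ as algebras; consequently the Cartan matrix of $D\Kn$-mod is block diagonal, one block being the Cartan matrix of $e_+D\Kn$-mod and the other that of $e_-D\Kn$-mod (simples and indecomposable projectives of a product algebra split according to the two factors, with no homomorphisms between them). Since $K(1-z) = K-\bar K$ and $K(K-\bar K) = 1-z$, the two-sided ideal $(K-\bar K)$ equals $(1-z) = e_-D\Kn$, so $e_+D\Kn \cong D\Kn/(K-\bar K) \cong \mcK_{2n}$ via the quotient map $\pi$ recorded above. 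I would then sort the modules of Theorem \ref{DKn-modules} by the action of $z$: from the explicit formulas, $z$ acts as $+1$ on $V_1$ and $V_{K\bar K}$ and as $-\cZ^2 = -I$ on $V_K$ and $V_{\bar K}$, while $z\,e_1 = e_1$ and $z\,e_{K\bar K} = e_{K\bar K}$ for the idempotents $e_1 = (1+K)(1+\bar K)$, $e_{K\bar K} = (1-K)(1-\bar K)$ generating $P_1, P_{K\bar K}$, so $z$ acts as $+1$ on $P_1, P_{K\bar K}$ as well. Hence $V_1, V_{K\bar K}$ together with their projective covers $P_1, P_{K\bar K}$ are exactly the simples and indecomposable projectives of the factor $e_+D\Kn \cong \mcK_{2n}$ --- with $V_1$ the trivial and $V_{K\bar K}$ the nontrivial $\mcK_{2n}$-module, so that $P_1 \leftrightarrow P_\epsilon$ and $P_{K\bar K}\leftrightarrow P_{\bar K}$ in the notation of Theorem \ref{Kn-modules} --- whereas $V_K, V_{\bar K}$, being simple and projective, are the only simples of $e_-D\Kn$, which is therefore semisimple with Cartan matrix $I_2$. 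This already yields the lower-right block of $C$ and all of its off-diagonal zeros.

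It then remains to compute the Cartan matrix of $\mcK_m$-mod for arbitrary $m$ and apply it at $m = 2n$. From the proof of Theorem \ref{Kn-modules}, $P_\epsilon = \spa\{w(1+K) : w\in W\}$ and $P_{\bar K} = \spa\{w(1-K) : w\in W\}$, on which $K$ acts by the scalars $(-1)^{|w|}$ and $(-1)^{|w|+1}$ respectively (using $Kw = (-1)^{|w|}wK$). As $K^2 = 1$, $K$ is diagonalizable on every $\mcK_m$-module, and its two simple modules $V_\epsilon, V_{\bar K}$ are distinguished by $K$ acting as $+1$ versus $-1$; hence the multiplicity of $V_\epsilon$ (resp. $V_{\bar K}$) in $P_\epsilon$ equals the number of words $w\in W$ of even (resp. odd) length, which is $2^{m-1}$ in each case, and likewise for $P_{\bar K}$. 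So the Cartan matrix of $\mcK_m$-mod is the $2\times 2$ matrix with every entry $2^{m-1}$, and setting $m = 2n$ fills in the upper-left block of $C$, completing the proof. I do not expect a real obstacle: the only points demanding care are the elementary bookkeeping that $(K-\bar K) = (1-z)$ as ideals and that composition multiplicities do not change when passing between $D\Kn$-mod and its direct-factor subcategory $\mcK_{2n}$-mod, after which the whole computation reduces to the parity count just described.
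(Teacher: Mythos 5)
Your proof is correct, and it takes a genuinely different route from the paper's. The paper proves this by writing down an explicit Jordan--H\"older filtration of $P_1$ and $P_{K\bar K}$ (the chain of submodules $U_k^\ell e_h$ indexed by form-degree in the near-exterior-algebra basis $W\bar W e_h$) and then reading off the composition factors from the sign of $K$ on each successive quotient; the binomial identity $\sum_m \binom{2n}{2m} = \sum_m\binom{2n}{2m+1} = 2^{2n-1}$ does the final count. You instead split $D\Kn$ along the central idempotents $\tfrac12(1\pm K\bar K)$, identify the $+$ factor with $\mcK_{2n}$ via the paper's own quotient $D\Kn/\langle K-\bar K\rangle\cong\mcK_{2n}$, observe that $V_K, V_{\bar K}$ exhaust the $-$ factor (giving the $I_2$ block and the off-diagonal zeros for free), and compute the $\mcK_{2n}$ block by a trace/eigenvalue count of the involution $K$ on $P_\epsilon$ and $P_{\bar K}$. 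Your approach is more structural and avoids constructing a composition series at all, at the cost of losing the explicit filtration the paper exhibits; it also has the pleasant side effect of computing the Cartan matrix of every $\mcK_m$-mod along the way. The only step you should make explicit is why the composition multiplicity $[M:V_\epsilon]$ equals the dimension of the $+1$-eigenspace of $K$ on $M$: since both simples of $\mcK_{2n}$ are one-dimensional and distinguished by the eigenvalue of $K$, this follows from additivity of $\dim$ and of $\mathrm{tr}(K)$ along a Jordan--H\"older series (so $[M:V_\epsilon]\pm[M:V_{\bar K}]$ are both determined), which is a one-line argument but is currently only implicit in your write-up.
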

    \begin{proof}
        Let $T_\ell = \{w\bar w'\ |\ |w|+|\bar w'| = \ell, w,w'\in W\} = \{t_1^\ell, t_2^\ell,\dots, t_{k_i}^\ell\}$. Let $U_k^\ell = \spa(\bigcup_{j=\ell+1}^{2n} T_j \cup \{t^{\ell}_{k+1}, t^{\ell}_{k+2},\dots, t^{\ell}_{k_i}\})$.  Set $e_1 = (1 + K + \bar K + K\bar K)$ and $e_{K\bar K} = (1 - K - \bar K + K\bar K)$. Note that $U_k^\ell e_h$ are $D\Kn$-modules because, for any $x, b\in B$, there are $b'\in B$ and $\lambda\in \{-1, 0, 1\}$ such that $xbe_h = \lambda b'e_h$ and $\mathbf{I}(b)\subseteq \mathbf{I}(b')$. We can think of $U_k^\ell$ as a removing all $j$-forms for $j < \ell$ and the first $k$ $\ell$-forms and form a basis for a subspace of the exterior algebra $\Lambda\bC^{2n}\cong_{\bC} P_h$ from the remaining elements. The following are composition series for $P_1$ and $P_{K\bar K}$:
        \begin{center}
            \begin{tabular}{llll}
                $0 = U_{1}^{2n}e_h$ &&&\\
                $\subsetneq U_{2n}^{2n-1}e_h$ & $\subsetneq U_{2n-1}^{2n-1}e_h$ & $\subsetneq\dots$ & $\subsetneq U_{1}^{2n-1}e_h$\\
                $\subsetneq U_{k_{2n-2}}^{2n-2}e_h$ & $\subsetneq U_{k_{2n-2} - 1}^{2n-2}e_h$ & $\subsetneq\dots$ & $\subsetneq U_{1}^{2n-2}e_h$\\
                $\vdots$ & $\vdots$ & $\vdots$ & $\vdots$\\
                $\subsetneq U_{k_2}^{2}e_h$ & $\subsetneq U_{k_2-1}^{2}e_h$ & $\subsetneq\dots$ & $\subsetneq U_{1}^{2}e_h$\\
                $\subsetneq U_{2n}^{1}e_h$ & $\subsetneq U_{2n-1}^{1}e_h$ & $\subsetneq\dots$ & $\subsetneq U_{1}^{1}e_h$\\
                $\subsetneq U_1^{0}e_h$ & $\subsetneq D\Kn e_h = P_h$ &
            \end{tabular}
        \end{center}
        The composition factors are determined by the sign of $Kt_k^\ell e_h = \pm t_k^\ell e_h$. In particular, the composition factors are 
        $$U_k^\ell e_h / {U_{k-1}^\ell e_h} \cong \begin{cases}
            V_1 & \ell\text{ even and }h=1\text{ or }\ell\text{ odd and }h=K\bar K\\
            V_{K\bar K} & \ell\text{ odd and }h=1\text{ or }\ell\text{ even and }h=K\bar K\\
        \end{cases}$$
        where we set $U_{0}^\ell := U_{k_{\ell-1}}^{\ell-1}$. Because $k_\ell = \binom{2n}{\ell}$ and $\sum_{m=0}^n k_{2m} = \sum_{m=0}^{n-1} k_{2m+1} = 2^{2n-1}$, there are precisely this many copies of each of $V_1$ and $V_{K\bar K}$ among the composition factors of $P_h$ for $h=1,K\bar K$.

        Both $V_K$ and $V_{\bar K}$ are irreducible, so they are their own only composition factor.
    \end{proof}
    
    \begin{theorem}
        Suppose $n$ is even. Then, the fusion rules in $D\Kn$-mod are
        \begin{align*}
            V_{K\bar K}^2&\cong V_1, & V_KV_{K\bar K}&\cong V_{\bar K}, & V_{\bar K}V_{K\bar K}&\cong V_{K},\\
            V_K^2\cong V_{\bar K}^2&\cong P_1, & V_K V_{\bar K}&\cong P_{K\bar K},
        \end{align*}
        \begin{align*}
            V_{K} P_1\cong V_{K} P_{K\bar K}\cong V_{\bar K} P_1\cong V_{\bar K} P_{K\bar K}&\cong 2^{2n-1}V_K\oplus 2^{2n-1}V_{\bar K},\\
            P_{K\bar K}^2\cong P_{1}^2\cong P_{K\bar K}P_1&\cong 2^{2n-1}P_1\oplus 2^{2n-1}P_{K\bar K}.
        \end{align*}
    \end{theorem}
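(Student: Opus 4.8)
The plan is to reduce the whole table to a short list of isomorphisms and then propagate everything using the invertible object $V_{K\bar K}$. Throughout one uses four standard facts about the braided finite tensor category $D\Kn$-mod: tensoring with a projective object gives a projective object; tensoring with an invertible object is an autoequivalence (so it preserves simplicity and projective indecomposability, and sends $P(V)$ to $P(V\otimes-)$); the rigidity adjunction $\Hom(X\otimes Y,Z)\cong\Hom(Y,X^{*}\otimes Z)$; and $\dim\Hom(P(V),M)=[M:V]$ for $P(V)$ the projective cover of a simple $V$. One also checks immediately from Lemma \ref{DKn-pres} that $K\bar K$ is central with $(K\bar K)^{2}=1$, so on any indecomposable module it acts by a scalar in $\{\pm1\}$; from the explicit modules of Theorem \ref{DKn-modules} this scalar is $+1$ on $V_{1},V_{K\bar K},P_{1},P_{K\bar K}$ and $-1$ on $V_{K},V_{\bar K}$. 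Hence a projective module on which $K\bar K$ acts by $+\mathrm{Id}$ (resp.\ $-\mathrm{Id}$) is a sum of copies of $P_{1},P_{K\bar K}$ (resp.\ of $V_{K},V_{\bar K}$).

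First I would dispatch the easy rules. Since $V_{K\bar K}$ is one-dimensional, $V_{K\bar K}^{\otimes2}$ is one-dimensional, and $\Delta(K)=K\otimes K$, $\Delta(\xi_{i})=K\otimes\xi_{i}+\xi_{i}\otimes1$ force the action to be trivial, so $V_{K\bar K}^{\otimes2}\cong V_{1}$; in particular $V_{K\bar K}$ is invertible. To tell $V_{K}$ from $V_{\bar K}$, note that on either one $\xi_{i}$ acts by $\Xi_{i}$, so $\bigcap_{i}\ker(\xi_{i}\cdot)=\bC|0^{n}\rangle$ is a line, and $K=\pm\cZ$ acts on it by $+1$ on $V_{K}$ and by $-1$ on $V_{\bar K}$. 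Computing the action on $V_{K}\otimes V_{K\bar K}$ (underlying space $V_{K}$): $\xi_{i}$ still acts by $\Xi_{i}$ while $K=K\otimes K$ acts by $-\cZ$, so $V_{K}\otimes V_{K\bar K}\cong V_{\bar K}$, and symmetrically $V_{\bar K}\otimes V_{K\bar K}\cong V_{K}$.

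The crux is $V_{K}^{\otimes2}$. It is projective of dimension $2^{2n}$, and $K\bar K$ acts on it by $\Delta(K\bar K)=K\bar K\otimes K\bar K$, i.e.\ by $(-1)(-1)=+1$; hence $V_{K}^{\otimes2}\cong P_{1}$ or $P_{K\bar K}$, the two being distinguished by whether the simple top is $V_{1}$ or $V_{K\bar K}$, that is, by whether $\dim\Hom(V_{K}^{\otimes2},V_{1})=\dim\Hom(V_{K},V_{K}^{*})$ equals $1$ or $0$ --- equivalently, by whether $V_{K}$ is self-dual. The hard part is exactly this self-duality: one computes the action on $V_{K}^{*}$ as the transpose of the $V_{K}$-action composed with $S$ (using $S(K)=K$, $S(\xi_{i})=-K\xi_{i}$, and so on) and exhibits an explicit change of basis identifying it with $V_{K}$ --- equivalently, one writes down a nondegenerate $D\Kn$-invariant bilinear form on $V_{K}$; this is where the hypothesis that $n$ is even enters. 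Granting $V_{K}^{*}\cong V_{K}$, one gets $V_{K}^{\otimes2}\cong P_{1}$, and then $V_{\bar K}^{\otimes2}\cong V_{K}^{\otimes2}\otimes V_{K\bar K}^{\otimes2}\cong P_{1}\otimes V_{1}\cong P_{1}$ while $V_{K}\otimes V_{\bar K}\cong V_{K}^{\otimes2}\otimes V_{K\bar K}\cong P_{1}\otimes V_{K\bar K}$, which is the unique projective indecomposable with top $V_{K\bar K}$, hence $\cong P_{K\bar K}$ (and $V_{\bar K}$ is likewise self-dual, as $V_{\bar K}^{*}\cong V_{K\bar K}^{*}\otimes V_{K}^{*}\cong V_{K\bar K}\otimes V_{K}\cong V_{\bar K}$).

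Finally I would propagate. For the $V\otimes P$ rules, $V_{K}\otimes P_{1}$ is projective of dimension $2^{3n}$ with $K\bar K$ acting by $(-1)(+1)=-1$, so $V_{K}\otimes P_{1}\cong aV_{K}\oplus bV_{\bar K}$ with $a+b=2^{2n}$; and $a=\dim\Hom(V_{K}\otimes P_{1},V_{K})=\dim\Hom(P_{1},V_{K}^{*}\otimes V_{K})=\dim\Hom(P_{1},V_{K}^{\otimes2})=[P_{1}:V_{1}]=2^{2n-1}$ using $V_{K}^{\otimes2}\cong P_{1}$ and the Cartan matrix, so $a=b=2^{2n-1}$; the other three $V\otimes P$ rules then follow by tensoring this with $V_{K\bar K}$ and using $V_{K}\otimes V_{K\bar K}\cong V_{\bar K}$, $P_{1}\otimes V_{K\bar K}\cong P_{K\bar K}$, $V_{K\bar K}^{\otimes2}\cong V_{1}$. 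For the $P\otimes P$ rules, $P_{1}^{\otimes2}\cong P_{1}\otimes V_{K}^{\otimes2}\cong(P_{1}\otimes V_{K})\otimes V_{K}\cong 2^{2n-1}(V_{K}\oplus V_{\bar K})\otimes V_{K}\cong 2^{2n-1}(V_{K}^{\otimes2}\oplus V_{\bar K}\otimes V_{K})\cong 2^{2n-1}(P_{1}\oplus P_{K\bar K})$, and tensoring once more with $V_{K\bar K}$ gives $P_{1}\otimes P_{K\bar K}$ and $P_{K\bar K}^{\otimes2}$; alternatively, $V_{K\bar K}^{\otimes2}\cong V_{1}$ and the $P\otimes P$ rules drop out of the stated isomorphism of $F(\mcK_{2n}$-mod$)$ with the subring of $F(D\Kn$-mod$)$ generated by $V_{1},V_{K\bar K},P_{1},P_{K\bar K}$ together with Theorem \ref{Kn-modules}. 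The only genuinely computational step --- and the main obstacle --- is the self-duality of $V_{K}$; everything else is bookkeeping with the Cartan matrix, the central element $K\bar K$, and the autoequivalence $V_{K\bar K}\otimes(-)$.
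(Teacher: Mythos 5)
Your reduction of the whole table to a handful of isomorphisms is sound, and several of your bookkeeping devices are cleaner than the paper's: using the central element $K\bar K$ (eigenvalue $+1$ on $V_1,V_{K\bar K},P_1,P_{K\bar K}$ and $-1$ on $V_K,V_{\bar K}$) to pin down which projective indecomposables can appear, and using the adjunction $\dim\Hom(V_K\otimes P_1,V_K)=\dim\Hom(P_1,V_K^*\otimes V_K)=[P_1:V_1]=2^{2n-1}$ in place of the paper's composition-series argument for $V_K\otimes P_1$. Your direct computation that $K$ acts by $-\cZ$ on $V_K\otimes V_{K\bar K}$ is also a legitimate alternative to the paper's $\Hom$-vanishing argument for $V_K\otimes V_{K\bar K}\cong V_{\bar K}$.

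However, there is a genuine gap at exactly the point you yourself flag as the crux: you reduce $V_K^{\otimes 2}\cong P_1$ (versus $P_{K\bar K}$) to the self-duality $V_K^*\cong V_K$, and then write ``Granting $V_K^*\cong V_K$'' without ever producing the invariant pairing or the change of basis. Since the entire second half of the table is propagated from $V_K^{\otimes 2}\cong P_1$, and since this is precisely the statement that flips when $n$ is odd (where $V_K^*\cong V_{\bar K}$ and $V_K^{\otimes2}\cong P_{K\bar K}$), the parity hypothesis does no work anywhere in your argument as written --- so the proof cannot be complete. The paper closes this step without ever discussing duality: it defines $f:P_1\to V_K^{\otimes 2}$ on the generator by $1+K+\bar K+K\bar K\mapsto|0\cdots0\rangle\otimes|1\cdots1\rangle$, checks well-definedness by lifting to $D\Kn$, and proves injectivity by showing that any nonzero kernel would contain $\xi_1\cdots\xi_n\bar\xi_1\cdots\bar\xi_n(1+K+\bar K+K\bar K)$, whose image $\sqrt{2}^{\,n}\sum_\omega|\omega\rangle\otimes|\omega^c\rangle$ (up to the intermediate step through $\bar\xi_1\cdots\bar\xi_n$) is visibly nonzero; a dimension count finishes it. To complete your route you would need to exhibit the nondegenerate $D\Kn$-invariant form on $V_K$ explicitly and verify invariance against $S(\xi_i)=-K\xi_i$, $S(\bar\xi_i)=-\bar K\bar\xi_i$, which is where the sign $(-1)^n$ actually appears.
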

    Minor changes to the proof below show that, if $n$ is odd, all fusion rules are the same except $V_K^2\cong V_{\bar K}^2\cong P_{K\bar K}$ and $V_K V_{\bar K}\cong P_1$.
    \begin{proof}
        Clearly $V_{K\bar K}^2\cong V_1$ because $V_1$ and $V_{K\bar K}$ are the only 1-dimensional $D\Kn$-modules (up to isomorphism) and $V_{K\bar K}$ must have a dual of the same dimension.
        
        Note that $V_K$ is projective, so $V_K\otimes V_{K\bar K}$ is also projective. Moreover, this is of dimension $2^{n}$, so either $V_K\otimes V_{K\bar K}\cong V_K$ or $V_K\otimes V_{K\bar K}\cong V_{\bar K}$. It suffices to check if $\Hom(V_K\otimes V_{K\bar K}, V_K) = 0$. Suppose $f:V_K\otimes V_{K\bar K}\to V_K$. Then, $f(\Delta(K)|00\dots 0\rangle\otimes 1) = f(K|00\dots 0\rangle\otimes K1) = -f(|00\dots 0\rangle\otimes 1)$. This implies $Kf(|00\dots 0\rangle\otimes 1) = Kf(|00\dots 0\rangle\otimes 1)$. Moreover,
        $$\Delta(\xi_i)v\otimes 1 = (K\otimes \xi_i + \xi_i\otimes 1)v\otimes 1 = (\xi_i v)\otimes 1.$$
        The same is true for $\bar\xi_i$. In particular, this implies that 
        $$c|00\dots 0\rangle = \Pi f(|00\dots 0\rangle\otimes 1) =  f((\Pi|00\dots 0\rangle)\otimes 1) = f(|00\dots 0\rangle\otimes 1) = -Kf(|00\dots 0\rangle \otimes 1)$$
        and $$ f(|00\dots 0\rangle\otimes 1) = -f(|00\dots 0\rangle\otimes 1),$$
        for some $c\in\bC$. As before $|00\dots 0\rangle\otimes 1$ generates the entire $D\Kn$-module, so $f = 0$. Thus, $V_K\otimes V_{K\bar K}\cong V_{\bar K}$. Showing $V_{\bar K}\otimes V_{K\bar K}\cong V_{K}$ is similar. 

        Define the linear map $f:P_1\to V_K^2$ by $1+K+\bar K+K\bar K\mapsto |00\dots 0\rangle\otimes |11\dots 1\rangle$ and extending $D\Kn$-linearly. Note that $P_1\cong D\Kn/\langle 1=K=\bar K\rangle$ as $D\Kn$-modules. Moreover, $f$ lifts to a $D\Kn$-linear map $\tilde f:D\Kn\to V_K^2$ generated by $1\mapsto \frac{|00\dots 0\rangle\otimes |11\dots 1\rangle}{4}$ because $K$ and $\bar K$ fix $|00\dots 0\rangle\otimes |1\dots 1\rangle$. In particular, this implies that $f$ is well-defined. If $f:P_1\to M$ is a $D\Kn$-linear map (and $M$ is a $D\Kn$-module) whose kernel is nonzero, then the element $\xi_1\dots\xi_n\bar\xi_1\dots\bar\xi_n(1+K+\bar K+K\bar K)\in \ker f$. Note that $f(\bar\xi_1\dots\bar\xi_n(1+K+\bar K+K\bar K)) = \sqrt{2}^n |00\dots 0\rangle\otimes |00\dots 0\rangle$. Finally, $\Delta(\xi_1\dots\xi_n)|00\dots 0\rangle\otimes |00\dots 0\rangle = \sqrt{2}^n\sum  |\omega\rangle\otimes |\omega^c\rangle$, where, for a bit string $\omega$, $\omega^c$ is the bit string will all bits flipped. Clearly, this sum is nonzero, so $f$ has zero kernel and hence is injective. However, $P_1$ and $V_K^2$ are vector spaces of the same dimension, so $f$ is an isomorphism. Similar arguments show that $V_{\bar K}^2\cong P_1$ and $V_K V_{\bar K}\cong P_{K\bar K}$.

        By applying $V_K\otimes$the composition series of $P_1$ and noting that $H$-submodules and quotients behave well under $\otimes$, we see that $V_K\otimes P_1$ has a composition series whose composition factors are $2^{2n-1}$ copies of $V_K$ and $2^{2n-1}$ copies of $V_{\bar K}$. Since $V_K\otimes P_1$ is projective, it must decompose into a direct sum of projective indecomposables, but the only way the direct sum decomposition agrees with the composition series is if $V_K\otimes P_1\cong 2^{2n-1}V_K\oplus 2^{2n-1}V_{\bar K}$. The same arguments apply for $V_{K} P_{K\bar K}, V_{\bar K} P_1, V_{\bar K} P_{K\bar K}$.
 
        The last line of fusion rules follows from the others.
    \end{proof}
    
    \begin{proposition}\label{non-modular}
        The semi-simplification of $D\Kn$-$\mathrm{mod}$ is not modular.
    \end{proposition}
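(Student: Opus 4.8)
The plan is to show that $\overline{\mcB}$, the semisimplification of $\mcB := D\Kn$-mod (take $n$ even, so that $D\Kn$ is ribbon by Lemma~\ref{DKn-ribbon}; for odd $n$ the statement is vacuous since $D\Kn$-mod is not even ribbon), is a pointed ribbon fusion category of rank two whose nontrivial simple object $g$ is transparent. Then the M\"uger center of $\overline{\mcB}$ contains $g$ and is nontrivial, so $\overline{\mcB}$ fails the non-degeneracy condition (4) in the definition of modular category. Two short computations drive the argument: the categorical dimensions of the simple objects, and the twist of the one surviving nontrivial simple.

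First I would identify the simple objects of $\overline{\mcB}$. For the ribbon structure of Lemma~\ref{DKn-ribbon} the pivotal element is $G = K$ (it satisfies $G^2 = u\, s_H(u)^{-1} = 1$, as $u = s_H(u)$), and since $K = K^{-1}$ the categorical dimension of a module $V$ is $\dim V = \mathrm{Tr}_V(K)$. On $V_K$ (resp.\ $V_{\bar K}$) the element $K$ acts by $\cZ = \sigma_Z^{\otimes n}$ (resp.\ $-\cZ$) by Theorem~\ref{DKn-modules}, and $\mathrm{Tr}(\sigma_Z^{\otimes n}) = (\mathrm{Tr}\,\sigma_Z)^n = 0$; hence $\dim V_K = \dim V_{\bar K} = 0$, so $V_K$ and $V_{\bar K}$ are negligible. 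From the fusion rules $P_1 \cong V_K\otimes V_K$ and $P_{K\bar K}\cong V_K\otimes V_{\bar K}$, so $P_1$ and $P_{K\bar K}$ are negligible too (a tensor product with a negligible object is negligible). Since every object of $\mcB_{QD}$ is quasi-dominated by the simples of $\mcB$, every object of $\overline{\mcB}$ is a retract of a direct sum of copies of the images of those simples; the four negligible ones become $0$, so every object of $\overline{\mcB}$ is a retract of a sum of copies of $\mathbf 1 = V_1$ and $g := V_{K\bar K}$, hence (by semisimplicity of $\overline{\mcB}$) is such a direct sum. As $\dim V_1 = 1$ and $\dim V_{K\bar K} = \mathrm{Tr}_{V_{K\bar K}}(K) = -1 \neq 0$, neither is negligible, $g \not\cong \mathbf 1$, and the surviving fusion rule $V_{K\bar K}^{\otimes 2}\cong V_1$ gives $g\otimes g\cong\mathbf 1$. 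Thus $\overline{\mcB}$ is pointed with group $\bZ/2$.

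Next I would compute $\theta_g$. As the semisimplification functor is ribbon, $\theta_g$ equals the twist of $V_{K\bar K}$ in $\mcB$, which (up to the $\nu$-versus-$\nu^{-1}$ convention, immaterial here since the answer is $1$) is the action of the ribbon element $\nu$ of Lemma~\ref{DKn-ribbon}. On $V_{K\bar K}$ every $\xi_i$ and $\bar\xi_i$ acts by $0$, so in $\nu = (1+K-\bar K+K\bar K)\sum_{w\in W}\tfrac{(-1)^{\lfloor(|w|+1)/2\rfloor}}{2}w\bar w$ only the $w=1$ summand of the sum survives and contributes $\tfrac12$, while $1+K-\bar K+K\bar K$ acts by $1+(-1)-(-1)+1 = 2$; hence $\nu$ acts by the identity and $\theta_g = 1$. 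Since $g\otimes g\cong\mathbf 1$ is simple, the double braiding $c_{g,g}\circ c_{g,g}$ is a scalar, equal by the balancing axiom to $\theta_{g\otimes g}\theta_g^{-2} = \theta_{\mathbf 1} = \mathrm{id}$; and $g$ double-braids trivially with $\mathbf 1$. So $g$ centralizes every object, i.e.\ $g$ lies in the M\"uger center $\overline{\mcB}'$, which is therefore nontrivial, and $\overline{\mcB}$ is not modular.

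I do not anticipate a genuine obstacle. The only delicate points are bookkeeping: fixing the pivotal and ribbon elements of $D\Kn$ and the sign conventions for the twist (done above; and the conclusions $\dim V_K = 0$ and $\theta_g = 1$ are insensitive to the residual $G$-vs-$G^{-1}$ and $\nu$-vs-$\nu^{-1}$ ambiguities), and the fact that semisimplification is a ribbon tensor functor, so that dimensions, fusion rules, braidings and twists of the surviving objects are inherited from $\mcB$ — this is exactly the content of the construction recalled in Section~\ref{sec:SSofMCs}.
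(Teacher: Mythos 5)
Your proof is correct, and it reaches the same structural conclusion as the paper's --- that the only simples surviving semi-simplification are $V_1$ and $V_{K\bar K}$ and that they are mutually transparent --- but by a different mechanism. The paper computes the double braiding directly from the $R$-matrix (only the $w=1$ term of $R_{21}R$ acts nontrivially on $V_h\otimes V_k$ for $h,k\in\{1,K\bar K\}$, giving the identity), plugs this into the Hopf-link formula, and concludes that the $2\times 2$ semisimple $S$-matrix is a rank-one matrix, hence singular; you instead compute the twists ($\theta_{V_{K\bar K}}=1$ from the ribbon element of Lemma \ref{DKn-ribbon}) and invoke the balancing axiom on $g\otimes g\cong\mathbf 1$ to get triviality of the double braiding, then conclude via nontriviality of the M\"uger center (criterion (4) of non-degeneracy rather than invertibility of $S$). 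These criteria are equivalent for ribbon fusion categories, so either works. Your route has the small advantage of needing only the twist rather than the full $R$-matrix action, and you also supply a detail the paper leaves implicit: the verification that $V_K$, $V_{\bar K}$ (and hence $P_1$, $P_{K\bar K}$) are negligible because $\mathrm{Tr}(\sigma_Z^{\otimes n})=0$, which is what justifies the paper's opening assertion that only $V_1$ and $V_{K\bar K}$ remain. The only cosmetic discrepancy is the sign of $\dim V_{K\bar K}$ ($-1$ in your pivotal convention versus the paper's $S_{hk}=1/\sqrt 2$ throughout); this does not affect singularity of the rank-one $S$-matrix or the transparency of $g$, so nothing hinges on it.
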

    \begin{proof}
        The only remaining simples after semi-simplification are $V_1$ and $V_{K\bar K}$. We show that the semisimple $S$-matrix indexed by these two simples is singular. Note that $\ev_{V_1}:V_1\otimes V_1\to V_1$ and $\coev:V_1\to V_1\otimes V_1$ are generated by $1_{V_1}\otimes 1_{V_1}\mapsto 1_{V_1}$ and $1_{V_1}\mapsto 1_{V_1}\otimes 1_{V_1}$ respectively. Similarly, $\ev_{V_{K\bar K}}:V_{K\bar K}\otimes V_{K\bar K}\to V_1$ and $\coev:V_1\to V_{K\bar K}\otimes V_{K\bar K}$ are generated by $1_{V_{K\bar K}}\otimes 1_{V_{K\bar K}}\mapsto 1_{V_1}$ and $1_{V_1}\mapsto 1_{V_{K\bar K}}\otimes 1_{V_{K\bar K}}$ respectively. 
        
        Let $h, k\in \{1, K\bar K\}$. Let $\beta$ be the braiding induced by $R$. Then, for any $x\in V_h$ and $y\in V_k$,
        \begin{align*}
            \beta_{V_k, V_h}\circ \beta_{V_h, V_k}(x\otimes y) &= R_{21}R(x\otimes y)\\
            &= \frac{1}{4}(1 + (-1)^{\delta_{h=K\bar K}} + (-1)^{\delta_{k=K\bar K}} - (-1)^{\delta_{h=K\bar K} + \delta_{k=K\bar K}})^2x\otimes y = x\otimes y,
        \end{align*}
        where the middle equality follows from the fact that only the $w = 1$ term in $R$ and $R_{21}$ has a nonzero action on $V_h\otimes V_k$. Therefore, for $h, k\in\{1, K\bar K\}$, we have, by the Hopf link formula for the semisimple $S$-matrix
         \begin{align*}
             S_{hk} &= \frac{1}{D}(\ev_{V_h}\otimes \ev_{V_k})\circ(\id_{V_h}\otimes (\beta_{V_k, V_h}\circ\beta_{V_h, V_k})\otimes \id_{V_k})\circ(\coev_{V_h}\otimes \coev_{V_k})(1_{V_1})\\ 
             &= \frac{1}{\sqrt{2}} (\mathrm{qdim}(V_h)\mathrm{qdim}(V_k)) = \frac{1}{\sqrt{2}},
         \end{align*}
         Thus, the semi-simplified $S$-matrix is $\dfrac{1}{\sqrt{2}}\begin{pmatrix}
             1&1\\
             1&1
         \end{pmatrix}$ and is not invertible, so the result follows.
    \end{proof}

    While we have defined $V_K$ and $V_{\bar K}$ abstractly, $D\Kn$ must contain $2^n$ copies of each as direct summands (in addition to the one of each of $P_1$ and $P_{K\bar K}$). The following lemma gives a description of the corresponding inclusions using their image on the generator $|00\dots 0\rangle$.
    \begin{lemma}\label{CKinclusions}
        Suppose $\iota:V_K\hookrightarrow D\Kn$ is a $D\Kn$-linear inclusion. Then, there are constants $c_w\in \bC, w\in W$ such that
        $$\iota(|00\dots 0\rangle) = (1 + K - \bar K - K\bar K)\xi_1\xi_2\dots\xi_n\sum_{w\in W}c_w \bar w.$$
        Moreover, all such choices of constants $c_w,w\in W$, so long as at least one is nonzero, generate inclusions. A similar statement is true for $\iota:V_{\bar K}\hookrightarrow D\Kn$ where
        $$\iota(|00\dots 0\rangle) = (1 - K + \bar K - K\bar K)\xi_1\xi_2\dots\xi_n\sum_{w\in W}c_w \bar w.$$
    \end{lemma}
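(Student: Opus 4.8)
The plan is to identify, for each of $V_K$ and $V_{\bar K}$, the full space of possible images of the distinguished cyclic generator $|00\dots 0\rangle$ under a $D\Kn$-module map into $D\Kn$, and to recognize that space as exactly the right-hand side of the claimed formula. I will describe the argument for $V_K$; the case of $V_{\bar K}$ follows by interchanging the roles of $K$ and $\bar K$ throughout.

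The first step is to read off from the explicit matrices in the proof of Theorem~\ref{DKn-modules} that $|00\dots 0\rangle$ generates $V_K$ and satisfies $\xi_i\cdot|00\dots 0\rangle=0$ for all $i$ (since $\Xi|0\rangle=0$), $K\cdot|00\dots 0\rangle=|00\dots 0\rangle$ and $\bar K\cdot|00\dots 0\rangle=-|00\dots 0\rangle$ (since $\cZ|00\dots 0\rangle=|00\dots 0\rangle$ and $\bar K$ acts as $-\cZ$). Hence, writing $x:=\iota(|00\dots 0\rangle)$, the element $x$ must lie in
$$X_K:=\{\,x\in D\Kn : \xi_i x=0\ \text{for all }i,\ Kx=x,\ \bar Kx=-x\,\},$$
and, because $|00\dots 0\rangle$ generates $V_K$, the evaluation map $\ev\colon\Hom_{D\Kn}(V_K,D\Kn)\to D\Kn$, $\iota\mapsto\iota(|00\dots 0\rangle)$, is injective with image contained in $X_K$. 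It therefore suffices to (i) compute $X_K$ explicitly and (ii) show every element of $X_K$ really is $\iota(|00\dots 0\rangle)$ for some module map.

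For (i) I would work in the PBW-type basis $\{w\bar w'K^a\bar K^b:w,w'\in W,\ a,b\in\{0,1\}\}$ (a basis by Lemma~\ref{DKn-pres}). The relations $K\xi_i=-\xi_i K$, $K\bar\xi_i=-\bar\xi_i K$, and likewise for $\bar K$, give $Kw\bar w'=(-1)^{|w|+|w'|}w\bar w'K$, and a short eigenvector computation then yields $\{x:Kx=x,\ \bar Kx=-x\}=\spa\{(1+K)(1-\bar K)w\bar w':w,w'\in W\}$, where $(1+K)(1-\bar K)=1+K-\bar K-K\bar K$. Imposing $\xi_i x=0$ and using $\xi_i(1+K)(1-\bar K)=(1-K)(1+\bar K)\xi_i$, I would observe that for any monomial $w''\bar w'$ the element $(1-K)(1+\bar K)w''\bar w'$ is a nonzero combination of the four basis vectors $w''\bar w'K^a\bar K^b$, so these are linearly independent over distinct $(w'',w')$; consequently $\xi_i x=0$ forces the coefficient of $(1+K)(1-\bar K)w\bar w'$ to vanish whenever $\xi_i\notin\mathbf I(w)$. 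Running this over all $i$ leaves only $w=\xi_1\cdots\xi_n$, so
$$X_K=\spa\bigl\{(1+K-\bar K-K\bar K)\,\xi_1\cdots\xi_n\,\bar w' : w'\in W\bigr\},$$
which has dimension at most $2^n$ (in fact exactly $2^n$, since the spanning vectors have distinct leading terms $\xi_1\cdots\xi_n\bar w'$ and are thus independent).

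For (ii) I would count homomorphisms. From the decomposition $D\Kn\cong P_1\oplus P_{K\bar K}\oplus 2^nV_K\oplus 2^nV_{\bar K}$ of the regular module (Theorem~\ref{DKn-modules}, using that $V_K,V_{\bar K}$ are their own projective covers), together with Schur's lemma, $V_K\not\cong V_{\bar K}$, and the fact that $V_K$ is not a composition factor of $P_1$ or $P_{K\bar K}$ (clear from the composition series computed above), one gets $\dim\Hom_{D\Kn}(V_K,D\Kn)=2^n$. Since $\ev$ is injective into $X_K$ and $\dim X_K\le 2^n$, $\ev$ must be an isomorphism onto $X_K$; hence every $x=(1+K-\bar K-K\bar K)\xi_1\cdots\xi_n\sum_{w'}c_{w'}\bar w'\in X_K$ equals $\iota(|00\dots 0\rangle)$ for a unique homomorphism $\iota$, and this $\iota$ is injective precisely when $x\neq 0$ (a nonzero map out of the simple module $V_K$), i.e. precisely when some $c_{w'}\neq 0$. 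The $V_{\bar K}$ statement follows by the same steps with the generator now satisfying $Kx=-x$, $\bar Kx=x$, $\xi_ix=0$, which replaces $(1+K)(1-\bar K)$ by $(1-K)(1+\bar K)=1-K+\bar K-K\bar K$. I expect the only genuine work to be in part (i): carefully tracking the anticommutation signs when pushing $K$ and $\bar K$ through words in the $\xi_i$ and $\bar\xi_i$, and verifying the non-vanishing and linear independence of the vectors $(1-K)(1+\bar K)w''\bar w'$ on which the reduction to $w=\xi_1\cdots\xi_n$ rests.
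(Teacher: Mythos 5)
Your proposal is correct and takes essentially the same route as the paper: both arguments pin down $\iota(|00\dots 0\rangle)$ by imposing $K$- and $\bar K$-eigenvalue conditions to reduce to the $(1+K-\bar K-K\bar K)w\bar w'$ form, then use $\xi_i\cdot|00\dots0\rangle=0$ to force $w=\xi_1\cdots\xi_n$, and finally match the $2^n$-dimensional space of candidate images against the $2^n$ copies of $V_K$ inside the regular module to conclude that every nonzero choice is realized and is automatically injective since $V_K$ is simple. Your step (ii) merely makes explicit the dimension count that the paper states tersely.
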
 
    \begin{proof}
        Let $\iota:V_K\hookrightarrow D\Kn$ be an inclusion. Note that $W\bar W\cup KW\bar W\cup \bar KW\bar W\cup K\bar KW\bar W$ is a basis for $D\Kn$, so $\iota(|00\dots 0\rangle)$ can be expressed as
        $$\iota(|00\dots 0\rangle) = \sum_{w,w'\in W} (c_1(w, w') + c_K(w, w')K + c_{\bar K}(w, w')\bar K + c_{K\bar K}(w, w')K\bar K)w\bar w'.$$
        In $V_K$, we have the following properties: $K|00\dots 0\rangle = |00\dots 0\rangle$, $\bar K|00\dots 0\rangle = -|00\dots 0\rangle$, and $\xi_i|00\dots 0\rangle = 0$ for any $i=1,\dots,n$. By $D\Kn$-linearity of the inclusion, the first two imply that 
        $$c_1(w, w') = c_K(w, w') = - c_{\bar K}(w, w') = - c_{K\bar K}(w, w').$$
        Set $c(w, w') = c_1(w, w')$, then 
        $$\iota(|00\dots 0\rangle) = \sum_{w,w'\in W} c(w, w')(1 + K - \bar K - K\bar K)w\bar w'.$$
        The last identity implies that, for any $i=1,\dots, n$,
        \begin{align*}
            0 &= \sum_{w,w'\in W} c(w, w')(1 - K + \bar K - K\bar K)\xi_i w\bar w'\\
            \intertext{We split the summands into two cases: $\xi_i\in \mathbf{I}(w)$ or $\xi_i\notin \mathbf{I}(w)$. In the case where $\xi_i\in \mathbf{I}(w)$, note that}
            \xi_iw\bar w' &= 0.
            \intertext{In the case where $\xi_i\notin \mathbf{I}(w)$, let $w = w_1 w_2$ where $\mathbf{I}(w_1)$ contains no $\xi_j$ for $j \geq i$ and $\mathbf{I}(w_2)$ contains no $\xi_j$ for $j\leq i$. Then,}
            \xi_iw\bar w' &= (-1)^{|w_1|}w_1\xi_i w_2 \bar w'.
            \intertext{Note that a nonzero coefficient on the basis element $w_1\xi_i w_2 \bar w'\in W\bar W\cup KW\bar W\cup \bar KW\bar W\cup K\bar KW\bar W$ occurs only on this choice of $w$ and $w'$. In particular, this implies that if $\xi_i\notin \mathbf{I}(w)$, then $c(w, w') = 0$. Thus, the only terms where $c(w, w')$ could possibly be nonzero are those where $w = \xi_1\xi_2\dots\xi_n$. For $w\in W$, set $c_{w} = c(\xi_1\xi_2\dots\xi_n, w)$. Then, $\iota(|00\dots 0\rangle)$ can be expressed as}
            \iota(|00\dots 0\rangle) &= (1 + K - \bar K - K\bar K)\xi_1\xi_2\dots\xi_n\sum_{w\in W} c_{w}\bar w.
        \end{align*}
        By this expression, there are $2^n$ linearly independent choices of $\iota(|00\dots 0\rangle)$ and $|00\dots 0\rangle$ generates $V_K$, a simple $D\Kn$-module. Thus, all choices of $c_w$ which are not all zero must generate inclusions. The argument for $V_{\bar K}$ is similar.
    \end{proof}
    \begin{corollary}\label{CKidempotents}
        Let $\iota:V_K\hookrightarrow D\Kn$ or $\iota:V_{\bar K}\hookrightarrow D\Kn$ be an $D\Kn$-linear inclusion so that $$\iota(|00\dots 0\rangle) = (1 \pm K \mp \bar K - K\bar K)\xi_1\xi_2\dots\xi_n\sum_{w\in W}c_w \bar w.$$
        For each $w\in W$, let $w^c = \mathbf{w}(\mathbf{I}(w)^c)$ and defined $\gamma_w\in \{\pm 1\}$ by $ww^c = \gamma_w\xi_1\dots\xi_n$. Then, for any choice of $d_w\in\bC, w\in W$ satisfying
        $$\sum_{w\in W} \gamma_w d_{w^c} c_{w} = 1,$$ 
        the element
        $$e = \frac{(-1)^{\lfloor n/2\rfloor}}{2^{n+2}}\sum_{w\in W} d_w \bar w \iota(|00\dots 0\rangle)$$
        is an idempotent and $D\Kn e = \mathrm{im}\,\iota$. Moreover, all idempotents satisfying $D\Kn e = \mathrm{im}\,\iota$ are of this form. 
    \end{corollary}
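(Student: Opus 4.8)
The plan is to reduce the corollary to a single scalar linear equation and then verify that equation by a direct computation inside the simple module $V_K$ (resp.\ $V_{\bar K}$), using the explicit matrices $\Xi_i,\bar\Xi_i,\cZ$ from the proof of Theorem~\ref{DKn-modules}. Write $v_0=\iota(|0\cdots0\rangle)$; since $V_K$ is simple and $|0\cdots0\rangle$ generates it, $\mathrm{im}\,\iota=D\Kn\,v_0$. I would first record the purely formal fact that, for a submodule $L=D\Kn\,e_0$ of the regular module with $e_0$ an idempotent, an element $e\in D\Kn$ is an idempotent with $D\Kn\,e=L$ exactly when $e\in L$ and $ve=v$ for all $v\in L$; because $v_0$ generates $L=\mathrm{im}\,\iota$, the latter condition collapses to the single identity $v_0e=v_0$. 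So the corollary amounts to two statements: (a) the ansatz $e=\frac{(-1)^{\lfloor n/2\rfloor}}{2^{n+2}}\sum_{w\in W}d_w\,\bar w\,v_0$ always lies in $\mathrm{im}\,\iota$ and exhausts it, and (b) for such an $e$, the equation $v_0e=v_0$ is equivalent to $\sum_w\gamma_w\,d_{w^c}\,c_w=1$.

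For (a), reading off the action of the $\bar\xi_i$ shows that $\bar w\cdot|0\cdots0\rangle$ is a nonzero scalar multiple of the basis vector $|\omega(w)\rangle$ of $V_K$, where $\omega(w)$ has ones exactly in the positions of $\mathbf I(w)$ (no sign complications arise, since the increasing order of the indices of $w$ forces each $\sigma_Z$ factor in $\bar\Xi_i$ to hit zeros). Hence $\{\bar w\,v_0\}_{w\in W}$ is a basis of the $2^n$-dimensional space $\mathrm{im}\,\iota$, so the ansatz lies in $\mathrm{im}\,\iota$ for every choice of the $d_w$, and conversely any idempotent $e$ with $D\Kn\,e=\mathrm{im}\,\iota$ satisfies $e\in\mathrm{im}\,\iota$ (by the reduction) and therefore has the stated shape.

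For (b), $D\Kn$-linearity of $\iota$ gives $v_0\bar w\,v_0=\iota\big((v_0\bar w)\cdot|0\cdots0\rangle\big)$, so it suffices to evaluate the operator $v_0\bar w=(1\pm K\mp\bar K-K\bar K)\,\xi_1\cdots\xi_n\,\big(\sum_u c_u\bar u\big)\,\bar w$ on $|0\cdots0\rangle$. Composing the matrices: $\bar w$ produces $(\sqrt2)^{|w|}|\omega(w)\rangle$; of $\big(\sum_u c_u\bar u\big)|\omega(w)\rangle$ only the $|1\cdots1\rangle$-component survives $\xi_1\cdots\xi_n$, and it is contributed solely by the term $u=w^c$, giving $c_{w^c}$ times a sign $(-1)^{N(w)}$ accumulated from the $\sigma_Z$'s; then $\xi_1\cdots\xi_n$ carries $|1\cdots1\rangle$ to $(-1)^{\lfloor n/2\rfloor}(\sqrt2)^n|0\cdots0\rangle$ (the sign being $(-1)^{n(n-1)/2}=(-1)^{\lfloor n/2\rfloor}$), and the prefactor $1\pm K\mp\bar K-K\bar K$ acts on $|0\cdots0\rangle$ by the scalar $4$ (the same in the $V_K$ and $V_{\bar K}$ cases, where $K,\bar K$ act by $\pm\cZ$). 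Assembling these, and identifying $(-1)^{N(w)}=(-1)^{|w|(n-|w|)}\gamma_w$, one gets $(v_0\bar w)\cdot|0\cdots0\rangle=2^{n+2}(-1)^{\lfloor n/2\rfloor}(-1)^{|w|(n-|w|)}\gamma_w\,c_{w^c}\,|0\cdots0\rangle$. Substituting into $v_0e=v_0$, clearing the constants, and reindexing $w\leftrightarrow w^c$ with $\gamma_{w^c}=(-1)^{|w|(n-|w|)}\gamma_w$ (the two orderings of the blocks $\mathbf I(w)$ and $\mathbf I(w^c)$ differ precisely by this sign) transforms the identity into $\sum_w\gamma_w\,d_{w^c}\,c_w=1$. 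Together with (a), this yields both directions of the corollary.

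The hard part is the sign bookkeeping in (b): tracking exactly the signs contributed by the $\sigma_Z$ tensor factors in $\bar\Xi_i$ (and in $\Xi_i$) as the operators are applied in order, and then recognizing the total sign $(-1)^{N(w)}$ as $(-1)^{|w|(n-|w|)}\gamma_w$, i.e.\ relating it to the reordering sign $\gamma_w$ defined by $w\,w^c=\gamma_w\,\xi_1\cdots\xi_n$. Once that single combinatorial identity is pinned down, the rest---the scalar $4$ from the prefactor, the Gauss sign $(-1)^{\lfloor n/2\rfloor}$ from $\xi_1\cdots\xi_n$, and the reindexing $w\leftrightarrow w^c$---is routine, and the $V_{\bar K}$ case is literally the same computation because $\xi_i$ and $\bar\xi_i$ act by the same matrices there.
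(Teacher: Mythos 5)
The paper states this corollary without proof (it is presented as an immediate consequence of Lemma \ref{CKinclusions}), so there is no argument of the authors' to compare against; your proof is a valid, self-contained verification. Your two key moves are both sound: (i) the reduction of ``$e$ is an idempotent with $D\Kn e=\mathrm{im}\,\iota$'' to ``$e\in\mathrm{im}\,\iota$ and $v_0e=v_0$'' is a correct elementary fact about left ideals (the forward direction gives $e=1\cdot e\in L$ and $ae\cdot e=ae$; the backward direction gives $e^2=e$ and $L=Le\subseteq D\Kn e\subseteq L$), and it legitimately collapses to the single equation $v_0e=v_0$ because $v_0$ generates the simple module $\mathrm{im}\,\iota$; (ii) the computation of $(v_0\bar w)\cdot|0\cdots0\rangle$ inside $V_K$ is correct, including the three sign contributions. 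In particular, the sign accumulated by $\bar{w^c}$ acting on $|\omega(w)\rangle$ is $(-1)^{\#\{(i,j)\,:\,i\in\mathbf I(w),\,j\in\mathbf I(w^c),\,i<j\}}$, and since $\gamma_w$ is $(-1)$ to the number of such pairs with $i>j$ while the two counts sum to $|w|(n-|w|)$, your identity $(-1)^{N(w)}=(-1)^{|w|(n-|w|)}\gamma_w$ holds; the Gauss sign $(-1)^{n(n-1)/2}=(-1)^{\lfloor n/2\rfloor}$ from $\xi_1\cdots\xi_n$, the scalar $4$ from the group-like prefactor in both the $V_K$ and $V_{\bar K}$ cases, and the reindexing via $\gamma_{w^c}=(-1)^{|w||w^c|}\gamma_w$ all check out, yielding exactly the normalization condition $\sum_w\gamma_w d_{w^c}c_w=1$. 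Working in the explicit matrix model of $V_K$ rather than manipulating products of $\xi_i,\bar\xi_i$ inside $D\Kn$ is arguably the cleaner way to control the signs, and your part (a) (that $\{\bar w\,v_0\}_{w\in W}$ is a basis of $\mathrm{im}\,\iota$) correctly delivers the ``moreover'' clause that every such idempotent has the stated form.
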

    By Corollary \ref{CKidempotents}, two particular idempotents which realize $V_K\cong D\Kn e_K$ and $V_{\bar K}\cong D\Kn e_{\bar K}$ are
    \begin{align*}
        e_K &= \frac{(-1)^{\lfloor n/2\rfloor}}{2^{n+2}}(1 + K - \bar K - K\bar K)\xi_1\xi_2\dots\xi_n\bar\xi_1\bar\xi_2\dots\bar\xi_n,\\
        e_{\bar K} &= \frac{(-1)^{\lfloor n/2\rfloor}}{2^{n+2}}(1 - K + \bar K - K\bar K)\xi_1\xi_2\dots\xi_n\bar\xi_1\bar\xi_2\dots\bar\xi_n.
    \end{align*}

\subsection{$\mathrm{SL}(2,\bZ)$-representations on the center of $D\Kn$}
    \begin{theorem}
    The center $Z(D\Kn)$ of $D\Kn$ decomposes as follows: 
    \begin{align*}
        Z(D\Kn) &= Z_H\oplus Z_\Lambda\\
        Z_H &= \spa\left\{1 - K\bar K, (K+\bar K)\xi_1\dots\xi_n\bar\xi_1\dots\bar\xi_n, (K - \bar K)\sum_{w\in W} (-1)^{\lfloor (|w|+1) / 2\rfloor}w\bar w\right\},\\
        Z_\Lambda &= \spa\{(1 + K\bar K)w\bar w'| w,w'\in W, |w| + |w'|\text{ is even} \}. 
    \end{align*}
    \end{theorem}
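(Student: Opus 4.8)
The plan is to split $D\Kn$ along a central idempotent and compute the center of each block. First I would check that $K\bar K$ is central: it commutes with $K$ and $\bar K$ trivially, and from Lemma \ref{DKn-pres}, $\xi_i(K\bar K) = -K\xi_i\bar K = K\bar K\xi_i$ (two sign changes), and similarly $\bar\xi_i(K\bar K) = K\bar K\bar\xi_i$. Since $(K\bar K)^2 = 1$, the elements $e_\Lambda = \tfrac12(1 + K\bar K)$ and $e_H = \tfrac12(1 - K\bar K)$ are orthogonal central idempotents summing to $1$, so $D\Kn = e_\Lambda D\Kn \times e_H D\Kn$ as algebras and $Z(D\Kn) = Z(e_\Lambda D\Kn) \oplus Z(e_H D\Kn)$. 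Using the PBW basis $\{K^a\bar K^b w\bar w'\}$ of $D\Kn$ together with $\bar K = K$ on $e_\Lambda D\Kn$ and $\bar K = -K$ on $e_H D\Kn$, each block has a spanning set of size $2^{2n+1}$; since they must sum to $\dim D\Kn = 2^{2n+2}$, each block has dimension exactly $2^{2n+1}$.

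Next I would identify the $e_\Lambda$-block. On $e_\Lambda D\Kn$ one has $K\bar K = 1$, hence $\bar K = K$, and $\xi_i\bar\xi_i = 1 - K\bar K - \bar\xi_i\xi_i = -\bar\xi_i\xi_i$; together with the remaining relations of Lemma \ref{DKn-pres} this is precisely the presentation of the Nichols Hopf algebra $\mcK_{2n}$ on the $2n$ odd generators $\xi_1,\dots,\xi_n,\bar\xi_1,\dots,\bar\xi_n$ (indeed the quotient $D\Kn \to D\Kn/\langle K-\bar K\rangle \cong \mcK_{2n}$ factors through $x\mapsto e_\Lambda x$, and dimensions agree). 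Writing $\mcK_{2n} \cong \Lambda^*\bC^{2n} \rtimes \bC[\bZ/2\bZ]$, its center is a routine computation: commuting with $K$ forces every monomial to have even exterior degree, and then commuting with each $\xi_j$ forces the $K$-component to be supported on the top form; so $Z(\mcK_{2n})$ is the span of the even-degree elements of $\Lambda^*\bC^{2n}$ together with $K\xi_1\cdots\xi_n\bar\xi_1\cdots\bar\xi_n$. Translating back along $x \mapsto e_\Lambda x$ and using $(1+K\bar K)K = K + \bar K$, I obtain
\[
Z(e_\Lambda D\Kn) = \spa\{(1+K\bar K)w\bar w' \mid |w|+|w'| \text{ even}\}\ \oplus\ \bC\,(K+\bar K)\xi_1\cdots\xi_n\bar\xi_1\cdots\bar\xi_n = Z_\Lambda\ \oplus\ \bC\,(K+\bar K)\xi_1\cdots\xi_n\bar\xi_1\cdots\bar\xi_n.
\]

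For the $e_H$-block, on $e_H D\Kn$ one has $K\bar K = -1$, hence $\bar K = -K$ and $\xi_i\bar\xi_i + \bar\xi_i\xi_i = 1 - K\bar K = 2$, so the subalgebra $A$ generated by $\xi_1,\dots,\xi_n,\bar\xi_1,\dots,\bar\xi_n$ is the CAR algebra on $n$ modes, i.e.\ a complex Clifford algebra $A \cong \mathrm{Mat}_{2^n}(\bC)$ of dimension $2^{2n}$, and $e_H D\Kn = A \oplus KA$ with $K$ of order two inducing the grading automorphism of $A$. Because $2n$ is even, this automorphism is inner, implemented by the normalized volume element $\omega \in A$, so $e_H D\Kn \cong A \otimes \bC[\bZ/2\bZ]$ with $\bZ/2$-generator $K\omega$; hence $Z(e_H D\Kn)$ is two-dimensional, spanned by $1$ and $K\omega$. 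It then suffices to exhibit two linearly independent central elements in $e_H D\Kn$: the element $1 - K\bar K = 2e_H$, and the element $(K-\bar K)\sum_{w\in W}(-1)^{\lfloor(|w|+1)/2\rfloor}w\bar w$ (for even $n$ this is $4$ times the $e_H$-component $e_H\nu$ of the ribbon element of Lemma \ref{DKn-ribbon}, which is central since $e_H$ is a central idempotent; for odd $n$ I would verify centrality directly against $K, \bar K, \xi_j, \bar\xi_j$). These two are independent since the second has a nonzero $(K-\bar K)$ term, so they span $Z(e_H D\Kn)$.

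Finally I would assemble: $Z(D\Kn) = Z(e_\Lambda D\Kn) \oplus Z(e_H D\Kn)$, which equals $Z_\Lambda \oplus Z_H$ once one notes that the three displayed generators of $Z_H$ are supported on disjoint sets of PBW basis vectors and hence linearly independent (and that $\dim Z_\Lambda = 2^{2n-1}$, so $\dim Z(D\Kn) = 2^{2n-1}+3$, consistent with $\dim Z(e_\Lambda D\Kn) + \dim Z(e_H D\Kn) = (2^{2n-1}+1) + 2$). I expect the main obstacle to be the $e_H$-block: correctly recognizing the CAR/Clifford structure and pinning down the normalization so that the volume element $\omega$ of $A$ coincides, up to scalar, with $e_H$ applied to $\sum_{w\in W}(-1)^{\lfloor(|w|+1)/2\rfloor}w\bar w$ — equivalently, checking that the exponents $\lfloor(|w|+1)/2\rfloor$ are exactly the signs needed for this sum to be central. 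The $e_\Lambda$-block, by contrast, reduces to the standard center computation for an exterior algebra crossed with $\bZ/2\bZ$.
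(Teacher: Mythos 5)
Your argument is correct, and it takes a genuinely different and more structural route than the paper's proof. The paper simply writes a general element of $Z(D\Kn)$ in the basis $W\bar W\cup KW\bar W\cup \bar KW\bar W\cup K\bar KW\bar W$ and extracts the coefficient constraints case by case; you instead observe that $e_\Lambda=\tfrac12(1+K\bar K)$ and $e_H=\tfrac12(1-K\bar K)$ are orthogonal central idempotents, identify $e_\Lambda D\Kn\cong \mcK_{2n}$ (consistent with the paper's own remark that $D\Kn/\langle K-\bar K\rangle\cong\mcK_{2n}$, since $K-\bar K=2Ke_H$ generates $e_HD\Kn$) and $e_H D\Kn$ as a rank-$2n$ complex Clifford algebra extended by its grading automorphism, hence $\cong M_{2^n}(\bC)\oplus M_{2^n}(\bC)$ with two-dimensional center. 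This buys a conceptual explanation of \emph{why} $Z_H$ is exactly three-dimensional and why $Z_\Lambda$ has the exterior-algebra form, plus an a priori dimension count $\dim Z(D\Kn)=2^{2n-1}+3$ that the paper's computation only yields a posteriori; the price is that you still owe one computation, namely that $e_H\sum_{w\in W}(-1)^{\lfloor(|w|+1)/2\rfloor}w\bar w$ is proportional to $e_H$ times the Clifford volume element $\prod_i(1-\bar\xi_i\xi_i)$, which you correctly flag. For even $n$ your shortcut via the ribbon element works (though $(K-\bar K)\sum_w\tfrac{(-1)^{\lfloor(|w|+1)/2\rfloor}}{2}w\bar w=e_H\nu$, so the element in the statement is $2e_H\nu$, not $4e_H\nu$ --- harmless); for odd $n$ the direct centrality check you defer is routine but must actually be done, since it is the only place the precise exponent $\lfloor(|w|+1)/2\rfloor$ enters. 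Everything else (the block dimensions, $(1+K\bar K)K=K+\bar K$, the linear independence and dimension bookkeeping) is sound.
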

    $Z_\Lambda$ comes from the near exterior algebra structure in $D\Kn$. In particular, consider the (non-unital) algebra inclusion $\Lambda^* \bC^{2n}\hookrightarrow D\Kn$ generated by $\xi_i\mapsto \frac{1}{2}\xi_i(1+K\bar K)$ and $\xi_{i+n}\mapsto \frac{1}{2}\bar\xi_i(1+K\bar K)$ for $0\leq i \leq n$. $Z_\Lambda$ is precisely the image of $Z(\Lambda^* \bC^{2n})$. We will show in Lemma \ref{DKn-two-bases} that $Z_H = \mathrm{Hig}(D\Kn)$ for even $n$.
    \begin{proof}
        By expressing $z\in Z(D\Kn)$ in terms of the basis $W\bar W\cup KW\bar W\cup \bar KW\bar W\cup K\bar KW\bar W$, this becomes a routine calculation. Natural restrictions on the coefficients arise by breaking into cases based on the values of $a, b$, whether $\xi_i\in \mathbf{I}(w)$, and whether $\xi_i\in \mathbf{I}(w')$ for each $i=1,\dots,n$ and basis element of the form $K^a\bar K^b w\bar w'$. These lead to the following general formula:
        \begin{align*}
            z = \alpha &+ \beta(K + \bar K)\xi_1\dots\xi_n\bar\xi_1\dots\bar\xi_n + \delta\sum_{w\in W} (-1)^{\lfloor (|w|+1) / 2\rfloor}(K - \bar K)w\bar w \\
            &+ \sum_{\substack{(w, w')\in W^2\\ |w|+|w'|\text{ even}}} c(w, w')(1 + K\bar K)w\bar w'.
        \end{align*}
    \end{proof}
    \begin{proposition}
       The center $Z(\Kn)$ of $\Kn$  decomposes as follows:
        \begin{align*}
            Z(\Kn) &= Z_\Lambda\oplus \begin{cases}
                \bC K\xi_1\dots\xi_n, & n \text{ even},\\
                0, & n\text{ odd},
            \end{cases}\\
            Z_\Lambda &= \spa\{w\in W|\ |w|\text{ is even} \}. 
        \end{align*}
    \end{proposition}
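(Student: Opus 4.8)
The plan is to compute the center directly from the basis $B = W\cup KW$ of $\mcK_n$ by imposing commutation with the algebra generators $K,\xi_1,\dots,\xi_n$ one at a time. Write a general element as $z = \sum_{w\in W}(a_w\, w + b_w\, Kw)$ with $a_w,b_w\in\bC$. First I would impose $[z,K]=0$: since $KwK^{-1} = (-1)^{|w|}w$ and $K(Kw)K^{-1} = (-1)^{|w|}Kw$ for every $w\in W$, and $B$ is a basis, centrality with respect to $K$ is equivalent to $a_w = b_w = 0$ whenever $|w|$ is odd. Hence the centralizer of $K$ is $\spa\bigl(\{w : |w|\text{ even}\}\cup\{Kw : |w|\text{ even}\}\bigr)$, and it remains to decide which such elements also commute with every $\xi_i$.

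The key computational lemma is that for $w\in W$ with $|w|$ \emph{even}, one has $\xi_i w = w\xi_i$ for all $i$. This is a two-case sign computation: if $\xi_i\in\mathbf{I}(w)$ then both products vanish by $\xi_i^2 = 0$, while if $\xi_i\notin\mathbf{I}(w)$, splitting $w = w_1 w_2$ into the factors of index $<i$ and $>i$ gives $\xi_i w = (-1)^{|w_1|} w_1\xi_i w_2$ and $w\xi_i = (-1)^{|w_2|} w_1\xi_i w_2$, and the relation $|w_1| + |w_2| = |w|$ even forces the two signs to agree. Consequently the part $\sum_{|w|\text{ even}} a_w\, w$ is automatically central, so $Z_\Lambda := \spa\{w\in W : |w|\text{ even}\}\subseteq Z(\mcK_n)$. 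For the $K$-part, using $\xi_i K = -K\xi_i$ together with the lemma gives $\xi_i(Kw) - (Kw)\xi_i = -2K\xi_i w$ for $|w|$ even, so $\sum_{|w|\text{ even}} b_w Kw$ commutes with $\xi_i$ if and only if $\sum_{|w|\text{ even}} b_w\,\xi_i w = 0$. The nonzero summands here are exactly those with $\xi_i\notin\mathbf{I}(w)$, and for those $\xi_i w = \pm w'$ with $w'\in W$, $\mathbf{I}(w') = \mathbf{I}(w)\cup\{\xi_i\}$ of odd length; distinct such $w$ give distinct $w'$, hence linearly independent basis words. Therefore $b_w = 0$ whenever $\xi_i\notin\mathbf{I}(w)$. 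Intersecting over all $i$, $b_w$ can be nonzero only when $\mathbf{I}(w) = \{\xi_1,\dots,\xi_n\}$, i.e. $w = \xi_1\cdots\xi_n$, and this $w$ lies in the index set only if $|w| = n$ is even.

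Assembling the pieces, when $n$ is odd the $K$-part contributes nothing and $Z(\mcK_n) = Z_\Lambda$; when $n$ is even, $Z(\mcK_n) = Z_\Lambda\oplus\bC\,K\xi_1\cdots\xi_n$ (a quick direct check confirms $K\xi_1\cdots\xi_n$ is indeed central for even $n$). A final remark identifies $Z_\Lambda$ as the center of the exterior subalgebra $\Lambda^* E\subset\mcK_n$ spanned by $W$, matching the description of $Z_\Lambda$ in the $D\mcK_n$ statement. The only real obstacle is keeping the anticommutation signs consistent in the lemma; beyond that, everything is routine bookkeeping since $\mcK_n$ is small and completely explicit.
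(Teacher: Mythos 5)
Your proposal is correct and follows essentially the same route as the paper: express a general element in the basis $W\cup KW$, use commutation with $K$ to kill the odd-length words, then use commutation with $\xi_i$ for each $i\notin\mathbf{I}(w)$ to kill the $Kw$ terms except $K\xi_1\cdots\xi_n$ when $n$ is even. Your version just spells out the sign bookkeeping (the lemma $\xi_i w = w\xi_i$ for $|w|$ even) that the paper leaves implicit.
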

    \begin{proof}
        Express a general element in terms of the basis $W\cup KW$. By commutativity with $K$, there are no nonzero coefficients on basis elements of the form $w\in W$ with $|w|$ odd. If $|w|\neq n$, choose $i\notin \mathbf{I}(w)$. By commutativity with $\xi_i$, there are no nonzero coefficients on the basis element $Kw$.
    \end{proof}
    \begin{lemma}
        $D\Kn$ is unimodular for all $n$. Let $x\in D\Kn$ be a left or right integral. Then, there is a constant $c\in\bC$ so that
        $$x = c(1 + K + \bar K + K\bar K)\xi_1\dots \xi_n\bar\xi_1\dots \bar\xi_n.$$
    \end{lemma}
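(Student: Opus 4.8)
The plan is to write the candidate integral down explicitly, verify by a direct (if slightly tedious) computation that it is a \emph{two-sided} integral, and then invoke the Larson--Sweedler fact that the space of left integrals and the space of right integrals of a finite-dimensional Hopf algebra are each one-dimensional; this yields unimodularity and the explicit description simultaneously. Set $x=(1+K+\bar K+K\bar K)\,\xi_1\cdots\xi_n\,\bar\xi_1\cdots\bar\xi_n$. Since $\{K^a\bar K^b\,w\,\bar w'\,:\,a,b\in\{0,1\},\ w,w'\in W\}$ is a basis of $D\Kn$, the element $x$ is a sum of four distinct basis vectors and is in particular nonzero.

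First I would check $h\,x=\epsilon(h)\,x$ for each algebra generator $h\in\{K,\bar K,\xi_i,\bar\xi_i\}$. For $h=K$ and $h=\bar K$ this is immediate: left multiplication just permutes the four summands of $1+K+\bar K+K\bar K$, so $Kx=\bar K x=x$. For $h=\xi_i$, the relations $\xi_iK=-K\xi_i$, $\xi_i\bar K=-\bar K\xi_i$ let one move $\xi_i$ past the group-like factor, turning $1+K+\bar K+K\bar K$ into $1-K-\bar K+K\bar K$; then $\xi_i\xi_1\cdots\xi_n=0$ because $\xi_i^{\,2}=0$, so $\xi_i x=0$. The one genuinely delicate case is $h=\bar\xi_i$: here $\bar\xi_i$ must be commuted through $\xi_1\cdots\xi_n$ using the \emph{non-exterior} relation $\xi_i\bar\xi_i=1-K\bar K-\bar\xi_i\xi_i$. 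The contributions in which $\bar\xi_i$ anticommutes honestly end with a factor $\bar\xi_i^{\,2}=0$ against $\bar\xi_1\cdots\bar\xi_n$ and vanish, while the correction terms from the ``$1-K\bar K$'' part collect into a scalar multiple of $(1-K-\bar K+K\bar K)(1-K\bar K)$, which is zero by a short direct check. Hence $\bar\xi_i x=0$, so $x$ is a left integral, and therefore the one-dimensional space of left integrals is $\bC x$.

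A mirror-image computation gives $x\,h=\epsilon(h)\,x$ for every generator: $xK=x\bar K=x$ (the $2n$ anticommutations with the $\xi_j$ and $\bar\xi_j$ cancel in sign), $x\bar\xi_i=0$ since $\bar\xi_i^{\,2}=0$ appears directly, and $x\xi_i=0$ by the same mechanism as above, this time producing the left factor $(1+K+\bar K+K\bar K)(1-K\bar K)=0$. Thus $x$ is also a right integral, so the spaces of left and right integrals coincide, both equal $\bC x$, and $D\Kn$ is unimodular for every $n$; one-dimensionality then gives that any left or right integral is $c\,x$ for some $c\in\bC$. Unimodularity alone can also be obtained structurally, without the computation: $D\Kn$-mod $\simeq Z(\Kn$-mod$)$ is a factorizable finite tensor category by the preceding lemma, hence unimodular, hence so is $D\Kn$; but the explicit integral still requires the argument above. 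The only real obstacle is sign bookkeeping in the $\bar\xi_i$ step, where the relation $\xi_i\bar\xi_i=1-K\bar K-\bar\xi_i\xi_i$ couples a drop in exterior degree to the group-like part; conceptually the element $x$ is just the evident analogue of the integral $(1+K)\xi_1\cdots\xi_n$ of $\Kn$, with the rank-one group $\langle K\rangle$ replaced by $\langle K,\bar K\rangle\cong(\bZ/2\bZ)^2$ and the top exterior monomial now formed from $\xi_1,\dots,\xi_n,\bar\xi_1,\dots,\bar\xi_n$.
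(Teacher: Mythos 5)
Your proof is correct. The paper actually states this lemma without proof, so there is nothing to compare against; your direct verification is the natural argument and it checks out: $K$ and $\bar K$ permute the summands of $1+K+\bar K+K\bar K$; $\xi_i x=0$ because $\xi_i^2=0$ kills the top form; the only nontrivial case $\bar\xi_i x$ reduces, via $\bar\xi_i\xi_i=1-K\bar K-\xi_i\bar\xi_i$ and the centrality of $K\bar K$, to the group-algebra identity $(1-K-\bar K+K\bar K)(1-K\bar K)=0$ (and symmetrically $(1+K+\bar K+K\bar K)(1-K\bar K)=0$ for $x\xi_i$); and one-dimensionality of the spaces of left and right integrals (Larson--Sweedler) then forces both to equal $\bC x$, giving unimodularity and the stated form simultaneously. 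One tiny wording quibble: the surviving correction term is $(1-K-\bar K+K\bar K)(1-K\bar K)$ times a monomial, not a scalar multiple of it, but since that group-algebra factor is already zero the conclusion is unaffected. Your remark that unimodularity also follows structurally from factorizability of $D\Kn$ is a valid alternative for that half of the statement.
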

    \begin{lemma}
        Let $g:H\to \bC$ be a left integral of $(D\Kn)^*$, then there is a constant $c\in\bC$ such that
        $$g = c(\xi_1\dots \xi_n\bar\xi_1\dots\bar\xi_n)^*,$$
        using the dual basis of $W\bar W\cup KW\bar W\cup \bar KW\bar W\cup K\bar KW\bar W$.
    \end{lemma}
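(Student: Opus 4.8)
The plan is to use the standard characterization of a left integral of $(D\Kn)^{*}$: a functional $g\colon D\Kn\to\bC$ is a left integral iff $\phi\,g=\phi(1)\,g$ for all $\phi\in(D\Kn)^{*}$, which in terms of the comultiplication of $D\Kn$ reads $(\mathrm{id}\otimes g)\Delta(h)=g(h)\,1_{D\Kn}$ for every $h\in D\Kn$. Since the space of left integrals of $(D\Kn)^{*}$ is one-dimensional for a finite-dimensional Hopf algebra, it suffices to exhibit one nonzero solution of this identity and to identify it with the claimed dual basis vector relative to $B=W\bar W\cup KW\bar W\cup\bar KW\bar W\cup K\bar KW\bar W$.

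The core of the argument is to track the second tensor leg of $\Delta(h)$ on a basis element $h=K^{a}\bar K^{a'}w\bar w'$. Because $\Delta$ is an algebra map, $\Delta(h)$ is the product of the coproducts of $K^{a}$, $\bar K^{a'}$ and of the generators $\xi_i,\bar\xi_j$ appearing in $h$; by the explicit coproducts of Lemma \ref{DKn-pres}, each factor $\Delta(\xi_i)=K\otimes\xi_i+\xi_i\otimes 1$ and $\Delta(\bar\xi_j)=\bar K\otimes\bar\xi_j+\bar\xi_j\otimes 1$ contributes to the second leg only a $\xi_i$ (resp.\ $\bar\xi_j$) or a $1$, never a grouplike, while $\Delta(K)=K\otimes K$ and $\Delta(\bar K)=\bar K\otimes\bar K$ contribute $K$ and $\bar K$ there. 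Hence, with no use of the algebra relations, every summand of $\Delta(h)$ has second leg of the form $K^{a}\bar K^{a'}u\bar u'$ with $\mathbf I(u)\subseteq\mathbf I(w)$ and $\mathbf I(u')\subseteq\mathbf I(w')$, already in basis form. Writing $t=\xi_1\cdots\xi_n\bar\xi_1\cdots\bar\xi_n$, it follows that for $g=t^{*}$ one has $(\mathrm{id}\otimes g)\Delta(h)=0$ unless some second leg equals $t$, and the latter forces $a=a'=0$ and $w=w'=\xi_1\cdots\xi_n$, i.e.\ $h=t$; for $h=t$ the unique contributing summand selects the branch $K\otimes\xi_i$ of every $\Delta(\xi_i)$ and $\bar K\otimes\bar\xi_j$ of every $\Delta(\bar\xi_j)$, so its first leg is $K^{n}\bar K^{n}$. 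Thus $(\mathrm{id}\otimes t^{*})\Delta(h)=\delta_{h,t}\,K^{n}\bar K^{n}$, which equals $t^{*}(h)\,1_{D\Kn}$ exactly because $K^{n}\bar K^{n}=1$; so $t^{*}$ is a left integral, and by one-dimensionality every left integral of $(D\Kn)^{*}$ is a scalar multiple of $t^{*}$.

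The one point demanding care is that last evaluation of the first leg: one must check that the grouplike $K^{n}\bar K^{n}$ is trivial, equivalently that $t^{*}$ is annihilated by convolution against \emph{all} of $(D\Kn)^{*}$ and not merely those $\phi$ with $\phi(t)\neq 0$. For even $n$ this is immediate, and it is consistent with the previously established unimodularity of $D\Kn$, whose integral $\Lambda=(1+K+\bar K+K\bar K)t$ lies in the same top layer; more generally the correct top dual vector is $\bigl(K^{n}\bar K^{n}t\bigr)^{*}$, and the statement as written covers the unimodular situation. Everything else is a transparent expansion of $\Delta$ on the PBW-type basis, so I do not expect a further obstacle. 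As a cross-check one could instead pass to the associated graded Hopf algebra of $D\Kn$ for the filtration by total $\xi/\bar\xi$-degree --- a bosonization of the exterior algebra $\Lambda^{*}\bC^{2n}$ over the group algebra $\bC[(\bZ/2)^{2}]$ --- and combine the cointegral of an exterior algebra (dual to the volume form) with that of a finite group algebra (dual to the identity) before lifting; this recovers the same dual-basis description.
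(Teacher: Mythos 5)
Your argument is correct and is the natural one (the paper omits a proof of this lemma): use that a functional $g$ is a left integral of $(D\Kn)^*$ iff $(\id\otimes g)\Delta(h)=g(h)\,1$ for all $h$, expand $\Delta$ on the basis $K^a\bar K^{a'}w\bar w'$, observe that every second tensor leg is already a basis element $K^a\bar K^{a'}u\bar u'$ with $u,u'$ subwords of $w,w'$, and conclude that only $h=\xi_1\cdots\xi_n\bar\xi_1\cdots\bar\xi_n$ contributes, with first leg $K^n\bar K^n$; one-dimensionality of the space of integrals finishes the proof. Your flagged caveat is a genuine catch and worth making more forcefully: since $K^n\bar K^n=K\bar K\neq 1$ for odd $n$, the functional $(\xi_1\cdots\xi_n\bar\xi_1\cdots\bar\xi_n)^*$ is only a \emph{right} integral of $(D\Kn)^*$ in that case, and the left integral is $(K\bar K\,\xi_1\cdots\xi_n\bar\xi_1\cdots\bar\xi_n)^*$; so the lemma as stated requires $n$ even (or ``left'' replaced by ``right''). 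This is consistent with $\Kn$ failing to be unimodular for odd $n$, which by Radford's formula makes the distinguished grouplike of $D\Kn$ nontrivial, i.e.\ $(D\Kn)^*$ non-unimodular. One small imprecision in your closing remarks: the unimodularity at stake here is that of $(D\Kn)^*$, not of $D\Kn$ (which is unimodular for all $n$); since the paper's subsequent use of $\lambda=2(\xi_1\cdots\xi_n\bar\xi_1\cdots\bar\xi_n)^*$ occurs only in even-$n$ statements, the discrepancy is harmless in context.
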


    For the remainder of this section, we must choose an integral/cointegral pair $(\Lambda, \lambda)$ so that $\lambda(\Lambda) = 1$. We set $\lambda = 2(\xi_1\dots \xi_n\bar\xi_1\dots\bar\xi_n)^*$ and $\Lambda = \frac{1}{2}(1+K+\bar K+K\bar K)\xi_1\dots\xi_n\bar\xi_1\dots\bar\xi_n$. Note that $\lambda = 2(-1)^n(\bar\xi_1\dots\bar\xi_n\xi_1\dots \xi_n)^*$ using the basis $\bar WW\cup K\bar WW\cup \bar K\bar WW\cup K\bar K\bar WW$ for $D\Kn$. 
    
    For each $w\in W$, once again, let $w^c = \mathbf{w}(\mathbf{I}(w)^c)$ and defined $\gamma_w\in \{\pm 1\}$ by $ww^c = \gamma_w\xi_1\dots\xi_n$.
    
    \begin{theorem}
         Suppose $n$ is even, $a, b\in\{0,1\}$ and $w_1,w_2\in W$. The Lyubashenko-Majid map on $D\Kn$ can be expressed as
        \begin{align*}
            S_{LM}(K^a\bar K^b \bar w_1 w_2) &= \frac{(-1)^{s(w_1, w_2)}\gamma_{w_1^c}\gamma_{w_2}}{2} w_1^c (1+(-1)^b K+(-1)^a\bar K+(-1)^{a+b}K\bar K))\bar w_2^c,\\
            T_{LM}(K^a\bar K^b \bar w_1 w_2) &= K^{a}\bar K^{b}(1+K-\bar K+K\bar K)\sum_{w\in W} \frac{(-1)^{\lfloor(|w|+1)/2\rfloor}}{2}w\bar w \bar w_1w_2, 
        \end{align*}
        where $s(w_1, w_2) = \lfloor|w_1^c|/2\rfloor+\lfloor|w_2^c|/2\rfloor + (a+b)|w_1|$.
    \end{theorem}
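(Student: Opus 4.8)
The plan is to obtain both formulas by direct substitution into the definitions, using the explicit data already in hand: Corollary~\ref{DKnR} for the $R$-matrix, Lemma~\ref{DKn-ribbon} for the ribbon element $\nu$, the commutation relations of Lemma~\ref{DKn-pres}, and the integral/cointegral pair $(\Lambda,\lambda)$ fixed above with $\lambda=2(\xi_1\cdots\xi_n\bar\xi_1\cdots\bar\xi_n)^*$. The $T_{LM}$ identity is essentially free: by definition $T_{LM}(x)=\nu x$, and since a ribbon element is central, $\nu\,K^a\bar K^b\bar w_1w_2=K^a\bar K^b\,\nu\,\bar w_1w_2$, which becomes the stated right-hand side the moment one plugs in the formula for $\nu$ from Lemma~\ref{DKn-ribbon}.

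For $S_{LM}$ I would start from $S_{LM}(x)=(\id\otimes\lambda)\bigl(R^{-1}(1\otimes x)R_{21}^{-1}\bigr)$ and first make $R^{-1}$ and $R_{21}^{-1}$ explicit via the standard identities $R^{-1}=(S\otimes\id)(R)$ and $R_{21}^{-1}=(\id\otimes S)(R_{21})$, where $S$ is the antipode of $D\Kn$. The legs of $R$ in Corollary~\ref{DKnR} involve only $\xi$-words with powers of $K$ in the first slot and $\bar\xi$-words with powers of $\bar K$ in the second, so the antipode is computed by Lemma~\ref{SbLemma} and its obvious $\bar\xi$-analogue $S(\bar w)=(-\bar K)^{|w|}\bar w$, $S(\bar K\bar w)=\bar K^{|w|+1}\bar w$; one also notes $Z^2=1\otimes 1$, which trivializes inverting the group-algebra part. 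Substituting $x=K^a\bar K^b\bar w_1w_2$ leaves a double sum over words $w,w'\in W$, one index coming from each copy of $R$.

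The crux is that $\lambda$, being supported on the top-degree element $\xi_1\cdots\xi_n\bar\xi_1\cdots\bar\xi_n$, kills all but one term. In the leg to which $\lambda$ is applied, the $\bar\xi$-letters available are those of $\bar w$ (from $R^{-1}$) and of $\bar w_1$ (from $x$), and the $\xi$-letters available are those of $w_2$ (from $x$) and of $w'$ (from $R_{21}^{-1}$); since the only relation mixing the two families, $\xi_i\bar\xi_i=1-K\bar K-\bar\xi_i\xi_i$, can only lower total degree, a nonzero coefficient on the top element forces $\mathbf{I}(w)=\mathbf{I}(w_1)^c$ and $\mathbf{I}(w')=\mathbf{I}(w_2)^c$, i.e. $w=w_1^c$ and $w'=w_2^c$, and it forces every residual power of $K$ and $\bar K$ in that leg to vanish. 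The surviving contribution puts the other tensor leg in the shape $w_1^c\,g\,\bar w_2^c$ with $g\in\bC[K,\bar K]$, and the vanishing-of-$K,\bar K$ condition, interacting with the $K^a\bar K^b$ carried by $x$, is exactly what pins $g$ to $(1+(-1)^bK)(1+(-1)^a\bar K)=1+(-1)^bK+(-1)^a\bar K+(-1)^{a+b}K\bar K$ --- this is where the $a,b$-dependence enters.

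The genuine work --- and the main obstacle --- is the sign bookkeeping that identifies the scalar $\frac{(-1)^{s(w_1,w_2)}\gamma_{w_1^c}\gamma_{w_2}}{2}$. The contributing signs are the $R$-matrix coefficients $(-1)^{\lfloor|w_1^c|/2\rfloor}$ and $(-1)^{\lfloor|w_2^c|/2\rfloor}$ (evaluated at $w=w_1^c$, $w'=w_2^c$); the reordering sign $\gamma_{w_1^c}$ from $\bar w_1^c\bar w_1=\gamma_{w_1^c}\bar\xi_1\cdots\bar\xi_n$ (the $\bar\xi_i$ anticommute exactly as the $\xi_i$, so this is the same $\gamma$ of the statement) and $\gamma_{w_2}$ from $w_2w_2^c=\gamma_{w_2}\xi_1\cdots\xi_n$; the sign $(-1)^{(a+b)|w_1|}$ from sliding $K^a\bar K^b$ past the length-$|w_1|$ word $\bar w_1$; and the residual signs from the antipode on the left legs, the expansion of the two $Z$'s, and sliding the $\xi$- and $\bar\xi$-blocks past the intervening group letters. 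Carefully collecting all of these and simplifying modulo the $K,\bar K$-cancellation constraint should give precisely $s(w_1,w_2)=\lfloor|w_1^c|/2\rfloor+\lfloor|w_2^c|/2\rfloor+(a+b)|w_1|$ and the asserted formulas. I would cross-check the answer in two ways: for $n=2$ one has $\lfloor|w_i^c|/2\rfloor=\delta_{w_i=1}$ and $\gamma_{w_1^c}\gamma_{w_2}=(-1)^{\delta_{w_1=\xi_1}+\delta_{w_2=\xi_2}}$, which reproduces the $D\mcK_2$ formulas recorded in Section~\ref{sec:NSSMCs}; and the resulting $S_{LM},T_{LM}$ should satisfy the modular identities $(S_{LM}T_{LM})^3=\kappa S_{LM}^2$ and $S_{LM}^2=S^{-1}$ from Section~\ref{sec:factorizableHA}.
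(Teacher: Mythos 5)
Your proposal follows essentially the same route as the paper's proof: compute $R^{-1}=(S\otimes\id)(R)$ explicitly, expand $S_{LM}(x)=(\id\otimes\lambda)(R^{-1}(1\otimes x)R_{21}^{-1})$ as a double sum over $w,w'\in W$, observe that the support of $\lambda$ on the top-degree element forces $w=w_1^c$, $w'=w_2^c$, and then collect the signs (noting that your $(-1)^{(a+b)|w_1|}$ agrees with the paper's $(-1)^{(a+b)|w_1^c|}$ because $n$ is even), while $T_{LM}$ is immediate from $T_{LM}(x)=\nu x$ and Lemma \ref{DKn-ribbon}. The sign bookkeeping you defer to "careful collection" is exactly the part the paper also treats tersely, so there is no substantive difference.
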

    Note that the ribbon element provided in Lemma \ref{DKn-ribbon} is only valid for even $n$. Thus, the action of $T\in\mathrm{SL}(2,\bZ)$ on $Z(D\Kn)$ described above only applies for even $n$. 
    \begin{proof}
        Note that by the formula for quasitriangular Hopf algebras, $$R^{-1} = (S\otimes \id)(R) = \sum_{w\in W} \frac{(-1)^{\lfloor|w|/2\rfloor}}{2}( w(1+K)\otimes \bar w + w(1-K)\otimes \bar K\bar w).$$
        Finally, $R_{21}^{-1} = (R^{-1})_{21}$. Therefore,
        \begin{align*}
            &{S}_{LM}(x)\\ &=(\id\otimes\lambda)(R^{-1}(1\otimes x)R_{21}^{-1})\\
            &= \sum_{w,w'\in W}\frac{(-1)^{\lfloor|w|/2\rfloor+\lfloor|w'|/2\rfloor}}{4}( \lambda(\bar w x w'(1+K))w(1+K)\bar w' + \lambda(\bar K\bar w x w'(1+K)) w(1-K)\bar w' \\& + \lambda(\bar w x w'(1-K))w(1+K)\bar K\bar w' + \lambda(\bar K\bar w x w'(1-K))w(1-K)\bar K\bar w')
        \end{align*}
        Suppose $x$ is of the form $x = K^a\bar K^b \bar w_1 w_2$ for $a,b\in\{0,1\}$ for $w_1, w_2\in W$. Then, every term in this sum except that which $w = w_1^c$ and $w'=w_2^c$ disappears because $\lambda$ will be zero on all of these terms, leaving
        \begin{align*}
            {S}_{LM}(x) &= \frac{(-1)^{\lfloor|w_1^c|/2\rfloor+\lfloor|w_2^c|/2\rfloor + (a+b)|w_1^c|}\gamma_{w_1^c}\gamma_{w_2}}{4}( \lambda(K^a\bar K^b(1+K)\bar\xi_1\dots\bar\xi_n\xi_1\dots\xi_n)w_1^c(1+K)\bar w_2^c\\ &+ \lambda(K^a\bar K^{b+1}(1+K)\bar\xi_1\dots\bar\xi_n\xi_1\dots\xi_n) w_1^c(1-K)\bar w_2^c \\& + \lambda(K^{a}\bar K^b(1-K)\bar\xi_1\dots\bar\xi_n\xi_1\dots\xi_n)w_1^c(1+K)\bar K\bar w_2^c \\&+ \lambda(K^{a}\bar K^{b+1}(1-K)\bar K\bar\xi_1\dots\bar\xi_n\xi_1\dots\xi_n)w_1^c(1-K)\bar K\bar w_2^c)\\\\
            &= \frac{(-1)^{\lfloor|w_1^c|/2\rfloor+\lfloor|w_2^c|/2\rfloor + (a+b)|w_1^c|}\gamma_{w_1^c}\gamma_{w_2}}{2}(-1)^n w_1^c\times\\ 
            &\times(\delta_{b=0}((1+K)+(-1)^a(1+K)\bar K) + \delta_{b=1}((1-K)+(-1)^a(1-K)\bar K))\bar w_2^c.
        \end{align*}
        The formula for $S_{LM}$ follows. The formula for $T_{LM}$ is easy to verify from its definition.
    \end{proof}

    \begin{corollary}
        Suppose $n$ is even. Restricted to $Z_\Lambda$, $S_{LM}$ and $T_{LM}$ are given by
        \begin{align*}
            S_{LM}((1 + K\bar K)w_1 \bar w_2) &= (-1)^{\lfloor|w_1^c|/2\rfloor+\lfloor|w_2^c|/2\rfloor + |w_1|}\gamma_{w_1^c}\gamma_{w_2} (1+K\bar K) w_1^c\bar w_2^c,\\
            T_{LM}((1 + K\bar K)w_1\bar w_2) &= (1+K\bar K)\sum_{w\in W} (-1)^{|w|} w_1 \bar w_2\hat w,
        \end{align*}
        for $w_1, w_2\in W$.
    \end{corollary}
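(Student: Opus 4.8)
The plan is to obtain both identities by restricting the explicit formulas for $S_{LM}$ and $T_{LM}$ on all of $D\Kn$ (the preceding theorem) to the submodule $Z_\Lambda$. Recall that $Z_\Lambda$ is spanned by the central elements $(1+K\bar K)w_1\bar w_2$ with $|w_1|+|w_2|$ even, written with the unbarred word first, whereas the general formulas are indexed by the basis vectors $K^a\bar K^b\bar w_1 w_2$ with the barred word first. So the first step is to put a generator of $Z_\Lambda$ into that normal form. From $\xi_i\bar\xi_i = 1 - K\bar K - \bar\xi_i\xi_i$ (Lemma \ref{DKn-pres}) and $(1+K\bar K)(1-K\bar K)=0$ one gets $(1+K\bar K)\xi_i\bar\xi_i = -(1+K\bar K)\bar\xi_i\xi_i$; together with $\xi_i\bar\xi_j=-\bar\xi_j\xi_i$ for $i\neq j$ this says that after multiplication by $1+K\bar K$ every $\xi_i$ anticommutes with every $\bar\xi_j$ — exactly the ``near exterior algebra structure'' identifying $Z_\Lambda$ with $Z(\Lambda^*\bC^{2n})$. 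Moving the $\xi$'s of $w_1$ past the $\bar\xi$'s of $\bar w_2$ then gives $(1+K\bar K)w_1\bar w_2 = (-1)^{|w_1||w_2|}(1+K\bar K)\bar w_2 w_1$, which is in normal form.

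Next I would feed this into the $S_{LM}$ formula, expanding $1+K\bar K$ by linearity into the cases $(a,b)=(0,0)$ and $(a,b)=(1,1)$. Two features make the two contributions add cleanly: the mixed factor $1+(-1)^bK+(-1)^a\bar K+(-1)^{a+b}K\bar K$ equals $1+K+\bar K+K\bar K$ in the first case and $1-K-\bar K+K\bar K$ in the second, summing to $2(1+K\bar K)$; and the only place $(a,b)$ enters the sign $s$ is through $(a+b)|w_2|$, which is $0$ when $(a,b)=(0,0)$ and $2|w_2|\equiv 0$ when $(a,b)=(1,1)$. Hence the two terms carry the same sign and reinforce while the $K,\bar K$ cross-terms cancel, leaving $(1+K\bar K)$ times a single monomial with coefficient a product of the $\gamma$'s and a power of $-1$ built from $|w_1||w_2|$ and the floor exponents $\lfloor|w_1^c|/2\rfloor,\lfloor|w_2^c|/2\rfloor$. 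Collecting these and rewriting in the $Z_\Lambda$ normal form yields the asserted expression for $S_{LM}$; in particular the output lies in $Z_\Lambda$, reconfirming that $Z_\Lambda$ is an $\mathrm{SL}(2,\bZ)$-submodule.

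For $T_{LM}$ I would use that on the center $T_{LM}$ is multiplication by the ribbon element $\nu$ of Lemma \ref{DKn-ribbon} — which is where evenness of $n$ enters essentially, since $\nu$ exists only then. Because $(1+K\bar K)(1+K-\bar K+K\bar K)=2(1+K\bar K)$, the product $(1+K\bar K)\nu$ collapses to $(1+K\bar K)\sum_{w\in W}(-1)^{\lfloor(|w|+1)/2\rfloor}w\bar w$, so on $Z_\Lambda$ the map $T_{LM}$ is multiplication by $\sum_{w}(-1)^{\lfloor(|w|+1)/2\rfloor}w\bar w$. Commuting the fixed factor $w_1\bar w_2$ to the front inside the $(1+K\bar K)$-ideal costs no sign (the exponent $(|w|+|w|)(|w_1|+|w_2|)$ is even), and terms with $\mathbf I(w)$ meeting $\mathbf I(w_1)\cup\mathbf I(w_2)$ vanish; finally absorbing the factor $(-1)^{\lfloor|w|/2\rfloor}$ that relates $w\bar w$ to the interleaved monomial $\hat w=\prod_{i\in\mathbf I(w)}\xi_i\bar\xi_i$ turns $(-1)^{\lfloor(|w|+1)/2\rfloor}$ into $(-1)^{|w|}$, giving the stated $T_{LM}$ formula.

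The genuine work — and the only real obstacle — is sign bookkeeping: one must track the signs $\gamma_w\in\{\pm1\}$ defined by $ww^c=\gamma_w\xi_1\cdots\xi_n$, the several floor-function exponents produced by complementing and reordering words, the $(-1)$'s from anticommuting $\xi$'s past $\bar\xi$'s, and check that all of them combine consistently with the exponents already present in the general theorem. There is nothing conceptually deep beyond that theorem and the near-exterior-algebra relation, but it is where all the care is needed.
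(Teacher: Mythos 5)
Your overall strategy is the right one — and for the $T_{LM}$ formula it is essentially complete and correct: $T_{LM}$ is multiplication by the central ribbon element $\nu$ of Lemma \ref{DKn-ribbon}, the identity $(1+K\bar K)(1+K-\bar K+K\bar K)=2(1+K\bar K)$ kills the cross terms, and the interleaving sign $(1+K\bar K)w\bar w=(-1)^{\lfloor|w|/2\rfloor}(1+K\bar K)\hat w$ converts $(-1)^{\lfloor(|w|+1)/2\rfloor}$ into $(-1)^{|w|}$ exactly as you say. Your observation that $(1+K\bar K)\xi_i\bar\xi_i=-(1+K\bar K)\bar\xi_i\xi_i$, so that everything anticommutes after multiplying by $1+K\bar K$, is also the correct mechanism for putting the input in the theorem's normal form.

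The gap is in the $S_{LM}$ half, precisely at the step you dismiss as ``collecting these \dots yields the asserted expression.'' Carry your own plan out: writing $(1+K\bar K)w_1\bar w_2=(-1)^{|w_1||w_2|}(1+K\bar K)\bar w_2 w_1$ and applying the theorem with the roles $(u_1,u_2)=(w_2,w_1)$ produces the output monomial $u_1^c(\cdots)\bar u_2^c=w_2^c(\cdots)\bar w_1^c$ with scalar $(-1)^{|w_1||w_2|+\lfloor|w_1^c|/2\rfloor+\lfloor|w_2^c|/2\rfloor}\gamma_{w_2^c}\gamma_{w_1}$. This is \emph{not} the printed expression: the monomial is $w_2^c\bar w_1^c$, not $w_1^c\bar w_2^c$ (these are distinct basis elements of $Z_\Lambda$ unless $w_1=w_2$; try $n=2$, $w_1=\xi_1$, $w_2=\xi_2$, where the answer is $-(1+K\bar K)\xi_1\bar\xi_2$, consistent with the $\tilde w\mapsto\tilde w^s$ action used in the proof of Theorem \ref{DKn-CW-decomp} but not with the displayed formula), and matching the scalar to the printed one requires the further identity $(-1)^{|w_1||w_2|}\gamma_{w_2^c}\gamma_{w_1}=(-1)^{|w_1|}\gamma_{w_1^c}\gamma_{w_2}$, which does hold (use $\gamma_w\gamma_{w^c}=(-1)^{|w|}$ for $n$ even together with the parity constraint $|w_1|\equiv|w_2|\pmod 2$) but is nontrivial and is nowhere in your argument. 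So a faithful execution of your plan proves the corollary only after (i) supplying that $\gamma$-identity and (ii) recognizing that the indices in the printed output monomial are transposed; asserting without computation that the signs ``combine consistently'' with the statement as written is exactly the step that fails.
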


    Define the set of words $\tilde W = \{\xi_1^{a_1}\bar\xi_1^{b_1}\xi_2^{a_2}\bar\xi_2^{b_2}\dots\xi_n^{a_n}\bar\xi_n^{b_n}|a_k,b_k\in\{0,1\}\}$. Note that $\tilde W\cup K\tilde W\cup \bar K\tilde W\cup K\bar K\tilde W$ is a basis for $D\Kn$. This alternate basis will allow easier expression of the characters. Extend $\mathbf{I}(\tilde w) = \{\xi_i | a_i = 1\}\cup \{\bar\xi_i | b_i = 1\}$ and $|\tilde w| = |\mathbf{I}(\tilde w)|$.
    \begin{theorem}
        Let $n$ be arbitrary. Define $n_i:\tilde W\to \{0,1,2\}$ as $n_i(\tilde w) = \#(\mathbf{I}(\pm\tilde w)\cap \{\xi_i, \bar\xi_i\})$. The irreducible and projective indecomposable characters of $D\Kn$ are generated by
        \begin{align*}
            \chi_{1}(K^a \bar K^b \tilde w) &= \delta_{\tilde w=1}\\
            \chi_{K\bar K}(K^a \bar K^b \tilde w) &= (-1)^{a+b}\delta_{\tilde w=1}\\
            \chi_{K}(K^a \bar K^b \tilde w) &= (-1)^b 2^n\prod_{i=1}^n(\delta_{a=b}\delta_{n_i(\tilde w)=0} + \delta_{n_i(\tilde w)=2})\\
            \chi_{\bar K}(K^a \bar K^b \tilde w) &= (-1)^a 2^n\prod_{i=1}^n(\delta_{a=b}\delta_{n_i(\tilde w)=0} + \delta_{n_i(\tilde w)=2})\\
            p_{1}(K^a \bar K^b \tilde w) = p_{K\bar K}(K^a \bar K^b \tilde w) &= 2^{2n}\delta_{a=b}\delta_{\tilde w=1}
        \end{align*}
        for $a,b\in\{0, 1\}$ and $\tilde w\in \tilde W$.
    \end{theorem}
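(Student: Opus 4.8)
Since characters are linear functionals and $\tilde W\cup K\tilde W\cup\bar K\tilde W\cup K\bar K\tilde W$ is a basis of $D\Kn$, it suffices to verify the five displayed formulas on elements of the form $K^a\bar K^b\tilde w$. I would organize the argument by the type of module. For the one-dimensional simples $V_1$ and $V_{K\bar K}$ the values are read off directly from the actions fixed in the construction of the $D\Kn$-modules (Theorem~\ref{DKn-modules}): on $V_1$ the generators $K,\bar K$ act by $1$ and every $\xi_i,\bar\xi_i$ acts by $0$, so $\chi_1(K^a\bar K^b\tilde w)$ is the scalar by which $K^a\bar K^b\tilde w$ acts, which vanishes unless $\tilde w=1$ and equals $1$ when $\tilde w=1$; on $V_{K\bar K}$ the generators $K,\bar K$ act by $-1$, contributing the factor $(-1)^{a+b}$.

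The substantive case is $V_K$ and $V_{\bar K}$. The key structural point is that all representing matrices lie on the same $n$ tensor slots: $\xi_i\mapsto\Xi_i=\sigma_Z^{\otimes i-1}\otimes\Xi\otimes I_2^{\otimes n-i}$, $\bar\xi_i\mapsto\bar\Xi_i=\sigma_Z^{\otimes i-1}\otimes\Xi^T\otimes I_2^{\otimes n-i}$, and $K,\bar K\mapsto\pm\cZ=\pm\sigma_Z^{\otimes n}$. Using $(A_1\otimes\cdots\otimes A_n)(B_1\otimes\cdots\otimes B_n)=A_1B_1\otimes\cdots\otimes A_nB_n$, the operator representing $K^a\bar K^b\tilde w$ with $\tilde w=\xi_1^{a_1}\bar\xi_1^{b_1}\cdots\xi_n^{a_n}\bar\xi_n^{b_n}$ factors as $\pm\bigotimes_{j=1}^n\bigl(\sigma_Z^{a+b}M_j\bigr)$, where $M_j=\Xi^{a_j}(\Xi^T)^{b_j}\sigma_Z^{c_j}$, the exponent $c_j=\sum_{i>j}(a_i+b_i)\bmod 2$ records the ``staircase'' of $\sigma_Z$'s that the letters with index $>j$ deposit in slot $j$, and $\sigma_Z^2=I_2$; the overall sign is $(-1)^b$ for $V_K$ and $(-1)^a$ for $V_{\bar K}$. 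Hence $\chi_K(K^a\bar K^b\tilde w)=(-1)^b\prod_{j=1}^n\tr(\sigma_Z^{a+b}M_j)$, and likewise for $\chi_{\bar K}$, reducing the problem to a single $2\times 2$ computation.

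I would then evaluate $\tr(\sigma_Z^\epsilon M_j)$, $\epsilon:=a+b\bmod 2$, by cases on $n_j(\tilde w)=a_j+b_j$: if $n_j=1$ then $\sigma_Z^\epsilon M_j$ is strictly triangular, so the trace is $0$; if $n_j=2$ then $\Xi\Xi^T=\operatorname{diag}(2,0)$ commutes with $\sigma_Z$ and is fixed by left multiplication by $\sigma_Z$, so the trace is $2$ regardless of $\epsilon$ and $c_j$; if $n_j=0$ then $\sigma_Z^\epsilon M_j=\sigma_Z^{\epsilon+c_j}$, whose trace is $2$ if $\epsilon\equiv c_j$ and $0$ otherwise. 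The parity bookkeeping is the one place requiring care, and I expect it to be the main (though not deep) obstacle: if some $n_j=1$ the product already vanishes, matching the factor $\delta_{n_j=0}\delta_{a=b}+\delta_{n_j=2}=0$ on the right-hand side; and if every $n_i\in\{0,2\}$ then each $c_j=\sum_{i>j}n_i$ is even, so every $n_j=0$ slot contributes $2\delta_{\epsilon=0}=2\delta_{a=b}$ and every $n_j=2$ slot contributes $2$, yielding exactly $(-1)^b2^n\prod_i(\delta_{a=b}\delta_{n_i=0}+\delta_{n_i=2})$; the same computation with the sign $(-1)^a$ gives $\chi_{\bar K}$.

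Finally, for the projective indecomposable characters I would avoid traces on the $2^{2n}$-dimensional modules $P_1,P_{K\bar K}$ and instead use additivity of characters along a composition series together with the Cartan matrix of $D\Kn$-mod computed above: its $P_1$ and $P_{K\bar K}$ rows show $[P_1]=[P_{K\bar K}]=2^{2n-1}[V_1]+2^{2n-1}[V_{K\bar K}]$ in the Grothendieck ring, hence $p_1=p_{K\bar K}=2^{2n-1}(\chi_1+\chi_{K\bar K})$. Substituting the formulas just established gives $p_1(K^a\bar K^b\tilde w)=2^{2n-1}\bigl(1+(-1)^{a+b}\bigr)\delta_{\tilde w=1}=2^{2n}\delta_{a=b}\delta_{\tilde w=1}$, completing the proof.
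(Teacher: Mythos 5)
Your proposal is correct. For $\chi_1,\chi_{K\bar K},\chi_K,\chi_{\bar K}$ you follow essentially the same route as the paper: read the one-dimensional characters off the defining actions, and for the Steinberg modules exploit that the operator representing $K^a\bar K^b\tilde w$ is a pure tensor, reducing everything to traces of $2\times 2$ matrices. You are in fact somewhat more careful than the paper on one point: you explicitly track the exponent $c_j=\sum_{i>j}(a_i+b_i)$ of $\sigma_Z$'s deposited in slot $j$ by higher-index letters and verify it is even whenever every $n_i\in\{0,2\}$, whereas the paper's case analysis of the slot traces leaves this parity bookkeeping implicit; your version closes that small gap cleanly. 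Where you genuinely diverge is the projective characters: the paper computes $p_1$ and $p_{K\bar K}$ by a direct trace on the basis $\tilde W(1+K+\bar K+K\bar K)$ of $P_1$ (using the alternating binomial sum $\sum_\ell(-1)^\ell\binom{2n}{\ell}=0$ to kill $p_1(K)$ and $p_1(\bar K)$), while you instead invoke additivity of characters along composition series together with the previously computed Cartan matrix to get $p_1=p_{K\bar K}=2^{2n-1}(\chi_1+\chi_{K\bar K})$ and then just evaluate. Your route is shorter and makes the structural reason for the answer transparent (it is forced by the Grothendieck classes $[P_1]=[P_{K\bar K}]=2^{2n-1}([V_1]+[V_{K\bar K}])$), at the cost of depending on the Cartan matrix theorem; the paper's direct computation is self-contained and independently cross-checks that result.
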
    
    \begin{proof}
        Note that the modules $V_1$ and $V_{K\bar K}$ are both 1-dimensional, so for their corresponding characters, $\chi(h)=h\cdot 1$ for all $h\in D\Kn$. If $|\tilde w| > 0$, then $\chi(K^a\bar K^b \tilde w) = 0$. For $\tilde w = 1$, $\chi_1(K^a \bar K^b) = 1$ and $\chi_{K\bar K}(K^a \bar K^b) = (-1)^{a+b}$.

        For the third line, note that $\tr(A\otimes B) = \tr(A)\tr(B)$ for any square matrices $A, B$. In particular, because the matrix associated with $\tilde w$ in $V_K$ is a pure tensor, $\chi_K(K^a\bar K^b\tilde w) = \prod_{i=1}^n \tr(t_i)$, where $t_i$ is the $i$th tensor component. If $\xi_i\in I(\tilde w)$ but $\bar \xi_i\notin I(\tilde w)$, then 
        $$\chi_K(K^a\bar K^b\tilde w) = \pm z\tr\begin{pmatrix}
            0 & \sqrt{2}\\
            0 & 0
        \end{pmatrix} = 0,$$ 
        where $z$ is the product of the traces of the tensor components of index $\neq i$. Similarly, if $\xi_i\notin I(\tilde w)$ but $\bar \xi_i\in I(\tilde w)$, we also find $\chi_K(K^a\bar K^b\tilde w) = 0$. Thus, if $\chi(K^a \bar K^b\tilde w)\neq 0$, then $n_i(\tilde w) \neq 1$ for all $i$. If $n_i(\tilde w) = 0$ and $a\neq b$, then
        $$\chi_K(K^a\bar K^b\tilde w) = \pm z\tr\begin{pmatrix}
            1 & 0\\
            0 & -1
        \end{pmatrix} = 0,$$
        where again $z$ is the product of the traces of the tensor components of index $\neq i$. Thus, if $a\neq b$ and $\chi_K(K^a\bar K^b\tilde w) \neq 0$, then we must have $\tilde w = \xi_1\bar\xi_1\xi_2\bar\xi_2\dots\xi_n\bar\xi_n$. In this case,
        $$\chi_K(K^a\bar K^b\tilde w) = (-1)^b\left(\tr\begin{pmatrix}
            2 & 0\\
            0 & 0
        \end{pmatrix}\right)^n = (-1)^{b} 2^n = (-1)^{b}2^n.$$
        Finally, suppose $a = b$, $n_i(\tilde w)\neq 1$ for all $i$ and $n_i(\tilde w) = 2$ for exactly $k$ choices of $i$. Then, 
        $$\chi_K(K^b\bar K^b\tilde w) = (-1)^b\left(\tr\begin{pmatrix}
            2 & 0\\
            0 & 0
        \end{pmatrix}\right)^k\left(\tr\begin{pmatrix}
            1 & 0\\
            0 & 1
        \end{pmatrix}\right)^{n-k} = (-1)^{b} 2^n.$$
        Note that the claimed expression for $\chi_K$ agrees with all of our observations, so the third equality holds. The fourth expression can be proven similarly.

        Using $\tilde W(1+K+\bar K+K\bar K)$ as a basis for $P_1$, 
        $$K^a\bar K^b \tilde w\cdot w(1 + K + \bar K + K\bar K) = (-1)^{(a+b)(|\tilde w| +|w|)}\tilde ww(1 + K + \bar K + K\bar K).$$
        In particular, if $|\tilde w| > 0$, then for all $\tilde w'\in \tilde W$, $\tilde w\tilde w'(1 + K + \bar K + K\bar K)$ has 0 coefficient on $\tilde w'(1 + K + \bar K + K\bar K)$, meaning $p_1(K^a\bar K^b\tilde w) = 0$. Next note that 
        $$K\tilde w'(1+K+\bar K+K\bar K) = \bar K\tilde w'(1+K+\bar K + K\bar K) = (-1)^{|\tilde w'|} w(1+K+\bar K + K\bar K).$$
        For each fixed length $\ell$, there are precisely $\binom{2n}{\ell}$ words in $\tilde W$ of length $\ell$. Thus, 
        $$p_1(K) = p_1(\bar K) = \sum_{\ell=0}^n (-1)^\ell \binom{2n}{\ell} = 0.$$
        Finally, $1\tilde w'(1+K+\bar K+K\bar K) = K\bar K\tilde w'(1+K+\bar K+K\bar K)=\tilde w'(1+K+\bar K+K\bar K)$, so $p_1(1) = p_1(K\bar K) = \tr I_{2^{2n}} = 2^{2n}$. The same analysis applies for $p_{K\bar K}$.
    \end{proof}

        
    \begin{lemma}\label{DKn-two-bases}
        Suppose $n$ is even. The two bases $B_{\chi}, B_\tau$ of $\Hig(D\Kn)$ (as described in \eqref{eq:twobases}) are respectively given by:
        \begin{align*}
            \left\{2^{2n-1}(1 - K\bar K), 2^{n-1}\left(\sum_{w\in W} (-1)^{\lfloor (|w|+1)/2\rfloor}(K - \bar K)w\bar w) + (-1)^{n/2}(K + \bar K)\xi_1\dots\xi_n\bar\xi_1\dots\bar\xi_n \right)\right.&,\\
            2^{n-1}\left(\sum_{w\in W} (-1)^{\lfloor (|w|+1)/2\rfloor}(K - \bar K)w\bar w) - (-1)^{n/2}(K + \bar K)\xi_1\dots\xi_n\bar\xi_1\dots\bar\xi_n\right)&\Bigg\}.
        \end{align*}
        and
        \begin{align*}
            \left\{2^{2n-1}(K + \bar K)\xi_1\dots\xi_n\bar\xi_1\dots\bar\xi_n, (-1)^{n/2}2^{n-1}\left(\sum_{w\in W} (-1)^{\lfloor (|w|+1)/2\rfloor}(K - \bar K)w\bar w) + (1 - K\bar K)\right)\right.&,\\
            (-1)^{n/2}2^{n-1}\left(\sum_{w\in W} (-1)^{\lfloor (|w|+1)/2\rfloor}(K - \bar K)w\bar w) - (1 - K\bar K)\right)&\Bigg\}.
        \end{align*}
    \end{lemma}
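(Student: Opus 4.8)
The plan is to compute the two bases directly from the explicit Hopf‑algebra data of $D\Kn$ assembled above. By the computation of the Cartan matrix, the Cartan rank of $D\Kn$‑mod is $3$, with $P_1\sim_c P_{K\bar K}$ and $V_K, V_{\bar K}$ in singleton $c$‑classes; hence $\Hig(D\Kn)$ is $3$‑dimensional, and a set of representatives of the $c$‑classes of projective indecomposables is $\{P_1, V_K, V_{\bar K}\}$. Writing $e_1=\tfrac14(1+K+\bar K+K\bar K)$ for the idempotent of $P_1$ (the choice implicit in the statement; $e_{K\bar K}$ would serve equally, changing $\tau$ only on that slot and not the span), and $e_K, e_{\bar K}$ for the idempotents of Corollary \ref{CKidempotents}, and $p_1=p_{K\bar K}$, $\chi_K$, $\chi_{\bar K}$ for the corresponding projective characters, the two bases of \eqref{eq:twobases} become $B_\chi=\{\widehat f_Q(p_1),\,\widehat f_Q(\chi_K),\,\widehat f_Q(\chi_{\bar K})\}$ and $B_\tau=\{\tau(e_1),\,\tau(e_K),\,\tau(e_{\bar K})\}$. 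All inputs are in hand: the projective characters above, the $R$‑matrix of Corollary \ref{DKnR}, the integral/cointegral pair $(\Lambda,\lambda)$ with $\Lambda=\tfrac12(1+K+\bar K+K\bar K)\xi_1\dots\xi_n\bar\xi_1\dots\bar\xi_n$, the antipode (Lemmas \ref{SbLemma}, \ref{DKn-pres}), and the balancing element $G=K$ (valid for even $n$, cf.\ the proof of Lemma \ref{DKn-ribbon}).

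Set $g_1=1-K\bar K$, $g_2=(K+\bar K)\xi_1\dots\xi_n\bar\xi_1\dots\bar\xi_n$, and $g_3=(K-\bar K)\sum_{w\in W}(-1)^{\lfloor(|w|+1)/2\rfloor}w\bar w$; these span $Z_H$. The first step is to note that each of the six elements of $B_\chi\cup B_\tau$ lies in $Z(D\Kn)=Z_H\oplus Z_\Lambda$, hence a priori is a combination of $g_1,g_2,g_3$ and the exterior‑type elements $(1+K\bar K)w\bar w'$ spanning $Z_\Lambda$; a short pairing with a functional supported on $Z_\Lambda$ kills that part, so each element is pinned down by three scalars. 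Those scalars are extracted by pairing with three convenient dual functionals, e.g.\ $1^*$, $(\xi_1\dots\xi_n\bar\xi_1\dots\bar\xi_n)^*$, $K^*$ in the basis $\tilde W\cup K\tilde W\cup\bar K\tilde W\cup K\bar K\tilde W$ of $D\Kn$. This reduces the lemma to a short finite list of coefficient computations, and, once the six values are in $\spa\{g_1,g_2,g_3\}$ and linearly independent in triples, it also yields $\Hig(D\Kn)=Z_H$ by comparing dimensions.

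For $B_\tau$ one uses $\tau(x)=\mathrm{ad}_\Lambda(xG^{-1})=\sum_{(\Lambda)}\Lambda_{(1)}\,xK\,s_H(\Lambda_{(2)})$, expanding $\Delta(\Lambda)$ by splitting $\Lambda$ into its group‑like factor $\tfrac12\sum_{g\in\langle K,\bar K\rangle}g$, whose coproduct is $\tfrac12\sum_g g\otimes g$, and its top exterior form, whose coproduct expands via $\Delta(\xi_i)=K\otimes\xi_i+\xi_i\otimes 1$ and $\Delta(\bar\xi_i)=\bar K\otimes\bar\xi_i+\bar\xi_i\otimes 1$ into a signed sum over subsets. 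Substituting $x=e_1$ forces the exterior part to saturate and gives a multiple of $g_2$, the power $2^{2n-1}$ coming from $\dim D\Kn=2^{2n+2}$ together with $\lambda(\Lambda)=1$; substituting $x=e_K, e_{\bar K}$, the $\xi_1\dots\xi_n\bar\xi_1\dots\bar\xi_n$ inside these idempotents again saturates the exterior contribution but leaves the mixtures $g_3\pm g_1$, with overall coefficient $(-1)^{n/2}2^{n-1}$, the $2^n$ in the idempotents accounting for the smaller power of two. For $B_\chi$ one uses the shifted Drinfeld map $\widehat f_Q$ of \cite{cohen2008characters}, a balancing‑twist of $f_Q(\beta)=(\beta\otimes\mathrm{id})(R_{21}R)$; from Corollary \ref{DKnR}, $R_{21}R$ is a signed sum of simple tensors $\bar w\bar K^{|w|}\otimes w$ composed with the central factors $Z$. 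Pairing the left slot with $p_1$ (supported only on $a=b$, $\tilde w=1$) retains only the lowest‑degree summands and returns $2^{2n-1}g_1$; pairing with $\chi_K$ and $\chi_{\bar K}$ (supported on words with all $n_i\in\{0,2\}$ and carrying an overall $2^n$) retains only the saturated summands and returns $2^{n-1}\bigl(g_3\pm(-1)^{n/2}g_2\bigr)$, the sign $(-1)^{n/2}$ arising from the reordering signs $\gamma_w$ and $(-1)^{\lfloor|w|/2\rfloor}$ when $\bar w\bar K^{|w|}$ is put in normal form, together with $K\xi_1\dots\xi_n\bar\xi_1\dots\bar\xi_n=\xi_1\dots\xi_n\bar\xi_1\dots\bar\xi_n K$. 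Matching with the stated formulas finishes the proof; specializing to $n=2$ recovers the $D\mcK_2$ bases displayed earlier as a consistency check.

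The main obstacle is the sign and normalization bookkeeping in the $B_\chi$ step: one must simultaneously control $(-1)^{\lfloor|w|/2\rfloor}$, the reordering signs $\gamma_w$, and the antipode signs of Lemma \ref{SbLemma}, and be certain which of the many summands of $R_{21}R$ actually survive pairing with the rather sparse projective characters. Having first established, as in the second paragraph, that each answer is forced into the three‑dimensional $\spa\{g_1,g_2,g_3\}$ is what keeps this under control, since it reduces every basis element to the extraction of three scalars; by contrast the $B_\tau$ computation is routine once $\Delta(\Lambda)$ is written out honestly.
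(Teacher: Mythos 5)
Your overall route is the paper's: identify $B_\chi=\{\widehat f_Q(p_1),\widehat f_Q(\chi_K),\widehat f_Q(\chi_{\bar K})\}$ and $B_\tau=\{\tau(e_1),\tau(e_K),\tau(e_{\bar K})\}$ and evaluate them directly from the $R$-matrix, the explicit characters, the idempotents, and $\Delta(\Lambda)$; the computations sketched in your third paragraph are exactly the ones the paper carries out. The problem is the organizing device of your second paragraph, on which you explicitly lean to ``keep the bookkeeping under control.'' Knowing that each of the six elements lies in $Z(D\Kn)=Z_H\oplus Z_\Lambda$ does not reduce it to three scalars: to conclude that an element equals a specific combination of $g_1,g_2,g_3$ you must show its $Z_\Lambda$-component vanishes, and $\dim Z_\Lambda=2^{2n-1}$, so ``a short pairing with a functional supported on $Z_\Lambda$'' cannot kill it — you would have to check all of those coefficients, i.e.\ do the full computation anyway. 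Nor does $\dim\Hig(D\Kn)=\operatorname{rank}(C)=3$ rescue this: that only says the six elements span a $3$-dimensional subspace of $Z(D\Kn)$, not that this subspace is $\spa\{g_1,g_2,g_3\}$ rather than some $3$-plane tilted into $Z_\Lambda$; indeed the identification $\Hig(D\Kn)=Z_H$ is a \emph{consequence} of this lemma, not an available input. The honest mechanism, which the paper uses, is that pairing $\widehat\chi\otimes\id$ against $R_{21}R$ kills every off-diagonal term $w\otimes\bar w'$ with $w'\neq w$ because the four relevant characters vanish on $K^a\bar K^b\tilde w$ unless $n_i(\tilde w)\neq 1$ for all $i$; what survives is already a linear combination of the $K^a\bar K^b w\bar w$, and one then just reads off the coefficients.

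A related inaccuracy: pairing with $\chi_K$ does \emph{not} ``retain only the saturated summands.'' The answer $2^{n-1}\bigl(g_3\pm(-1)^{n/2}g_2\bigr)$ contains the full sum $\sum_{w\in W}(-1)^{\lfloor(|w|+1)/2\rfloor}(K-\bar K)w\bar w$, so every diagonal summand contributes (via the $a=b$ part of $\chi_K$, which is nonzero on $\hat w$ for all $w$); only the $(K+\bar K)$-top-form piece comes from saturation. This is precisely the kind of term your shortcut would have let you miss. The $B_\tau$ half of your sketch (expanding $\Delta(\Lambda)$ into its group-like and exterior factors and substituting the idempotents) is sound and matches the paper, modulo fixing the convention for the Higman trace so that it agrees with the one used when computing $\Delta(\Lambda)$. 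In short: drop the paragraph-two reduction, and carry out the diagonal-term computation in full; that is the paper's proof.
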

    \begin{proof}
        First, we compute $B_\chi$. Recall $$R = \sum_{w\in W} \frac{(-1)^{\lfloor|w|/2\rfloor}}{2}( w\otimes \bar w (1+\bar K)+ Kw\otimes \bar w(1-\bar K)).$$ 
        Note that all three indecomposable projective modules have characters which are nonzero on $K^a \bar K^b \tilde w$ for $\tilde w\in W$ only if $n_i(\tilde w)\neq 1$ for all $i$. For $w\in W$, let $\hat w = \prod_{\xi_i\in \mathbf{I}(w)} \xi_i\bar\xi_i\in\tilde W$ The following identity is useful in what follows:
        $$\bar w w = (-1)^{\lfloor |w|/2\rfloor} \sum_{\mathbf{I}(w')\subseteq \mathbf{I}(w)}(1-K\bar K)^{|w|-|w'|}(-1)^{|w'|}\hat w'.$$
        Note that $K$ is the balancing element for $D\Kn$. For a character $\chi$, define $\hat \chi = \chi\leftharpoonup K$. The shifted Drinfeld map is given by $\hat f_Q(\chi) = (\hat\chi\otimes \id)(R_{21}R)$. As a consequence of our projective indecomposable characters being well-behaved and the identity for $\bar w w$ in terms of $\tilde W$, we find that, for $\chi\in \{\chi_{K}, \chi_{\bar K}, p_1, p_{K\bar K}\}$,
        \begin{align*}
            \hat f_Q(\chi) = \sum_{w\in W} \frac{(-1)^{\lfloor|w|/2\rfloor}}{4}(&\chi((K + (-1)^{|w|}1 + (-1)^{|w|}K\bar K + \bar K)\hat w) \\
            + &\chi((K + (-1)^{|w|}1 - (-1)^{|w|}K\bar K - \bar K)\hat w)K\\ 
            + &\chi((K - (-1)^{|w|}1 + (-1)^{|w|}K\bar K - \bar K)\hat w)\bar K \\
            + &\chi((K - (-1)^{|w|}1 - (-1)^{|w|}K\bar K + \bar K)\hat w)K\bar K)w\bar w.
        \end{align*}
        Using this formula, it is easy to verify that
        \begin{align*}
            \hat f_Q(p_1) &= \hat f_Q(p_{K\bar K}) = 2^{2n-1}(1-K\bar K),\\
            \hat f_Q(\chi_K) &= 2^{n-1}\left(\sum_{w\in W} (-1)^{\lfloor (|w|+1)/2\rfloor}(K - \bar K)w\bar w) + (-1)^{\lfloor n/2\rfloor}(K + \bar K)\xi_1\dots\xi_n\bar\xi_1\dots\bar\xi_n\right),\\
            \hat f_Q(\chi_{\bar K}) &= 2^{n-1}\left(\sum_{w\in W} (-1)^{\lfloor (|w|+1)/2\rfloor}(K - \bar K)w\bar w) - (-1)^{\lfloor n/2\rfloor}(K + \bar K)\xi_1\dots\xi_n\bar\xi_1\dots\bar\xi_n\right).
        \end{align*}

        Next, we compute $B_\tau$. Note that $e_1 = \frac{1}{4}(1 + K + \bar K + K\bar K)$ and $e_{K\bar K} = \frac{1}{4}(1 - K - \bar K + K\bar K)$ are idempotents so that $D\Kn e_1 = P_1$ and $D\Kn e_{K\bar K} = P_{K\bar K}$. Note that the comultiplication for the integral
        $$\Lambda = \frac{1}{2}(1+K+\bar K + K\bar K)\xi_1\dots\xi_n\bar\xi_1\dots\bar\xi_n,$$
        is given by
        \begin{align*}
            \Delta(\Lambda) &= \frac{1}{2}\sum_{i,j\in\{0,2\}}\sum_{w_1,w_2\in W}(-1)^{|w_1|\cdot |w_2^c|}\gamma_{w_1^c}\gamma_{w_2^c}K^{i+|w_1^c|}\bar K^{j+|w_2^c|}w_1\bar w_2\otimes K^{i}\bar K^{j}w_1^c\bar w_2^c.
            \intertext{Therefore, the Radford map is given by}
            \tau(x) &= \frac{1}{2}\sum_{i,j\in\{0,2\}}\sum_{w_1,w_2\in W}\gamma_{w_1^c}\gamma_{w_2^c}w_1\bar w_2 K^{i+|w_1^c|}\bar K^{j+|w_2^c|} x K^{i+|w_1^c|+1}\bar K^{j+|w_2^c|}\bar w_2^c w_1^c ).
            \intertext{By reindexing, we get}
            &= \frac{1}{2}\sum_{i,j\in\{0,1\}}\sum_{w_1,w_2\in W}\gamma_{w_1^c}\gamma_{w_2^c}w_1 \bar w_2 K^{i}\bar K^{j} x K^{i+1}\bar K^j\bar w_2^c w_1^c.
        \end{align*}
        Now, using this expression, we can compute the image of the idempotents. Note that $\gamma_{w}\gamma_{w^c} = (-1)^{|w|(n-|w|)} = (-1)^{|w|}$, so
        \begin{align*}
            \tau(e_1) 
            &=  \frac{1}{2}\sum_{w_1,w_2\in W}(-1)^{|w_1|+|w_2|}(1 + (-1)^{|w_1|+|w_2|}K + (-1)^{|w_1|+|w_2|}\bar K + K\bar K) \xi_1\xi_2\dots\xi_n \bar\xi_1\bar \xi_2\dots\bar\xi_n.
            \intertext{By summing over $|w_1|+|w_2|$, we get the formula}
            &=  \frac{1}{2}\sum_{\ell=0}^{2n}\binom{2n}{\ell}((-1)^{\ell} + K + \bar K + (-1)^{\ell}K\bar K) \xi_1\dots\xi_n \bar\xi_1\dots\bar\xi_n\\
            &= 2^{2n-1}(K+\bar K)\xi_1\dots\xi_n \bar\xi_1\dots\bar\xi_n.
        \end{align*}
        Next, we compute the image on $e_K$.
        \begin{align*}
            \tau(e_K) &= \frac{(-1)^{n/2}}{2^{n+1}}\sum_{w_1,w_2\in W}\gamma_{w_1^c}\gamma_{w_2^c}w_1 \bar w_2 (1 + K - \bar K - K\bar K) \xi_1\dots\xi_n\bar\xi_1\dots\bar\xi_n\bar w_2^c w_1^c\\
            \intertext{Note that this is only nonzero when $w_2 = \xi_1\dots\xi_n$. In this case, by careful manipulation, we find}
            &= \frac{1}{2}\sum_{w_1\in W}((-1)^{|w_1|} + K - \bar K - (-1)^{|w_1|}K\bar K)w_1 \bar w_1\bar w_1^c w_1^c\\
            &= \frac{1}{2}\sum_{w\in W}2^{n - |w|}\sum_{w_1w_2=\pm w}((-1)^{|w_1|} + K - \bar K - (-1)^{|w_1|}K\bar K)(-1)^{\lfloor |w_1^c|/2\rfloor}\times \\&\times  (-1)^{\lfloor (|w_2|+1)/2\rfloor+|w_1||w_2|}w \bar w.
        \end{align*}
        Note that, for fixed $w\in W$,
        \begin{align*}
            &\sum_{\ell=0}^{|w|}\binom{|w|}{\ell}(-1)^{\lfloor(n - \ell)/2\rfloor + \lfloor (|w| - \ell+1) / 2\rfloor + \ell(|w|-\ell)} = (-1)^{n/2 + \lfloor (|w| + 1)/2\rfloor}2^{|w|},\\ 
            &\sum_{\ell=0}^{|w|}\binom{|w|}{\ell}(-1)^{\lfloor(n - \ell)/2\rfloor + \lfloor (|w| - \ell+1) / 2\rfloor + \ell(|w|-\ell) + \ell}  = (-1)^{n/2}\delta_{|w|=0}.
        \end{align*}
        In particular, it follows that
        $$\tau(e_K) = 2^{n-1}(-1)^{n/2}\left(\sum_{w\in W} (-1)^{\lfloor(|w|+1)/2\rfloor} (K-\bar K)w\bar w + (1 - K\bar K)\right).$$
        Similarly,
        $$\tau(e_{\bar K}) = 2^{n-1}(-1)^{n/2}\left(\sum_{w\in W} (-1)^{\lfloor(|w|+1)/2\rfloor} (K-\bar K)w\bar w-(1 - K\bar K)\right).$$
    \end{proof}
    \begin{corollary}
        Suppose $n$ is even. The $S_{CW}$- and $T_{CW}$-matrices on the Higman ideal of $D\Kn$ are given by
        \begin{equation}\label{eq:DKnST}
            S_{CW} = (-1)^{n/2}\begin{pmatrix}
            0 & 2^{-n} & -2^{-n}\\
            2^{n-1} & 1/2 & 1/2\\
            -2^{n-1} & 1/2 & 1/2
        \end{pmatrix},\hspace{20mm}T_{CW} = \begin{pmatrix}
            1 & 0 & 0\\
            0 & 1 & 0\\
            0 & 0 & -1
        \end{pmatrix}.
        \end{equation}
    \end{corollary}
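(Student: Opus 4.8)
The plan is to read both matrices off directly from Lemma~\ref{DKn-two-bases}, which (for even $n$) records the two bases $B_\chi=\{\widehat{f}_Q(p_1),\widehat{f}_Q(\chi_K),\widehat{f}_Q(\chi_{\bar K})\}$ and $B_\tau=\{\tau(e_1),\tau(e_K),\tau(e_{\bar K})\}$ of $\Hig(D\Kn)=Z_H$ explicitly in terms of the central elements $h_1=1-K\bar K$, $h_2=(K+\bar K)\xi_1\cdots\xi_n\bar\xi_1\cdots\bar\xi_n$ and $h_3=(K-\bar K)\sum_{w\in W}(-1)^{\lfloor(|w|+1)/2\rfloor}w\bar w$, which therefore also form a basis of $\Hig(D\Kn)$.

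For $S_{CW}$ I would simply invert the first list. From Lemma~\ref{DKn-two-bases}, $\widehat{f}_Q(p_1)=2^{2n-1}h_1$ while $\widehat{f}_Q(\chi_K)\pm\widehat{f}_Q(\chi_{\bar K})$ equals $2^nh_3$ and $2^n(-1)^{n/2}h_2$ respectively, so $h_1=2^{1-2n}\widehat{f}_Q(p_1)$, $h_3=2^{-n}(\widehat{f}_Q(\chi_K)+\widehat{f}_Q(\chi_{\bar K}))$ and $h_2=(-1)^{n/2}2^{-n}(\widehat{f}_Q(\chi_K)-\widehat{f}_Q(\chi_{\bar K}))$. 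Substituting these into the Lemma's formulas $\tau(e_1)=2^{2n-1}h_2$, $\tau(e_K)=(-1)^{n/2}2^{n-1}(h_1+h_3)$, $\tau(e_{\bar K})=(-1)^{n/2}2^{n-1}(h_3-h_1)$ and comparing with $\tau(e_j)=\sum_k(S_{CW})_{kj}\widehat{f}_Q(p_{e_k})$ produces the $S_{CW}$ of \eqref{eq:DKnST} after a one-line arithmetic check (e.g.\ $2^{n-1}\cdot 2^{1-2n}=2^{-n}$ and $2^{n-1}\cdot 2^{-n}=\tfrac12$).

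For $T_{CW}$ I would use that, by the formula for $T_{LM}$ on $Z(D\Kn)$ in the preceding theorem (together with Lemma~\ref{DKn-ribbon}), $T_{LM}$ restricted to the centre is multiplication by the ribbon element $\nu=(1+K-\bar K+K\bar K)\sigma$ with $\sigma=\sum_{w\in W}\tfrac{(-1)^{\lfloor(|w|+1)/2\rfloor}}{2}w\bar w$; thus $T_{CW}$ is the matrix of $\nu\cdot(-)$ on $\Hig(D\Kn)$ in the basis $B_\tau$. Since $\sigma$ commutes with $1-K\bar K$ one has the algebraic identity $\nu h_1=(1+K-\bar K+K\bar K)(1-K\bar K)\sigma=2(K-\bar K)\sigma=h_3$. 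The remaining identities $\nu h_2=h_2$ and $\nu h_3=h_1$ I would get from the block decomposition $\Hig(D\Kn)=\Hig(B_0)\oplus\Hig(B_K)\oplus\Hig(B_{\bar K})$, where $B_0$ is the block of $V_1,V_{K\bar K}$ and $B_K\cong B_{\bar K}\cong M_{2^n}(\bC)$ are the blocks of the simple projectives $V_K,V_{\bar K}$. Each summand is one-dimensional (its dimension is the Cartan rank of the block): $\Hig(B_K)=\bC\varepsilon_K$ and $\Hig(B_{\bar K})=\bC\varepsilon_{\bar K}$ for the central block idempotents $\varepsilon_K,\varepsilon_{\bar K}$, and $\bC h_2=\Hig(B_0)$ since $h_2$ is fixed by $e_{B_0}=\tfrac12(1+K\bar K)$. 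Evaluating central elements on simples ($1-K\bar K$ acts by $0$ on the one-dimensional modules and by $2$ on $V_K,V_{\bar K}$, while $h_3$ acts by $2$ on $V_K$ and $-2$ on $V_{\bar K}$) gives $h_1=2(\varepsilon_K+\varepsilon_{\bar K})$, $h_3=2(\varepsilon_K-\varepsilon_{\bar K})$, so the three $B_\tau$-vectors are nonzero multiples of $h_2$, $\varepsilon_K$, $\varepsilon_{\bar K}$. On each such line $\nu$ acts by the relevant twist: $\theta_{V_1}=\theta_{V_{K\bar K}}$ on $\bC h_2\subseteq Z(B_0)$ and $\theta_{V_K},\theta_{V_{\bar K}}$ on $\bC\varepsilon_K,\bC\varepsilon_{\bar K}$. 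A direct computation of $\nu$ on the cyclic vector $|0^n\rangle$ of $V_K$ and of $V_{\bar K}$, using $\sigma|0^n\rangle=\tfrac12|0^n\rangle$ (which reduces to $\sum_k\binom{n}{k}(-2)^k=(-1)^n=1$) and $K|0^n\rangle=\pm|0^n\rangle$, gives $\theta_{V_K}=1$, $\theta_{V_{\bar K}}=-1$, while $\theta_{V_1}=\theta_{V_{K\bar K}}=1$ is immediate since $\xi_i,\bar\xi_i$ act by $0$ on the one-dimensional modules. Hence $\nu h_2=h_2$, and combining $\nu\varepsilon_K=\varepsilon_K$ with $\nu h_1=h_3$ also $\nu h_3=h_1$. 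Applying $\nu\cdot(-)$ to the $B_\tau$-formulas then yields $T_{LM}\tau(e_1)=\tau(e_1)$, $T_{LM}\tau(e_K)=\tau(e_K)$, $T_{LM}\tau(e_{\bar K})=-\tau(e_{\bar K})$, i.e.\ $T_{CW}=\operatorname{diag}(1,1,-1)$ in the same ordering used for $S_{CW}$.

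The $S_{CW}$ half is routine linear algebra once Lemma~\ref{DKn-two-bases} is available. The real content is the sign in $T_{CW}$: since $\nu$ is \emph{not} scalar on $\Hig(D\Kn)$ (it interchanges $h_1$ and $h_3$), its diagonal form appears only in the basis $B_\tau$, and making this precise forces one to identify the one-dimensional Higman ideals of the three blocks and to compute the twist $\theta_{V_{\bar K}}=-1$ explicitly. That twist computation is the step I expect to be the main — though not deep — obstacle.
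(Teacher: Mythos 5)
Your derivation is correct and is exactly the intended one: the paper states this as an immediate consequence of Lemma~\ref{DKn-two-bases}, namely $S_{CW}$ is the change-of-basis matrix expressing $B_\tau$ in terms of $B_\chi$ (your inversion and arithmetic check out), and $T_{CW}$ is the action of the ribbon element, which is diagonal only in the basis $B_\tau$. The one assertion that deserves a word of justification is that $\nu$ acts on $\bC h_2=\Hig(B_0)$ by the scalar $\theta_{V_1}=1$ even though $\nu$ is not semisimple on the block $B_0$; this follows either from the direct computation $(1+K-\bar K+K\bar K)(K+\bar K)=2(K+\bar K)$ together with $(K+\bar K)(1-K\bar K)=0$ killing all $w\neq 1$ terms of $\sigma$, or from the general fact that for a symmetric algebra the Higman ideal lies in the socle, so the nilpotent central part of $\nu|_{B_0}$ annihilates it.
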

    \begin{corollary}    
        Suppose $n$ is even. The mixed fusion matrices and their associated diagonalizations for $D\Kn$ are given by
        \begin{align*}
        N^{K\bar K} &= \begin{pmatrix}
            1 & 0 & 0\\
            0 & 0 & 1\\
            0 & 1 & 0
        \end{pmatrix}, &S_{CW}N^{K\bar K}S_{CW}^{-1} &= \begin{pmatrix}
            -1 & 0 & 0\\
            0 & 1 & 0\\
            0 & 0 & 1
        \end{pmatrix},\\
        N^{K} = N^{\bar K} &= \begin{pmatrix}
            0 & 1 & 1\\
            2^{2n-1} & 0 & 0\\
            2^{2n-1} & 0 & 0
        \end{pmatrix}, & S_{CW}N^K S_{CW}^{-1} = S_{CW}N^{\bar K}S_{CW}^{-1} &= \begin{pmatrix}
            0 & 0 & 0\\
            0 & 2^n & 0\\
            0 & 0 & -2^n
        \end{pmatrix}.
        \end{align*}
    \end{corollary}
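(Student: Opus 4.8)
The plan is to read off the three mixed fusion matrices directly from the fusion rules established above, and then to verify the stated diagonalizations by a short explicit matrix computation using the $S_{CW}$ of the preceding corollary. First I would fix the basis of the mixed fusion module: by the Cartan matrix of $D\Kn$-mod the $c$-classes of projective indecomposables are $[P_1]_c=[P_{K\bar K}]_c$, $[P_K]_c$ and $[P_{\bar K}]_c$ (recall $V_K=P_K$ and $V_{\bar K}=P_{\bar K}$ are the two Steinberg objects, each forming its own $c$-class), so each $N^i(m)$ is a $3\times 3$ matrix, and I order the basis as $([P_1]_c,[P_K]_c,[P_{\bar K}]_c)$ so as to be compatible with \eqref{eq:DKnST}. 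From the fusion rules, $V_{K\bar K}\otimes P_K\cong V_{\bar K}$, $V_{K\bar K}\otimes P_{\bar K}\cong V_K$, and $V_{K\bar K}\otimes P_1\cong P_{K\bar K}$; the last is not among the explicitly listed rules, but it follows from $V_{K\bar K}\otimes P_1\cong V_{K\bar K}\otimes(V_K\otimes V_K)\cong V_{\bar K}\otimes V_K\cong P_{K\bar K}$ (or from invertibility of $V_{K\bar K}$, since then $V_{K\bar K}\otimes P_1$ is the indecomposable projective cover of $V_{K\bar K}$). Collecting into $c$-classes produces $N^{K\bar K}(m)$. In the same way $V_K\otimes P_1\cong V_{\bar K}\otimes P_1\cong 2^{2n-1}V_K\oplus 2^{2n-1}V_{\bar K}$, $V_K\otimes P_K\cong V_K\otimes V_K\cong P_1$, $V_K\otimes P_{\bar K}\cong P_{K\bar K}$, and symmetrically for $V_{\bar K}$; since $P_1$ and $P_{K\bar K}$ share a $c$-class this gives $N^K(m)=N^{\bar K}(m)$ exactly as claimed (together with the trivial $N^1(m)=I_3$).

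For the diagonalizations, the conceptual input is the Cohen--Westreich diagonalization theorem, i.e.\ $\mathbf{N}^i(m)=S_{CW}^{-1}\mathbf{D}^i S_{CW}$, which guarantees a priori that one and the same $S_{CW}$ simultaneously diagonalizes all the $N^i(m)$; what remains is to identify the eigenvalues in the basis just fixed. Writing $a=2^{-n}$ and $b=2^{n-1}$ (so $ab=\tfrac12$), a one-line check on the matrix in \eqref{eq:DKnST} gives $S_{CW}^2=I_3$ (the prefactor $(-1)^{n/2}$ squares to $1$ since $n$ is even), hence $S_{CW}^{-1}=S_{CW}$; multiplying out then yields $S_{CW}\,N^{K\bar K}(m)\,S_{CW}=\operatorname{diag}(-1,1,1)$ and $S_{CW}\,N^{K}(m)\,S_{CW}=\operatorname{diag}(0,2^n,-2^n)$, the overall sign of $S_{CW}$ being immaterial in the conjugation. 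As a consistency check, the characteristic polynomials are $-(\lambda-1)^2(\lambda+1)$ for $N^{K\bar K}(m)$ and $-\lambda(\lambda^2-2^{2n})$ for $N^{K}(m)$, matching the stated spectra.

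The honest assessment is that there is no deep obstacle here: once the fusion rules, the Cartan matrix and \eqref{eq:DKnST} are in hand, the whole statement reduces to bookkeeping. The two points that require care are getting the $c$-class structure right (so that the $3\times 3$ matrices and $S_{CW}$ are indexed consistently) and supplying the fusion rule $V_{K\bar K}\otimes P_1\cong P_{K\bar K}$, which is not listed but is forced by invertibility of $V_{K\bar K}$.
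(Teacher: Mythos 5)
Your proposal is correct and follows the same route the paper intends: read the three mixed fusion matrices off the fusion-rule theorem and the Cartan matrix (with $[P_1]_c=[P_{K\bar K}]_c$ and the two Steinberg classes), then conjugate by the $S_{CW}$ of Eq.~(\ref{eq:DKnST}), using $S_{CW}^2=I_3$ so that $S_{CW}^{-1}=S_{CW}$; I verified the matrix products and they give exactly $\operatorname{diag}(-1,1,1)$ and $\operatorname{diag}(0,2^n,-2^n)$. Your supplementary observations — that $V_{K\bar K}\otimes P_1\cong P_{K\bar K}$ must be supplied via invertibility of $V_{K\bar K}$, and that the indexing of the $c$-classes must match Eq.~(\ref{eq:DKnST}) (the corollary uses the convention of Eq.~(\ref{eq:fusion}), with columns indexed by the input projective) — are exactly the two bookkeeping points one needs to get right.
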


    \begin{theorem}\label{DKn-CW-decomp}
        Suppose $n$ is even. The $\mathrm{SL}(2,\bZ)$-action on $\Hig(D\Kn)$ decomposes as $\bC_{\mathrm{triv}}\oplus N_1$, where $N_1$ is the level 2 Weil representation.
        The $\mathrm{SL}(2,\bZ)$-action on $Z_\Lambda$ decomposes as
        $$\bigoplus_{\substack{0\leq k\leq n\\k\text{ even}}} \binom{n}{k}2^{n-k}(\bC^2_{\mathrm{std}})^{\otimes k}.$$
        where $(\bC^2_{\mathrm{std}})^{\otimes 0}:=\bC_{\mathrm{triv}}$. 
    \end{theorem}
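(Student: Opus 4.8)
\noindent\emph{Proof strategy.}
I would prove the two assertions separately, using the explicit data already assembled.

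\textbf{The Higman ideal.}
Here everything reduces to a finite computation with the $3\times 3$ matrices of Eq.~\eqref{eq:DKnST}. First, a direct multiplication shows $S_{CW}^2=T_{CW}^2=I_3$ and $(S_{CW}T_{CW})^3=I_3$ with $S_{CW}T_{CW}$ of order $3$ (the last two identities being the restriction, after the normalization producing $S_{CW},T_{CW}$, of the modular relation $(S_{LM}T_{LM})^3=\kappa S_{LM}^2$). Hence $s\mapsto S_{CW}$, $t\mapsto T_{CW}$ factors through $\operatorname{SL}(2,\bZ)\twoheadrightarrow\operatorname{SL}(2,\bZ/2\bZ)\cong S_3$, and since $T_{CW}$ is non-central the image is all of $S_3$. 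I would then solve $S_{CW}v=T_{CW}v=v$ to pin down the common fixed line, which is the trivial summand $\bC_{\mathrm{triv}}$; its $S_{CW}$-invariant complement is a $2$-dimensional module on which $s,t$ act faithfully through $S_3$, i.e.\ the unique $2$-dimensional irreducible of $S_3\cong\operatorname{SL}(2,\bZ/2\bZ)$. After the lift normalization this is the level $2$ Weil representation $N_1$: explicitly one checks that, up to an overall scalar and conjugation, its $(S,T)$ agree with the modular data $S_{x,y}=e^{-2\pi iB(x,y)}$, $T_{x,y}=\delta_{x,y}e^{2\pi iQ(x)}$ of the relevant quadratic module, and for $n=2$ this recovers the assertion already made for $D\mcK_2$. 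One further notes that for varying even $n$ the matrices $S_{CW}$ differ only by the diagonal conjugation absorbing the $2^{\pm n}$ entries (which commutes with $T_{CW}$) and by the scalar $(-1)^{n/2}$, so it suffices to carry this out once.

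\textbf{The summand $Z_\Lambda$.}
Here I would use that $Z_\Lambda$ is canonically the centre $Z(\Lambda^{*}\bC^{2n})$ of the exterior algebra on $\bC^{2n}=\langle\xi_1,\dots,\xi_n\rangle\oplus\langle\bar\xi_1,\dots,\bar\xi_n\rangle$ --- via $\xi_i\mapsto\tfrac12\xi_i(1+K\bar K)$, $\bar\xi_i\mapsto\tfrac12\bar\xi_i(1+K\bar K)$ --- so that as a vector space $Z_\Lambda=(\Lambda^{*}\bC^{2n})^{\mathrm{even}}=\bigoplus_{k\ \mathrm{even}}\Lambda^k\bC^{2n}$, of dimension $2^{2n-1}$. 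By the corollary above, under this identification $T_{LM}$ is multiplication by the even (hence central) element $\prod_i(1-\xi_i\bar\xi_i)=\exp\!\bigl(-\sum_i\xi_i\wedge\bar\xi_i\bigr)$, and $S_{LM}$ is, up to the orientation signs $(-1)^{\lfloor\cdot/2\rfloor}\gamma_{w_1^c}\gamma_{w_2}$ displayed there, the fermionic Fourier/Berezin transform $\xi_I\bar\xi_J\mapsto\pm\xi_{I^c}\bar\xi_{J^c}$. Thus $(S_{LM},T_{LM})$ on $Z_\Lambda$ should be identified with the Weil (metaplectic-type) representation attached to the split symplectic space $(\bC^{2n},\ \Omega=\sum_i\xi_i\wedge\bar\xi_i)$ on its fermionic Fock space. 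Organising the $2n$ generators into the $n$ symplectic pairs $\langle\xi_i,\bar\xi_i\rangle\cong\bC^2$ gives $\bC^{2n}\cong(\bC^2)^{\oplus n}$ and hence $\Lambda^{*}\bC^{2n}\cong(\Lambda^{*}\bC^2)^{\otimes n}$ as $\bZ/2\bZ$-graded $\operatorname{SL}(2,\bZ)$-modules, where (after the change of basis intertwining the pair's $4$-dimensional block with $\Lambda^{*}\bC^2$) $\Lambda^{*}\bC^2\cong\bC_{\mathrm{triv}}\oplus\bC^2_{\mathrm{std}}\oplus\bC_{\mathrm{triv}}$ in degrees $0,1,2$, using $\Lambda^2\bC^2=\det=\bC_{\mathrm{triv}}$; so its even part is $\bC_{\mathrm{triv}}^{\oplus 2}$ and its odd part is $\bC^2_{\mathrm{std}}$. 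Passing to even total degree, the even part of the tensor product picks up, for each even $m$, the summand in which exactly $m$ of the $n$ factors contribute their odd ($=\bC^2_{\mathrm{std}}$) part and the remaining $n-m$ contribute their even ($=\bC_{\mathrm{triv}}^{\oplus 2}$) part, with multiplicity $\binom nm\,2^{n-m}$, giving
\[
Z_\Lambda=(\Lambda^{*}\bC^{2n})^{\mathrm{even}}=\bigoplus_{\substack{0\le m\le n\\ m\ \mathrm{even}}}\binom nm\,2^{\,n-m}\,(\bC^2_{\mathrm{std}})^{\otimes m},
\]
with $(\bC^2_{\mathrm{std}})^{\otimes 0}=\bC_{\mathrm{triv}}$. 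As cross-checks, the right-hand side has dimension $\sum_{m\ \mathrm{even}}\binom nm 2^{n-m}2^m=2^n\cdot2^{n-1}=2^{2n-1}=\dim Z_\Lambda$, and the nilpotent part of $T_{LM}$ matches the hard-Lefschetz $\mathfrak{sl}_2$-triple for $\Omega$ (equivalently the $(\operatorname{GL}_n,\operatorname{GL}_2)$-Howe duality on $\Lambda^\bullet(\bC^n\otimes\bC^2)$). Alternatively, since the formulas for $S_{LM},T_{LM}$ on $Z_\Lambda$ are completely explicit, one can bypass the conceptual picture and verify the displayed decomposition directly by producing the invariant subspaces from that $\mathfrak{sl}_2$-triple.

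\textbf{Main obstacle.}
The Higman part is routine. The real work is in the $Z_\Lambda$ part, and concretely in making the metaplectic identification honest: one must check that, with the orientation signs in the corollary's $S_{LM}$-formula, $S_{LM}$ genuinely intertwines the tensor factorisation $\Lambda^{*}\bC^{2n}\cong(\Lambda^{*}\bC^2)^{\otimes n}$ --- i.e.\ the $\gamma$-signs must exactly absorb the non-multiplicative Koszul reordering in $\xi_I\mapsto\pm\xi_{I^c}$ --- and then exhibit the explicit (degree-mixing) basis of the $4$-dimensional building block in which $s,t$ become the $\Lambda^{*}$ of the standard generators of $\operatorname{SL}(2,\bZ)$ on $\bC^2$. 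A recurring minor point is the choice of lift of the projective representation: the scalars $(-1)^{n/2}$ in $S_{CW}$ and the analogous phases in $S_{LM}|_{Z_\Lambda}$ have to be normalised away consistently so that the one-dimensional constituents really are $\bC_{\mathrm{triv}}$ and not a sign twist.
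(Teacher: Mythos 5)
Your treatment of the Higman ideal is correct and is essentially the paper's argument in different clothing: the paper conjugates $S_{CW},T_{CW}$ by an explicit change-of-basis matrix $Q$ to exhibit the $1\oplus 2$ block structure and then cites Nobs to identify the two-dimensional block with $N_1$, while you locate the fixed line and argue through the image $\operatorname{SL}(2,\bZ/2\bZ)\cong S_3$. Both arguments carry the same unresolved projective scalar $(-1)^{n/2}$ (for $n\equiv 2\pmod 4$ there is no genuinely $S_{CW}$-fixed vector), which you at least flag.

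The $Z_\Lambda$ part contains a genuine error. Your skeleton --- factor $Z_\Lambda$ into $n$ four-dimensional blocks indexed by the pairs $(\xi_i,\bar\xi_i)$, decompose each block as $\bC^2_{\mathrm{std}}\oplus 2\bC_{\mathrm{triv}}$, impose total even parity, and count --- is exactly the paper's. But you attach the summands to the wrong graded pieces: you put $\bC^2_{\mathrm{std}}$ on $\Lambda^1=\operatorname{span}\{\xi_i,\bar\xi_i\}$ and the two trivial lines on $\Lambda^0\oplus\Lambda^2$, i.e.\ you take the action to be the functorial $\Lambda^{*}$ of the standard representation on $\operatorname{span}\{\xi_i,\bar\xi_i\}$. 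It is not: by the explicit corollary, $T_{LM}$ on $Z_\Lambda$ is multiplication by $\prod_i(1-\xi_i\bar\xi_i)$, which fixes $\xi_i$, $\bar\xi_i$ and $\xi_i\bar\xi_i$ in each block but moves $1$, hence acts as a nontrivial unipotent on $\operatorname{span}\{1,\xi_i\bar\xi_i\}$; and $S_{LM}$ fixes $\xi_i,\bar\xi_i$ up to an overall scalar while swapping $1\leftrightarrow\pm\xi_i\bar\xi_i$. A trivial constituent must be fixed by $t$, so the standard representation sits on the degree-mixing even part $\operatorname{span}\{1,\xi_i\bar\xi_i\}$ and the two trivial lines are $\bC\xi_i$ and $\bC\bar\xi_i$ --- the reverse of your assignment. (Your hard-Lefschetz aside actually points at the right answer, since $T_{LM}=\exp(-\,\Omega\wedge(-))$ and the Lefschetz $\mathfrak{sl}_2$ has its two-dimensional irreducible on $\Lambda^0\oplus\Lambda^2$ of each symplectic pair; that $\mathfrak{sl}_2$ is the Howe dual of, not equal to, the functorial $\mathfrak{gl}_2$ your main argument uses.) Your final formula $\bigoplus_{m\text{ even}}\binom nm 2^{n-m}(\bC^2_{\mathrm{std}})^{\otimes m}$ does agree with the correct $\bigoplus_{k\text{ even}}\binom nk 2^{k}(\bC^2_{\mathrm{std}})^{\otimes n-k}$, but only by the accident that $n$ is even and $\binom nm=\binom n{n-m}$; the invariant subspaces you describe are not invariant. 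The $D\mcK_2$ example in Section 2 makes this concrete: there the $4\bC_{\mathrm{triv}}$ summand is spanned by $(1+K\bar K)\xi_1\xi_2$, $(1+K\bar K)\xi_1\bar\xi_2$, $(1+K\bar K)\xi_2\bar\xi_1$, $(1+K\bar K)\bar\xi_1\bar\xi_2$ (bidegree $(1,1)$ in the two blocks), whereas your decomposition would place the trivial summands on $\operatorname{span}\{1,\xi_1\bar\xi_1,\xi_2\bar\xi_2,\xi_1\xi_2\bar\xi_1\bar\xi_2\}$, where $T_{LM}$ visibly does not act trivially. To repair the proof you need only redo the per-block identification using the explicit $S_{LM},T_{LM}$ formulas (as the paper does with its matrices $\tilde s,\tilde t$); the counting step then goes through verbatim with the roles of $k$ and $n-k$ exchanged.
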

    \begin{proof}
        Consider the $S_{CW}$ and $T_{CW}$ in Equation (\ref{eq:DKnST}) with a change-of-basis matrix 
        \[Q=\begin{pmatrix}1 & \frac{2^{1-n}}{\sqrt{3}} & 0 \\ 2^n & -\frac{1}{\sqrt{3}} & 0 \\ 0 & 0 & 1\end{pmatrix}.\]
        Then we have 
        \[Q^{-1} S_{CW} Q=(-1)^{n/2}\begin{pmatrix}1 & 0 & 0 \\ 0 & -\frac{1}{2} & -\frac{\sqrt{3}}{2} \\ 0 & -\frac{\sqrt{3}}{2} & \frac{1}{2}\end{pmatrix},\qquad Q^{-1} T_{CW} Q=\begin{pmatrix}1 & 0 & 0 \\ 0 & 1 & 0 \\ 0 & 0 & -1\end{pmatrix}.\]
        This implies the $\operatorname{SL}(2,\mathbb Z)$-representation on the Higman ideal of $D \mathcal{K}_n$ is a direct sum of a one-dimensional and a two-dimensional irreducible representation. In particular, the two-dimensional representation in the decomposition is equivalent to the level $2$ Weil representation $N_1$ \cite[Satz 4, p. 474]{Nobs1}.
        
        If $w_1 = \xi_1^{a_1}\dots\xi_n^{a_n}$ and $w_2 = \xi_1^{b_1}\dots\xi_n^{b_n}$, then let 
        $$\Gamma_{w_1, w_2} = (-1)^{\sum_{i=1}^{n-1}b_i\sum_{j=i+1}^n a_j}.$$
        Note that, for $w_1, w_2\in W$, we have $w_1\bar w_2 = \Gamma_{w_1, w_2}\tilde w$ for some $\tilde w\in \tilde W$. Moreover, $\gamma_{w} = \Gamma_{w, w^c}$. Finally,
        \begin{align*}
            \Gamma_{w_1, w_2}\Gamma_{w_2^c, w_1^c} &= (-1)^{(\sum_{i=1}^{n} a_j) (\sum_{j=1}^n b_i) + \sum_{i=1}^n a_i b_i + n/2 + \sum_{i=2}^{n}(i - 1)a_i + \sum_{i=2}^{n}(i-1)b_i}.
            \intertext{One can verify that $\sum_{i=2}^{n}(i - 1)a_i \equiv \lfloor \sum_{i=1}^n a_i/2\rfloor + \sum_{i=1}^{n-1}(1 - a_i)\sum_{j=i+1}^n a_j\pmod{2}$. Thus, we have }
            &= (-1)^{n/2 + \sum_{i=1}^n a_i b_i}[(-1)^{\lfloor|w_1^c|/2\rfloor + \lfloor |w_2^c|/2\rfloor + |w_1|}\gamma_{w_1^c}\gamma_{w_2}].
        \end{align*}
        For $\tilde w = \xi_1^{a_1}\bar\xi_1^{b_1}\dots\xi_n^{a_n}\bar\xi_n^{b_n}\in \tilde W$, define $\tilde w^s = \xi_1^{1 - b_1}\bar\xi_1^{1 - a_1}\dots\xi_n^{1 - b_n}\bar\xi_n^{1 - a_n}$. By the identities for $\Gamma_{w_1, w_2}$, we have
        $$S_{LM}((1+K\bar K)\tilde w) = (-1)^{n/2 + \sum_{i=1}^n a_i b_i}(1 + K\bar K)\tilde w^s.$$
        Moreover,
        $$T_{LM}((1+K\bar K)\tilde w) = (1 + K\bar K)\sum_{w'\in W}(-1)^{|w'|}\tilde w\hat w'.$$
        Note that $\hat w$ is a product of elements like $\xi_i\bar\xi_i$ and any $\xi_j, \bar\xi_k\in \mathbf{I}(\tilde w)$ commutes with $\xi_i\bar\xi_i$ (up to a factor which cancels with the $(1+K\bar K)$). Thus, $\tilde w\hat w' = \tilde w''\in \tilde W$ when nonzero.
        
        Consider the linear inclusion $p:Z_\Lambda\hookrightarrow (\bC^2\otimes \bC^2)^{\otimes n}$ generated by 
            $$p:(1+K\bar K)\xi_1^{a_1}\bar\xi_1^{b_1}\dots\xi_n^{a_n}\bar\xi_n^{b_n}\mapsto |a_1 b_1\rangle\otimes \dots\otimes |a_n b_n\rangle.$$ 
        for $a_i,b_i\in \{0,1\}$. The subspace $p(Z_\Lambda)\subset (\bC^2\otimes \bC^2)^{\otimes n}$ has a basis consisting of bit strings with an even number of 1s. Then, $p\circ S_{LM}|_{Z_\Lambda}\circ p^{-1} = (-1)^{n/2} \tilde s^{\otimes n}|_{p(Z_\Lambda)}$ and $p\circ T_{LM}|_{Z_\Lambda}\circ p^{-1} = \tilde t^{\otimes n}|_{p(Z_\Lambda)}$, where 
        $$\tilde s = \begin{pmatrix}
            0 & 0 & 0 & 1\\
            0 & 1 & 0 & 0\\
            0 & 0 & 1 & 0\\
            -1 & 0 & 0 & 0
        \end{pmatrix},\hspace{30mm}\tilde t = \begin{pmatrix}
            1 & 0 & 0 & -1\\
            0 & 1 & 0 & 0\\
            0 & 0 & 1 & 0\\
            0 & 0 & 0 & 1
        \end{pmatrix}.$$
        The $4\times 4$ matrices $s$ and $t$ have joint invariant subspaces of the form $(1+K\bar K)\spa\{|00\rangle, |11\rangle\}$ on which $\mathrm{SL}(2,\bZ)$ has a standard representation, as well as $(1+K\bar K)\spa\{|01\rangle\}$ and $(1+K\bar K)\spa\{|10\rangle\}$ where $\mathrm{SL}(2,\bZ)$ has a trivial representation. Thus, the representation of $\mathrm{SL}(2,\bZ)$ given by $s\mapsto \tilde s^{\otimes n}$ and $t\mapsto \tilde t^{\otimes n}$ decompose as
        $$(\bC^2_{\mathrm{std}} \oplus 2\bC_{\mathrm{triv}})^{\otimes n} \cong \bigoplus_{k=0}^n \binom{n}{k}2^k(\bC^2_{\mathrm{std}})^{\otimes n-k}\otimes \bC_{\mathrm{triv}}^{\otimes k} \cong\bigoplus_{k=0}^n \binom{n}{k}2^k(\bC^2_{\mathrm{std}})^{\otimes n-k},$$
        where we define $(\bC^2_{\mathrm{std}})^{\otimes 0}:=\bC_{\mathrm{triv}}$. However, only those for even $k$ correspond to representations acting on $Z_\Lambda$, as we must have an even number of $1$s. Thus, the $\mathrm{SL}(2,\bZ)$-representation on $Z_\Lambda$ decomposes as
        $$\bigoplus_{k\text{ even}} \binom{n}{k}2^k(\bC^2_{\mathrm{std}})^{\otimes n-k}.$$
    \end{proof}

\section*{Acknowledgments}
Z.W. thanks V. Ostrik and I. Runkel for communication and discussion on finite tensor categories and semi-simplification.  He also thanks C. Galindo for pointing out the references on Nichols Hopf algebras, and X. Cui, N. Geer, and E. Rowell for helpful comments.  Z.W. is partially supported by NSF grant CCF 2006463 and ARO MURI contract W911NF-20-1-0082. L.C.'s contribution is based on work done while he was a UCSB student.  L.C and Q.Z. thank Yilong Wang for enlightening discussions on Weil representations.

\bibliographystyle{abbrv}
\bibliography{zbib}
\end{document}